\newcommand{\mb}[1]{\mathbb{#1}}
\newcommand\oast{\stackMath\mathbin{\stackinset{c}{0ex}{c}{0ex}{\ast}{\bigcirc}}}
\newcommand{\bed}{\begin{definition}}
\newcommand{\eed}{\end{definition}}
\newcommand{\globalcolor}[1]{%
  \color{#1}\global\let\default@color\current@color
}
\newcommand{\dsE}{\mathds{E}}
\newcommand{\dsV}{\mathds{V}}
\newcommand{\ds}[1]{\mathds{#1}}
\newcommand{\colb}[1]{{\color{blue} #1}}
\newcommand{\btau}{\colb{\tau}}
\newcommand{\bXi}{\colb{\Xi}}
\newcommand{\bcI}[1]{{\color{blue} \mathcal{I}(#1)}}
\newcommand{\dsG}{\ds{G}}
\definecolor{blush}{rgb}{0.87, 0.36, 0.51}
	\definecolor{brightcerulean}{rgb}{0.11, 0.67, 0.84}
	\definecolor{greenryb}{rgb}{0.4, 0.69, 0.2}
\newif\ifdark
\definecolor{darkred}{rgb}{0.9,0.2,0.2}
\definecolor{darkblue}{rgb}{0.7,0.3,1}
\definecolor{darkgreen}{rgb}{0.1,0.9,0.1}
\definecolor{franck}{rgb}{0,0.8,1}
\definecolor{pagebackground}{rgb}{.15,.21,.18}
\definecolor{pageforeground}{rgb}{.84,.84,.85}
\definecolor{symbols}{rgb}{0,0.7,1}
\colorlet{connection}{red!80!black}
\colorlet{boxcolor}{blue!50}
\definecolor{darkred}{rgb}{0.7,0.1,0.1}
\definecolor{darkblue}{rgb}{0.4,0.1,0.8}
\definecolor{darkgreen}{rgb}{0.1,0.7,0.1}
\definecolor{franck}{rgb}{0,0,1}
\definecolor{pagebackground}{rgb}{1,1,1}
\definecolor{pageforeground}{rgb}{0,0,0}
\colorlet{symbols}{blue!90!black}
\colorlet{connection}{red!30!black}
\colorlet{boxcolor}{blue!50!black}
\def\slash{\leavevmode\unskip\kern0.18em/\penalty\exhyphenpenalty\kern0.18em}
\def\dash{\leavevmode\unskip\kern0.18em--\penalty\exhyphenpenalty\kern0.18em}
\DeclareMathAlphabet{\mathbbm}{U}{bbm}{m}{n}
\DeclareFontFamily{U}{BOONDOX-calo}{\skewchar\font=45 }
\DeclareFontShape{U}{BOONDOX-calo}{m}{n}{
  <-> s*[1.05] BOONDOX-r-calo}{}
\DeclareFontShape{U}{BOONDOX-calo}{b}{n}{
  <-> s*[1.05] BOONDOX-b-calo}{}
\DeclareMathAlphabet{\mcb}{U}{BOONDOX-calo}{m}{n}
\SetMathAlphabet{\mcb}{bold}{U}{BOONDOX-calo}{b}{n}
\setlist{noitemsep,topsep=4pt,leftmargin=1.5em}
\DeclareMathAlphabet{\mathbbm}{U}{bbm}{m}{n}
\DeclareMathAlphabet{\mcb}{U}{BOONDOX-calo}{m}{n}
\SetMathAlphabet{\mcb}{bold}{U}{BOONDOX-calo}{b}{n}
\DeclareFontFamily{U}{mathx}{\hyphenchar\font45}
\DeclareFontShape{U}{mathx}{m}{n}{
      <5> <6> <7> <8> <9> <10>
      <10.95> <12> <14.4> <17.28> <20.74> <24.88>
      mathx10
      }{}
\DeclareSymbolFont{mathx}{U}{mathx}{m}{n}
\DeclareMathSymbol{\bigtimes}{1}{mathx}{"91}
\def\s{\mathfrak{s}}
\def\emptyset{{\centernot\ocircle}}
\providecommand{\figures}{false}
{ \ifthenelse{\equal{\figures}{false}} {#1}{\[ {\rm Figure \ missing !} \]} }{}
\def\id{\mathrm{id}}
\newcommand{\be}{\begin{equation*}}
\newcommand{\ee}{\end{equation*}}
\def\CP{\mathcal{P}}
\def\CG{\mathcal{G}}
\def\CA{\mathcal{A}}
\def\CQ{\mathcal{Q}}
\def\CT{\mathcal{T}}
\def\K{\mathfrak{K}}
\tikzstyle{tinydots}=[dash pattern=on \pgflinewidth off \pgflinewidth]
\tikzstyle{superdense}=[dash pattern=on 4pt off 1pt]
\newcommand{\mcI}{\mathcal{I}}
\newcommand{\mc}[1]{\mathcal{#1}}
\newcommand{\beq}{\begin{equation}}
\newcommand{\eeq}{\end{equation}}
\newcommand{\mbbE}{\mathbb{E}}
\newcommand{\mfT}{\mathfrak{T}}
\newcommand{\mfn}{\mathfrak{n}}
\newcommand{\mfe}{\mathfrak{e}}
\def\Labe{\mathfrak{e}}
\def\Labn{\mathfrak{n}}
\def\Deltam{\Delta^{\!-}}
\def\${|\!|\!|}
\newenvironment{DIFnomarkup}{}{} 
\newtheorem{assumption}{Assumption}
\newtheorem{example}[lemma]{Example}
\newtheorem{remarque}[lemma]{Remark}
\newcommand{\rrightarrow}{{\to\hskip -4.9mm\raise 1pt\hbox{$\to$}}}
\newfont{\indic}{bbmss12}
\def\Nabla_#1{\nabla_{\!#1}}
    \pgfmathsetlength{\pgf@xb}{\pgfkeysvalueof{/pgf/outer xsep}}%
    \pgfmathsetlength{\pgf@yb}{\pgfkeysvalueof{/pgf/outer ysep}}%
\def\symbol#1{\textcolor{symbols}{#1}}
\def\decorate#1#2{
        \ifnum#2>0
    		\foreach \count in {1,...,#2}{
	       	let
				\p1 = (sourcenode.center),
                \p2 = (sourcenode.east),
				\n1 = {\x2-\x1},
				\n2 = {1mm},
				\n3 = {(1.3+0.6*(\count-1))*\n1},
				\n4 = {0.7*\n1}
			in 
        		node[rectangle,fill=symbols,rotate=30,inner sep=0pt,minimum width=0.2*\n2,minimum height=\n2] at ($(sourcenode.center) + (\n3,\n4)$) {}
				}
		\fi
        \ifnum#1>0
    		\foreach \count in {1,...,#1}{
	       	let
				\p1 = (sourcenode.center),
                \p2 = (sourcenode.east),
				\n1 = {\x2-\x1},
				\n2 = {1mm},
				\n3 = {(1.3+0.6*(\count-1))*\n1},
				\n4 = {0.7*\n1}
			in 
        		node[rectangle,fill=symbols,rotate=-30,inner sep=0pt,minimum width=0.2*\n2,minimum height=\n2] at ($(sourcenode.center) + (-\n3,\n4)$) {}
				}
		\fi
}
\tikzset{
    dectriangle/.style 2 args={
        triangle,
        alias=sourcenode,
        append after command={\decorate{#1}{#2}}
    },
    dectriangle/.default={0}{0},
}
\tikzset{
	cross/.style={path picture={ 
  		\draw[symbols]
			(path picture bounding box.south east) -- (path picture bounding box.north west) (path picture bounding box.south west) -- (path picture bounding box.north east);
		}},
root/.style={circle,fill=green!50!black,inner sep=0pt, minimum size=1.2mm},
        varus/.style={circle,fill=blue!20!white,draw=blue!50!white,inner sep=0pt,minimum size=3mm},
		kepsus/.style={semithick, ->},
		testf/.style{ultra thick, green!30!black, - >}
        dot/.style={circle,fill=pageforeground,inner sep=0pt, minimum size=1mm},
        dotred/.style={circle,fill=pageforeground!50!pagebackground,inner sep=0pt, minimum size=2mm},
        var/.style={circle,fill=pageforeground!10!pagebackground,draw=pageforeground,inner sep=0pt, minimum size=3mm},
        varu/.style={circle,fill=pageforeground!10!pagebackground,draw=pageforeground,inner sep=0pt, minimum size=1mm},
        bar/.style={circle,fill=pageforeground!10!pagebackground,draw=pageforeground,inner sep=0pt, minimum size=1.5mm},
        barg/.style={circle,fill=green,draw=green,inner sep=0pt, minimum size=1.5mm},
        bnode/.style={circle,fill=black,draw=black,inner sep=0pt, minimum size=0.5mm},
        barx/.style={crosscircle,fill=pageforeground!10!pagebackground,draw=pageforeground,inner sep=0pt, minimum size=1.5mm},
        barx2/.style={crosscircle,fill=pageforeground!50!pagebackground,draw=pageforeground,inner sep=0pt, minimum size=1.5mm},
        kernel/.style={semithick,shorten >=2pt,shorten <=2pt},
        kernels/.style={snake=zigzag,shorten >=2pt,shorten <=2pt,segment amplitude=1pt,segment length=4pt,line before snake=2pt,line after snake=5pt},
        rho/.style={densely dashed,semithick,shorten >=2pt,shorten <=2pt},
           testfcn/.style={dotted,semithick,shorten >=2pt,shorten <=3pt},
        renorm/.style={shape=circle,fill=pagebackground,inner sep=1pt},
        labl/.style={shape=rectangle,fill=pagebackground,inner sep=1pt},
        xic/.style={very thin,circle,draw=symbols,fill=symbols,inner sep=0pt,minimum size=1.2mm},
        g/.style={very thin,rectangle,draw=symbols,fill=symbols!10!pagebackground,inner sep=0pt,minimum width=2.5mm,minimum height=1.2mm},
        xi/.style={very thin,circle,draw=symbols,fill=symbols!10!pagebackground,inner sep=0pt,minimum size=1.2mm},
	xies/.style={very thin,rectangle,fill=green!50!black!25,draw=symbols,inner sep=0pt,minimum size=1.1mm},
	xiesf/.style={very thin,rectangle,fill=green!50!black,draw=symbols,inner sep=0pt,minimum size=1.1mm},
        xix/.style={very thin,crosscircle,fill=symbols!10!pagebackground,draw=symbols,inner sep=0pt,minimum size=1.2mm},
        X/.style={very thin,cross,rectangle,fill=pagebackground,draw=symbols,inner sep=0pt,minimum size=1.2mm},
	xib/.style={thin,circle,fill=symbols!10!pagebackground,draw=symbols,inner sep=0pt,minimum size=1.6mm},
	xie/.style={thin,circle,fill=green!50!black,draw=symbols,inner sep=0pt,minimum size=1.6mm},
	xid/.style={thin,circle,fill=symbols,draw=symbols,inner sep=0pt,minimum size=1.6mm},
	xibx/.style={thin,crosscircle,fill=symbols!10!pagebackground,draw=symbols,inner sep=0pt,minimum size=1.6mm},
	kernels2/.style={very thick,draw=connection,segment length=12pt},
	keps/.style={thin,draw=symbols,->},
	kepspr/.style={thick,draw=connection,->},
	krho/.style={thin,draw=symbols,superdense,->},
	krhopr/.style={thick,draw=connection,superdense},
	triangle/.style = { regular polygon, regular polygon sides=3},
	not/.style={thin,circle,draw=connection,fill=connection,inner sep=0pt,minimum size=0.5mm},
	diff/.style = {very thin,draw=symbols,triangle,fill=red!50!black,inner sep=0pt,minimum size=1.6mm},
	diff1/.style = {very thin,dectriangle={1}{0},fill=red!50!black,draw=symbols,inner sep=0pt,minimum size=1.6mm},
	diff2/.style = {very thin,dectriangle={1}{1},fill=red!50!black,draw=symbols,inner sep=0pt,minimum size=1.6mm},
		diffmini/.style = {very thin,rectangle,fill=black,draw=black,inner sep=0pt,minimum size=0.75mm},
	 kernelsmod/.style={very thick,draw=connection,segment length=12pt},
	 rec/.style = {very thin,rectangle,fill=black,draw=black,inner sep=0pt,minimum size=2mm},
	cerc/.style={very thin,circle,draw=black,fill=symbols,inner sep=0pt,minimum size=2mm},
	stars/.style={very thin,star,star points=6,star point ratio=0.5, draw=black,fill=red,inner sep=0pt,minimum size=0.7mm},
	>=stealth,
        }
        \tikzset{
root/.style={circle,fill=black!50,inner sep=0pt, minimum size=3mm},
        circ/.style={circle,fill=white,draw=black,very thin,inner sep=.5pt, minimum size=1.2mm},
        round1/.style={fill=white,outer sep = 0,inner sep=2pt,rounded corners=1mm,draw,text=black,thin,minimum size=1.2mm},
          circ1/.style={circle,fill=red!10,draw=red,very thin,inner sep=.5pt, minimum size=1.2mm},
        rect/.style={fill=white,outer sep = 0,inner sep=2pt,rectangle,draw,text=black,thin,minimum size=1.2mm},
        rect1/.style={fill=white,outer sep = 0,inner sep=2pt,rectangle,draw,text=black,thin,minimum size=1.2mm},
        round2/.style={fill=red!10,outer sep = 0,inner sep=2pt,rounded corners=1mm,draw,text=black,thin,minimum size=1.2mm},
       round3/.style={fill=blue!10,outer sep = 0,inner sep=2pt,rounded corners=1mm,draw,text=black,thin,minimum size=1.2mm}, 
        rect2/.style={fill=black!10,outer sep = 0,inner sep=2pt,rectangle,draw,text=black,thin,minimum size=1.2mm},
        dot/.style={circle,fill=black,inner sep=0pt, minimum size=1.2mm},
        dotred/.style={circle,fill=black!50,inner sep=0pt, minimum size=2mm},
        var/.style={circle,fill=black!10,draw=black,inner sep=0pt, minimum size=3mm},
        kernel/.style={semithick,shorten >=2pt,shorten <=2pt},
         diag/.style={thin,shorten >=4pt,shorten <=4pt},
        kernel1/.style={thick},
        kernels/.style={snake=zigzag,shorten >=2pt,shorten <=2pt,segment amplitude=1pt,segment length=4pt,line before snake=2pt,line after snake=5pt,},
		kernels1/.style={snake=zigzag,segment amplitude=0.5pt,segment length=2pt},
		rho1/.style={densely dotted,semithick},
        rho/.style={densely dashed,semithick,shorten >=2pt,shorten <=2pt},
           testfcn/.style={dotted,semithick,shorten >=2pt,shorten <=2pt},
           visible/.style={draw, circle, fill, inner sep=0.25ex},
        renorm/.style={shape=circle,fill=white,inner sep=1pt},
        labl/.style={shape=rectangle,fill=white,inner sep=1pt},
        xic/.style={very thin,circle,fill=symbols,draw=black,inner sep=0pt,minimum size=1.2mm},
        xi/.style={very thin,circle,fill=blue!10,draw=black,inner sep=0pt,minimum size=1.2mm},
	xib/.style={very thin,circle,fill=blue!10,draw=black,inner sep=0pt,minimum size=1.6mm},
	xie/.style={very thin,circle,fill=green!50!black,draw=black,inner sep=0pt,minimum size=1mm},
	xid/.style={very thin,circle,fill=symbols,draw=black,inner sep=0pt,minimum size=1.6mm},
	edgetype/.style={very thin,circle,draw=black,inner sep=0pt,minimum size=5mm},
	nodetype/.style={very thick,circle,draw=black,inner sep=0pt,minimum size=5mm},
	kernels2/.style={very thick,draw=connection,segment length=12pt},
clean/.style={thin,circle,fill=black,inner sep=0pt,minimum size=1mm},	not/.style={thin,circle,fill=symbols,draw=connection,fill=connection,inner sep=0pt,minimum size=0.8mm},
	>=stealth,
cumu2n/.style={inner sep=3pt},
cumu2/.style={draw=red!50,fill=red!20},
dot1/.style={circle,fill=black,inner sep=0pt, minimum size=1mm},
cumu3/.style={regular polygon, regular polygon sides=3,draw=red!50,rounded corners=3pt,fill=red!20,minimum size=5mm},
cumu4/.style={regular polygon, regular polygon sides=4,draw=red!50,rounded corners=3pt,fill=red!20,minimum size=7mm},
cumu5/.style={regular polygon, regular polygon sides=5,draw=red!50,rounded corners=3pt,fill=red!20,minimum size=5mm},
	xi/.style={circle,fill=symbols!10,draw=symbols,inner sep=0pt,minimum size=1.2mm},
	xix/.style={crosscircle,fill=symbols!10,draw=symbols,inner sep=0pt,minimum size=1.2mm},
	xib/.style={circle,fill=symbols!10,draw=symbols,inner sep=0pt,minimum size=1.6mm},
	xibx/.style={crosscircle,fill=symbols!10,draw=symbols,inner sep=0pt,minimum size=1.6mm},
	not/.style={circle,fill=symbols,draw=symbols,inner sep=0pt,minimum size=0.5mm},
	>=stealth,
	}
\def\DeclareSymbol#1#2#3{%
	\expandafter\gdef\csname MH@symb@#1\endcsname{\tikzsetnextfilename{symbol#1}%
	\tikz[baseline=#2,scale=0.15,draw=symbols,line join=round]{#3}}%
	\expandafter\gdef\csname MH@symb@#1s\endcsname{\scalebox{0.75}{\tikzsetnextfilename{symbol#1}%
	\tikz[baseline=#2,scale=0.15,draw=symbols,line join=round]{#3}}}%
	\expandafter\gdef\csname MH@symb@#1ss\endcsname{\scalebox{0.65}{\tikzsetnextfilename{symbol#1}%
	\tikz[baseline=#2,scale=0.15,draw=symbols,line join=round]{#3}}}%
	}
\def\<#1>{\ifthenelse{\boolean{mmode}}{\mathchoice{\csname MH@symb@#1\endcsname}{\csname MH@symb@#1\endcsname}{\csname MH@symb@#1s\endcsname}{\csname MH@symb@#1ss\endcsname}}{\csname MH@symb@#1\endcsname}}
 \def\1{\mathbf{\symbol{1}}}
\DeclareMathAlphabet{\mathpzc}{OT1}{pzc}{m}{it}
\def\eqref#1{(\ref{#1})}
\newcommand*{\bigcdot}{}
\DeclareRobustCommand*{\bigcdot}{%
  \mathbin{\mathpalette\bigcdot@{}}%
}
\newcommand*{\bigcdot@scalefactor}{.5}
\newcommand*{\bigcdot@widthfactor}{1.15}
\newcommand*{\bigcdot@}[2]{%
  \sbox0{$#1\vcenter{}$}
  \sbox2{$#1\cdot\m@th$}%
  \hbox to \bigcdot@widthfactor\wd2{%
    \hfil
    \raise\ht0\hbox{%
      \scalebox{\bigcdot@scalefactor}{%
        \lower\ht0\hbox{$#1\bullet\m@th$}%
      }%
    }%
    \hfil
  }%
}
\def\two{{\<generic>\kern0.05em\<genericb>}}
\def\twoI{{\<Ito>\kern0.05em\<Itob>}}
\def\mail#1{\burlalt{#1}{mailto:#1}}
\begin{document}
\renewcommand\thmcontinues[1]{Continued}

\title{Convergence of space-discretised gKPZ via Regularity Structures}

\author{Yvain Bruned$^1$, Usama Nadeem$^2$}
\institute{ 
 IECL (UMR 7502), Université de Lorraine
 \and University of Edinburgh \\
Email:\ \begin{minipage}[t]{\linewidth}
\mail{yvain.bruned@univ-lorraine.fr}
\\ \mail{M.U.Nadeem@sms.ed.ac.uk}
\end{minipage}}

\maketitle

\begin{abstract}
In this work, we show a convergence result for the discrete formulation of the generalised KPZ equation $\partial_t u = (\Delta u) + g(u)(\nabla u)^2 + k(\nabla u) + h(u) + f(u)\xi_t(x)$, where $\xi$ is real-valued, $\Delta$ is the discrete Laplacian, and $\nabla$ is a discrete gradient, without fixing the spatial dimension. Our convergence result is established within the discrete regularity structures introduced by Hairer and Erhard \cite{EH17}. We extend with new ideas the convergence result found in \cite{MH21} that deals with a discrete form of the Parabolic Anderson model driven by a (rescaled) symmetric simple exclusion process. This is the first time that a discrete generalised KPZ equation is treated and it is a major step toward a general convergence result that will cover a large family of discrete models.
\\[.4em]
\noindent { {\scriptsize \textit{Keywords:} Discrete models, Generalised KPZ equation,
Regularity Structures, Stochastic PDE}}\\
\noindent {\scriptsize\textit{MSC classification:} 60L30, 60L90, 60H15} 
\end{abstract}
\setcounter{tocdepth}{2}
\tableofcontents

\section{Introduction}

Martin Hairer's theory of Regularity Structures \cite{reg} has been applied to construct a solution theory for a large subclass of singular stochastic PDEs of the form:

\beq\label{eq:sSPDE}
\partial_t u - \mathcal{L}u = F(u,\nabla u,\xi)
\eeq

where $\mathcal{L}$ is some differential operator and $F$ is some non-linearity affine in the noise $ \xi $. Central to this solution theory is the idea of renormalisation which usually involves subtracting certain counterterms from the equation so as to deal with products of distributions that are undefined in the classical sense. 

Since the original paper \cite{reg}, the programme for generating the solution theory for a given SPDE has been automated. In \cite{BHZ}, the authors explained how to extract \textit{rules} from a given equation, and then proceduralised the construction of a renormalisation group that can affect renormalisation on the equation. In this programme, the choice of the aforementioned counter-terms comes from the classic BPHZ formalism \cite{BP57,KH69,WZ69}. In \cite{CH16}, the authors provide a functionally black-box like result that automatically produces the required stochastic estimates for the renormalised stochastic objects coming from \cite{BHZ}. The last step completed in \cite{BCCH} was to write the fixed point and the action of the renormalisation onto the right hand side of \eqref{eq:sSPDE}. Hence, the programme in the continuum has been automated by this series of papers. 

Many equations of the form \eqref{eq:sSPDE}, arise from scaling limits of microscopic models - consider \cite{BG97,MW17,K16} for derivations of Stochastic Burgers, KPZ, $\Phi^4$ and the parabolic Anderson model - and as such there is interest in discretisations of these equations. A classic example is from \cite{BG97}, where employing an approach based on the Cole-Hopf transform, the authors proved that a (rescaled) particle system converges to their notion of solution of the KPZ equation. In \cite{GJ10,GJ14}, the authors introduced a martingale type approach to the problem - the so-called energy solution. Further, they showed that limits of a sizeable class of suitably scaled interacting particle systems are also energy solutions. The uniqueness of the energy solution has been proved in \cite{GP18} which implies various applications given in \cite{GP16,DGP17}. These approaches that we have listed thus far have certain drawbacks -  for the former the difficulty is that the Hopf-Cole transform is difficult to implement at a discrete level, and for the latter, knowledge and control of the invariant measure for the system is a requisite.

The use of regularity structures circumvents both of these problems and hence has been shown to have great promise for discrete problems such as these. In \cite{HM18}, the authors developed a framework to adapt the theory of regularity structure for certain spatial discretisations. Further generality was achieved in \cite{EH17}, where the authors do not fix any particular discretisation procedure. As applications to the discrete regularity structures constructed in these papers, in \cite{HM18} a space-discretised KPZ was proven to converge to the solution of the generic KPZ, and then in \cite{CM18} the same was proven for a space-time discretisation of the KPZ equation. Convergence of more general discretisations driven by interacting particle systems to stochastic PDEs in paper such as  \cite{Mat18} and \cite{MH21}, was addressed. In the former, the author used multiple stochastic integrals with respect to martingales with particular properties that allow for certain moment bounds and expansions that are analogous to the Nelson's Estimate and Wiener chaos expansions that are used in \cite{reg} in the case of (homogeneous) Gaussian Noise. In the latter, the authors begin with a discrete parabolic Anderson model driven by a symmetric simple exclusion process and show that the solutions to a suitably renormalised version of the equation converge in law to the solution of the same equation driven by a generalised Ornstein-Uhlenbeck process. This is achieved via estimates on joint cumulants of arbitrary large order for the exclusion process. 
 
Another reason that drives interest in discrete regularity structures is that it may possibly provide a way to link invariant measures to their dynamics. See \cite{BGHZ22} for a conjecture connecting the Brownian loop measure to stochastic geometric heat equations, that falls short of being a theorem due to the lack of a convergence result in the way of \cite{CH16} for discrete regularity structure. In \cite{C22, CCHS20, CCHS22} the authors study the stochastic quantisation for the Yang-Mills in the 2 and 3 dimensional Euclidean space and they have the same type of open problem for the invariant measure.

Apart from Regularity Structures, other theories have been successful in treating singular SPDEs. One such is via the use of paracontrolled calculus as in \cite{GIP13}, which much in the way of Hairer, describes the spectral features of a function in terms of paracontrolled distributions. This methodology has also been used to look at discrete problems. In \cite{ZZ14}, for example, the author use the theory to approximate the Navier-Stokes equation, while in \cite{ZZ15} the authors construct a piecewise linear approximation for the $\Phi^4_3$ model. In \cite{GP17}, the authors achieve a similar convergence result as we present here, for Sasamoto-Spohn type discretisations of the stochastic Burgers equation, in that they show that the limit of such discretisations solves (a version of) the continuous Burgers equations. In \cite{CGP16}, the authors also look at the discrete PAM, but the tool for proving convergence is paracontrolled calculus and finally in \cite{MP19} the authors develop a discrete version of paracontrolled distributions.

In this paper, we intend to extend the work done in \cite{MH21} by presenting a renormalisation procedure that is able to handle more complicated equations. We fix the following discretisation of the generalised KPZ equation:
\begin{equs}\label{eq:gKPZ}\tag{gKPZ}
\partial_t u = \Delta u + g(u)(\nabla u)^2+k(u) \nabla u+h(u)+f(u)\xi_t(x),
\end{equs}
where $f,\,g,\,k,\,h$ are assumed to be smooth, to showcase our methodology. Here $t\ge 0$, $x\in\mathbb{Z}^d$, $\nabla$ is a discrete gradient and $\Delta$ is the discrete Laplacian defined by:
\begin{equs}
\Delta u(t,x) = \sum_{y:\,x\sim y} [u(y,t) - u(x,t)],
\end{equs} 

where $x\sim y$ means that $x$ and $y$ are nearest neighbours with respect to the Euclidean norm on $\mathbb{Z}^d$ and $\xi$ is a $\mathbb{R}$-valued random field which we will not fix although we do assume that it is centred.

As in \cite{MH21} we would like to prove that the solution of \eqref{eq:gKPZ} converges, in some sense to be specified later, to the solution $\bar u$ of the gKPZ equation driven by some space-time random field $Y$ defined on $\mb{R}_+\times\mb{R}^d$:
\begin{equs}\label{eq:gKPZcon}
\partial_t \bar u = \Delta \bar u + g(\bar u)(\nabla \bar u)^2+k(\bar u) \nabla \bar u+h(\bar u)+f(\bar u)Y_t(x).
\end{equs}
To this end, we rescale the space by reformulating our equation on $\mathbb{Z}^d_N = (\mathbb{Z}/2^N\mathbb{Z})^d$:
\begin{equs}
\label{eq:gKPZ1}\partial_t\hat{u}^N & = (\Delta^N\hat{u}^N) + g(\hat{u}^N)(\nabla_{\! N}\hat{u}^N)^2 \\ & + k(\hat{u}^N)(\nabla_{\! N}\hat{u}^N)+ h(\hat u^n) + 2^{Nd/2}{\xi}_t^Nf(\hat{u}^N),
\end{equs}
where $\Delta_N = 2^{2N}\Delta$, $\nabla_{\! N} = 2^{N}\nabla$ are the rescaled discrete Laplacian and discrete derivative defined on $2^{N}\mathbb{Z}^d$, with $x\in 2^{-N}\mathbb{Z}^d$,\,$t\ge 0$, $\xi^N_t(x) = \xi_{2^{2N}t}(2^Nx)$. Recall that we did not fix any particular discrete derivative, but we do require from here on that $\nabla$ must be chosen so that we have $\|\nabla_{\! N}\varphi - \varphi'\|_{\infty}\rightarrow 0$.
In the sequel, we will use the shorthand notation $ \mathbb{S} $ for  
$\mathbb{Z}^d_N$.

Of course, with reference to the discussion in the opening regarding the ill-posedness of the products of distributions in general, it turns out that both \eqref{eq:gKPZ} and \eqref{eq:gKPZcon} fail to have any canonical meaning and instead of \eqref{eq:gKPZ1} one should be looking at:
\begin{equs}\label{eq:gKPZ2}
\partial_t\hat{u}^N & = (\Delta^N\hat{u}^N) + g(\hat{u}^N)(\nabla_{\! N}\hat{u}^N)^2 \\ & + k(\hat{u}^N)(\nabla_{\! N}\hat{u}^N)+ \bar{h}(\hat u^n) + 2^{Nd/2}{\xi}_t^Nf(\hat{u}^N),
\end{equs}
with $\bar{h}$ taking the form:
\begin{equs}
\bar{h}(u) & = h(u) + \sum_{\btau \in \mc{T}_-} \frac{\Upsilon[\btau]}{S(\btau)} C_N(\btau).
\end{equs}
where $ \mc{T}_- $ is a finite set of decorated trees, $ \Upsilon[\btau] $ are elementary differentials, $ S(\btau) $ are symmetric factors and the $ C_N(\btau) $ are renormalisation constants coming from the general theory which was developed in \cite{BCCH} and has since been used for stochastic geometric equations in \cite{BGHZ22}. For our purposes, the renormalisation constants need to be adjusted and we discuss this in section~\ref{sec:RnmMdl}.

We impose the following assumption on the cumulants of the noise (refer to Appendix~\ref{sec:cum} for a primer on cumulants): 
\begin{assumption}\label{ass:cyc}
We assume that $\xi^N_t(x)$ defined on $Z^d_N$, is such that for $t\ge 0$ and some index set $|\mathcal{A}|\ge 2$, one has the following bound: 
\beq
\mathbb{E}_c\left\{\xi^N_{t_a}(x_a)\,:\,a\in\mathcal{A}\right\}\lesssim \sum_{\sigma\in\sigma(\mathcal{A})}\prod_{i=1}^{|\mathcal{A}|}\left(\|z_{\sigma(i+1)}-z_{\sigma(i)}\|_\s\vee 2^{-N}\right)^{-\frac{3}{2}},
\eeq
where $\|\cdot\|_\s$ is as in \eqref{eq:normRd}, uniformly over $N > 0$ and over all collections of time-space points $z_a=(t_a,x_a)$ indexed by $a\in\mathcal{A}$.
\end{assumption}

Then, our main result can be seen as an extension of the main result obtained in \cite[Thm 1.1]{MH21}:

\begin{theorem}[Meta-theorem] Denote by $u^N$ the solution of \eqref{eq:gKPZ2} defined on the rescaled torus $\mathbb{T}^d_N = 2^{-N}\mathbb{Z}^d_N$ and by $u$ the solution of the renormalised version of \eqref{eq:gKPZcon}. If there exists a sequence of initial conditions $(u^{N}_0)_{N\in\mb{N}}$ such that there is $\eta\in(0,1)$ and $u_0\in\mathcal{C}^\eta$ for which it is true that
$$\lim_{n\rightarrow\infty}\|u_0;u^N_0\|_\eta = 0,$$ and one has $$\lim_{N\rightarrow\infty}\sup_{(t,x)\in[0,T]\times 2^{-N}\mb{S}}|\xi^{\delta,N}(t,x)-Y^\delta(t,x)|=0,$$
where $\xi^{\delta,N}$ and $Y^{\delta}$ are constructed from $\xi$ and $Y$ by convolution as explained in Sec~\ref{sec:convergence}. Then there exists a sequence of diverging constants such that the sequence of $u^N$ converges in law to $u$ as a member of $\mc{C}^{\bar\eta,T}_N$ for all $\bar\eta\in(0,\frac{1}{2}\wedge\eta)$.

\end{theorem}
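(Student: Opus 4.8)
The plan is to run the by-now standard three-step scheme for convergence via regularity structures, adapted to the discrete setting of \cite{EH17} and in the spirit of \cite{MH21}: (i) recast \eqref{eq:gKPZ2} as an abstract fixed point problem in a discrete model space that is well posed uniformly in $N$; (ii) prove that the renormalised discrete model converges in law to the BPHZ model associated with $Y$; (iii) conclude by continuity of the solution map together with the hypotheses on the initial data and on the mollified noise. For step (i) one extracts from \eqref{eq:gKPZ} the rule in the sense of \cite{BHZ}, builds the associated regularity structure and its discrete realisation over $\mathbb{Z}^d_N$ following \cite{EH17}, and lifts \eqref{eq:gKPZ2} to a fixed point problem for a modelled distribution $U\in\mathcal{D}^{\gamma,\eta}$. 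By the reconstruction theorem for discrete models and the general identity of \cite{BCCH}, the presence of the counterterm $\bar h = h + \sum_{\btau\in\mathcal{T}_-}\tfrac{\Upsilon[\btau]}{S(\btau)}C_N(\btau)$ is precisely what makes the renormalised model act correctly on the right-hand side, so that the reconstruction of $U$ solves \eqref{eq:gKPZ2}; one then checks, as in \cite{reg,EH17}, that the fixed point problem is solvable on a time interval that can be chosen uniform in $N$ on an event of probability close to one, with bounds depending only on the norm of the model.

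\textbf{Convergence of the renormalised model.} This is the heart of the matter. Denote by $\hat\Pi^N$ the BPHZ-renormalised discrete admissible model built from $2^{Nd/2}\xi^N$ and by $\hat\Pi$ the limiting model built from $Y$; one must show $\hat\Pi^N\to\hat\Pi$ in law in the space of discrete models. Proceeding tree by tree over the finite set of symbols of negative homogeneity, one expands each $\Pi^N_z\tau$ into a sum over partitions of its noise leaves, the contribution of a partition being controlled by the joint cumulants of $\xi^N$ via Assumption~\ref{ass:cyc} (the exponent $-\tfrac32$ there is the one producing the correct power counting for \eqref{eq:gKPZ}). The goal is uniform-in-$N$ moment bounds of the form
\begin{equs}
\mathbb{E}\,\big|(\hat\Pi^N_z\tau)(\varphi_z^\lambda)\big|^p \;\lesssim\; \lambda^{p(|\tau|+\kappa)},
\end{equs}
together with the analogous bounds for the differences $\hat\Pi^N_z\tau-\hat\Pi^M_z\tau$ and, after coupling $\xi^N$ and $Y$ through the mollified objects of the hypothesis, $\hat\Pi^N_z\tau-\hat\Pi_z\tau$, which jointly yield tightness and identify the limit. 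Since no black-box analogue of \cite{CH16} is available in the discrete category, these bounds are obtained by hand, generalising \cite{MH21}: each graph arising from a pairing is estimated using the discrete heat-kernel bounds of \cite{EH17}, the cumulant power counting of Assumption~\ref{ass:cyc}, and — this is the new input — a careful treatment of the many additional negative trees of gKPZ, in particular those carrying discrete gradients $\nabla_N$, for which one exploits $\|\nabla_N\varphi-\varphi'\|_\infty\to 0$ and discrete summation by parts. The renormalisation subtractions are shown to cancel exactly the divergent subgraphs, the adjusted constants $C_N(\btau)$ (cf. Section~\ref{sec:RnmMdl}) absorbing the divergences.

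\textbf{Passing to the limit.} Combining step (i) with the continuity of the map $(\text{model},u^N_0)\mapsto(\text{reconstructed solution})$ — uniform over discrete approximations, as in \cite{EH17} — and the model convergence just described, one gets convergence in law of $u^N$ to the reconstruction of the limiting fixed point driven by $\hat\Pi$, which by \cite{BCCH} is exactly the renormalised solution $u$ of \eqref{eq:gKPZcon}. The hypotheses $\|u_0;u^N_0\|_\eta\to 0$ and $\sup|\xi^{\delta,N}-Y^\delta|\to 0$ feed, respectively, the initial-data slot and, through the mollification parameter $\delta$ and a standard diagonal argument (upgraded to convergence in law via a Skorokhod coupling where needed), the model slot. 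The final threshold $\bar\eta<\tfrac12\wedge\eta$ reflects the Hölder regularity of the $Y$-driven solution together with that of the initial data, exactly as in \cite[Thm 1.1]{MH21}.

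\textbf{Main obstacle.} The principal difficulty is the model convergence: the combinatorial growth of the negative-homogeneity trees for gKPZ compared with the discrete PAM of \cite{MH21}, and especially the trees involving gradients, force one to redo the stochastic estimates and the renormalisation bookkeeping essentially from scratch in the discrete setting, where the convenient pointwise scaling of continuous heat kernels is replaced by the weaker discrete bounds of \cite{EH17} and where \cite{CH16} cannot be invoked; controlling the interaction of $\nabla_N$ with the discrete kernels inside these graph estimates, uniformly in $N$, is expected to be the most delicate point.
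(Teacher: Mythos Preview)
Your overall three–step strategy (abstract fixed point, convergence of renormalised models, continuity of the solution map, with the mollified objects $\xi^{\delta,N}$ and $Y^\delta$ providing the bridge between discrete and continuous) is correct and is precisely the route the paper takes in Section~\ref{sec:gKPZ}; the paper makes the triangulation $u^N\!\to u^{\delta,N}\!\to u^\delta\!\to u$ more explicit, but this is what your ``diagonal argument'' amounts to.

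Where your proposal has a genuine gap is in the description of how the stochastic estimates are actually obtained. You locate the difficulty in the presence of discrete gradients and propose to handle them via $\|\nabla_N\varphi-\varphi'\|_\infty\to 0$ and discrete summation by parts. Neither is the relevant tool. The gradient edges simply shift kernel weights from $|\s|-2$ to $|\s|-1$; the real obstruction is that, unlike the PAM case of \cite{MH21}, the gKPZ diagrams contain subdivergences with \emph{two} outgoing kernel edges rather than one (cf.\ the integrals $I=\int f(x_0,\dots,x_n)\prod_i K_i(x_i-x_i')\,dx$ in the introduction). The Hairer--Quastel machinery of \cite{HQ15}, on which \cite{MH21} relies, only renormalises such a subdivergence by Taylor-expanding a single outgoing edge around the origin, and this is insufficient here. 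The paper's key new idea (Sections~\ref{sec:lblgrph} and~\ref{sec:RenormProc}) is to enrich the edge label from $(a_e,r_e)$ to $(a_e,r_e,v_e)$, allowing the Taylor expansion to be recentred at an interior vertex $v_e$ of the subdivergence itself; iterating this over all outgoing edges produces a telescoping decomposition whose pieces either satisfy Assumption~\ref{ass:grph} or collapse to renormalisation constants. This is what replaces the missing discrete analogue of \cite{CH16}, and without it the moment bounds $\mathbb{E}\,|(\hat\Pi^N_z\tau)(\varphi_z^\lambda)|^p\lesssim\lambda^{p(|\tau|+\kappa)}$ for several of the trees in Table~\ref{table1} (for instance $\colb{\CI'(\CI(\Xi)\Xi)^2}$, worked out in Section~\ref{sec:exmclc}) cannot be closed by the method you sketch.
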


\begin{remarque}
The motivation behind this assumption and the convergence result comes from the work that Hairer and Erhard have done in \cite{MH21}, wherein they studied a discrete analogue of the parabolic Anderson model: $$\partial_t u(x,t) = \bigl(\Delta u\bigr)(x,t)-\xi_t(x)\,u(x,t)$$ defined on $x\in\mathbb{Z}^d$. Unlike the usual (continuous) SPDE setup where the random noise comes from the space-time white noise, the authors in \cite{MH21} instead solve a discretisation of the equation driven by a stationary symmetric simple exclusion process using discrete regularity structures, which they further show to converge to a parabolic Anderson model that is driven by a generalised Ornstein-Uhlenbeck process. The result they achieve is in $d=3$, which is not surprising because for this spatial dimensionality, one expects the space-time white noise to be far too irregular to be amenable to the usual programme, while the Ornstein-Uhlenbeck process is more regular and hence susceptible to it.
\end{remarque}
\begin{remarque}
We adopt a flipping of the perspective. Instead of fixing a noise that drives the discretisation, we can begin with positing the bounds on the cumulants of the noise, for which the solution theory still works. A major work in \cite{MH21} was to check that Assumption~\ref{ass:cyc} holds for a stationary symmetric simple exclusion process. We assume these bounds were given to us. 
\end{remarque}
\begin{remarque}
As the noise no longer needs to be Gaussian, the second order moments are no longer sufficient in finding bounds on higher order moments and as such higher order cumulants are needed for the stochastic estimates central to the application of regularity structures in solving singular SPDEs. This means that more Feynman diagrams have to be controlled.
\end{remarque}

It is well established in the literature of (discrete) regularity structures that certain stochastic estimates are sufficient for ensuring convergence post renormalisation; one can refer to the prototypical Theorem 10.7 in \cite{reg} in the continuous setting. As we have remarked in the opening, these stochastic estimates have been automated for the continuum but in discrete settings corresponding results are lacking. In lieu of those results, the authors in \cite{MH21} appealed to the rather general bounds on labelled rooted binary trees presented in \cite[Appendix A]{HQ15} to establish certain estimates. The work presented here is motivated by the same but it turns out that there are certain estimates needed for the \eqref{eq:gKPZ} that are not tractable under the existing framework. Indeed, one specific example is a singular integral with multiple points:
\begin{equs}
I = \int f(x_{0},x_1,...,x_n) \prod_{i=1
}^n K_i(x_i-x'_i) d x_1 ...d x_n 
\end{equs}
where the $ x'_i $ are fixed and $f$ is singular when the points $ x_0,...,x_n $ are close. Our main new idea is to use a local transformation by introducing Taylor expansions via telescopic sums:
\begin{equs}
I & = \int f(x_{0},x_1,...,x_n) \left( K_1(x_1-x'_1) - \sum_{k \leq n_f} \frac{(x_1-x_0)^k}{k!}D^k K_1(x_0 - x'_1)\right) \\ & \prod_{i \neq 1
} K_i(x_i-x'_i) d x_1 ...d x_n + \sum_{k \leq n_f} D^{k} K_1(x_0 - x'_1) \int f(x_{0},x_1,...,x_n)   \frac{(x_1-x_0)^k}{k!}  \\ & \prod_{i \neq 1
} K_i(x_i-x'_i)d x_1 \hdots d x_n. 
\end{equs}
The first term contains a Taylor expansion that renormalised $ f $ up to its degree of divergence given by $ n_f $ whereas the second term is simpler as it contains fewer $K_i$'s. One now moves all the $K_i$'s to focus in on the following integral:
\begin{equs}
\int f(x_{0},x_1,...,x_n) \prod_{i=1}^n (x_i-x_0)^{k_i} dx_1 \hdots dx_n. 
\end{equs}
Then one integrates over all the $x_i,\text{ i}\neq 0$, and finally uses translation invariance to get rid of the $x_0$. In the convergence theorem given in \cite{HQ15,MH21}, one can only treat the case with only one $ K_i $ which is enough for PAM but not sufficient for gKPZ as they are terms with two $ K_i$'s. 
 Our contribution is to extend the \cite{HQ15} result (compare our Ass.~\ref{ass:grph} with \cite[Ass. A.1]{HQ15}) and couple it with a renormalisation procedure (see Section~\ref{sec:RenormProc}) that allows us to establish the erstwhile untractable stochastic estimates.
Our approach is valid when one has to face subdivergences with no nested structures and with no overlap.
\newline
General estimates in the discrete setting will need a full extension of \cite{CH16} or to push forward the approach advocated in this work, which we expect to be a very challenging task. Another route is to rely on a complete recursive proof of these estimates as it is performed with multi-indices in \cite{OSSW,LOT,LOTT} or to use the local renormalisation approach developed in \cite{BR18}. It has been used successfully in \cite{BB21,BB21b} for writing the renormalised equation in a non-translation invariant setting.

\begin{remark}
    Several months after this work was completed, many papers appeared on the convergence of discrete/continuous singular SPDEs. Let us summarise how these works compare with the techniques of the present paper. In the continuous setting, for a long time the main convergence theorem relied on \cite{CH16}. Recently in \cite{LOTT}, the authors proposed a non-diagrammatic proof based on a spectral inequality satisfied by the noise. Their proof has been written for a quasilinear SPDE using multi-indices instead of decorated trees.
    In \cite{BN23}, the authors of the present paper has explained the main algebraic steps of \cite{LOTT} into the context of decorated trees and for a large class of singular SPDEs. The first author of the present work together with Ismaël Bailleul wrote a short proof of the convergence of the renormalised model for continuous gKPZ with a space-time white noise in \cite{BB23} using the main results of \cite{BN23} and building upon the techniques of this paper. Almost at the same time, a recursive proof of the convergence of the renormalised model has been produced in \cite{HS23}. In the discrete setting, the recent works \cite{Mat18,CS23} use techniques from \cite{reg,HP15} which cannot cover the case of gKPZ.
\end{remark}

{\bf Overview of the paper:}  In Section \ref{sec:DisRS}, we recall for the reader some major aspects of the theory of Discrete Regularity Structures; the presentation here follows very much that of \cite{MH21}. We also briefly discuss the implementation of the renormalisation procedure, employing the recursive procedure found in \cite{BR18}.

In Section \ref{sec:lblgrph}, we present an extension of the bounds to be found in \cite[Appendix A]{HQ15}. We show that one has certain bounds (see Theorem \ref{th:HQbnd}) on integrals over the so-called "Generalised Convolutions" (see \eqref{eq:genconv}) under certain assumptions (see Assumption~\ref{ass:grph}) and these bounds we prove via very general bounds (see Lemma~\ref{lem:HQ}) on rooted binary trees. The interest in these integrals stems from the fact that they can be specialised to integrals one needs to bound to get convergence in the Regularity Structure paradigm.

In Section~\ref{sec:elblgraph}, we present the notion of an "elementary graph" (see Definition~\ref{def:ELG}, which is elementary in the sense that they can be morphed together to construct all the relevant graphs in our analysis. We also explain how to associate to the multiple integrals in $\Pi^N_0\btau$ these elementary graphs. We will then present assumptions (see Assumption~\ref{ass3}) under which Theorem~\ref{th:HQbnd} holds for the elementary graphs as well. 

In Section~\ref{sec:gKPZ}, we use the bounds previously constructed to achieve the stochastic estimates to prove the convergence of the discrete \eqref{eq:gKPZ}. 

\subsection*{Acknowledgements}

{\small
Y.B. thanks the Max Planck Institute for Mathematics in the Sciences (MiS) in Leipzig for having supported his research via a long stay in Leipzig from January to June 2022. U. N. thanks the Max Planck Institute for Mathematics in the Sciences (MiS) for a short stay in Leipzig where parts of this work were discussed.
}  

\section{Discrete Regularity Structures} \label{sec:DisRS}

A major tool in our analysis of \eqref{eq:gKPZ} is the theory of discrete Regularity Structures, as in \cite{EH17}. In this section we recall the very basics to motivate the work we do in this paper. The presentation here is heavily inspired by \cite[Section 2]{MH21}.

We start by recalling some notations. A scaling on $\mathbb{R}^{d+1}$ is the vector $\s=\{\s_0,\s_1,\hdots,\s_d\}\in\mathbb{N}^{d+1}_{\ge 1}$ and it allows the definition of a norm on $\mathbb{R}^{d+1}$ given by:

\begin{equs}\label{eq:normRd}||z||_\s:=\sup_{i\in\{0,\hdots,d\}}|z_i|^{1/\s_i}\end{equs}

We also set $|\s|=\sum_{i=0}^d|\s_i|$. In this paper it will always be the parabolic scaling, which is given by $\s=\{2,1,\hdots,1\}$. We will also find use of the following scaling function (and functional): for a $\lambda>0$, and $\varphi:\mathbb{R}^{d+1}\rightarrow\mathbb{R}$, $\mathcal{S}_\s^\lambda(z_0,z_1,\hdots,z_d) = (\lambda^{-\s_0}z_0,\lambda^{-\s_1}z_1,\hdots,\lambda^{-\s_d}z_d)$ and $(\mathcal{S}^{\lambda}_{\s,z}\varphi)(y)=\lambda^{-|\s|}\varphi(\mathcal{S^\lambda_\s}(y-z))$. Furthermore we denote by $(e_0,e_1,\hdots,e_d)$ the canonical basis of $\mb{N}^{d+1}$.

For our purposes the role that H\"older spaces play in the theory of regularity structures, will be played by the following discrete counter parts:
\bed
For $\eta\in (0,1)$, we define discrete H\"older spaces $\mc{C}_N^\eta(\mb{T}_N^d,\R)$ as the space of all elements $f\in\mb{R}^{\mb{T}_N^d}$, with norm 
\begin{equation}
\|f\|_{\mc{C}_N^\eta}\coloneqq \sup_{x\in\mb{T}_N^d}|f(x)| + \sup_{x\neq y\in\mb{T}_N^d}
\frac{|f(x)-u(y)|}{|x-y|^\eta}.
\end{equation}
\eed
Let $\eta\in (0,1)$. To compare an element $f\in\mc{C}^\eta(\mb{T}^d,\mb{R})$ in the usual H\"older space with an element $f^N\in \mc{C}_N^\eta(\mb{T}_N^d,\mb{R})$ we introduce the distance
\begin{equation}\label{eq:discreteHolder}
\begin{aligned}
\|f;f^N\|_{\eta}\coloneqq
&\sup_{x\in\mb{T}_N^d}|f(x)-f^N(x)| + \sup_{x\neq y\in\mb{T}_N^d}
\frac{|(f(x)-f(y))-(f^N(x)-f^N(y))|}{|x-y|^\eta}\\
 &+\sup_{\substack{x,y\in\mb{R}^d:\, |x-y|<2^{-N}}}
\frac{|f(x)-f(y)|}{|x-y|^\eta}.
\end{aligned}
\end{equation}
To compare functions $f\in\mc{C}_\s^\eta ([0,T]\times \mb{T}^d,\mb{R})$ and $f^N\colon [0,T]\times \mb{T}_N^d \to \mb{R}$, we define a ``distance'' by
\begin{equs}
\|f;f^N\|_{\mc{C}_N^{\eta,T}}&\coloneqq \hspace{-5mm}\sup_{(t,x)\in [0,T]\times \mb{T}_N^d}\hspace{-2mm}|f(t,x)-f^N(t,x)|+\hspace{-3mm}\sup_{\substack{(t,x), (s,y)\in [0,T]\times\mb{T}_N^d\\ \|(t,x)-(s,y)\|_\s < 2^{-N}}}\frac{|f(t,x)-f(s,y)|}{\|(t,x)-(s,y)\|_\s^\eta}\\
& + \sup_{\substack{(t,x), (s,y)\in [0,T]\times\mb{T}_N^d\\ \|(t,x)-(s,y)\|_\s \geq 2^{-N}}} \frac{|(f(t,x)-f(s,y))-(f^N(t,x)-f^N(s,y))|}{\|(t,x)-(s,y)\|_\s^\eta}.
\end{equs}

\subsection{Discrete Models}

\begin{definition}[Regularity Structures]\label{def:regstr}
The pair $(\mc{T},\mathcal{G})$ is called a regularity structure if one has the following:
\begin{itemize}
\item $\mathcal{T}$ is a graded vector space $\bigoplus_{\alpha\in A} T_\alpha$, where $A$ is a locally finite and bounded from below index set, $A\subset \mathbb{R}$, $T_\alpha$ is a Banach space. To each element $\colb{\tau_\alpha}\in T_\alpha$, we associate the notion of a homogeneity: $\|\btau\| = \alpha$.
\item $\mathcal{G}$ is a group of continuous linear operators defined on $T$ in such a manner that one has for every $\Gamma\in\mathcal{G}$ and $\colb{\tau_\alpha}\in T_\alpha$:
$$\Gamma\colb{\tau_\alpha}-\colb{\tau_a}\in\bigoplus_{\beta<\alpha}T_\beta$$
\end{itemize}
\end{definition}

We will always use $\colb{\text{blue}}$ for elements of any given regularity.

To construct the Regularity Structure $\mc{T}$ we will need, we begin with a bigger structure $\mc{F}$ first. This is defined recursively by beginning with $\{\colb{1},\colb{X_0},\colb{X_1},\cdots,\colb{X_d},\bXi\}\subset\mc{F}$.  Here $(\colb{X_0},\colb{X_1},\hdots,\colb{X_d})$ corresponds to $(t,x_1,\hdots,x_d)$ with the parabolic scaling and for $k\in\mathbb{N}^{d+1}$, and we use the term $\colb{X^k}$ to represent $\colb{X_0^{2k_0}X_1^{k_1}\cdots X^{k_d}_d}$. Upon this set we require the existence of a product, i.e. $\colb{\tau_1},\hdots,\colb{\tau_n}\in\mc{F}$, then $\colb{\tau_1\cdots \tau_n}\in\mc{F}$, which is assumed to be associative and commutative. $\bXi$ is included here to represent the noise. Finally we require that if $\btau\in\mc{F}\setminus\{\colb{1},\colb{X^k}:k\in\mathbb{N}^{d+1}\}$ then $\{\bcI{\btau},\colb{\CI_{e_1}(\tau)},\hdots,\colb{\CI_{e_d}(\tau)}\}\subset\mc{F}$, where in general for $k\in\mb{N}^{d+1}$, $\colb{\mc{I}_k(\cdot)}$ represents convolution with the kernel differentiated $k_i$ times in the $i$-th component and in particular $\colb{\mc{I}} \coloneqq \colb{\mc{I}_{\{0,0,\hdots,0\}}}$.

To each $\btau\in\mc{F}$, we associate a homogeneity $|\btau|_\s$ which belongs to the reals. This is also defined recursively by setting $|\bXi|_\s = -\frac{3}{2}-\kappa$, $|\colb{X_i}|=1$ unless $i=0$ in which case $|\colb{X_0}|=2$, $|\colb{1}|_\s=0$ and then requiring $|\colb{\tau_1\cdots\tau_n}|_\s = |\colb{\tau_1}|_\s+\hdots+|\colb{\tau_n}|_\s$ and $\colb{\mc{I}_k(\tau)}=|\btau|_\s + 2 - |k|_\s$.

The recursive nature of this definition makes the space far too large. We instead restrict ourselves to those symbols that we expect to be able to extract from \eqref{eq:gKPZ}:
\begin{equs} \label{rules_gKPZ}
\mc{R} & =\{\colb{X^k\mc{I}(\cdot)^\ell},\colb{X^k\mc{I}(\cdot)^{\ell}\mc{I}_{e_i}(\cdot)},\colb{X^k\mc{I}(\cdot)^{\ell}\mc{I}_{e_i}(\cdot)\mc{I}_{e_j}(\cdot)},\colb{X^k\mc{I}(\cdot)^{\ell}\Xi}, \\ &  k \in \mb{N}^{d+1}, \, \ell \in \mb{N}, \, i,j \in \lbrace 1,...,d  \rbrace \}
\end{equs}
 Then we define:
\begin{equs}
\mc{H} = \{\btau\in\mc{F}:\btau = R(\colb{\tau_1},\hdots,\colb{\tau_n}), R\in\mc{R} \text{ and }\colb{\tau_1},\hdots,\colb{\tau_n}\in\mc{H}\text{ or }\btau = \Xi\}
\end{equs}
and finally $\mc{T} = \text{Vec}\,\mc{H}$. $\mc{H}_\alpha$ is naturally the set of all symbols in $\mc{H}$ that have a homogeneity of $\alpha$, and then $\mc{T}_\alpha$ is then the span of $\mc{H}_\alpha$, whereby $A$ is the just the set of all of these $\alpha$.

It is known that for a small enough choice of $\kappa$, $\mc{T}_\alpha$ is finite dimensional for all $\alpha$. Finally we impose $\colb{\mc{I}(X^k)}=0$ for $k\in\mathbb{N}^{d+1}$ because the function such a convolution would realise would be smooth and we already have the polynomial regularity structure to describe smooth functions.

With a Regularity Structure fixed, one needs to define a model on it. One fixes first $\mc{X}_N = \mc{D}(\mathbb{R},\mathbb{R}^{\mathbb{T}^d_N})$, by which we mean the set of all c\'{a}dl\'{a}g functions from $\mathbb{R}$ to $\mathbb{R}^{\mathbb{T}^d_N}$. We also define an inclusion map $\iota_N:\mc{X}_N\rightarrow\mc{S}'(\mathbb{R}^{d+1})$: 

\beq
(\iota_N\,f)(\varphi)=2^{-dN}\sum_{x\in\mathbb{T}^d_N}\int f(t,x)\varphi(t,x)\,dt.
\eeq

On $\chi_N$ we define a family of seminorms $\|\cdot\|_{\alpha;\mc{k}_N;z;N}$ in the following way:

$$\|f\|_{\alpha;\K_N;z;N} = \sup_{\lambda\in(0,2^{-N}]}\sup_{\varphi\in\Phi_{N,z}^\lambda}\lambda^{-\alpha}\left|\int f(s,x)(\mc{S}^\lambda_{2,t}\varphi)(s)\,ds\right|$$

Here $\alpha$ ranges over $\mathbb{R}$, $\K_N$ ranges over all compact subsets in $\mathbb{R}^{d+1}$ with diameter bounded by $2^{-N+1}$, and $z=(t,x)\in\mathbb{R}\times\mathbb{T}^d_N$ is such that $z\in\K_N$. With $r > |\min A|$ and $\lambda\in(0,2^{-N}]$ fixed, $\Phi_{N,z}^\lambda$ the set of all functions $\varphi:\mathbb{R}\rightarrow\mathbb{R}$ with $\|\varphi\|_{\mc{C}^r}\le 1$ and support contained in the unit ball so that $\supp\,\mc{S}^\lambda_{2,t}\varphi\subset\{s\in\mathbb{R}:(s,x)\in\K_N\}$. Further by $\|\cdot\|_\ell$ we will denote the norm of the $\ell$-th component in $\mc{T}$. With these ingredients we define:

\begin{definition}[Discrete Model]
A discrete model consists of a collection of maps $z\rightarrow \Pi_x^N\in\mc{L}(\mc{T},\mc{X}_N)$ and $\Gamma^N:\mathbb{R}^{d+1}\times\mathbb{R}^{d+1}\rightarrow\mc{G}$ such that:
\begin{itemize}
\item $\Gamma^N_{zz}\equiv\text{id}$ the identity operator, and the $\Gamma^n_{xy}\Gamma^N_{yz}=\Gamma^N_{xz}$, $\forall\; x,\,y,\,z\in\mathbb{R}$.
\item $\Pi^N_z = \Pi^N_y\Gamma^N_{yz}$, $\forall y,\,z\in\mathbb{R}^{d+1}$
\end{itemize}

Such that the following estimates hold, for any compact set $\K\subset\mathbb{R}^d$ and every $\btau\in\mc{T}_\ell$:
\beq
|(\iota_N\Pi_z^N\btau)(\mc{S}^\lambda_{\s,z}\varphi)|\lesssim\|\btau\|_\ell\lambda^\ell,\qquad\qquad\|\Pi^N_z\btau\|_{\ell;\K_N,z;N}\lesssim\|\btau\|_{\ell},
\eeq

and 
\beq
\|\Gamma_{zz'}^N\btau\|_m\lesssim\|\btau\|_\ell\|z-z'\|^{\ell - m}_\s,\qquad\qquad\$z\mapsto\Gamma_{zz'}^N\btau\$_{\ell;\K;N}\lesssim\|\btau\|_\ell
\eeq

for some fixed $\gamma > 0$, uniformly over $\lambda\in(2^{-N},1]$, for all $\varphi\in\Phi^\lambda_{N,z}$ , all $\ell\in A$ such that $\ell < \gamma$ and $m < \ell$, all $\btau\in\mc{T}_\ell$, and locally uniform over $z,z'\in\K$ such that $\|z-z'\|\s\in(2^{-N},1]$ and over all compact subsets $\K$ of diameter bounded by $2^{-N+1}$.
\end{definition}

\begin{remark}
The norm $\$\cdot\$_{\ell,\mathfrak{K},N}$ has not yet been defined. Further below we replace it with another norm on functions of the form $f:\mb{R}^{d+1}\rightarrow\mc{T}_{<\gamma}$ that we do explicitly state, when we discuss the $``t=0"$-hyperplane. 
\end{remark}

Central to the application of Regularity Structures to the study of semilinear SPDEs is the Green's function of the differential operator of the equation. For the \eqref{eq:gKPZ}, this is $\partial_t - \Delta_N$. It is known, from \cite[Sec. A.1]{HQ15} say, that one can decompose the rescaled Green's function of this operator as follows:

\beq\label{eq:Greens} 2^{dN}G^N = K^N + R^N,\eeq

where the function $K^N$ can be decomposed as $K^N = \sum_{n=1}^N K_n$ where each $K_n$ has a support contained in the set $\{z\in\mathbb{R}^{d+1}:\|z\|\lesssim 2^{-n}\}$, and annihilate polynomials of scaled degree less than or equal to two. For each $n\in\{1,\cdots,N-1\}$, and each multi-index $|k|_\s\le 2$,
$$|D^k K_n(z)|\lesssim 2^{n(|\s|-2+|k|_\s)}$$
uniformly in all parameters. If $n=N$, then the same estimate holds for $k=0$. What this inequality means that this function can be extended to a function in $\mathbb{R}^{d+1}$, for which the estimate is true. $R^N$ is compactly supported, is anticipative, and $\|R^N\|_{\mc{C}^2}$ is bounded uniformly in $N$, with the same interpretation for the bound as before. Let $\zeta\in\mathbb{R}$ and $n\le N$, we define:
\begin{equation}
(T_{n,\zeta+2}^N F)(z)= \sum_{|k|_\s<\zeta+2}\frac{X^k}{k!}\CQ_k((T_{n,\zeta+\beta}^{N} F)(z)),
\end{equation}
where for $z=(t,x)\in \mathbb{R}^{d+1}$, $n\in\{1,\ldots, N\}$, and $|k|_\s <\zeta+2$,
\begin{equation}
\CQ_k((T_{n,\zeta+2}^{N} F)(z))= 
\begin{cases}
2^{-dN}\sum_{y\in\mb{T}_N^d}\int D_1^k K_n( z,(s,y))F(y,s)\, ds, &\text{if }n<N,\\
\delta_{k=0} 2^{-dN}\sum_{y\in\mb{T}_N^d}\int K_N( z,(s,y))F(y,s)\,ds, &\text{otherwise},
\end{cases}
\end{equation}
for all $F\in\mc{X}_N$ for which the above expression makes sense.

\begin{definition}[Admissible Models]\label{def:admmodel}
Given $K^N$ as above, we define the set of admissible models $\mc{M}$ as consisting of all models such that for every multi-index $k$:
\begin{equation}
\label{eq:Xk}
(\Pi_{(t,x)}^N \colb{X^k})(s,y)= ((s,y)-(t,x))^k
\end{equation}
such that:
\begin{equation}
\label{eq:I}
\begin{aligned}
(\Pi_{(t,x)}^N \bcI{\tau})(s,y)= &2^{-dN}\sum_{\bar x\in\mb{T}_N^d}\int K^N((s,y),(\bar s,\bar x))(\Pi_{(t,x)}^{N}\btau)(\bar s,\bar x)\, d\bar s\\
&-\sum_{|k|_\s<|\tau|+2} \frac{((s,y)-(t,x))^k}{k!}Q_k\big((T_{|\tau|+2}^{N}\Pi_{(t,x)}^N \btau )(t,x)\big).
\end{aligned}
\end{equation}
where the following shorthands have been employed:
\begin{equation}
T_{\zeta+2}^N F= \sum_{n=1}^{N}T_{n,\zeta+2}^N F,\quad\mbox{and}\quad 
\CQ_k(T_{\zeta+2}^N F)(\cdot))=\sum_{n=1}^{N}\CQ_k(T_{n,\zeta+2}^N F)(\cdot)).
\end{equation}
\end{definition}

The standard way of constructing an admissible model \cite{reg} is, to begin with:
\begin{equation}
(\Pi_{(t,x)}^N\bXi)(s,y)=\xi_s^N(y), 
\end{equation}
for $(t,x)\in\mathbb{R}^{d+1}$ and $(s,y)\in\mathbb{R}\times\mathbb{T}^d_N$ and $\Pi_{(t,x)}^N\colb{X^k}$ as in \eqref{eq:Xk} and the relationship:
\begin{equation}
\label{eq:product}
\Pi_{(t,x)}^N\btau\colb{\bar{\tau}}= \Pi_{(t,x)}^N\btau\,\Pi_{(t,x)}^N\colb{\bar{\tau}}.
\end{equation}

To construct a solution theory, we extract from the equation an abstract fixed problem, and for that we need a notion of a $\mc{T}$-valued functions on our physical space. Recall that $\Pi_z\btau$ is to be understood as a local description of a possibly globally defined distribution around $z$ and motivated by this we would like our $\mc{T}$-valued function $\colb{f}$ to be defined in such a manner that $\Pi_z\colb{f}(z)$ is able to mimic a global distribution locally around $z$ for all $z\in\mathbb{R}^{d+1}$. To this end, we define first the $``t=0"$-hyperplane via $P=\{(t,x)\in\mathbb{R}^{d+1}:t = 0\}$, and $||z||p = 1 \wedge \inf_{y\in P}\|z - y\|_\s$ and $||y,z||_P = ||y||_P\wedge ||z||_P$, and finally:
$$\K_P=\{(y,z)\in (\K\setminus P)^2:\, y\neq z,\, \|y-z\|_\s\leq \|y,z\|_{P}\}$$

for some set $\K\subset\mathbb{R}^{d+1}$

Further for fixed $\gamma > 0$, $f:\mathbb{R}^{d+1}\setminus P\rightarrow \mc{T}_{<\gamma}$. Then one defines for any compact set $\mc{K}\subset\mathbb{R}^{d+1}$:
$$\$ f\$_{\gamma,\eta;\K;N}=\sup_{\substack{z\in\K\setminus P\cap\mb{R}\times \mb{T}_N^d\\ \|z\|_P < 2^{-N}}}\sup_{\beta <\gamma}\frac{\|f(z)\|_\beta}{\|z\|_{P}^{(\eta-\beta)\wedge 0}}
+ \sup_{\substack{y,z\in\K_P\cap\mb{R}\times \mb{T}_N^d,\\ \|y-z\|_\s<2^{-N}}}
\sup_{\beta <\gamma}\frac{\|f(z)-\Gamma^{N}_{zy} f(y)\|_\beta}{\|y-z\|_\s^{\gamma-\beta}\|y,z\|_{P}^{\eta-\gamma}}$$

The need to be careful with $P$, is due to the singularity of the initial condition on it. For particulars one can refer to \cite[Section 2]{reg}.

\begin{definition}[Discrete Model] For a regularity structure $\mc{T}$, a discrete model $(\Pi^N,\,\Gamma^N)$ and $\eta\in\mathbb{R}$, the space $\mathcal{D}^{\gamma,\eta}_N$ consists of functions $f:\mathbb{R}^{d+1}\setminus P \rightarrow \mathcal{T}_{<\gamma}$ for which the following norm is finite:
\begin{equation}
\label{eq:weightedDgamma}
\sup_{\substack{z\in\K\setminus P\\ \|z\|_P\geq 2^{-N}}}\sup_{\beta <\gamma}\frac{\| f(z)\|_{\beta}}{\| z\|_{P}^{(\eta-\beta)\wedge 0}} + \sup_{\substack{(y,z)\in\K_P,\\ 2^{-N}\leq \|y-z\|_\s\leq 1}}\sup_{\beta <\gamma}
\frac{\|f(z)-\Gamma^{N}_{zy}f(y)\|_{\beta}}{\|y-z\|_\s^{\gamma-\beta}\|y,z\|_{P}^{\eta-\gamma}}
+\$f\$_{\gamma,\eta;\K;N}.
\end{equation}
\end{definition}

For $\colb{f}$ to be an adequate description of a global distribution, we should be able to recover (or "reconstruct") it from $\colb{f}$. This is done via the reconstruction operator, $\mc{R}^N:\mc{D}^{\gamma,\eta}_N\rightarrow\mc{X}_N$ defined in the discrete case, simply as:

$$(\mc{R}^N\colb{f})(z)=(\Pi^N_z\colb{f}(z)),$$

for $z \in \mathbb{R}\times\mathbb{T}^d_N$.

With this diversion settled, we return to our fixed point equation. For \eqref{eq:gKPZ}, the fixed point equation that is to be solved is given by:

\beq\label{eq:absfixpoint}\colb{U^N} = \colb{\mc{G}^N}(\hat F_\gamma\colb{U^N}\mathbbm{1}_{\{t\ge 0\}})+(G^N u^N_0)\eeq

where $\mc{G}^N$ is to be understood as the abstract version of convolution with the (rescaled) heat kernel $G^N$ such that it satisfies the property:

\beq\mc{R}^N\colb{\mc{G}^N} = G^N\eeq

and $F_\gamma(\btau)=(\colb{G}(\btau)(\mc{D}\btau)^2 + \colb{H}\mc{D}\btau+\colb{K}(\btau)+\colb{F}(\btau)\Xi)$ with $\colb{G},\,\colb{H},\,\colb{K},\,\colb{F}$ being the lifts of the smooth functions $g,\,h,\,k,\,f$ as defined in \cite[Section 5.2]{EH17}, the abstract derivative operator $\mc{D}$ as seen in \cite[Section 5.3]{EH17} and the products of the objects in $\mc{D}^{\gamma,\eta}_N$ made sense of as in \cite[Section 5.1]{EH17}. Finally, following \cite[Section 4]{EH17} and the particular discretisation in \cite{MH21}, $G^N u^N_0$ is defined in the following manner: 

\begin{equation}
(G^Nu_0^N)(t,x)= \sum_{|k|_\s<\gamma}\frac{X^k}{k!}\mc{Q}_k((G^N u_0^N)(t,x),
\end{equation}
where $\mc{Q}_k((G^N u_0^N)(t,x))$ is given by
\begin{equation}
\begin{cases}
2^{-dN}\sum_{n<N}\sum_{y\in\mathbb{T}_N^d}\int D_1^k[ K_n+ R^N]((t,x),(s,y))\, u_0^N(y)\, ds, &\mbox{if $|k|_\s >0$},\\
2^{-dN}\sum_{y\in\mathbb{T}_N^d}\int [K^N+ R^N]((t,x),(s,y))\, u_0^N(y)\, ds, &\mbox{otherwise.}
\end{cases}
\end{equation}

That \eqref{eq:absfixpoint} is wellposed comes from (a slight modification of) the work in \cite{BCCH}, where the authors introduce a notion of ``coherence with non-linearity" in Definition~3.20, prove in Lemma~3.21 that being coherent with non-linearity for $\colb{U^N}$ is equivalent to $\colb{U^N}$ solving \eqref{eq:absfixpoint}, and finally in \cite[Section 2.8.1]{BCCH} check the coherence condition for \eqref{eq:gKPZ}. Moreover, it turns out that $\mc{R}^N\colb{U^N}$ solves (or perhaps more accurately coincides) with the solution of \eqref{eq:gKPZ}. At this juncture to show the convergence of the equations, one might think that we should just consider the convergence of the model, but it so transpires that the sequence fails to converge in the space of admissible models. The remedy to this situation is an application of a set of continuous transformations $\hat{M}^{N}$ to the model $(\hat{\Pi}^N,\hat\Gamma^N) = \hat M^N(\Pi^N,\Gamma^N)$ so that it has a {\it renormalisation} effect. A general construction of the underlying renormalisation group has been achieved in \cite{BHZ}, and we will outline some of the features in the following section and show that our renormalisation procedure is completely consistent with the contraction extraction method they employ.

Before we explicate this renormalisation group, we remark that due to the recursive nature of an admissible model, a very natural graphical scheme is used in the literature as a shorthand to avoid cumbersome notation.

For $\bXi$ we use the notation \begin{tikzpicture}
\node at (0,0) [bar] (a) {};
\end{tikzpicture}. For the abstract integration of the previous symbol - $\bcI{\Xi}$ - we add a downward facing edge to the previous diagram,
\begin{tikzpicture}[scale=0.8]
\node at (0,0) [bar] (a) {};
\draw[] (a) to (0,-0.35);
\end{tikzpicture},
and for {\color{blue} $\CI_{e_i}(\Xi)$}, for $i\in\{1,\hdots,d\}$, we signal the presence of a derivative by making the edge thicker
\begin{tikzpicture}[scale=0.8]
\node at (0,0) [bar] (a) {};
\draw[ultra thick] (a) to (0,-0.35);
\end{tikzpicture} without distinguishing between which direction the derivative is taken in. For this reason we will use the notation $\colb{\CI'(\Xi)}$ for $\colb{\CI_{e_i}(\Xi)}$ with $i$ unspecified. Multiplication of symbols is represented by joining them at the root, so for example one would have \begin{tikzpicture}
\node at (0,0) [bar] (a) {};
\node at (0.5, 0) [bar] (c) {};
\draw[] (a) to (0.25,-0.25);
\draw[] (c) to (0.25,-0.25);
\end{tikzpicture}
to represent $\bcI{\Xi}\bcI{\Xi}$, except for multiplication of $\btau$ by $\colb{X}$ in which case one uses the symbol \begin{tikzpicture}
\node at (0,0) [barx] (a) {};
\end{tikzpicture}. If $\bXi$ is in $\btau$, one uses the symbol
\begin{tikzpicture}
\node at (0,0) [barx2] (a) {};
\end{tikzpicture} instead. In Table~\ref{table1} we compile all the symbols with homogeneity less than or equal to naught; whenever there's a thick line in the symbol, it is meant to represent all of the $d$ trees with the derivatives taken along different spatial dimensions.

\begin{table}[!htbp]\label{table1}
\centering
\begin{tabular}{||l l ||} 
 \hline
 Homogeneity & Symbol\\ [0.5ex] 
 \hline\hline
 $-3/2-\kappa$ & \begin{tikzpicture}[baseline=0cm,scale=0.8]
 \node at (0,0.15) [bar] (a) {};
 \end{tikzpicture}\\
 \hline
 $-1-2\kappa$ & \begin{tikzpicture}[baseline=0cm,scale=0.8]
 \node at (-0.25,0.25) [bar] (a) {};
 \node at (0,0) [bar] (b) {};
 \draw[] (a) to (b);
 \end{tikzpicture}\,,\,\,\,\begin{tikzpicture}[baseline=0cm,scale=0.8]
 \node at (-0.25,0.25) [bar] (a) {};
 \node at (0.25,0.25) [bar] (b) {};
 \node at (0,0) [bnode] (c) {};
 \draw[ultra thick] (a) to (0,0);
 \draw[ultra thick] (0,0) to (b);
 \end{tikzpicture}\\
 \hline
 $-1/2-3\kappa$ & \begin{tikzpicture}[baseline=0cm,scale=0.8]
 \node at (-0.25,0.25) [bar] (a) {};
 \node at (0.25,0.25) [bar] (b) {};
 \node at (0,0) [bar] (c) {};
 \node at (0,-0.05) {};
 \draw[] (a) to (c);
 \draw[] (c) to (b);
 \end{tikzpicture}\,,\,\,\,\begin{tikzpicture}[baseline=0cm,scale=0.8]
 \node at (0.25,0.45) [bar] (a) {};
 \node at (0,0.20) [bar] (b) {};
 \node at (0.25,-0.05) [bar] (c) {};
 \draw[] (a) to (b);
 \draw[] (b) to (c);
 \end{tikzpicture}\,,\,\,\,\begin{tikzpicture}[baseline=0cm,scale=0.8]
 \node at (0.25,0.20) [bar] (a) {};
 \node at (-0.25,0.20) [bar] (b) {};
 \node at (0,0.45) [bar] (c) {};
 \node at (0,0) [bnode] (d) {};
 \draw[ultra thick] (a) to (0,0);
 \draw[ultra thick] (0,0) to (b);
 \draw[] (a) to (c);
 \end{tikzpicture}\,,\,\,\,\begin{tikzpicture}[baseline=0cm,scale=0.8]
 \node at (0.25,0.45) [bar] (a) {};
 \node at (-0.25,0.45) [bar] (b) {};
 \node at (0.25,-0.05) [bar] (c) {};
 \node at (0,0.20) [bnode] (d) {};
 \draw[ultra thick] (a) to (0,0.20);
 \draw[ultra thick] (0,0.20) to (b);
 \draw[] (0,0.19) to (c);
 \end{tikzpicture}\,,\,\,\,\begin{tikzpicture}[baseline=0cm,scale=0.8]
 \node at (0,0.25) [bar] (a) {};
 \node at (-0.25,0.15) [bar] (b) {};
 \node at (0.25,0.15) [bar] (c) {};
 \node at (0,-0.05) [bnode] (d) {};
 \draw[ultra thick] (a) to (0,-0.05);
 \draw[ultra thick] (0,-0.05) to (c);
 \draw[] (0,-0.1) to (b);
 \end{tikzpicture}\,,\,\,\,\begin{tikzpicture}[baseline=0cm,scale=0.8]
 \node at (0,0.5) {};
 \node at (0.25,0.45) [bar] (a) {};
 \node at (-0.25,0.45) [bar] (b) {};
 \node at (0.5,0.20) [bar] (c) {};
 \node at (0.25,-0.05) [bnode] (d) {};
 \draw[ultra thick] (a) to (0,0.20);
 \draw[ultra thick] (0,0.20) to (b);
 \draw[ultra thick] (0,0.20) to (0.25,-0.05);
 \draw[ultra thick] (c) to (0.25,-0.05);
 \end{tikzpicture}\,\\
 \hline
 $-1/2 - \kappa$ &
 \begin{tikzpicture}[baseline=0cm,scale=0.8]
 \node at (0,0.15) [barx] (a) {};
 \end{tikzpicture}\,,\,\,\,\begin{tikzpicture}[baseline=0cm,scale=0.8]
 \node at (-0.25,0.20) [bar] (a) {};
\draw[ultra thick] (a) to (0,-0.05);
\end{tikzpicture} \\
 \hline
 $-4\kappa$ & \begin{tikzpicture}[baseline=0cm,scale=0.8]
 \node at (0,0.8) {};
 \node at (0,0.7) [bar] (d) {};
 \node at (0.25,0.45) [bar] (a) {};
 \node at (0,0.20) [bar] (b) {};
 \node at (0.25,-0.05) [bar] (c) {};
 \draw[] (a) to (b);
 \draw[] (b) to (c);
 \draw[] (a) to (d);
 \end{tikzpicture}\,,\,\,\,\begin{tikzpicture}[baseline=0cm,scale=0.8]
 \node at (0,0.7) [bar] (d) {};
 \node at (0.25,0.45) [bar] (a) {};
 \node at (0,0.20) [bar] (b) {};
 \node at (0.25,-0.05) [bnode] (c) {};
 \node at (0.5,0.20) [bar] (e) {};
 \draw[] (a) to (b);
 \draw[] (a) to (d);
 \draw[ultra thick] (b) to (0.25,-0.05);
 \draw[ultra thick] (0.25,-0.05) to (e);
 \end{tikzpicture}\,,\,\,\,\begin{tikzpicture}[baseline=0cm,scale=0.8]
 \node at (0,0.7) [bar] (d) {};
 \node at (0.25,0.45) [bnode] (a) {};
 \node at (0,0.20) [bar] (b) {};
 \node at (0.25,-0.05) [bar] (c) {};
 \node at (0.5,0.7) [bar] (e) {};
 \draw[ultra thick] (d) to (0.25,0.45);
 \draw[ultra thick] (0.25,0.45) to (e);
 \draw[] (b) to (c);
 \draw[] (a) to (b);
 \end{tikzpicture}\,,\,\,\,\begin{tikzpicture}[baseline=0cm,scale=0.8]
 \node at (0,0.7) [bar] (d) {};
 \node at (0.25,0.45) [bar] (a) {};
 \node at (-0.25,0.45) [bar] (b) {};
 \node at (0,0.20) [bnode] (c) {};
 \node at (0.25,-0.05) [bar] (e) {};
 \draw[ultra thick] (a) to (0,0.20);
 \draw[ultra thick] (0,0.20) to (b);
 \draw[] (d) to (a);
 \draw[] (0,0.20) to (e);
 \end{tikzpicture}\,,\,\,\,\begin{tikzpicture}[baseline=0cm,scale=0.8]
 \node at (-0.25,0.7) [bar] (d) {};
 \node at (0.25,0.7) [bar] (a) {};
 \node at (0,0.45) [bnode] (b) {};
 \node at (-0.25,0.20) [bar] (c) {};
 \node at (0.25,0.20) [bar] (e) {};
 \node at (0,-0.05) [bnode] (f) {};
 \draw[ultra thick] (d) to (0,0.45);
 \draw[ultra thick] (0,0.45) to (a);
 \draw[] (b) to (c);
 \draw[ultra thick] (c) to (0,-0.05);
 \draw[ultra thick] (e) to (0,-0.05);
 \end{tikzpicture}\,,\,\,\,\begin{tikzpicture}[baseline=0cm,scale=0.8]
 \node at (0.5,0.7) [bar] (d) {};
 \node at (0.25,0.45) [bnode] (a) {};
 \node at (0,0.7) [bar] (b) {};
 \node at (0,0.20) [bnode] (c) {};
 \node at (-0.25,0.45) [bar] (e) {};
 \node at (0.25,-0.05) [bar] (f) {};
 \draw[ultra thick] (d) to (0.25,0.45);
 \draw[ultra thick] (0.25,0.45) to (b);
 \draw[ultra thick] (0.25,0.45) to (0,0.20);
 \draw[ultra thick] (0,0.20) to (e);
 \draw[] (f) to (0,0.20);
 \end{tikzpicture}\,,\,\,\,\begin{tikzpicture}[baseline=0cm,scale=0.8]
 \node at (-0.25,0.7) [bar] (d) {};
 \node at (0,0.45) [bar] (a) {};
 \node at (-0.25,0.20) [bnode] (b) {};
 \node at (-0.5,0.45) [bar] (c) {};
 \node at (0,-0.05) [bnode] (e) {};
 \node at (0.25,0.20) [bar] (f) {};
 \draw[] (d) to (a);
 \draw[ultra thick] (a) to (-0.25,0.20);
 \draw[ultra thick] (-0.25,0.20) to (c);
 \draw[ultra thick] (-0.25,0.20) to (0,-0.05);
 \draw[ultra thick] (f) to (0,-0.05);
 \end{tikzpicture}\,,\,\,\,\begin{tikzpicture}[baseline=0cm,scale=0.8]
 \node at (-0.75,0.70) [bar] (d) {};
 \node at (-0.5,0.45) [bnode] (a) {};
 \node at (-0.25,0.70) [bar] (b) {};
 \node at (-0.25,0.20) [bnode] (c) {};
 \node at (0,0.45) [bar] (e) {};
 \node at (0,-0.05) [bnode] (f) {};
 \node at (0.25,0.20) [bar] (g) {};
 \draw[ultra thick] (d) to (-0.5,0.45);
 \draw[ultra thick] (-0.5,0.45) to (b);
 \draw[ultra thick] (-0.25,0.20) to (-0.5,0.45);
 \draw[ultra thick] (-0.25,0.20) to (e);
 \draw[ultra thick] (-0.25,0.20) to (0,-0.05);
 \draw[ultra thick] (0,-0.05) to (g);
 \end{tikzpicture}\,,\,\,\,\begin{tikzpicture}[baseline=0cm,scale=0.8]
 \node at (0,-0.05) [bnode] (a) {};
 \node at (-0.25, 0.2) [bnode] (b) {};
 \node at (0.25,0.2) [bnode] (c) {};
 \node at (-0.5,0.45) [bar] (d) {};
 \node at (0.5,0.45) [bar] (e) {};
 \node at (-0.12,0.45) [bar] (f) {};
 \node at (0.12,0.45) [bar] (g) {};
 \draw[ultra thick] (d) to (-0.25,0.2);
 \draw[ultra thick] (f) to (-0.25,0.2);
 \draw[ultra thick] (e) to (0.25,0.2);
 \draw[ultra thick] (g) to (0.25,0.2);
 \draw[ultra thick] (-0.25,0.2) to (0,-0.05);
 \draw[ultra thick] (0.25,0.2) to (0,-0.05);
 \end{tikzpicture}\\
  \,    & \begin{tikzpicture}[baseline=0cm,scale=0.8]
 \node at (0,-0.05) [bar] (a) {};
 \node at (0,0.30) [bar] (b) {};
 \node at (-0.25,0.20) [bar] (c) {};
 \node at (0.25, 0.20) [bar] (d) {};
 \node at (0,0.5) {};
 \draw[] (a) to (b);
 \draw[] (a) to (c);
 \draw[] (a) to (d);
 \end{tikzpicture}\,,\,\,\, \begin{tikzpicture}[baseline=0cm,scale=0.8]
 \node at (0,-0.05) [bar] (a) {};
 \node at (-0.12,0.30) [bar] (b) {};
 \node at (0.12,0.30) [bar] (c) {};
 \node at (-0.3, 0.15) [bar] (d) {};
 \node at (0.3,0.15) [bar] (e) {};
 \draw[] (a) to (b);
 \draw[] (a) to (d);
 \draw[ultra thick] (a) to (c);
 \draw[ultra thick] (a) to (e); 
 \end{tikzpicture}\,,\,\,\, \begin{tikzpicture}[baseline=0cm,scale=0.8]
\node at (0,-0.05) [bnode] (a) {};
\node at (-0.25, 0.2) [bar] (d) {};
\node at (0.25,0.2) [bar] (e) {};
\node at (-0.25,0.45) [bar] (f) {};
\node at (0.25,0.45) [bar] (g) {};
\draw[ultra thick] (0,-0.05) to (d);
\draw[ultra thick] (0,-0.05) to (e);
\draw[] (d) to (f);
\draw[] (e) to (g);
\end{tikzpicture}\,,\,\,\,\begin{tikzpicture}[baseline=0cm,scale=0.8]
\node at (0,-0.05) [bnode] (a) {};
\node at (-0.25, 0.2) [bar] (d) {};
\node at (0.25,0.2) [bnode] (e) {};
\node at (-0.25,0.45) [bar] (f) {};
\node at (0.4,0.45) [bar] (g) {};
\node at (0.10,0.45) [bar] (h) {};
\draw[ultra thick] (0,-0.05) to (d);
\draw[ultra thick] (0,-0.05) to (0.25,0.2);
\draw[] (d) to (f);
\draw[ultra thick] (0.25,0.2) to (g);
\draw[ultra thick] (0.25,0.2) to (h);
\end{tikzpicture}\,,\,\,\,\begin{tikzpicture}[baseline=0cm,scale=0.8]
\node at (0,-0.05) [bar] (a) {};
\node at (0, 0.25) [bar] (d) {};
\node at (-0.25,0.45) [bar] (e) {};
\node at (0.25,0.45) [bar] (f) {};
\draw[] (a) to (d);
\draw[] (d) to (e);
\draw[] (d) to (f);
\end{tikzpicture}\,,\,\,\,\begin{tikzpicture}[baseline=0cm,scale=0.8]
\node at (0,-0.05) [bnode] (a) {};
\node at (-0.25, 0.2) [bar] (d) {};
\node at (0.25,0.2) [bar] (e) {};
\node at (-0.40,0.45) [bar] (f) {};
\node at (-0.10,0.45) [bar] (g) {};
\draw[ultra thick] (0,-0.05) to (e);
\draw[ultra thick] (0,-0.05) to (d);
\draw[] (d) to (f);
\draw[] (d) to (g);
\end{tikzpicture}\,,\,\,\,\begin{tikzpicture}[baseline=0cm,scale=0.8]
\node at (0,-0.05) [bar] (a) {};
\node at (0,0.2) [bnode] (d) {};
\node at (0,0.55) [bar] (e) {};
\node at (-0.25,0.45) [bar] (f) {};
\node at (0.25,0.45) [bar] (g) {};
\draw[] (a) to (d);
\draw[ultra thick] (0,0.2) to (e);
\draw[ultra thick] (0,0.2) to (g);
\draw[] (0,0.2) to (f);
\end{tikzpicture}\,,\,\,\,\begin{tikzpicture}[baseline=0cm,scale=0.8]
\node at (0,-0.05) [bar] (a) {};
\node at (0.25,0.2) [bar] (d) {};
\node at (-0.25,0.2) [bar] (e) {};
\node at (0,0.45) [bar] (f) {};
\draw[] (a) to (d);
\draw[] (a) to (e);
\draw[] (e) to (f);
\end{tikzpicture}\,,\,\,\,\begin{tikzpicture}[baseline=0cm,scale=0.8]
\node at (0,-0.05) [bnode] (a) {};
\node at (0,0.3) [bar] (d) {};
\node at (-0.25,0.25) [bar] (e) {};
\node at (0.25,0.25) [bar] (f) {};
\node at (-0.25, 0.5) [bar] (g) {};
\draw[ultra thick] (0,-0.05) to (d);
\draw[] (0,-0.05) to (f);
\draw[ultra thick] (0,-0.05) to (e);
\draw[] (e) to (g);
\end{tikzpicture}\,,\,\,\,\begin{tikzpicture}[baseline=0cm,scale=0.8]
\node at (0,-0.05) [bnode] (a) {};
\node at (0,0.3) [bar] (d) {};
\node at (-0.25,0.25) [bar] (e) {};
\node at (0.25,0.25) [bar] (f) {};
\node at (-0.25, 0.5) [bar] (g) {};
\draw[ultra thick] (0,-0.05) to (d);
\draw[ultra thick] (0,-0.05) to (f);
\draw[] (0,-0.05) to (e);
\draw[] (e) to (g);
\end{tikzpicture}
\\ 
 & \begin{tikzpicture}[baseline=0cm,scale=0.8]
\node at (0,-0.05) [bar] (a) {};
\node at (0.25,0.2) [bar] (b) {};
\node at (-0.25,0.2) [bnode] (c) {};
\node at (-0.4,0.45) [bar] (d) {};
\node at (-0.1, 0.45) [bar] (e) {};
\node at (0,0.65) {};
\draw[] (a) to (b);
\draw[] (a) to (c);
\draw[ultra thick] (-0.25,0.2) to (d);
\draw[ultra thick] (-0.25,0.2) to (e);
\end{tikzpicture}\,,\,\,\,\begin{tikzpicture}[baseline=0cm,scale=0.8]
\node at (0,-0.05) [bnode] (a) {};
\node at (0,0.3) [bar] (b) {};
\node at (0.25,0.25) [bar] (c) {};
\node at (-0.25,0.25) [bnode] (d) {};
\node at (-0.55, 0.5) [bar] (g) {};
\node at (-0.25,0.55) [bar] (e) {};
\draw[ultra thick] (0,-0.05) to (b);
\draw[ultra thick] (0,-0.05) to (c);
\draw[] (0,-0.05) to (-0.25,0.25);
\draw[ultra thick] (-0.25,0.25) to (g);
\draw[ultra thick] (-0.25,0.25) to (e);
\end{tikzpicture}\,,\,\,\,\begin{tikzpicture}[baseline=0cm,scale=0.8]
\node at (0,-0.05) [bnode] (a) {};
\node at (0,0.3) [bar] (b) {};
\node at (0.25,0.25) [bar] (c) {};
\node at (-0.25,0.25) [bnode] (d) {};
\node at (-0.55, 0.5) [bar] (g) {};
\node at (-0.25,0.55) [bar] (e) {};
\draw[ultra thick] (0,-0.05) to (b);
\draw[] (0,-0.05) to (c);
\draw[ultra thick] (0,-0.05) to (-0.25,0.25);
\draw[ultra thick] (-0.25,0.25) to (g);
\draw[ultra thick] (-0.25,0.25) to (e);
\end{tikzpicture}\,,\,\,\,\begin{tikzpicture}[baseline=0cm,scale=0.8]
\node at (0,-0.05) [bnode] (a) {};
\node at (0,0.5) [bar] (b) {};
\node at (0.25,0.25) [bar] (c) {};
\node at (-0.25,0.25) [bnode] (d) {};
\node at (-0.5, 0.5) [bar] (g) {};
\node at (-0.25,0.6) [bar] (e) {};
\draw[ultra thick] (-0.25,0.25) to (b);
\draw[ultra thick] (0,-0.05) to (c);
\draw[ultra thick] (0,-0.05) to (-0.25,0.25);
\draw[] (-0.25,0.25) to (g);
\draw[ultra thick] (-0.25,0.25) to (e);
\end{tikzpicture}
 \\ 
\hline
$-2\kappa$ & \begin{tikzpicture}[baseline=0cm,scale=0.8]
\node at (0,0.05) [bar] (a) {};
\node at (-0.25,0.3) [barx] (b) {};
\draw[] (a) to (b);
\end{tikzpicture}\,,\,\,\,\begin{tikzpicture}[baseline=0cm,scale=0.8]
\node at (0,-0.05) [bnode] (a) {};
\node at (-0.25,0.2) [barx] (b) {};
\node at (0.25,0.2) [bar] (c) {};
\draw[ultra thick] (0,-0.05) to (b);
\draw[ultra thick] (0,-0.05) to (c);
\end{tikzpicture}\,,\,\,\,\begin{tikzpicture}[baseline=0cm,scale=0.8]
\node at (0,0.05) [barx] (a) {};
\node at (-0.25,0.3) [bar] (b) {};
\draw[] (a) to (b);
\end{tikzpicture}\,,\,\,\,\begin{tikzpicture}[baseline=0cm,scale=0.8]
\node at (0,0.05) [barx2] (a) {};
\node at (0.25,0.3) [bar] (b) {};
\node at (-0.25,0.3) [bar] (c) {};
\draw[ultra thick] (a) to (b);
\draw[ultra thick] (a) to (c);
\end{tikzpicture}\,,\,\,\,\begin{tikzpicture}[baseline=0cm,scale=0.8]
\node at (0,-0.05) [bnode] (a) {};
\node at (0,0.2) [bar] (b) {};
\node at (-0.25,0.5) [bar] (c) {}; 
\draw[ultra thick] (0,-0.05) to (b);
\draw[] (b) to (c);
\end{tikzpicture}\,,\,\,\,\begin{tikzpicture}[baseline=0cm,scale=0.8]
\node at (0,-0.05) [bnode] (a) {};
\node at (0,0.2) [bnode] (b) {};
\node at (-0.25,0.5) [bar] (c) {};
\node at (0.25,0.5) [bar] (d) {};
\draw[ultra thick] (0,-0.05) to (0,0.2);
\draw[ultra thick] (0,0.2) to (c);
\draw[ultra thick] (0,0.2) to (d);
\end{tikzpicture}\,,\,\,\,\begin{tikzpicture}[baseline=0cm,scale=0.8]
\node at (0,-0.05) [bnode] (a) {};
\node at (-0.25,0.2) [bar] (b) {};
\node at (0.25,0.2) [bar] (c) {};
\draw[] (0,-0.05) to (b);
\draw[ultra thick] (0,-0.05) to (c);
\end{tikzpicture}\,,\,\,\,\begin{tikzpicture}[baseline=0cm,scale=0.8]
\node at (0,-0.05) [bar] (a) {};
\node at (0,0.2) [bnode] (b) {};
\node at (0,0.45) [bar] (c) {};
\draw[] (a) to (b);
\draw[ultra thick] (0,0.2) to (c);
\end{tikzpicture}\,,\,\,\,\begin{tikzpicture}[baseline=0cm,scale=0.8]
\node at (0,0.5) {};
\node at (0,-0.05) [bnode] (a) {};
\node at (0.25,0.2) [bar] (d) {};
\node at (-0.25,0.2) [bnode] (e) {};
\node at (0,0.45) [bar] (f) {};
\draw[ultra thick] (0,-0.05) to (d);
\draw[ultra thick] (0,-0.05) to (-0.25,0.2);
\draw[ultra thick] (-0.25,0.2) to (f);
\end{tikzpicture}
\\
\hline
$0$ & \begin{tikzpicture}[baseline=0cm,scale=0.8]
\node at (0,0) {$\colb{\mathbbm{1}}$};
\end{tikzpicture} \\
[0.5ex]
 \hline
\end{tabular}
\caption{List of symbols with negative homogeneity}
\label{table1}
\end{table}

\subsection{Renormalised Model}\label{sec:RnmMdl}

In this subsection, we get into the definition of the renormalisation map announced in the previous section. The systematic study of the implementation of such a renormalisation is due to \cite{BHZ}, where \eqref{eq:product} is redefined because in general we expect $(\Pi^{N}_{x})(\btau)(\Pi^{N}_{x})({\color{blue} \bar \tau})$ to diverge. The work \cite{BHZ} uses heavily extended decorations which are not needed for the semi-general convergence theorem, we want to implement. Instead, we follow the approach with the local renormalisation map introduced in \cite{BR18}.
The works \cite{BB21,BB21b} based on this approach give a more general proof of the renormalised equation introduced in \cite{BCCH}.

To showcase the formulation of that result, we require the so-called decorated trees, which in turn are defined via rooted trees. By a rooted tree $T$, we mean a finite tree (connected graph without simple cycles) with a distinguished vertex $\rho_T$, called the root, and a function $\mathfrak{l}:L_T\sqcup E_T\rightarrow\CL$, where $\CL$ is a fixed non-empty set of types, $E_T\subset N_T\times N_T$ is the set of edges of $T$, with $N_T$ denoting the nodes of $T$, and finally $L_T\subset N_T\setminus\{\rho_T\}$ is the set of leaves, that is nodes that are adjacent to a single edge. Pick two symbols $\mcI$ and $\Xi$ and let $ \mathcal{D} := \lbrace \mcI,\Xi \rbrace \times \mb{N}^{d+1}$ define the set of edge decorations. Decorated trees over $\mathcal{D}$ are of the form  $T_{\Labe}^{\Labn} =  (T,\Labn,\Labe) $ where $T$ is a non-planar rooted tree. We also define a partial order $\geq$ on rooted trees by saying that $x\geq y$ for $x,\,y\in N_T$ whenever $y$ lies on the shortest path from $\rho_T$ to $x$. The maps $\Labn : N_T \rightarrow \mathbb{N}^{d+1}$ and $\Labe : E_T \rightarrow \mathcal{D}$ are node, respectively edge, decorations. We denote the set of decorated trees by $ \mfT $. The tree product is defined by 
\begin{equation}  \label{treeproduct}
 	(T,\Labn,\Labe) \cdot  (T',\Labn',\Labe') 
 	= (T \cdot T',\Labn + \Labn', \Labe + \Labe')\;, 
\end{equation} 
where $T \cdot T'$ is the rooted tree obtained by identifying the roots of $ T$ and $T'$. The sums $ \Labn + \Labn'$ and $\Labe + \Labe'$ mean that decorations are added at the root and extended to the disjoint union by setting them to vanish on the other tree. We now make the connection with the symbolic notation introduced in the previous part.

\begin{enumerate}
   \item[--] An edge decorated by  $ (\mcI,a) \in \mathcal{D} $  is denoted by $ \colb{\mathcal{I}_{a}} $. The symbol $  \mathcal{I}_{a} $ is also viewed as the operation that grafts a tree onto a new root via a new edge with edge decoration $ a $. The new root at hand remains decorated with $0$. 
   
   \item[--]  An edge decorated by $ (\Xi,0) \in \mathcal{D} $ is denoted by $  \colb{\Xi} $.

   \item[--] A factor $\colb{ X^k}$   encodes a single node  $ \bullet^{k} $ decorated by $ k \in \mathbb{N}^{d+1}$. We write $ \colb{X_i}$, $ i \in \lbrace 0,1,\ldots,d\rbrace $, to denote $ \colb{X^{e_i}}$. The element $ \colb{X^0} $ is identified with the empty tree $\mathbbm{1}$.
 \end{enumerate}
 
 \begin{remark}
Notice that the noise symbol $\bXi$ was depicted as a node in the previous section, but is seen as a decorated edge above. The result from \cite{BHZ} is of more general applicability, in particular to cases where one might have any finite number of noises attached to the same inner node, for which the notion of a noise edge is natural. For \eqref{eq:gKPZ}, each inner node is at most attached to one instance of noise, so for notational ease, we prefer illustrating the presence of the noise with a node. Of course for this work, the equivalence is obvious.
\end{remark}
 
 The degree $ |\cdot|_{\s} $ of a decorated tree is the one applied to its symbolic notation. The following is easily seen to be true:
\begin{equs}
|{\color{blue} \tau\bar \tau}|_{\s} = |{\color{blue} \tau}|_{\s} + |{\color{blue} \bar \tau}|_{\s}
\end{equs}

 We restrict ourselves to the trees that are generated by the graphical algorithm outlined in the last section from symbols residing only in $\mc{H}$. The collection of these trees is denoted by $\mfT_0$.

 We say a tree $T\in\mfT$ is planted if either $T = \bXi$ or \colb{if} there exists $k\in\mb{N}^{d+1}$ and $\bar T\in\mfT$ such that $T =  \colb{\mathcal{I}_k(\bar \tau)}$, where $\colb{ \bar\tau}$ is the symbol associated to $\bar T$.
 
The set $\mfT_{-}$ is then the set of unplanted trees of negative degree and zero polynomial decoration at the root, which is to say:
$$\mfT_-\coloneqq\{\btau = T^{\mfn}_\mfe\in\mfT_0\,:\,|\btau|_\s < 0,\,\mfn(\rho_T)= 0,\,\btau\text{ is not planted}\}.$$

We set $\mc{T}_{-}\coloneqq\text{Vec}\,\mfT_-$ and then define the local extraction/contraction map $\Deltam_r$ that will be central to the construction of the renormalisation map. It was first introduced in \cite{BR18}.

\begin{definition}\label{def:Deltam}
We define for $T^\mfn_\mfe\in\mfT_0$
\beq\begin{split}
\Deltam_r T^{\mfn}_{\mfe}=\sum_{A\subset_r T}\sum_{\mfe_A,\,\mfn_A}\frac{1}{\mfe_A!}\binom{\mfn}{\mfn_A}&\pi_-(A,\mfn_A+\pi\mfe_A,\mfe\upharpoonright E_A) \\
&\otimes (T/A,[\mfn - \mfn_A]_A,\mfe+\mfe_A),
\end{split}
\eeq
where:
\begin{itemize}
\item $\pi_-:\mathcal{T}\rightarrow\mathcal{T}_-$ which projects every $A\in \mathcal{T}  \setminus \mathcal{T}_-$ to $0$,
\item For $C\subset D$, and $f:D\rightarrow\mb{N}^{d+1}$, the restriction of $f$ to $C$, is denoted by $f\upharpoonright C$. Further one defines by $f! = \prod_{x\in D} f(x)!$ where $n! = \prod_{i=0}^d n_i!$ for $n\in\mb{N}^{d+1}$. Given another $g:C\rightarrow\mb{N}^{d+1}$, we denote:
$$\binom{f}{g}=\prod_{x\in C}\binom{f(x)}{g(x)},$$
and unsurprisingly $\binom{n}{m}=\prod_{i=0}^{d}\binom{n_i}{m_i}$ for $k,\,n\in\mb{N}^{d+1}$.
\item The outer sum is over all subtrees $A$ of $T$ which have the same root as $ T $. The inner sum runs over all $\mfn_A:N_A\rightarrow\mb{N}^{d+1}$ with $\mfn_A\le \mfn$, and $\mfe_A:\partial(A,T)\rightarrow\mb{N}^{d+1}$, where the \textit{boundary of} $A$ in $T$ is defined by:
$$\partial(A,T)\coloneqq\{(x,y)\in E_T\setminus E_A : y \geq x, x\in A\}$$
\item For $f:E_T\rightarrow\mb{N}^{d+1}$, we set for every $x\in N_T$, $(\pi f)(x) = \sum_{(x,y)\in E_T}f(x,y)$
\item We write $T/A$ for the tree obtained by contracting  $A$ to the root $ \rho_T $ of $ T $. For $f:N_T\rightarrow\mb{N}^{d+1}$ we define $[f]_A: N_{T/A}\rightarrow\mb{N}^{d+1}$ by $[f]_A(\rho_T)=\sum_{y\in N_{A}} f(y)$ and $[f]_A(x) = f(x)$ for $x\in N_{T/A}\setminus\{ \rho_{T}\}$.
\end{itemize}
\end{definition}

The definition above gives us a simple representation for the renormalisation map $ R_g $ that we will use for the model:
\begin{equs}
R_g = \left(  g\otimes \id \right) \Deltam_r
\end{equs}
with $ g  $ a linear map from $ \CT_- $ into $ \mathbb{R} $. 

\begin{example}
Consider the simple tree given by: $ T = \begin{tikzpicture}[baseline = 0.05cm]
\node at (0,0) [bar] (a) {};
\node at (0.25,0.25) [bar] (b) {};
\draw[] (a) to (b);
\end{tikzpicture}$, for which one can check that $|T|_\s = -1-$. Then an easy computation gives:
\begin{equs}
\Deltam_r \begin{tikzpicture}[baseline = 0.05cm]
\node at (0,0) [bar] (a) {};
\node at (0.25,0.25) [bar] (b) {};
\draw[] (a) to (b);
\end{tikzpicture} = \mathbbm{1}\otimes\begin{tikzpicture}[baseline = 0.05cm]
\node at (0,0) [bar] (a) {};
\node at (0.25,0.25) [bar] (b) {};
\draw[] (a) to (b);
\end{tikzpicture} + \begin{tikzpicture}[baseline = 0.05cm]
\node at (0,0) [bar] (a) {};
\node at (0.25,0.25) [bar] (b) {};
\draw[] (a) to (b);
\end{tikzpicture}\otimes\mathbbm{1},
\quad
R_g\begin{tikzpicture}[baseline = 0.05cm]
\node at (0,0) [bar] (a) {};
\node at (0.25,0.25) [bar] (b) {};
\draw[] (a) to (b);
\end{tikzpicture} = g(\mathbbm{1})\begin{tikzpicture}[baseline = 0.05cm]
\node at (0,0) [bar] (a) {};
\node at (0.25,0.25) [bar] (b) {};
\draw[] (a) to (b);
\end{tikzpicture} - g\left(\begin{tikzpicture}[baseline = 0.05cm]
\node at (0,0) [bar] (a) {};
\node at (0.25,0.25) [bar] (b) {};
\draw[] (a) to (b);
\end{tikzpicture}\right)\mathbbm{1}
\end{equs}
where we perform the following identification for the tensor product $a\otimes{ \color{blue} \tau} = a\btau$ with $a\in\mathbb{R}$. As $g$ is a character one has directly that $g(\mathbbm{1}) = 1$ for any choice of $g$.
\end{example}

The BPHZ renormalisation given in \cite{BHZ} can be recovered by a specific choice of $ g $:
\begin{equation}\label{eq:BPZg}
g_{\text{\tiny{BPHZ}}}(\btau) = -  \mathbb{E} \left(  (\Pi^{N,g} \btau)(0) \right)
\end{equation}
where one has:
$$\Pi^{N,g} \btau   = \hat{\Pi}^{N,g} R_g \btau, \quad  \hat{\Pi}^{N,g}  \btau {\color{blue} \bar \tau }=  \hat{\Pi}^{N,g}  \btau  \hat{\Pi}^{N,g}   {\color{blue} \bar \tau}. \quad \Pi^{N,g} \colb{X^k}(s,y)  = (s,y)^k,$$
$$(\hat{\Pi}^{N,g} \bcI{\tau})(s,y)  = 2^{-dN}\sum_{\bar x\in\mb{T}_N^d}\int K^N((s,y),(\bar s,\bar x))(\Pi^{N,g}\btau)(\bar s,\bar x)\, d\bar s.$$
This recursive formula has been introduced in \cite{BB21b} in a non-translation invariant setting.
Then, the renormalised model is described recursively by a similar formula:
\be
\begin{split}
\label{eq:Xk}
(\hat{\Pi}_{(t,x)}^{N,g} \colb{X^k})(s,y) &= ((s,y)-(t,x))^k \quad
\Pi^{N,g}_{(t,x)} \btau   = \hat{\Pi}^{N,g}_{(t,x)} R_g \btau, \\  &\hat{\Pi}^{N,g}_{(t,x)}  \btau {\color{blue} \bar \tau} =  \hat{\Pi}^{N,g}_{(t,x)}  \btau  \hat{\Pi}^{N,g}_{(t,x)}   {\color{blue} \bar \tau},
\end{split}
\ee
\be
\begin{split}
(\hat{\Pi}_{(t,x)}^{N,g}\bcI{\tau})(s,y)= 2^{-dN}\sum_{\bar x\in\mb{T}_N^d}\int K^N((s,y),(\bar s,\bar x))(&\Pi_{(t,x)}^{N,g}\btau)(\bar s,\bar x)\, d\bar s \\
-\sum_{|k|_\s<|\btau|+2} \frac{((s,y)-(t,x))^k}{k!}Q_k\big((T_{|\btau|+2}^{N}\Pi_{(t,x)}^{N,g} \btau )(t,x)\big).
\end{split}
\ee

\begin{example}
For $\btau = \begin{tikzpicture}[baseline = 0.05cm]
\node at (0,0) [bar] (a) {};
\node at (0.25,0.25) [bar] (b) {};
\draw[] (a) to (b);
\end{tikzpicture}$, let us compute $g\left(\begin{tikzpicture}[baseline = 0.05cm]
\node at (0,0) [bar] (a) {};
\node at (0.25,0.25) [bar] (b) {};
\draw[] (a) to (b);
\end{tikzpicture}\right)$ under the BPHZ paradigm. Due to \eqref{eq:BPZg} and the definition of $\Pi^{N,g}$ following it, the quantity that we are interested in is:
\be
\begin{split}
\mb{E}\left[\left(\Pi^{N,g}\begin{tikzpicture}[baseline = 0.05cm]
\node at (0,0) [bar] (a) {};
\node at (0.25,0.25) [bar] (b) {};
\draw[] (a) to (b);
\end{tikzpicture}\right)(0)\right] = \mb{E}\left[\left(\Pi^{N,g}\begin{tikzpicture}[baseline = 0.05cm]
\node at (0.25,0.25) [bar] (b) {};
\draw[] (0,0) to (b);
\end{tikzpicture}\right)(0)\left(\Pi^{N,g}\begin{tikzpicture}[baseline = 0.05cm]
\node at (0,0.125) [bar] (b) {};
\end{tikzpicture}\right)(0)\right] &= \mb{E}\left[\int K^N(-z)\xi^{N}(z)\xi^{N}(0)\,dz\right] \\
&= \int K^N(-z)\mb{E}\left[\xi^N(z)\xi^N(0)\right]\,dz,
\end{split}
\ee
where we have used the shorthands $z = (t,x)\in\mb{R}^{d+1}$ and $\xi^N(z) = \xi^N_t(x) = \xi_{2^{2N}t}(2^N\,x)$ and the integral is the semi-discrete integral i.e. integrals over space are actually the Riemann sums $2^{-dN}\sum_{x\in\mb{T}^{d}_N}$. Notice that due to assumption of centrality on our noise and the properties of cumulants, we have $\mb{E}\left[\xi^N(z)\xi^N(0)\right] = \mb{E}_c\left[\xi^N(z),\xi^N(0)\right]$. One can check that the constant $c_N$ in \cite[Sec. 2.3]{MH21} associated to $\btau$ is the same as the quantity above and our renormalisation map achieves the same effect theirs does.
\end{example}

In the manner explained above one can compute the counter terms $C_N$ associated with any $\btau\in\mc{T}_-$. The following example explains the difference that stems from the fact that we allow for noises that are (generally) non-Gaussian.

\begin{example}
Consider the tree: $\btau = \begin{tikzpicture}[baseline = 0.1cm]
\node at (0,0) [bar] (a) {};
\node at (-0.25,0.25) [bar] (b) {};
\node at (0,0.5) [bar] (c) {};
\draw[] (a) to (b);
\draw[] (b) to (c);
\end{tikzpicture}$. One checks then that:

$$\Deltam_r\begin{tikzpicture}[baseline = 0.1cm]
\node at (0,0) [bar] (a) {};
\node at (-0.25,0.25) [bar] (b) {};
\node at (0,0.5) [bar] (c) {};
\draw[] (a) to (b);
\draw[] (b) to (c);
\end{tikzpicture}=\mathbbm{1}\otimes\begin{tikzpicture}[baseline = 0.1cm]
\node at (0,0) [bar] (a) {};
\node at (-0.25,0.25) [bar] (b) {};
\node at (0,0.5) [bar] (c) {};
\draw[] (a) to (b);
\draw[] (b) to (c);
\end{tikzpicture} + \begin{tikzpicture}[baseline=0.1cm]
\node at (0,0) [bar] (a) {};
\node at (-0.25,0.25) [bar] (b) {};
\draw[] (a) to (b);
\end{tikzpicture}\otimes\begin{tikzpicture}[baseline=0.1cm]
\node at (0,0) [bar] (a) {};
\node at (0.25,0.25) [bar] (b) {};
\draw[] (a) to (b);
\end{tikzpicture}+\begin{tikzpicture}[baseline = 0.1cm]
\node at (0,0) [bar] (a) {};
\node at (-0.25,0.25) [bar] (b) {};
\node at (0,0.5) [bar] (c) {};
\draw[] (a) to (b);
\draw[] (b) to (c);
\end{tikzpicture}\otimes\mathbbm{1}.$$

We have already seen how to compute $g(\mathbbm{1})$ and $g\left(\begin{tikzpicture}[baseline = 0.05cm,scale=0.8]
\node at (0,0) [bar] (a) {};
\node at (0.25,0.25) [bar] (b) {};
\draw[] (a) to (b);
\end{tikzpicture}\right)$, the new calculation at hand is $g\left(\begin{tikzpicture}[baseline = 0.1cm]
\node at (0,0) [bar] (a) {};
\node at (-0.25,0.25) [bar] (b) {};
\node at (0,0.5) [bar] (c) {};
\draw[] (a) to (b);
\draw[] (b) to (c);
\end{tikzpicture}\right)$. A very similar computation as in the previous example gives us:
\be
\mb{E}\left[\left(\Pi^{N,g}\begin{tikzpicture}[baseline = 0.1cm,scale=0.8]
\node at (0,0) [bar] (a) {};
\node at (-0.25,0.25) [bar] (b) {};
\node at (0,0.5) [bar] (c) {};
\draw[] (a) to (b);
\draw[] (b) to (c);
\end{tikzpicture}\right)(0)\right]=\int K^{N}(-z_1)K^N(z_1-z_2)\mb{E}\left[\xi^{N}(z_1)\xi^N(z_2)\xi^N(0)\right]\,dz_1 dz_2,
\ee

Were our noises centred Gaussian, the above expectation would have been zero due to the centredness and Wicks Formula. In the non-Gaussian case, the above integral is no longer guaranteed to vanish. As we have assumed control of the cumulants, we again use the assumption of centred noises to give $\mb{E}\left[\xi^{N}(z_1)\xi^N(z_2)\xi^N(0)\right] = \mb{E}_c\left[\xi^{N}(z_1),\xi^N(z_2),\xi^N(0)\right]$. Hence we find that this constant is the same as  $c^{(1)}_N$ in \cite[Sec. 2.3]{MH21}.
\end{example}

We have thus seen how the counter-terms that renormalise our equation are generated for each of the symbols in Table~\ref{table1}. In subtracting these counter terms, one cannot expect that our admissible model solves, in the manner described in the previous section, the equation \eqref{eq:gKPZ} we stated in the introduction. The "renormalised equation" that it does solve, which is due to  \cite[Th. 2.22]{BCCH}, takes the form:
\begin{equs}
\partial_t u = \Delta u + g(u)(\nabla_{\!x} u)^2+k(u)(\nabla_{\!x}u)+\bar{h}(u)+f(u)\xi_t(x),
\end{equs}
where $\bar{h}$ is given by:
\begin{equs}
\bar{h}(u) & = h(u) + \sum_{\btau \in \mc{T}_-} \frac{\Upsilon[\btau]}{S(\btau)} C_N(\btau),
\end{equs}
where the $C_N$ are found exactly in the manner we have explained above. The definitions of the symmetry factors and the elementary differential operators are directly from \cite[Sec 2.7]{BCCH}. Let's mention that \cite{BCCH} uses heavily decorated trees with extended decorations while \cite{BB21,BB21b} proposes a shorter proof valid for local renormalisation maps and without the extended decorations.
 
\section{Labelled Graphs} \label{sec:lblgrph}

It is known from the literature on Discrete Regularity Structure \cite{EH17,HM18} that the sort of convergence we seek above relies on certain stochastic estimates on $\hat \Pi^N$ and $\Pi^N$. As the general theory that exists in the continuum for getting these stochastic estimates from \cite{CH16} remains out of reach for the discrete context, we follow the method in \cite{MH21} that builds on the theory from \cite{HQ15}. Our contribution here is to extend the theory so that it is applicable to \eqref{eq:gKPZ}.

\subsection{Directed MultiGraph and Contracted Graphs}
The main ingredient will be finite directed multigraphs $\ds{G}=(\ds{V},\ds{E})$, with edges $e = (e_-,\,e_+)\in\ds{E}$ that originate from $e_-\in\ds{V},$ culminate at $e_+\in\ds{V}\hspace{-0.5mm},$ that carry the label $(a_e,\,r_e,\,v_e)\in\mathbb{R}\times\mathbb{Z}\times\ds{V},$ and are associated with compactly supported kernels $K_e:\mathbb{R}\times\mathbb{T}^d_N\mapsto\mathbb{R}$ that satisfy the following bound:
 \beq\label{eq:KerTay}
 |D^{k}K_e(z)|\lesssim\|z\|_\s^{-a_e-|k|_\s},
 \eeq
 uniformly over $\|z\|_\s\le 1$ and for all multi-indices $k$. In this sense, $a_e$ is the order of the singularity of the kernel $K_e$ associated to the edge. The renormalisation procedure we purpose will involve Taylor expansions whose particulars are given by the quantities $r_e,\,v_e$. That is to say, we associate to each edge $e\in\ds{E}$, a kernel $\hat{K}_e$ via a Taylor expansion around another point and the length of this expansion is encoded by $r_e\in\mathbb{Z}_{\ge 0}$ while the point of expansion $x_{v_e}\in\mathbb{R}^d$ is encoded by $v_e$:
 \begin{equation}\label{eq:defKernel}
 \hat{K}_e(x_{e_-},x_{v_e},x_{e_+})=K_e(x_{e_+}-x_{e_-})-\sum_{|j|_\s<r_e}\frac{(x_{e_+}-x_{v_e})^j}{j!}D^jK_e(x_{v_e}-x_{e_-}).
 \end{equation}
Whenever $v_e = v_0$, we suppress the dependence to write $\hat{K}_e(x_{e_-},x_{e_+})$. In the above definition, one notices the first major departure from the \cite{HQ15} and all derivative literature. The only meaningful value for $r_e$ to take here is non-negative, whereas in \cite{HQ15} the handling of $r_e < 0$ is very non-trivial. Furthermore, in \cite{HQ15} the role of $v_e$ is played by the distinguished vertex $0$ (see below), whereas we allow it to be any $v_e\in\ds{V}$.

In the case that we have multiple edges with the same origin and destination vertices, we will assume only one has a non-zero $r_e$ label, and we will concatenate all the edges into one edge with the label $(\hat{a}_e,r_e)$ where $\hat{a}_e$ denotes the sum of all the $a_e$ for the edges that were concatenated. We require $\mathds{G}$ to have the distinguished vertex $0\in\mathds{V}$ that will be connected by distinguished edges $e_{\star,1},\,\hdots,\,e_{\star,M}$ to $M$ distinguished vertices $v_{\star,1},\,\hdots,\,v_{\star,M}$. All of these distinguished edges will carry the label $(0,0,0)$. We also define the sets $\ds{V}_0\coloneqq\ds{V}\setminus{0}$ and $\ds{V}_\star\coloneqq\{0,v_{\star,1},\,\hdots,\,v_{\star,M}\}$.
Finally, we define for any subset $\bar{\mathds{V}}\subseteq{\mathds{V}}$ the following sets of edges:
\begin{equs}
& \mathds{E}^\uparrow(\bar{\mathds{V}}) \coloneqq \{e\in \mathds{E}:e_-\in\bar{\mathds{V}}\}, \quad \mathds{E}^\downarrow(\bar{\mathds{V}}) \coloneqq \{e\in \mathds{E}:e_+\in\bar{\mathds{V}}\}, \\ & \mathds{E}_0(\bar{\mathds{V}}) \coloneqq \{e\in \mathds{E}:e_\pm\in\bar{\mathds{V}}\}, \quad 
\mathds{E}(\bar{\mathds{V}}) \coloneqq \{e\in \mathds{E}:e_+\in\bar{\mathds{V}}\textnormal{ or }e_-\in\bar{\mathds{V}}\},
\\ & \mathds{E}_+(\bar{\mathds{V}})\coloneqq\{e\in \mathds{E}(\bar{\mathds{V}}):r_e > 0\}, \quad \mathds{E}_+^\uparrow(\bar{\mathds{V}}): =\mathds{E}_+(\bar{\mathds{V}})\cap\mathds{E}^\uparrow(\bar{\mathds{V}}),
\\ &  \mathds{E}_+^\downarrow(\bar{\mathds{V}})\coloneqq\mathds{E}_+(\bar{\mathds{V}})\cap\mathds{E}^\downarrow(\bar{\mathds{V}}), \quad \mathds{E}_{\star} \coloneqq \{ e \in \mathds{E} : \, \mathds{V}_{\star} \cap e = e \}.
\end{equs}
We levy the following assumptions on our graphs:
\begin{assumption}\label{ass:grph} For a given graph $\mathds{G} = (\mathds{V}, \mathds{E})$ we require:
\begin{itemize}
    \item For every subset $\bar{\mathds{V}} \subset {\mathds{V}}$, such that $|\dsV|\ge 2$, one has:
\beq\label{eq:ass2e1}\sum_{e \in \mathds{E}_0 (\bar{\mathds{V}})} \hat{a}_e + \hspace{-2mm}\sum_{e \in \mathds{E}^{\uparrow}_+ (\bar {\mathds{V}})} \hspace{-2mm}\mathbbm{1}_{\{v_e\in\bar{\mathds{V}}\}}(\hat{a}_e + r_e - 1) - \hspace{-2mm}\sum_{e \in \mathds{E}^{\downarrow} (\bar{\mathds{V}})} \hspace{-2mm}\mathbbm{1}_{\{v_e\in\bar{\mathds{V}}\}}r_e < \bigl(|\bar{\mathds{V}}| - 1\bigr)|\mathfrak{s}|.\eeq
 \item For every non-empty subset $\bar{\mathds{V}} \subset \hat{\mathds{V}}\setminus\mathds{V}_{\star}$ one has:
\beq\label{eq:ass2e2}
\begin{split}\sum_{e \in \mathds{E}_0(\bar{\mathds{V}})} \hspace{-3mm}\hat{a}_e + \hspace{-3mm}\sum_{e \in \mathds{E}^{\downarrow}(\bar{\mathds{V}})} \hspace{-2mm}\bigl(\mathbbm{1}_{\{v_e\in\bar{\mathds{V}}\vee r_e=0\}}&(\hat{a}_e + r_e - 1)-(r_e - 1)\bigr) \\
&+\sum_{e \in \mathds{E}^{\uparrow}(\bar{\mathds{V}})}((\hat{a}_e + r_e) - \mathbbm{1}_{\{v_e\in\hat{\mathds{V}}\}}r_e) > |\bar{\mathds{V}}| |\mathfrak{s}|,\end{split}
\eeq
\end{itemize}
\end{assumption}

where $\vee$ denotes the logical OR operator. An initial observation to make here is that these assumptions are different from the ones in \cite[Assumption A.1]{MH21}. The "renormalisation" and "recentring" conditions of there's, have been adapted to our renormalisation paradigm, while the milder condition on integrability (i.e. A.1.1 and A.1.2. in \cite{MH21}) have been absorbed into \eqref{eq:ass2e1}. Indeed if one sets $\bar{\dsV}=\{(e_-,e_+)\}$ for any edge $e\in\dsE$, \eqref{eq:ass2e1} becomes $\hat{a}_e < |\s|$ which is exactly Assumption A.1.1 seeing as $r_e \wedge 0 =0$ for every edge in our setup. Similarly whenever $v_e\not\in\bar\dsV$, \eqref{eq:ass2e1} becomes $$\sum_{e\in\dsE_0(\bar{\dsV})}\hat{a}_e<(|\hat{\dsV}|-1)|\s|,$$ which is just Assumption A.1.2. Why we need $v_e\in\bar{\dsV}$, will be clearer to the reader once we have explained our renormalisation procedure.
Given our kernels $K_e$, we will also assume the existence of the following particular decomposition:
\begin{assumption}\label{ass:Kernel}
Given $K_e$ as above, we assume that there exist $\{K_e^{(n)}\}_{0\le n \le N}$ satisfying:
\begin{itemize}
\item $K_e(z) = \sum_{0\le n\le N}K^{(n)}_e$ for all $z\neq 0$.
\item For all $0\le n < N$ the functions $K^{(n)}_e$ are supported in the annulus $2^{-(n+2)}\le\|z\|_\s\le 2^{-n}$ and $\text{supp}(K^{N}_e)\subseteq\{z:\|z\|_\s\le 2^{-N}\}$
\item for all $p<\infty$ and some $C<\infty$
\begin{equation}\label{e:boundKn}
\begin{aligned}
\sup_{\substack{|k| \le p\\ 0\leq n\leq N-1 }} 2^{-(a_e + |k|_\s)n}
|D^k K_e^{(n)}(z)| &\le C \quad \text{and}\\
2^{-a_eN} |K_e^{(N)}(z)| & \le C \, ;
\end{aligned}
\end{equation}
\end{itemize}
\end{assumption}
where the $N$ in the above definition is as it was fixed in $\mb{Z}_N^d$. The smallest $C$ that appears in \eqref{e:boundKn} will be denoted by $\|K_e\|_{a_e;p}$. For a smooth test function $\varphi$ and $z=(t,x)$, we set $$\phi_{\lambda,\mu}(z) = \lambda^{-|\s\setminus\s_1|}\mu^{-|\s_1|} \phi(t/\mu^{|\s_1|},x/\lambda).$$ This is another departure from \cite{HQ15}, but this time towards more generality as seen in \cite{MH21}. One has to take care to distinguish between the two regimes: $2^{-N}\le\lambda=\mu\text{ or }\mu<2^{-N}=\lambda$. In the former, $\mu = \lambda$ regime, the arguments are almost the same as in \cite{HQ15}, so we explicate on the more demanding case. 

The purpose of this section is to be able to bound \textit{Generalised Convolutions} of the following form:
\begin{equation}\label{eq:genconv}
\CI^\ds{G}(\phi_{\lambda,\mu},K)\coloneqq  \int_{(\mathbb{R}\times\mathbb{T}_N^d)^{\ds{V}_0}} \prod_{e\in \ds{E}}{\hat K}_e(z_{e_-}, z_{e_+})\prod_{i=1}^M \phi_{\lambda,\mu}(z_{v_{\star,i}}) \,dz,
\end{equation}
where $\mathbb{T}^d_N=2^{-N}\mathbb{Z}^d_N$. When $\lambda = \mu$, we simply write $\CI^{\ds{G}}(\varphi_\lambda,K)$. The main result (adapted to this setup) provides exactly this:
\begin{theorem}\label{th:HQbnd}  Let $\ds{G}=(\ds{V},\ds{E})$ be a finite directed multigraph with labels $(a_e,r_e,v_e) $ and kernels
$K_e$ with $e\in \ds{E}$, such that Assumption~\ref{ass:grph} are satisfied and that the kernels satisfying Assumption~\ref{ass:Kernel}.
Then, there exists  $C$ depending only on the 
structure of the graph $(\ds{V},\ds{E})$ and the labels $r_e$ such that 
\begin{equation}\label{eq:genconvBound}
\CI^\ds{G}(\phi_{\lambda,\mu},K) 
\le C\lambda^{\tilde \alpha}\;,
\end{equation}
if either $2^{-N}\leq \lambda=\mu$ or $\mu <2^{-N}=\lambda$.
Here,
\begin{equation}\tilde \alpha = |\s||\ds{V}\setminus \ds{V}_\star| - \sum_{e\in \ds{E}} a_e. 
\end{equation} 
\end{theorem}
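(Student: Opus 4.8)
The plan is to follow the multiscale strategy of \cite{HQ15} but adapted to the refinements introduced here: the non-standard recentring vertex $v_e$, the semi-discrete setting with cutoff at scale $2^{-N}$, and the anisotropic test function $\phi_{\lambda,\mu}$. First I would reduce the bound on $\CI^\ds{G}(\phi_{\lambda,\mu},K)$ to a bound on a sum over \emph{scale assignments}: using Assumption~\ref{ass:Kernel}, each kernel $\hat K_e$ is written as $\sum_{n_e} \hat K_e^{(n_e)}$ (with the Taylor-subtracted pieces handled componentwise), so that $\CI^\ds{G}$ decomposes as a sum over $\mathbf{n}=(n_e)_{e\in\ds{E}}\in\{0,\dots,N\}^{\ds{E}}$ of integrals $\CI^\ds{G}_{\mathbf{n}}$ in which every edge kernel is localised to a dyadic annulus of scale $2^{-n_e}$. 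One then groups scale assignments according to the induced forest/hierarchy structure on $\ds{V}$: for a threshold $n$, the connected components of the subgraph of edges with $n_e \geq n$ define nested subsets $\bar{\ds{V}}$, and the classical Hepp-sector argument organises $\sum_{\mathbf n}$ over these sectors. This is exactly where Lemma~\ref{lem:HQ} (the general bound on labelled rooted binary trees) is invoked: after integrating out the innermost vertices first, each integration over a vertex $z_v$ in a cluster at scale $2^{-n}$ contributes a factor $2^{-n|\s|}$, each kernel contributes $2^{a_e n_e}$ from \eqref{e:boundKn}, and each Taylor remainder associated with $r_e$ contributes the gain $2^{(r_e-1)n}$ or the loss controlled by the $\mathbbm{1}_{\{v_e\in\bar{\ds{V}}\}}$ bookkeeping.

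The two inequalities in Assumption~\ref{ass:grph} are precisely the conditions that make the resulting geometric sums over each sector convergent. Concretely, \eqref{eq:ass2e1} ensures that when one contracts an interior cluster $\bar{\ds{V}}$ (all of whose vertices sit at a common fine scale) the net power of $2^{-n}$ is strictly negative, so summing the finest scale to $+\infty$ converges and produces a residual kernel on the contracted graph whose singularity order is still controlled — this is the ``renormalisation'' estimate, with the $v_e$-dependence entering because a Taylor subtraction only helps inside the cluster when its base point $v_e$ lies in $\bar{\ds{V}}$. The companion inequality \eqref{eq:ass2e2} controls the opposite regime, where a cluster not touching $\ds{V}_\star$ is integrated over a \emph{large} ball: here one needs the total homogeneity to exceed $|\bar{\ds{V}}||\s|$ so that the integral is dominated by its behaviour at the outer scale rather than the inner scale. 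Combining the per-sector bounds and summing over the finitely many forest shapes yields $\CI^\ds{G}_{\mathbf n}$ summed over $\mathbf n$ bounded by a constant times $\lambda^{\tilde\alpha}$, where the exponent $\tilde\alpha = |\s||\ds{V}\setminus\ds{V}_\star| - \sum_e a_e$ is simply the overall scaling dimension read off from the $M$ test functions at scale $\lambda$ and the $|\ds{V}_0|$ integration variables.

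For the case $\mu<2^{-N}=\lambda$ I would run essentially the same sector decomposition, but now the cutoff scale $2^{-N}$ acts as an additional ``floor'': kernels $K_e^{(N)}$ and the anisotropy of $\phi_{\lambda,\mu}$ in the time variable mean that the finest scales are truncated at $N$ (resp.\ that the time integration is effectively restricted to a box of size $\mu^{|\s_1|}$). The point is that truncating the geometric sums at $n=N$ can only improve convergence in the directions already controlled by \eqref{eq:ass2e1}, while \eqref{eq:ass2e2} is unaffected since it concerns large scales; one checks that the anisotropic rescaling does not change the leading power of $\lambda$ because the extra $\mu$-factors in the definition of $\phi_{\lambda,\mu}$ are exactly compensated. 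I would handle this by re-deriving the per-vertex and per-kernel weights with the $(\lambda,\mu)$-split norms and observing that every estimate used in the $\lambda=\mu$ case degrades monotonically, so the same final exponent survives.

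The main obstacle I expect is the bookkeeping around $v_e$: unlike in \cite{HQ15}, where recentring is always at the distinguished vertex $0$, here $v_e$ can be any vertex, so the Taylor-subtracted kernel $\hat K_e$ is genuinely a three-point object and its scale behaviour depends on the relative positions of $e_-$, $e_+$, and $v_e$ within the current cluster. Getting the indicator functions $\mathbbm{1}_{\{v_e\in\bar{\ds{V}}\}}$ in \eqref{eq:ass2e1}--\eqref{eq:ass2e2} to match exactly the combinatorics of which Taylor gains/losses are available in each sector — and verifying that the concatenation convention (at most one edge with nonzero $r_e$ between a given pair of vertices) is compatible with the sector analysis — is the delicate part, and it is where the proof genuinely extends rather than merely transcribes the argument of \cite{HQ15}. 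Everything else is a careful but routine adaptation of the Hepp-sector/rooted-tree machinery, with Lemma~\ref{lem:HQ} doing the heavy lifting once the weights are correctly assigned.
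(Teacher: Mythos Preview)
Your proposal is essentially correct and follows the same overall architecture as the paper: multiscale decomposition of the kernels, organisation of scale assignments by a hierarchical structure on $\ds{V}$, per-sector bounds feeding into Lemma~\ref{lem:HQ}, with Assumption~\ref{ass:grph} supplying exactly the two sign conditions needed for the geometric sums. Your identification of the $v_e$-bookkeeping as the crux is spot on.

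Two points where the paper's implementation differs from your sketch and fills in what you leave vague. First, the paper does not use the Hepp forest language but instead partitions the integration domain by \emph{labelled rooted binary trees} $(T,\ell,\ell^T)\in\mathbb{T}_{\lambda,\mu}(\ds V)$ on the vertex set (Section~3.2). More importantly, to handle the three-point kernel $\hat K_e$ when $v_e\neq 0$, the paper assigns to such an edge a \emph{triple} scale index $\mathbf n_e=(k,p,m)$ tracking the dyadic sizes of $z_{e_+}-z_{e_-}$, $z_{v_e}-z_{e_-}$, and $z_{e_+}-z_{v_e}$ separately (Definition~\ref{def:Kn}). The supremum bound on $\hat K_e^{(k,p,m)}$ then splits into three cases according to whether $e_+\wedge v_e$, $e_\uparrow$, or $e_-\wedge v_e$ sits deepest in the binary tree --- the configurations $A^+,A,A^-$ in \eqref{eq:config} --- and each case yields a different placement of the weights $a_e$, $r_e$ on the interior nodes of $T$; this is encoded in the piecewise formula \eqref{eq:etaa} for $\eta_e$. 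Lemma~\ref{lem:etasat} then checks, case by case across these configurations, that the resulting $\eta$ satisfies the sign conditions of Lemma~\ref{lem:HQ}, and it is precisely here that the indicator functions $\mathbbm{1}_{\{v_e\in\bar{\ds V}\}}$ from Assumption~\ref{ass:grph} are matched against the combinatorics. Your proposal anticipates that this matching is the hard part but does not give the mechanism; the triple-index plus three-configuration device is the concrete answer.

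Second, for the regime $\mu<2^{-N}=\lambda$ the paper does not simply truncate the sums at $n=N$ as you suggest; instead it carries a dual labelling $(\ell,\ell^T)$ on the tree, with $\ell^T$ recording a separate time-scale on the nodes $T^\circ_\star$ that are ancestors of pairs of distinguished vertices, and introduces a second weight function $\eta^T$ \eqref{eq:etaT}. The factor $(\mu^{|\s_1|}2^{N|\s_1|})^M$ in Theorem~\ref{lem:wantedBound2} is then what compensates the anisotropic test-function norm. Your heuristic that the $\mu$-factors ``exactly compensate'' is the right intuition, but the compensation runs through this extra $\eta^T$ machinery rather than a monotonicity argument.
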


We do not prove directly Theorem~\ref{th:HQbnd} but we rewrite it via a multi-scale decomposition of the kernels
$ K_e $. We need to introduce some notations to present this expansion. The first building block is to define a cutoff function $\Psi^{(n)}$ for every $ n \in \mathbb{Z} $ using a smooth function supported on $[3/8,1]$, $\psi:\mathbb{R}\rightarrow [0,1]$ such that $\sum_{n\in\mathbb{Z}}\psi(2^nx)=1 \text{ for every } x \neq 0$. We set for every $n\in\mathbb{N}$: $\Psi^{(n)}=\psi(2^nx)$.

\begin{definition}\label{def:Kn}
For any $\textbf{n}_1:\ds{E}\setminus\ds{E}_\star\mapsto\{0,1,\hdots,N-1,N\}^3$ and $e\in\ds{E}\setminus\ds{E}_\star$, we define a function $\hat{K}_e^{(\textbf{n}_1(e))}(z_1,z_2)$ as follows: if $r_e \le 0$ then $\hat{K}_e^{(\textbf{n}_1(e))}=0$ unless $\textbf{n}_1=(k,0,0)$ in which case $\hat{K}_e^{(\textbf{n}_1(e))}=K^{(k)}_e(z_2-z_1)$; if $r_e > 0$, $\hat{K}_e^{(\textbf{n}_1(e))}=0$, unless $\mathbf{n}=(k,p,m)\in\{0,1,\hdots,N-1\}^3$, in which case we have:
\begin{equs}
\hat{K}_e^{(k,p,m)}(z_1,z_2)&=\Psi^{(k)}(z_2-z_1)\Psi^{(p)}(z_{v_e}-z_1)\Psi^{(m)}(z_2-z_{v_e}) \\ &\quad\Bigl(K_e(z_2-z_1)-\sum_{|j|_{\mathfrak{s}}<r_e}\frac{(z_2-z_{v_e})^{j}}{j!}D^jK_e(z_{v_e}-z_1)\Bigr),
\end{equs}
or $\textbf{n}_1(e)=(N,0,0)$ in which case:
$$\hat{K}^{(\textbf{n}_1(e))}_e(z_1,z_2)=\Psi^{(N)}(z_2-z_1)K_e(z_2-z_1).$$
\end{definition}
Note that the main difference in our treatment is in defining $\hat{K}_e^{(k,p,m)}$. Finally to avoid issues of differentiability at $t=0$, we also define for a function $\textbf{n}_2:\ds{E}_\star\mapsto\{0,1,\hdots\lceil|\log_2(\mu)|\rceil\}^3$ and $e\in\ds{E}_\star$, the kernel $\hat{K}_e^{(\textbf{n}_2(e))}$ as being zero unless $\textbf{n}_2(e)=(k,0,0)$ in which case $\hat{K}_e^{(\textbf{n}_2(e))}=\Psi^{(k)}(z_2-z_1)K^{(k)}_e(z_2-z_1)$.
The multiplicity in the definitions allows the following definition:
\begin{equation}\label{def:kayhat}
\hat{K}^{(\textbf{n})}(z) = \prod_{e \in \ds{E}} \hat{K}_e^{(\textbf{n}_e)}(z_{e_-},z_{e_+}),
\end{equation}
where we have defined:
$$\textbf{n}_e=\begin{cases}
\textbf{n}_1(e)\mbox{ if }e\in\ds{E}\setminus\ds{E}_\star \\
\textbf{n}_2(e)\mbox{ if }e\in\ds{E}_\star
\end{cases}$$
Whenever $\textbf{n}$ is given in such a form we write $\textbf{n}=\textbf{n}_1\cup\, \textbf{n}_2$. For $ 0 < \mu < \lambda = 2^{-N} \le 1$, we set:
\begin{equation}
\CN_{\lambda,\mu}\coloneqq\{ 
\textbf{n}=\textbf{n}_1\sqcup\textbf{n}_2 \colon\, 2^{-|\textbf{n}_2(e_{\star,i})|}\leq \mu,\,  i=1,\ldots,M\}\;,
\end{equation}
And then finally:
\begin{equation}\label{eq:bigsum}
\CI_{\lambda,\mu}^\ds{G}(K) \coloneqq \sum_{\textbf{n} \in \CN_{\lambda,\mu}} \int_{(\mathbb{R}\times\mathbb{T}_N^d)^{\ds{V}_0}} \hat{K}^{(\textbf{n})}(z)\,dz\;.
\end{equation}
The benefit of the above definition is that the following theorem, when established will give us Theorem~\ref{th:HQbnd}. Indeed assume that we know the following theorem to be true. Then the facts that test functions can be interpreted as kernels with $a_e = 0$ and that $\|K_{e}\|_{a_e;\,p}\lesssim\lambda^{-|\s\setminus \s_1|}\mu^{-|\s_1|}$ allow us to conclude.
\begin{theorem}
\label{lem:wantedBound2}
Under the same assumptions as in Theorem~\ref{th:HQbnd}, the 
inequality 
\begin{equation}\label{e:wantedBound2}
|\CI_{\lambda,\mu}^\ds{G}(K)| \leq C\lambda^\alpha (\mu^{|\s_1|}2^{N|\s_1|})^M\prod_{e\in\ds{E}}\|K_e\|_{a_e;p},
\end{equation}
holds. 
If $\lambda=\mu$, we also have:
\begin{equation}\label{e:wantedBound21}
|\CI_{\lambda,\mu}^\ds{G}(K)| \leq C\lambda^\alpha \prod_{e\in\ds{E}}\|K_e\|_{a_e;p}.
\end{equation}
In both cases $\alpha =|\s| |\ds{V}_0| - \sum_{e \in \ds{E}} a_e$.
\end{theorem}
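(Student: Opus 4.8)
The strategy is the classical multi-scale (Hepp-sector) bound, adapted from \cite[Appendix A]{HQ15} to the present generalised setting, the key new point being that the recentring vertex $v_e$ is now allowed to be arbitrary rather than the distinguished vertex $0$. First I would fix a scale assignment $\textbf{n}\in\CN_{\lambda,\mu}$ and estimate the single integral $\int \hat K^{(\textbf{n})}(z)\,dz$ by reducing it to a sum over rooted binary (more precisely, spanning) trees on the vertex set $\ds{V}_0\cup\ds{V}_\star$. The mechanism is the standard one: each kernel factor $\hat K_e^{(\textbf n_e)}$ is supported, by Definition~\ref{def:Kn}, in a region where $\|z_{e_+}-z_{e_-}\|_\s$, $\|z_{v_e}-z_{e_-}\|_\s$ and $\|z_{e_+}-z_{v_e}\|_\s$ are each comparable to a dyadic scale; organising the vertices into a hierarchy according to which pairs are forced close at which scale produces a tree, and on each such tree one integrates out leaves toward the root, picking up at each vertex the volume factor $2^{-|\s|\,n}$ against the product of kernel bounds from \eqref{e:boundKn}. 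The bound $|D^kK_e^{(n)}|\lesssim \|K_e\|_{a_e;p}2^{(a_e+|k|_\s)n}$ is what enters here, and for the recentred kernels with $r_e>0$ the Taylor remainder gains the extra factor $2^{-r_e\,p}$ or $2^{r_e\,m}$ depending on whether $\|z_{e_+}-z_{v_e}\|_\s$ is below or above $\|z_{e_+}-z_{e_-}\|_\s$, exactly as encoded in the two indicator terms of \eqref{eq:ass2e1}--\eqref{eq:ass2e2}. This is precisely Lemma~\ref{lem:HQ}, which I would invoke rather than reprove.

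Second, I would carry out the summation over $\textbf{n}\in\CN_{\lambda,\mu}$. After the tree bound, the summand is a product over tree edges of a geometric-type expression in the scale differences; the power-counting Assumption~\ref{ass:grph} is tailored so that every partial sum converges. Concretely, \eqref{eq:ass2e1} (the ``renormalisation/divergence'' condition applied to subsets $\bar{\dsV}$ containing at least two vertices) guarantees that the sum over scales associated to any sub-hierarchy converges at the ``small scales'' end, which is where divergences would otherwise occur; the exponent there is exactly $(|\bar{\dsV}|-1)|\s|-\sum_{\ds E_0(\bar{\dsV})}\hat a_e-(\text{recentring corrections})>0$. Meanwhile \eqref{eq:ass2e2} (the condition on subsets disjoint from $\ds{V}_\star$) controls the ``large scales'' end near the test-function vertices. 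Summing the geometric series over all of $\CN_{\lambda,\mu}$ then yields $\lambda^{\alpha}$ with $\alpha=|\s||\ds V_0|-\sum_e a_e$, and the constraint $2^{-|\textbf n_2(e_{\star,i})|}\le\mu$ in the definition of $\CN_{\lambda,\mu}$ contributes, for each of the $M$ distinguished edges, an extra factor bounded by $(\mu^{|\s_1|}2^{N|\s_1|})$, giving the stated $(\mu^{|\s_1|}2^{N|\s_1|})^M$; in the regime $\lambda=\mu=2^{-N}$ this last factor is $O(1)$ and one recovers \eqref{e:wantedBound21}.

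The main obstacle, and the place where genuinely new work beyond transcribing \cite{HQ15} is required, is the bookkeeping of the recentring vertex $v_e$ when $v_e$ is \emph{not} the root of the sub-hierarchy under consideration. In \cite{HQ15} one always recenters at $0$, so the Taylor-remainder improvement is automatically ``visible'' to every subset containing $0$; here a remainder attached to edge $e$ only helps a subset $\bar{\dsV}$ when $v_e\in\bar{\dsV}$, which is exactly why the indicators $\mathbbm 1_{\{v_e\in\bar{\dsV}\}}$ appear in Assumption~\ref{ass:grph} and why the clustering argument must track, for each edge, whether its recentring point has already been ``absorbed'' into the current cluster. I would handle this by splitting, for each edge with $r_e>0$, the scale sum into the two regimes $\{m\le p\}$ (remainder below kernel scale: use $2^{-r_ep}$) and $\{m>p\}$ (remainder above: use $2^{r_em}$ together with the extra factor $2^{-r_e\cdot(\text{something})}$ coming from $\Psi^{(m)}$), and checking case by case that in each regime the relevant exponent in \eqref{eq:ass2e1} or \eqref{eq:ass2e2} is the one governing convergence; the subtlety is purely combinatorial and amounts to verifying that the worst case for every sub-hierarchy is covered by one of the two displayed inequalities. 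Once that verification is in place, the geometric summation and the assembly of the final exponent $\alpha$ are routine.
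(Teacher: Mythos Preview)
Your overall architecture matches the paper's: pass from the sum over $\CN_{\lambda,\mu}$ to a sum over labelled binary trees (Lemma~\ref{lem:intbnd}), bound each integrand by a volume factor times $\sup_z|\hat K^{(\mathbf n)}(z)|$ (Lemma~\ref{lem:tildeK} and \eqref{eq:secondmainstep}), bound that supremum edge by edge (Lemma~\ref{lem:Knebnd}), and then sum the resulting product $\prod_{v\in T^\circ}2^{-\ell_v\tilde\eta(v)-\ell_v^T\eta^T(v)}$ via Lemma~\ref{lem:HQ}. So the strategy is the same.

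There is, however, a concrete imprecision in your handling of the recentred kernel that would cause trouble if carried through as written. You propose a \emph{two}-regime split, sometimes phrased as $m$ versus $k$ and sometimes as $m$ versus $p$; the paper instead distinguishes \emph{three} configurations $A^+,A,A^-$ according to which pair among $\{e_+,e_-,v_e\}$ sits deepest in the binary tree (equivalently, whether $m\gg k$, $k\gg m$, or $k\sim m$). The case you are missing is $A^-$, where $e_-\wedge v_e$ lies below $e_\uparrow$: there the correct bound is $\sup|\hat K_e^{(k,p,m)}|\lesssim 2^{(a_e+r_e-1)p-(r_e-1)m}$, and it is precisely this estimate that produces the coefficient $(a_e+r_e-1)$ attached to $\mathbbm 1_{\{v_e\in\bar{\mathds V}\}}$ in \eqref{eq:ass2e1}. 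Your proposed gains $2^{-r_ep}$ and $2^{r_em}$ do not match any of the three bounds actually needed.

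Second, you say you would ``invoke Lemma~\ref{lem:HQ} rather than reprove'' it, but the real work is not in Lemma~\ref{lem:HQ} itself; it is in defining the exponent function $\eta=\sum_e\eta_e$ on $T^\circ$ from the three configurations above (equation~\eqref{eq:etaa}) and then verifying that this $\eta$ satisfies the two sign conditions of Lemma~\ref{lem:HQ}. That verification (Lemma~\ref{lem:etasat}) is exactly where Assumption~\ref{ass:grph} is consumed: one tabulates, for each configuration $A^+,A,A^-,A_0$ and each of $\ds E_0,\ds E^\uparrow,\ds E^\downarrow$, the contribution of $\eta_e$ to $\sum_{u\ge v}\tilde\eta(u)$ and to $\sum_{u\ngeq v}\tilde\eta(u)$, and reads off \eqref{eq:ass2e1} and \eqref{eq:ass2e2} line by line. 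Your plan gestures at this (``checking case by case'') but does not identify it as the crux, and with only two regimes the table would not close.
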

 In the next section we prove Theorem~\ref{lem:wantedBound2} and hence Theorem~\ref{th:HQbnd}. 

\subsection{Partition of Integral Domain}

As in \cite{HQ15}, we want to associate to every point $z\in(\mathbb{R}\times\mathbb{T}^d_N)^{{\mathds{V}}}$ a rooted labelled binary tree, where by rooted tree we mean a connected graph without cycles, which has a distinguished node called the root, which is labelled in the sense that a mapping from the set of its edges into reals exists and finally binary in the usual graph theoretic way.

We denote by $\mathcal{B}({\mathds{V}})$ the set of all labelled rooted binary trees which have ${\mathds{V}}$ as their set of leaves. We further define a partial order by saying that for inner nodes $\nu$ and $\omega$ in a tree in $\mathcal{B}(\ds{V})$, $\nu \ge \omega$ means that $\omega$ lies on the shortest path from the root to $\nu$. We write $\nu\wedge\omega$ for the most recent ancestor of $\nu$ and $\omega$. The main departure here from \cite{HQ15} which has been introduced in \cite{MH21}, is that when $\nu$ is the most recent ancestor of two elements in $\ds{V}_\star$, in lieu of the usual node labelling $\ell$ we assign to it the pair of node labels: $(\ell^T_\nu,\ell_\nu)=(\lceil|\log_2(\mu)|\rceil,N)$. This change is necessitated by the fact that under the scale $2^{-N}$ space does not matter and the time variable does not need to be controlled beyond scale $\mu$. Now we may construct our trees so as to be able to impose:
$$\ell^\alpha(v) \ge \ell^\beta(w) \text{ for } v\ge w,$$
where the $\alpha\text{ and }\beta$ can either be empty (in which case we mean $\ell$) or it could be $T$ (by which we mean $\ell^{T}$). The set of labelled trees that is constructed in this manner is denoted by $\mathbb{T}(\dsV)$ with a general element denoted by $(T,\ell,\ell^T)$.

\begin{example}
Consider for example $\dsV = \{v_1,v_2,v_3\}$. The following diagram is of one possible example of a rooted tree on it (with $r$ denoting the root).
\begin{center}
    \begin{tikzpicture}[scale=0.75]
\node at (0,0) [circle,fill=black, inner sep=0pt, minimum size=1.5mm, label=left:$r$] (a) {};
\node at (0,-1) [circle,fill=black, inner sep=0pt, minimum size=1.5mm, label=left:$v'$] (b) {};
\node at (-1,-2) [circle,fill=black, inner sep=0pt, minimum size=1.5mm, label=left:$v_1\wedge v_2$] (c) {};
\node at (-2,-3) [circle,draw=black,fill=gray, inner sep=0pt, minimum size=1.5mm, label=left:$v_1$] (e) {};
\node at (0,-3) [circle,draw=black,fill=gray, inner sep=0pt, minimum size=1.5mm, label=left:$v_2$] (f) {};
\node at (2,-3) [circle,draw=black,fill=gray, inner sep=0pt, minimum size=1.5mm, label=left:$v_\star$] (g) {};
\draw[->, snake=snake, segment amplitude = 0.5mm, line after snake = 0.5mm, shorten >=2pt,shorten <=2pt] (a) -- (b);
\draw[shorten >=2pt,shorten <=2pt] (b) -- (c);
\draw[shorten >=2pt,shorten <=2pt] (c) -- (e);
\draw[shorten >=2pt,shorten <=2pt] (c) -- (f);
\draw[shorten >=2pt,shorten <=2pt] (b) -- (g);
\end{tikzpicture}
\end{center}
Here for example we have $\ell^\alpha_{v_1\wedge v_2}\ge\ell^\beta_{v'}$ exactly because $v'$ lies on the path from $r$ to $v_1\wedge v_2$.
\end{example}

 The labelled trees $(T, \ell,\ell^T)$ partition the domain of integration and the labelling $\ell$ is further such that for $v, \ w\in\mathds{V}$, we have:
\begin{equs}
\label{eq:labelbnd}\|z_\nu-z_\omega\|_{\mathfrak{s}}\thicksim 2^{\colb{-}\ell_{\nu\wedge 
\omega}}.
\end{equs}
For the above bound we have to replace $\ell_{v\wedge w}$ by $\ell_{v\wedge w}^T$ if $v,w\in\ds{V}_\star$.
\begin{definition}
Set $c := \log|\dsV| + 2$. We define $\mathcal{N}(T,\ell,\ell^T)$ as the collection of all functions
$\mathbf{n}=\mathbf{n}_1\cup\mathbf{n}_2$ such that:
\begin{itemize}
\item for every edge $e = (v,w)\in\ds{E}\setminus\ds{E}_{\star}$ if one has $\textbf{n}_e = (k,0,0)$ with
$|k - \ell_{v\wedge w}| \le c$, alternatively if one has $\textbf{n}_e = (k,p,m)$ with
$|k - \ell_{v\wedge w}| \le c$, $|p - \ell_{v\wedge v_e}| \le c$, and
$|m - \ell_{w\wedge v_e}| \le c$;
\item for every edge $e=(v,w)\in\ds{E}_{\star}$ with $\textbf{n}_e=(k,0,0)$  one has 
$|k - \ell_{v\wedge w}| \le c$.
\end{itemize}
\end{definition}

With this definition, one has the following result:

\begin{lemma}\label{lem:kfcn}
For every $\mathbf{n}:\hat{\mathds{E}}\rightarrow \mathbb{N}^3$ such that $\hat{K}^{(n)}$ as defined as before is non-vanishing, there exists an element $(T,\,\ell,\,\ell^T)\in\mathcal{B}(\hat{\mathds{V}})$ with $\mathbf{n}\in\mathcal{N}(T,\,\ell,\,\ell^T)$.
\end{lemma}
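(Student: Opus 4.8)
The goal is to show that any multi-scale index $\mathbf{n}$ for which the product kernel $\hat K^{(\mathbf n)}$ is non-zero is compatible with (i.e. lies in $\mathcal N(T,\ell,\ell^T)$ for) some labelled binary tree built on the leaf set $\hat{\mathds{V}}$. The plan is to reconstruct the tree directly from the support constraints that non-vanishing of $\hat K^{(\mathbf n)}$ imposes, following the strategy of \cite[Appendix A]{HQ15} but tracking the extra pair of scale indices $(p,m)$ attached to each recentred edge and the doubled node labels $(\ell^T,\ell)$ at ancestors of $\mathds V_\star$-points.

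First I would extract the geometric content of ``$\hat K^{(\mathbf n)}\neq 0$''. For each edge $e=(v,w)\notin\mathds E_\star$ with $\mathbf n_e=(k,p,m)$, the cutoffs $\Psi^{(k)},\Psi^{(p)},\Psi^{(m)}$ force $\|z_w-z_v\|_\s\sim 2^{-k}$, $\|z_{v_e}-z_v\|_\s\sim 2^{-p}$, $\|z_w-z_{v_e}\|_\s\sim 2^{-m}$ (and for $r_e\le 0$ or the top scale only the first of these), and similarly for $e\in\mathds E_\star$. Taking the union over all edges I obtain a finite collection of ``distance is comparable to a dyadic scale'' constraints on the tuple $(z_v)_{v\in\hat{\mathds V}}$. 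The key combinatorial fact is that such a system of pairwise-distance constraints on a finite metric-like configuration is encoded by an ultrametric-type hierarchy: define for $v\neq w$ the quantity $\ell_{v\vee w}$ as (the integer nearest to) $-\log_2\|z_v-z_w\|_\s$; the ``max'' structure of $\|\cdot\|_\s$ gives the ultrametric inequality up to an additive constant $c=\log|\dsV|+2$, which is exactly why $c$ is defined that way. From this hierarchy I build $T$ as the binary tree realising it: recursively split the current vertex set at the coarsest scale, introducing an inner node for each split, so that $\nu\wedge\omega$ in $T$ carries the label $\ell_{\nu\wedge\omega}$ agreeing with $\ell_{v\vee w}$ for leaves $v,w$ under it. Binarity is arranged by breaking ties arbitrarily (any two leaves at the same scale get an inner node), which is harmless because of the slack $c$.

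Next I would install the doubled labels. When an inner node $\nu$ is the most recent common ancestor of two leaves both lying in $\mathds V_\star$, I assign it $(\ell^T_\nu,\ell_\nu)=(\lceil|\log_2\mu|\rceil,N)$ as prescribed; one must check this is consistent with the monotonicity requirement $\ell^\alpha(v)\ge\ell^\beta(w)$ for $v\ge w$, which holds because the $\mathds E_\star$-edges all sit at scales $\le N$ in space and $\le\lceil|\log_2\mu|\rceil$ in time by the definition of $\CN_{\lambda,\mu}$, so any such $\mathds V_\star$-ancestor is indeed a ``deepest'' node and its label dominates everything below appropriately. Then I verify $\mathbf n\in\mathcal N(T,\ell,\ell^T)$ edge by edge: for $e=(v,w)$ with $\mathbf n_e=(k,p,m)$ I need $|k-\ell_{v\wedge w}|\le c$, $|p-\ell_{v\wedge v_e}|\le c$, $|m-\ell_{w\wedge v_e}|\le c$. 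The first is immediate since $\ell_{v\wedge w}=\ell_{v\vee w}\approx k$ by construction; the second and third follow because $2^{-p}\sim\|z_{v_e}-z_v\|_\s$ and $\ell_{v\wedge v_e}$ was defined to track $-\log_2\|z_v-z_{v_e}\|_\s$ up to the same additive slack. The edges in $\mathds E_\star$ and the $r_e\le 0$ / top-scale cases are handled identically with only the single index present.

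The main obstacle I anticipate is purely bookkeeping rather than conceptual: the recentred kernel $\hat K_e^{(k,p,m)}$ involves a \emph{difference} $K_e(z_2-z_1)-\sum_{|j|_\s<r_e}\frac{(z_2-z_{v_e})^j}{j!}D^jK_e(z_{v_e}-z_1)$, so a priori one could worry that the product of the three $\Psi$'s, not the analytic factor, is what pins the scales — but that is exactly the point: the three cutoffs are present precisely to localise the three relevant distances, and non-vanishing of $\hat K^{(\mathbf n)}$ depends on the cutoffs alone, so the Taylor-difference structure plays no role in this lemma. The one genuine subtlety is ensuring that the three scale constraints per recentred edge are mutually compatible with a \emph{single} ultrametric hierarchy on $\hat{\mathds V}$ together with the distinguished vertex $v_e$ — this is where one uses that $v_e\in\dsV$ is itself one of the vertices carrying a coordinate $z_{v_e}$, so its distances to $v$ and $w$ are already part of the same configuration and the ultrametric comparison absorbs them. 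Once that is observed, the construction of $(T,\ell,\ell^T)$ and the verification of membership in $\mathcal N(T,\ell,\ell^T)$ are routine, and the lemma follows.
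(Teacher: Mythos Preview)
Your proposal is correct and follows precisely the approach the paper indicates: the paper does not give a self-contained proof but states that the lemma ``is the extension of \cite[Lemma A.9]{HQ15} and is proven in virtually the same manner,'' and your sketch does exactly this, adapting the ultrametric-hierarchy construction of HQ15 while tracking the additional indices $(p,m)$ for recentred edges and the doubled labels $(\ell^T_\nu,\ell_\nu)$ at ancestors of $\mathds V_\star$-points. Your observation that only the support of the $\Psi$-cutoffs (not the Taylor-difference factor) is relevant for this lemma is the right structural point, and the inclusion of $v_e$ as an actual vertex of $\mathds V$ is indeed what makes the three per-edge scale constraints fit into a single hierarchy.
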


\begin{definition}
Denote by $\mathbb{T}_{\lambda,\mu}(\ds{V})$ those subsets in $\mathbb{T}(\ds{V})$ such that $2^{-\ell^T_{v\wedge w}}\leq\mu$ for all $v,w\in\ds{V}_\star$.
 \end{definition}

One can now use Lemma~\ref{lem:kfcn} (which is the extension of \cite[Lemma A.9]{HQ15} and is proven in virtually the same manner) to turn the sum over $\mathcal{N}_\lambda$ into a sum over $\mathbb{T}_{\lambda,\mu}(\dsV)$:

\begin{lemma}\label{lem:intbnd} 
For $\CI_{\lambda,\mu}^{G}(K)$ as in \ref{eq:bigsum}, we have the following bound
\begin{equation}
|\CI_{\lambda,\mu}^{G}(K)|\lesssim \sum_{(T,\ell,\ell^T)\in \mb{T}_{\lambda,\mu}(\ds{V})}\sum_{\mathbf{n}\in\CN(T,\ell,\ell^T)}
\Big|\int_{(\mathbb{R}\times\mathbb{T}_N^d)^{\ds{V}_0}}\hat{K}^{(\textbf{n})}(z)\, dz\Big|.
\end{equation}
\end{lemma}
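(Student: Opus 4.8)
The plan is to obtain the bound as an essentially formal consequence of Lemma~\ref{lem:kfcn} and the triangle inequality, by re-indexing the finite sum over scale-assignments $\mathbf{n}$ appearing in \eqref{eq:bigsum} so that it is organised first by labelled binary trees.

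First I would apply the triangle inequality to \eqref{eq:bigsum}, giving $|\CI_{\lambda,\mu}^{\ds{G}}(K)|\le \sum_{\mathbf{n}\in\CN_{\lambda,\mu}}\bigl|\int_{(\mathbb{R}\times\mathbb{T}_N^d)^{\ds{V}_0}}\hat{K}^{(\mathbf{n})}(z)\,dz\bigr|$, and then discard every $\mathbf{n}$ for which $\hat{K}^{(\mathbf{n})}\equiv 0$, since those terms contribute nothing. For each remaining $\mathbf{n}$, Lemma~\ref{lem:kfcn} furnishes a labelled rooted binary tree, which I will call $\Theta(\mathbf{n})=(T,\ell,\ell^T)$, on the leaf set $\ds{V}$, such that $\mathbf{n}\in\CN(T,\ell,\ell^T)$; when several such trees are available, fixing any one of them suffices. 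Because $\hat{K}^{(\mathbf{n})}$ is a product of functions each supported on a prescribed dyadic annulus, its non-vanishing forces the scale labels to be compatible along every root-to-leaf path, so the tree returned by Lemma~\ref{lem:kfcn} in fact satisfies the monotonicity $\ell^\alpha(v)\ge \ell^\beta(w)$ for $v\ge w$ and thus lies in $\mathbb{T}(\ds{V})$, not merely in $\mathcal{B}(\ds{V})$.

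The one point meriting a little care is that $\Theta(\mathbf{n})\in\mb{T}_{\lambda,\mu}(\ds{V})$ whenever $\mathbf{n}\in\CN_{\lambda,\mu}$. This is where the constraint $2^{-|\mathbf{n}_2(e_{\star,i})|}\le\mu$ defining $\CN_{\lambda,\mu}$ is used: by construction the time-label $\ell^T_\nu$ of any node $\nu$ that is the most recent common ancestor of two leaves in $\ds{V}_\star$ is pinned to the value $\lceil|\log_2(\mu)|\rceil$ (as specified in the paragraph preceding the example in this subsection), whence $2^{-\ell^T_{v\wedge w}}=2^{-\lceil|\log_2\mu|\rceil}\le 2^{-|\log_2\mu|}=\mu$ for all $v,w\in\ds{V}_\star$, which is exactly the membership condition for $\mb{T}_{\lambda,\mu}(\ds{V})$. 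Grouping the surviving terms by the value of $\Theta$ and using that $\{\mathbf{n}:\Theta(\mathbf{n})=(T,\ell,\ell^T)\}\subseteq\CN(T,\ell,\ell^T)$ by the defining property of $\Theta$, we get
\[
\sum_{\substack{\mathbf{n}\in\CN_{\lambda,\mu}\\ \hat{K}^{(\mathbf{n})}\neq 0}}\Big|\int \hat{K}^{(\mathbf{n})}(z)\,dz\Big|
= \sum_{(T,\ell,\ell^T)\in\mb{T}_{\lambda,\mu}(\ds{V})}\ \sum_{\mathbf{n}:\,\Theta(\mathbf{n})=(T,\ell,\ell^T)}\Big|\int \hat{K}^{(\mathbf{n})}(z)\,dz\Big|
\le \sum_{(T,\ell,\ell^T)\in\mb{T}_{\lambda,\mu}(\ds{V})}\ \sum_{\mathbf{n}\in\CN(T,\ell,\ell^T)}\Big|\int \hat{K}^{(\mathbf{n})}(z)\,dz\Big|,
\]
which is precisely the asserted inequality.

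In other words, the whole argument is bookkeeping built on Lemma~\ref{lem:kfcn}; there is no genuine analytic obstacle at this stage. The only subtlety — and the place where the present discrete/anisotropic setup differs from the $\lambda=\mu$ situation of \cite{HQ15} — is in the second paragraph, namely checking that the binary trees produced from $\mathbf{n}\in\CN_{\lambda,\mu}$ land in the restricted class $\mb{T}_{\lambda,\mu}(\ds{V})$ carrying the correct frozen time-labels $\ell^T=\lceil|\log_2\mu|\rceil$. The substantive work, i.e. estimating each tree-indexed integral $\int\hat{K}^{(\mathbf{n})}(z)\,dz$ and summing the resulting geometric series over $\mb{T}_{\lambda,\mu}(\ds{V})$, is carried out in the next section.
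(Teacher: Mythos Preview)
Your argument is correct and is precisely the standard bookkeeping that the paper has in mind: it does not prove the lemma itself but simply refers to \cite[Lemma~A.9]{MH21}, and that reference carries out exactly the triangle-inequality-plus-Lemma~\ref{lem:kfcn} re-indexing you describe. Your extra paragraph verifying that the trees land in $\mb{T}_{\lambda,\mu}(\ds{V})$ via the frozen label $\ell^T=\lceil|\log_2\mu|\rceil$ is the one point specific to the $\mu<\lambda$ regime and is handled correctly.
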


For the proof of Lemma~\ref{lem:intbnd} one can refer to \cite[Lemma A.9]{MH21}.

As is usual, we represent by $T^\circ$ the interior nodes of $T$, while setting $T^\circ_\star$ as the set of nodes $\nu$ such that $\nu=v_\star\wedge w_\star$ for some $v_\star,\, w_\star\in\ds{V}_\star$, and $\CD(T,\ell,\ell^T)\subset(\mathbb{R}\times\mathbb{T}^d_N)^{\ds{V}}$ such that $\|z_v-z_w\|_\s\le|\ds{V}|2^{-\ell_{v\wedge w}}$, unless $v,\,w\in T^\circ_\star$ in which case we assume that $\|z_v - z_w\|_\s\le|\ds{V}|2^{-\ell^T_{v\wedge w}}$. In the same vein, we are able to define $\CN_{\lambda,\,\mu}(T^\circ)$, where instead of having a labelling fixed and the trees constituting the elements of the collection, we fix a tree and ask for all the labellings for which the previous construction holds true. With this notation we get the following volumetric bound:

\begin{lemma}\label{lem:tildeK}
With $\CD(T,\ell,\ell^{T})$ as above, the volume (in the lebesgue sense) of $\CD(T,\ell,\ell^T)$ is bounded, that is:
\begin{align}\label{eq:secondmainstep}\mu\left(\CD(T,\ell,\ell^{T})\right)\lesssim\prod_{v \in T^\circ\setminus T^{\circ}_{\star}}2^{-\ell_v |\s|}\prod_{v\in T^{\circ}_{\star}}2^{-\ell_v^T|\s_1|-\ell_v|\s\setminus\s_1|}
\end{align}
\end{lemma}
\begin{proof} The bound is derived in \cite[Lemma~A.11]{MH21}.
\end{proof}

We digress here to introduce the general construction that will be pivotal for the bounds we are looking for. Given a rooted binary tree $T$ with some distinguished leaves $0,v_{\star,1},\cdots,v_{\star,M}$ and a distinguished node $\nu_\star$, we set $T^\circ$ as the set of inner nodes of $T$ and $\CN_{\lambda,\mu}(T^\circ)$ has the same description as before. Given two functions $\eta,\;\eta^T:T^\circ\rightarrow\mathbb{R}$, we write:
\begin{equs}\CI_{\lambda,\mu}(\eta,\eta^T)=\sum_{(\ell,\ell^T)\in\CN_{\lambda,\mu}(T^\circ)}\prod_{\nu\in T^\circ}2^{-\ell_\nu\eta_\nu - \ell^T_\nu\eta^T_\nu}.
\end{equs}
Set $|\eta|=\sum_{\nu\in T^\circ} \eta_\nu$ and similarly for $\eta^T$. Then we have:
\begin{lemma}\label{lem:HQ}
If $\eta$ satisfies the following:
\begin{itemize}
\item For every $\nu\in {T}^\circ$, one has the $\sum_{v\ge \nu} \eta_v > 0$
\item For every $\nu\in {T}^\circ$ such that $\nu \le \nu_\star$, one has $\sum_{u \ngeqslant v} \eta_u < 0$, if this sum contains at least one term.
\end{itemize}
and $\eta^T$ is such that $\eta^T_\nu\neq 0\Leftrightarrow \ell^T > 0$ and that necessarily $\eta^T_v$ is positive in this case, then one has the bound $\mathcal{I}_{\lambda,\mu}(\eta,\eta^T)\lesssim\lambda^{|\eta|}\mu^{|\eta^T|}$, uniformly over $0<\mu<2^{-N}=\lambda$.
\end{lemma}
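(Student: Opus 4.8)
The plan is to prove Lemma~\ref{lem:HQ} by induction on the number of inner nodes $|T^\circ|$, peeling off one node at a time from the leaves, exactly in the spirit of the tree-summation argument of \cite[Appendix A]{HQ15} but tracking the extra time-only labels $\ell^T_\nu$ alongside the usual labels $\ell_\nu$. The base case $|T^\circ| = 1$ is immediate: there is a single node $\nu$, the first hypothesis forces $\eta_\nu > 0$, and $\sum_{\ell_\nu} 2^{-\ell_\nu \eta_\nu - \ell^T_\nu\eta^T_\nu}$ is a geometric series over $\ell_\nu$ ranging down to $\log_2\lambda^{-1} = N$ (and $\ell^T_\nu$ down to $\log_2\mu^{-1}$, when $\eta^T_\nu \neq 0$), giving $\lesssim \lambda^{\eta_\nu}\mu^{\eta^T_\nu}$ since all exponents are positive. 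Here I use that the constraint set $\CN_{\lambda,\mu}(T^\circ)$ enforces $2^{-\ell_\nu} \le \lambda$ and $2^{-\ell^T_\nu}\le \mu$, and the compatibility ordering $\ell^\alpha(v)\ge\ell^\beta(w)$ for $v\ge w$.

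For the inductive step I would select a \emph{maximal} inner node $\nu_0\in T^\circ$, i.e.\ one all of whose children are leaves. The sum over $\ell_{\nu_0}$ (and $\ell^T_{\nu_0}$) factors off, constrained only from below by the label of its parent $\nu_1 = $ (the node immediately below $\nu_0$), say $\ell_{\nu_0} \ge \ell_{\nu_1} - O(1)$, and from above by $N$. There are two regimes according to the sign of the partial sum that the first hypothesis controls at $\nu_0$, namely $S_{\nu_0} := \sum_{v\ge \nu_0}\eta_v = \eta_{\nu_0}$ (since $\nu_0$ is maximal). If $\eta_{\nu_0}>0$, summing the geometric series in $\ell_{\nu_0}$ downward from $N$ produces a factor $\lambda^{\eta_{\nu_0}}$ and leaves the remaining sum unchanged in structure on $T^\circ\setminus\{\nu_0\}$; if instead the second hypothesis is the binding one at some node $\nu\le\nu_\star$ with $\sum_{u\not\ge v}\eta_u<0$, the geometric series is summed \emph{toward} the parent label, producing a factor $2^{-\ell_{\nu_1}\eta_{\nu_0}}$ that gets absorbed into the $\nu_1$-exponent. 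In either case one must check that the reduced tree $T\setminus\{\nu_0\}$, with the modified exponent at $\nu_1$, still satisfies both hypotheses of the lemma — this is the bookkeeping heart of the argument and is where the two conditions on $\eta$ have been rigged precisely so that the induction closes. The time-label factors $\ell^T_\nu$ ride along passively: $\eta^T_\nu\neq 0$ only at nodes in $T^\circ_\star$ (ancestors of two starred leaves), these form a sub-path toward the root, and since each such $\eta^T_\nu > 0$ one always sums downward and picks up $\mu^{\eta^T_{\nu_0}}$ cleanly, never interacting with the $\lambda$-side because the constraint $2^{-\ell^T_\nu}\le\mu<2^{-N}=\lambda$ decouples the two scales.

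The main obstacle, as in \cite{HQ15}, will be verifying that after removing $\nu_0$ and shifting its weight onto the parent, the hypotheses are preserved — in particular that the strict inequality $\sum_{u\not\ge v}\eta_u < 0$ survives for all relevant $v$, and that no node ``below the cut'' acquires a nonpositive partial sum $\sum_{w\ge \nu}\eta_w$. This requires carefully distinguishing whether $\nu_0$ lies above or below $\nu_\star$, and in the latter case using that the negativity hypothesis applies to give room. I would handle this by a short case analysis on the position of $\nu_0$ relative to $\nu_\star$ and to the sub-path $T^\circ_\star$, mirroring the proof of \cite[Lemma A.10]{HQ15} and noting that the additional labels $\ell^T$ only ever help (they contribute positive exponents summed downward) and never obstruct the recursion. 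The uniformity in $0<\mu<2^{-N}=\lambda$ is automatic since every geometric series summed has ratio bounded away from $1$ by a constant depending only on the $\eta_\nu$'s, i.e.\ only on the graph structure and the labels $r_e$.
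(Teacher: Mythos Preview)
The paper does not give a proof of this lemma; it simply refers to \cite[Lemma~A.10]{MH21}, which is the two-scale extension of \cite[Lemma~A.10]{HQ15}. Your inductive node-peeling strategy is exactly the argument used there, and your observation that the $\ell^T$-labels decouple and contribute a clean $\mu^{|\eta^T|}$ (each $\eta^T_\nu>0$, sum downward) is correct.

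The case analysis at the inductive step is, however, muddled. At a maximal inner node $\nu_0$ the first hypothesis forces $\eta_{\nu_0}>0$, so the geometric sum $\sum_{\ell_{\nu_0}=\ell_{\nu_1}}^{N}2^{-\ell_{\nu_0}\eta_{\nu_0}}$ is dominated by its \emph{lower} endpoint and yields $\lesssim 2^{-\ell_{\nu_1}\eta_{\nu_0}}$, which is absorbed into the parent exponent --- it does \emph{not} produce $\lambda^{\eta_{\nu_0}}$. No $\lambda$-powers are extracted at side-branch nodes. The factor $\lambda^{|\eta|}$ appears only after all side branches have been collapsed onto the path from the root to $\nu_\star$: the second hypothesis guarantees that the accumulated exponent at each path node $\nu_j$ (which equals $\sum_{u\not\ge\nu_{j+1}}\eta_u$) is negative, so the sum over $\ell_{\nu_j}\le\ell_{\nu_{j+1}}$ is dominated by its \emph{upper} endpoint and pushes the weight toward $\nu_{j+1}$ (the child on the path toward $\nu_\star$), not toward the parent. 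When this reaches $\nu_\star$, whose spatial label is pinned at $N$ in the regime $\lambda=2^{-N}$, the total accumulated exponent $|\eta|$ produces $2^{-N|\eta|}=\lambda^{|\eta|}$. Once you straighten out which endpoint dominates in each of the two collapses, the bookkeeping you describe (verifying both hypotheses persist on the reduced tree) closes exactly as in the cited reference.
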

\begin{proof}
Refer to \cite[Lemma A.10]{MH21}
\end{proof}
Notice now that \eqref{eq:secondmainstep} put together with the fact that $\supp[\hat{K}^{(\mathbf{n})}]\subseteq\CD(T,\ell,\ell^T)$ and Lemma~\ref{lem:intbnd} gives us:
\begin{align}
|\CI_{\lambda,\mu}^\ds{G}(K)| & \lesssim
 \sum_{(T,\ell,\ell^T) \in \mb{T}_{\lambda,\mu}(\ds{V})}\sum_{\textbf{n} \in \CN(T,\ell,\ell^T)} \nonumber \\ & \Bigl(\prod_{v \in T^\circ\setminus T^{\circ}_{\star}}2^{-\ell_v |\s|}\prod_{v\in T^{\circ}_{\star}}2^{-\ell_v^T|\s_1|-\ell_v|\s\setminus\s_1|}\Bigr)\sup_{\mathbf{n}\in\mathcal{N}(T,\ell,\ell^T)}\sup_{z}|\hat K^{(\textbf{n})}(z)|.
\end{align}
This reduces our problem to bounding $\sup_{z}|\hat{K}^{(\mathbf{n})}(z)|$. To this end, we begin with the observation that:
\beq\label{eq:indbnd}\sup_{z}|\hat{K}^{(\textbf{n})}(z)|\le\prod_{e\in\ds{E}}\sup_x|\hat{K}_e^{(\textbf{n}_e)}(z_{e_-},z_{e_+})|,\eeq
which further reduces our problem to bounding $\hat{K}^{(\textbf{n}_e)}(z_{e_-},z_{e_+})$. It is here that we would like to use Lemma~\ref{lem:HQ}. To this end, we first make the following definitions. With $e_{\uparrow}:= e_+ \wedge e_-$, define the following configuration: $A^+\hspace{-0.5mm}\text{, }A\text{ and }A^-$ as follows:\\
$$A^+=\begin{tikzpicture}[scale=0.75]
\node at (0,0) [circle,fill=black, inner sep=0pt, minimum size=1.5mm, label=left:$r$] (a) {};
\node at (0,-1) [circle,fill=black, inner sep=0pt, minimum size=1.5mm, label=left:$e_{\uparrow}$] (b) {};
\node at (-1,-2) [circle,fill=black, inner sep=0pt, minimum size=1.5mm, label=left:$e_+\wedge v_e$] (c) {};
\node at (-2,-3) [circle,draw=black,fill=gray, inner sep=0pt, minimum size=1.5mm, label=left:$e_+$] (e) {};
\node at (0,-3) [circle,draw=black,fill=gray, inner sep=0pt, minimum size=1.5mm, label=left:$v_e$] (f) {};
\node at (2,-3) [circle,draw=black,fill=gray, inner sep=0pt, minimum size=1.5mm, label=left:$e_-$] (g) {};
\draw[->, snake=snake, segment amplitude = 0.5mm, line after snake = 0.5mm, shorten >=2pt,shorten <=2pt] (a) -- (b);
\draw[shorten >=2pt,shorten <=2pt] (b) -- (c);
\draw[shorten >=2pt,shorten <=2pt] (c) -- (e);
\draw[shorten >=2pt,shorten <=2pt] (c) -- (f);
\draw[shorten >=2pt,shorten <=2pt] (b) -- (g);
\end{tikzpicture},\qquad\qquad A=\begin{tikzpicture}[scale=0.75]
\node at (0,0) [circle,fill=black, inner sep=0pt, minimum size=1.5mm, label=left:$r$] (a) {};
\node at (0,-1) [circle,fill=black, inner sep=0pt, minimum size=1.5mm, label=left:$e_+\wedge v_e$] (b) {};
\node at (-1,-2) [circle,fill=black, inner sep=0pt, minimum size=1.5mm, label=left:$e_{\uparrow}$] (c) {};
\node at (-2,-3) [circle,draw=black,fill=gray, inner sep=0pt, minimum size=1.5mm, label=left:$e_+$] (e) {};
\node at (0,-3) [circle,draw=black,fill=gray, inner sep=0pt, minimum size=1.5mm, label=left:$e_-$] (f) {};
\node at (2,-3) [circle,draw=black,fill=gray, inner sep=0pt, minimum size=1.5mm, label=left:$v_e$] (g) {};
\draw[->, snake=snake, segment amplitude = 0.5mm, line after snake = 0.5mm, shorten >=2pt,shorten <=2pt] (a) -- (b);
\draw[shorten >=2pt,shorten <=2pt] (b) -- (c);
\draw[shorten >=2pt,shorten <=2pt] (c) -- (e);
\draw[shorten >=2pt,shorten <=2pt] (c) -- (f);
\draw[shorten >=2pt,shorten <=2pt] (b) -- (g);
\end{tikzpicture},$$ \newline \beq\label{eq:config} A^-=\begin{tikzpicture}[scale=0.75]
\node at (0,0) [circle,fill=black, inner sep=0pt, minimum size=1.5mm, label=left:$r$] (a) {};
\node at (0,-1) [circle,fill=black, inner sep=0pt, minimum size=1.5mm, label=left:$e_{\uparrow}$] (b) {};
\node at (-1,-2) [circle,fill=black, inner sep=0pt, minimum size=1.5mm, label=left:$e_-\wedge v_e$] (c) {};
\node at (-2,-3) [circle,draw=black,fill=gray, inner sep=0pt, minimum size=1.5mm, label=left:$e_-$] (e) {};
\node at (0,-3) [circle,draw=black,fill=gray, inner sep=0pt, minimum size=1.5mm, label=left:$v_e$] (f) {};
\node at (2,-3) [circle,draw=black,fill=gray, inner sep=0pt, minimum size=1.5mm, label=left:$e_+$] (g) {};
\draw[->, snake=snake, segment amplitude = 0.5mm, line after snake = 0.5mm, shorten >=2pt,shorten <=2pt] (a) -- (b);
\draw[shorten >=2pt,shorten <=2pt] (b) -- (c);
\draw[shorten >=2pt,shorten <=2pt] (c) -- (e);
\draw[shorten >=2pt,shorten <=2pt] (c) -- (f);
\draw[shorten >=2pt,shorten <=2pt] (b) -- (g);
\end{tikzpicture}\eeq

and by $A_0$ those edges such that $r_e = 0$.

On the interior of $T$ consider $\eta:T^{\circ}\rightarrow\mathbb{R}$ given by $\eta=\sum_{e\in \mathds{E}}\eta_e$ where:
\begin{equation}\label{eq:etaa}\begin{split}\eta_e(v)&=\mathbbm{1}_{e\in A}(-a_e\mathbbm{1}_{e_{\uparrow}}(v))+\mathbbm{1}_{e\in A^+}(r_e\mathbbm{1}_{{e_+}\wedge v_e}(v)-(a_e+r_e)\mathbbm{1}_{e_{\uparrow}}(v))\\
&-\mathbbm{1}_{e\in A^-}((a_e+r_e-1)\mathbbm{1}_{e_-\wedge v_e}(v)-(r_e -1)\mathbbm{1}_{e_\uparrow}(v))+\mathbbm{1}_{e \in A_0}(-a_e\mathbbm{1}_{e_\uparrow}(v)),\end{split}\end{equation}
and $\eta^T:T^\circ\mapsto\mathbb{R}$ given by:
\begin{equation}\label{eq:etaT}
\eta^T(\nu) = \left\{\begin{array}{lr}
|\s_1| & \mbox{if }\nu\in T^\circ_\star \\
0, & \mbox{otherwise.}
\end{array}\right.
\end{equation}
and finally $\tilde{\eta}(v) = |\s|+\eta(v)$.

\begin{lemma}\label{lem:Knebnd}With $\hat{K}^{(\mathbf{n}_e)}$ defined as above and $\eta_e$ and $\eta^T$ as in \eqref{eq:etaa}, we have the following bound:
$$\sup_z\left|\prod_{e = (e_-,e_+)}\hat{K}^{(\mathbf{n}_e)}_e(z_{e_-},z_{e_+})\right|\lesssim\prod_{v\in T^\circ}2^{-\ell_v\eta_e(v)}$$
for every edge $(e_-,e_+)\in\hat{\dsE}$.
\end{lemma}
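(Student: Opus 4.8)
The plan is to reduce the bound on the product of kernels to a bound on each individual factor $\hat K_e^{(\mathbf n_e)}(z_{e_-},z_{e_+})$ on the support $\mathcal D(T,\ell,\ell^T)$, using the multi-scale localisation built into Definition~\ref{def:Kn} and the constraint $\mathbf n\in\mathcal N(T,\ell,\ell^T)$. First I would fix an edge $e=(e_-,e_+)$ and split into the cases dictated by the label $r_e$, exactly as in the definition of $\hat K_e^{(\mathbf n_e)}$. If $r_e\le 0$ (this is the family $A_0$), then $\mathbf n_e=(k,0,0)$, and by Assumption~\ref{ass:Kernel} together with $|k-\ell_{e_\uparrow}|\le c$ one gets $|\hat K_e^{(k)}(z_{e_-},z_{e_+})|=|K_e^{(k)}(z_{e_+}-z_{e_-})|\lesssim 2^{a_e k}\lesssim 2^{a_e\ell_{e_\uparrow}}$ (up to the constant $2^{|a_e|c}$), which is precisely the factor $2^{-\ell_{e_\uparrow}\eta_e(e_\uparrow)}$ coming from the $\mathbbm 1_{e\in A_0}(-a_e\mathbbm 1_{e_\uparrow})$ term in \eqref{eq:etaa}.

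The substantive case is $r_e>0$, where $\mathbf n_e=(k,p,m)$ and the kernel is the Taylor-remainder
\[
\hat K_e^{(k,p,m)}(z_1,z_2)=\Psi^{(k)}(z_2-z_1)\Psi^{(p)}(z_{v_e}-z_1)\Psi^{(m)}(z_2-z_{v_e})\Bigl(K_e(z_2-z_1)-\sum_{|j|_\s<r_e}\tfrac{(z_2-z_{v_e})^j}{j!}D^jK_e(z_{v_e}-z_1)\Bigr).
\]
Here I would distinguish the two geometric regimes encoded by the configurations $A^+$, $A$, $A^-$, namely whether $v_e$ is ``closer'' to $e_+$, separated from the $e_\pm$ pair, or closer to $e_-$; the cutoffs $\Psi^{(k)},\Psi^{(p)},\Psi^{(m)}$ force $\|z_2-z_1\|_\s\sim 2^{-k}$, $\|z_{v_e}-z_1\|_\s\sim 2^{-p}$, $\|z_2-z_{v_e}\|_\s\sim 2^{-m}$, and the compatibility $\mathbf n_e\in\mathcal N(T,\ell,\ell^T)$ ties $k,p,m$ to $\ell_{e_\uparrow}$, $\ell_{e_+\wedge v_e}$ or $\ell_{e_-\wedge v_e}$ up to $c$. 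In the configuration $A^+$ (so $e_\uparrow = e_+\wedge v_e \wedge e_-$ lies below $e_+\wedge v_e$), the relevant scales are $\|z_2-z_{v_e}\|_\s\sim 2^{-\ell_{e_+\wedge v_e}}$ and $\|z_{v_e}-z_1\|_\s\sim 2^{-\ell_{e_\uparrow}}$; a standard Taylor-with-integral-remainder estimate on $K_e$, using the derivative bound $|D^jK_e(z)|\lesssim\|z\|_\s^{-a_e-|j|_\s}$ from Assumption~\ref{ass:Kernel} for $|j|_\s$ up to $r_e$, gives
\[
\bigl|K_e(z_2-z_1)-\textstyle\sum_{|j|_\s<r_e}\tfrac{(z_2-z_{v_e})^j}{j!}D^jK_e(z_{v_e}-z_1)\bigr|\lesssim \|z_2-z_{v_e}\|_\s^{r_e}\sup\|\cdot\|_\s^{-a_e-r_e}\lesssim 2^{-\ell_{e_+\wedge v_e}r_e}\,2^{(a_e+r_e)\ell_{e_\uparrow}},
\]
where one has to be slightly careful that the intermediate point in the remainder stays in the annulus where the derivative bound is of size $2^{(a_e+r_e)\ell_{e_\uparrow}}$ — this is exactly where the localisation of $\Psi^{(p)},\Psi^{(m)}$ is used. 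This is the factor $2^{-\ell_v\eta_e(v)}$ with $\eta_e$ given by the $A^+$ line of \eqref{eq:etaa}. The case $A$ is the simplest: there $v_e$ is not below $e_\uparrow$ so the remainder subtraction does not help (and indeed $\eta_e$ only carries the $-a_e\mathbbm 1_{e_\uparrow}$ term), and one just bounds $|K_e(z_2-z_1)|\lesssim 2^{a_e\ell_{e_\uparrow}}$ and absorbs the subtracted polynomial term into the same scale since $\|z_2-z_{v_e}\|_\s\gtrsim\|z_2-z_1\|_\s$ there. The case $A^-$ is the mirror of $A^+$ but with the crucial shift $r_e\mapsto r_e-1$: here the recentring happens on the ``wrong side'', so only a single derivative can be gained from the Taylor remainder relative to the naive bound, which is exactly the content of the $(a_e+r_e-1)$ versus $(r_e-1)$ weights appearing in the $A^-$ line of \eqref{eq:etaa}. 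Finally the case $\mathbf n_e=(N,0,0)$ is handled by the $K^{(N)}_e$ bound in Assumption~\ref{ass:Kernel}, and the $\mathbf n_e=(k,0,0)$ case for $e\in\ds E_\star$ (test-function edges) is trivial since $a_e=0$, so $\eta_e\equiv 0$ there.

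The main obstacle I expect is bookkeeping rather than depth: one must verify that in every configuration the triangle inequalities among the three scales $2^{-k},2^{-p},2^{-m}$ — forced on one hand by the cutoffs $\Psi^{(k)},\Psi^{(p)},\Psi^{(m)}$ and on the other by $\mathbf n_e\in\mathcal N(T,\ell,\ell^T)$ — are consistent, so that the binary-tree label $\ell_{e_\uparrow}$ really is the minimum of the three (making the exponent bound clean), and that the intermediate point in the Lagrange/integral remainder form of the Taylor expansion lies in the region where $\|z\|_\s$ is comparable to the larger of $\|z_{v_e}-z_1\|_\s$ and $\|z_2-z_1\|_\s$, so that the singular kernel bound can be applied with the right exponent. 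Once all four configurations are checked, taking the supremum over $z$ and the product over $e\in\hat{\ds E}$, collecting the exponents into $\eta=\sum_{e\in\ds E}\eta_e$, gives $\sup_z\bigl|\prod_{e}\hat K_e^{(\mathbf n_e)}(z_{e_-},z_{e_+})\bigr|\lesssim\prod_{v\in T^\circ}2^{-\ell_v\eta_e(v)}$, which is the claim. I would close by remarking that the implicit constant depends only on the graph structure, the labels $r_e$, and the constants $\|K_e\|_{a_e;p}$ from Assumption~\ref{ass:Kernel}, uniformly in $N$.
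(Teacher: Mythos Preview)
Your proposal is correct and follows essentially the same approach as the paper: both argue edge-by-edge, treating the $r_e\le 0$ case by the direct bound from Assumption~\ref{ass:Kernel} and the $r_e>0$ case by a three-way split according to the configurations $A^+,A,A^-$, using a Taylor-remainder estimate where the recentring helps and term-by-term bounds otherwise. The only cosmetic difference is that the paper organises the split by comparing the dyadic indices $k,m$ (namely $m\ge k+C_0$, $k\ge m+C_0$, $k\sim m$) and then identifies these with the tree configurations, whereas you start from the configurations; your handling of the $A^-$ case is slightly less explicit than the paper's (which bounds $K_e(z_2-z_1)$ and each subtracted polynomial term separately, the $|j|_\s=r_e-1$ term dominating since $p>m$), but reaches the same exponent.
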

\begin{proof}
Let $e\in \mathds{E}$, and $\mathbf{n}_e=(k,p,m)$. Recall that:
\beq\label{eq:kpmbnd}|k-\ell_{z_{e_-}\wedge \ z_{e_+}}|\le c;\quad |p-\ell_{z_{e_-}\wedge \ z_{v_e}}|\le c;\quad |m-\ell_{z_{e_+}\wedge \ z_{v_e}}|\le c  \eeq
when $r_e > 0$. With reference to \cite[Proposition A.1]{reg} one is able to write:
\begin{equation*}\begin{split}
    \hat K_e^{(\mathbf{n})}(z_{e_-},z_{e_+}) = &\Psi^{(k)}(z_{e_+}-z_{e_-}) \\
    &\Psi^{(p)}(z_{v_e}-z_{e_-})\Psi^{(m)}(z_{e_+}-z_{v_e}) \sum_{|r|_{\mathfrak{s}}=r_e}
    \int_{\mathbb{R}^{d+1}}D^rK_e(y)\mathcal{Q}_e^r(x,dy)
\end{split}\end{equation*}
where the kernel has the property:
$$\mathcal{Q}_e^r(z,\mathbb{R}^{d+1})\lesssim\|z_{e_+}-z_{v_e}\|_\mathfrak{s}^{r_e}$$
We can bound this in different ways, depending on how $m$ and $k$ compare.
We consider the following possibilities with some constant $C_0$:
\begin{itemize}
    \item If $m \ge k + C_0$, then:
    $$\sup_z|\hat K_e^{(\mathbf{n}_e)}(z)|\lesssim 2^{-r_em+(a_e+r_e)k}\sim 2^{-r_e\ell_{z_{e_+}\wedge z_{v_e}}+(a_e+r_e)\ell_{z_{e_+}\wedge z_{e_-}}}$$
    \item If $k\ge m + C_0$, then:
    $$\sup_z|\hat K_e^{(\mathbf{n}_e)}(z)|\lesssim 2^{a_e k}+\sum_{|j|_\mathfrak{s} < r_e}2^{-m|j|_\mathfrak{s}+ (a_e+|j|_\mathfrak{s})p}\lesssim 2^{a_e k}$$
    \item if $k\sim m$ then:
    \begin{equation*}
    \begin{split}
        \sup_z|\hat K_e^{(\mathbf{n}_e)}(z)|\lesssim 2^{a_e k}+\sum_{|j|_\mathfrak{s} < r_e}&2^{-m|j|_\mathfrak{s}+ (a_e+|j|_\mathfrak{s})p} \lesssim 2^{(a_e + r_e - 1)p - (r_e - 1)m}
    \end{split}    
    \end{equation*}
    
\end{itemize}

When $r_e = 0$, the following bound is straightforward:
$$\sup_z |\hat{K}_e^{(\mathbf{n}_e)}(z_{e_-},z_{e_+})|\lesssim 2^{a_ek}.$$
Notice finally that these above three cases correspond to the three configurations in \eqref{eq:config}. Then we have the following bound:
$$\sup_z|\hat{K}_e^{(\mathbf{n}_e)}(z_{e_-},z_{e_+})|\lesssim\prod_{v \in T^{\circ}}2^{-\ell_v\eta_e(v)}.$$
\end{proof}
\begin{lemma}\label{lem:etasat}
For $\eta$ and $\eta^T$ as given in \eqref{eq:etaa} and \eqref{eq:etaT}, the hypothesis of Lemma~\ref{lem:HQ} is satisfied.
\end{lemma}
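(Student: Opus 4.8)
The plan is to verify, for the concrete exponents supplied by \eqref{eq:etaa} and \eqref{eq:etaT}, each of the three requirements of Lemma~\ref{lem:HQ}: the sign/support condition on $\eta^T$, the positivity of the subtree sums of $\eta$, and the negativity of the complement sums of $\eta$ below $\nu_\star$. Throughout it is really $\tilde\eta = |\s| + \eta$ that enters Lemma~\ref{lem:HQ}, since by \eqref{eq:secondmainstep} the Lebesgue weight $\prod_v 2^{-\ell_v|\s|}$ is folded in with the kernel bound of Lemma~\ref{lem:Knebnd}; on the nodes of $T^\circ_\star$ the $|\s_1|$-part of this weight is recorded by $\ell^T$ and is exactly $\eta^T$. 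The $\eta^T$ condition is then immediate: by \eqref{eq:etaT}, $\eta^T_\nu = |\s_1| = 1 > 0$ precisely on $T^\circ_\star$, and by the construction of the labelled trees $\mathbb{T}(\dsV)$ those are exactly the nodes carrying a non-zero label $\ell^T_\nu = \lceil |\log_2 \mu| \rceil$, with $\eta^T_\nu = 0$ everywhere else.

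For the first requirement, fix $\nu \in T^\circ$ and let $\bar{\dsV} = \dsV_\nu \subseteq \dsV$ be the set of leaves of the subtree of $T$ rooted at $\nu$; since $T$ is binary, $|\bar{\dsV}| \ge 2$. The plan is to evaluate $\sum_{v \ge \nu} \eta(v)$ by expanding $\eta = \sum_{e \in \dsE} \eta_e$ and summing edge by edge. The elementary fact that drives the computation is that each support node of $\eta_e$ is of the form $a \wedge b$ with $a, b \in \dsV$, and $a \wedge b \ge \nu$ if and only if $\{a,b\} \subseteq \bar{\dsV}$. Running through the four mutually exclusive configurations $A^+, A, A^-, A_0$ of \eqref{eq:config}, together with the tree-compatibility constraints they carry (in $A^+$ the node $e_+ \wedge v_e$ is a descendant of $e_\uparrow$, so $e_+, e_- \in \bar{\dsV}$ forces $v_e \in \bar{\dsV}$; more generally $e_+ \in \bar{\dsV}$, $e_- \notin \bar{\dsV}$, $v_e \in \bar{\dsV}$ can occur only in configuration $A^+$, and symmetrically for $A^-$), one finds that an edge $e$ with both endpoints in $\bar{\dsV}$ contributes exactly $-\hat a_e$, a boundary edge with $e_+ \in \bar{\dsV}$ contributes $\hat r_e \mathbbm{1}_{v_e \in \bar{\dsV}}$, a boundary edge with $e_- \in \bar{\dsV}$ contributes $-(\hat a_e + \hat r_e - 1) \mathbbm{1}_{v_e \in \bar{\dsV}}$, and all other edges contribute $0$. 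Adding the Lebesgue weight and using that a binary tree with $|\bar{\dsV}|$ leaves has $|\bar{\dsV}| - 1$ interior nodes, this reorganises precisely into $(|\bar{\dsV}| - 1)|\s|$ minus the left-hand side of \eqref{eq:ass2e1} for $\bar{\dsV}$; the first inequality of Assumption~\ref{ass:grph} then yields $\sum_{v \ge \nu} \tilde\eta(v) > 0$.

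For the second requirement, fix $\nu \in T^\circ$ with $\nu \le \nu_\star$ and suppose $\{u \in T^\circ : u \ngeqslant \nu\}$ is non-empty (otherwise — e.g. when $\nu$ is the root — there is nothing to check). Here one first records the global identity $\sum_{u \in T^\circ} \eta(u) = -\sum_{e \in \dsE} \hat a_e$, which holds because in every configuration the total mass of $\eta_e$ telescopes to $-\hat a_e$ (the $\hat r_e$-shifts cancel) and the distinguished edges $e_{\star,i}$, carrying label $(0,0,0)$, contribute nothing. Writing $\sum_{u \ngeqslant \nu} \eta_u = \sum_{u \in T^\circ} \eta_u - \sum_{u \ge \nu} \eta_u$ and inserting the edgewise computation from the previous step, the edges internal to $\bar{\dsV}$ drop out, the edges external to $\bar{\dsV}$ contribute $-\hat a_e$, and the boundary edges contribute the complementary amounts. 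Since $\nu \le \nu_\star$ forces $\dsV_\star \subseteq \bar{\dsV}$ (so that no $T^\circ_\star$-node sits in $\{u \ngeqslant \nu\}$), the complement $\bar{\dsV}^c := \dsV \setminus \bar{\dsV}$ is a non-empty subset of $\hat{\dsV} \setminus \dsV_\star$; folding in the Lebesgue weight and the count $|\{u \ngeqslant \nu\}| = |\bar{\dsV}^c|$, the sum reorganises into $|\bar{\dsV}^c| \,|\s|$ minus the left-hand side of \eqref{eq:ass2e2} for $\bar{\dsV}^c$, and the second inequality of Assumption~\ref{ass:grph} then gives $\sum_{u \ngeqslant \nu} \tilde\eta_u < 0$.

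\textbf{Main obstacle.} The routine part is the configuration bookkeeping behind the first requirement, whose underlying vertex set is transparently the subtree leaf-set $\bar{\dsV} = \dsV_\nu$. The delicate step is the second requirement: the index set $\{u \ngeqslant \nu\}$ is not the interior of any single subtree but mixes the ancestors of $\nu$ with the nodes incomparable to $\nu$, so the per-edge contributions must be matched against the $\mathbbm{1}_{v_e \in \bar{\dsV}}$ and $\mathbbm{1}_{v_e \in \hat{\dsV}}$ indicators of \eqref{eq:ass2e2} with care, and it is exactly the hypothesis $\nu \le \nu_\star$ that is used to locate the distinguished vertices $v_{\star,i}$ — hence the vertex $v_e$ of each distinguished edge — relative to the subtree of $\nu$. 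Getting all the $\hat r_e - 1$ shifts, and the transfer of the $|\s_1|$-weight to $\ell^T$ on $T^\circ_\star$, to line up with \eqref{eq:ass2e2} is the technical heart; the compatibility of the configurations $A^\pm, A, A_0$ with the sub/complement decompositions is then a finite check carried out configuration by configuration.
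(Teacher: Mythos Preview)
Your proposal is correct and follows essentially the same approach as the paper: verify the $\eta^T$ condition from the construction of $\mathbb{T}(\dsV)$, then for each $\nu\in T^\circ$ identify the leaf-set $\bar{\dsV}$ of the subtree at $\nu$ and reduce the positivity of $\sum_{v\ge\nu}\tilde\eta(v)$ to \eqref{eq:ass2e1} via the edgewise configuration analysis, and finally reduce the negativity of the complement sum to \eqref{eq:ass2e2}. The only (minor) difference is methodological: for the second condition the paper recomputes $\sum_{u\ngeqslant\nu}\tilde\eta(u)$ directly by a second configuration table applied to the complement leaf-set, whereas you obtain it by the telescoping identity $\sum_{u\ngeqslant\nu}=\sum_{u\in T^\circ}-\sum_{u\ge\nu}$ together with the global relation $\sum_{u}\eta(u)=-\sum_e\hat a_e$; both routes land on the same comparison with \eqref{eq:ass2e2}.
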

\begin{proof}
The first comment we make is that the condition on $\eta^T$ is certainly met. Indeed, if $\eta^T_\nu > 0$, then necessarily it is $|\s_1|$ and the corresponding $\ell^T_\nu$ is $\lceil|\log_2(\mu)|\rceil$ which is strictly positive because $\mu < 2^{-N}$ (the other regime falls in the domain of the original Hairer-Quastel result). The other direction follows similarly. Now let $v \in T^\circ$ and we consider $L_v\subseteq \ds{V}$ the leaves attached to $v$. We have:
 \begin{align*}
&\sum_{u\ge v}\tilde \eta(u)= |\mathfrak{s}|(|L_v|-1) + \sum_{u \ge v}\biggl(\hspace{1mm}\sum_{e\in A}-a_e\mathbbm{1}_{\{e_{\uparrow}\}}(u)+\sum_{e\in A^+}(r_e\mathbbm{1}_{\{{e_+}\wedge v_e\}}(u)\\ 
&-(a_e+r_e)\mathbbm{1}_{\{e_{\uparrow}\}}(u))-\sum_{e\in A^-}((a_e+r_e-1)\mathbbm{1}_{\{e_-\wedge v_e\}}(u)\\
&\hspace{4cm}-(r_e -1)\mathbbm{1}_{\{e_\uparrow\}}(u))+\sum_{e \in A_0}(-a_e\mathbbm{1}_{\{e_\uparrow\}}(u))\biggr) \\
&= |\mathfrak{s}|(|L_v|-1) -\hspace{-4mm} \sum_{e \in \mathds{E}_0(L_v)}\hspace{-2mm}a_e - \hspace{-4mm}\sum_{e \in \mathds{E}^{\uparrow}(L_v)}\hspace{-3mm}\mathbbm{1}_{\{v_e\in L_v\}}(a_e+r_e - 1)+\hspace{-4mm}\sum_{e\in\mathds{E}^{\downarrow}(L_v)}\hspace{-3mm}\mathbbm{1}_{\{v_e\in L_v\}}(r_e),
     \end{align*}
where the second equality is justified by the observation that $A,\,A^+,\,A^-,\,A_0$ exhaust all possible configurations for the leaves so that for example $\sum_{u\ge v}(\sum_{e\in A} \bullet + \sum_{e\in A^+} \bullet + \sum_{e\in A^-} \bullet + \sum_{e\in A_0}\bullet) = \sum_{e\in\dsE_0(L_v)}\bullet$. The other summands are explained by considering alternate arguments for the indicators above. To see that this quantity is indeed strictly positive, one can look at the contributions to each of the summands above from $A,\,A^+,\,A^-,\,A_0$ separately:
$$ \begin{array}{||c|c|c|c||}
     \hline
      & \mathds{E}_0(L_v) & \mathds{E}^{\uparrow}(L_v) & \mathds{E}^{\downarrow}(L_v) \\
    \hline
    A^+ & a_e & 0 & -\mathbbm{1}_{\{v_e\in\bar{\mathds{V}}\}}(r_e) \\
    \hline
    A & a_e & 0 & 0 \\
    \hline
    A^- & a_e & \mathbbm{1}_{\{v_e\in\bar{\mathds{V}}\}}(a_e + r_e - 1) & 0
    \\
    \hline
    A_0 & a_e & 0 & 0 \\
    \hline
 \end{array}$$
To check the second condition, we fix some node $v \in T^\circ$ such that $v_\star \ge v$. Denote by \newline $U_v = \{u \in T^{\circ}: u \ngeqslant v\}$ and $\bar{\mathds{V}}$ the set of leaves attached to $U_v$. We must have $|\bar{\mathds{V}}|=|U_v|$ and then it follows:
\begin{equation*}
     \begin{split}
         \sum_{u\in U_v}\tilde \eta(u) &= |\mathfrak{s}||\bar{\mathds{V}}| + \sum_{u \in U_v}\biggl(\hspace{1mm}\sum_{e\in A}-a_e\mathbbm{1}_{\{e_{\uparrow}\}}(u)+\sum_{e\in A^+}(r_e\mathbbm{1}_{\{{e_+}\wedge v_e\}}(u)-(a_e+r_e)\mathbbm{1}_{\{e_{\uparrow}\}}(u))\\
&-\sum_{e\in A^-}((a_e+r_e-1)\mathbbm{1}_{\{e_-\wedge v_e\}}(u)-(r_e -1)\mathbbm{1}_{\{e_\uparrow\}}(u))+\sum_{e \in A_0}\left(-a_e\mathbbm{1}_{\{e_\uparrow\}}(u)\right)\biggr)\\
&\leq |\mathfrak{s}||\bar{\mathds{V}}| - \sum_{e \in {\mathds{E}}_0(\bar{\mathds{V}})} \hspace{-3mm}\hat{a}_e + \hspace{-3mm}\sum_{e \in {\mathds{E}}^{\downarrow}(\bar{\mathds{V}})} \hspace{-2mm}\bigl(\mathbbm{1}_{\{v_e\in\bar{\mathds{V}}\vee r_e=0\}}(\hat{a}_e + r_e - 1)-(r_e - 1)\bigr) \\
&\qquad+\sum_{e \in {\mathds{E}}^\uparrow(\bar{\mathds{V}})}((\hat a_e + \hat r_e) - \mathbbm{1}_{\{v_e\in{\mathds{V}}\}}r_e) < 0.
     \end{split}
 \end{equation*}
The argument is as before but we fail to have equality because not all configurations for the indicator's arguments are possible. It only remains to check that this is strictly negative, for which, as before we check the relevant contributions.
$$ \begin{array}{||c|c|c|c||}
     \hline
      & \mathds{E}_0(L_v) & \mathds{E}^{\uparrow}(L_v) & \mathds{E}^{\downarrow}(L_v) \\
    \hline
    A^+ & a_e & a_e + r_e -\mathbbm{1}_{\{v_e\in\bar{\mathds{V}}\}}(r_e) & a_e \\
    \hline
    A & a_e & a_e & a_e \\
    \hline
    A^- & a_e & a_e & \mathbbm{1}_{\{v_e\in\bar{\mathds{V}}\}}(a_e + r_e - 1) - (r_e - 1)
    \\
    \hline
    A_0 & a_e & a_e & a_e \\
    \hline
 \end{array}$$
\end{proof}

We can now conclude this section by putting all of this together and proving Theorem~\ref{lem:wantedBound2}.
\begin{proof} \hspace{-1.5mm}$\left[\textsc{Of Theorem}~\ref{lem:wantedBound2}\right]$ From Lemma~\ref{lem:Knebnd} and \eqref{eq:indbnd}, we have the following:
$$\sup_z |\hat{K}^{(\mathbf{n})}(z)|\lesssim\prod_{v \in B^{\circ}}2^{-\ell_v\eta  (v)}$$

Inserting the above bound into \eqref{eq:secondmainstep}, and recalling that $2^{-\ell_v}=2^{-N}$ for all $v\in T^\circ_\star$, gives us the following bound:
\begin{equation}\label{eq:sup}
    |\CI_{\lambda,\mu}^\ds{G}(K)|\lesssim\sum_{(T,\ell,\ell^T)\in\mathbb{T}_{\lambda,\mu}(\ds{G})}\sum_{\textbf{n}\in\CN(T,\ell,\ell^T)}\prod_{v\in T^\circ}2^{-\ell_v\tilde{\eta}(v)-\ell^T_v\eta^T(v)}\;2^{N|\s_1|M}
\end{equation}
with $\tilde \eta(v) := |\mathfrak{s}|+\eta(v)$.
To conclude now we need to appeal to Theorem~\ref{lem:HQ}, the hypothesis of which we have already checked in Lemma~\ref{lem:etasat}.
\end{proof}
\section{Elementary Labelled Graphs} \label{sec:elblgraph}

In the previous section, we presented our primary tool for obtaining the bounds we need on our stochastic integrals. We want to be able to prove that the graphs that show up in the analysis of the \eqref{eq:gKPZ} satisfy Assumption~\ref{ass:grph}. It turns out that while checking condition \eqref{eq:ass2e1} is very non-trivial, one is able to link to the labelled graphs of the previous section a new construction called elementary labelled graphs, that always satisfy (a version of) \eqref{eq:ass2e2}. In this section, we will give a recursive construction of these elementary graphs.

\subsection{Construction of Elementary Labelled Graphs}

\begin{definition}[Elementary Labelled Graph]\label{def:ELG}
An elementary labelled graph is the graph ${G}=(\ds{V},\ds{E})$ connected with two distinguished vertices $\ds{V}_{\star}=\{v_0,v_\star\}$ with an edge label: $(a_e,r_e,v_e)\in\mathbb{R}\times\mathbb{N}\times\ds{V}$ such that:
\begin{itemize}
\item it is almost a tree in the sense that $\bar{T}=(\bar{\ds{V}},\bar{\ds{E}})=\left(\ds{V}\setminus\{v_0\},\ds{E}\setminus \ds{E}^{\uparrow}\left(\{v_0\}\right)\right)$ is a rooted tree with root $v_\star$. Furthermore, we can ascribe to it a labelled tree, $T^\mathfrak{e}_\mathfrak{n}$, as follows:
\begin{itemize}
\item $E_T=\bar{\ds{E}}$,\;$N_{\bar{T}}=N_T$, and $L_{\bar{T}}=L_{T},$
\item for every edge $e\in E_T$, one has:
\begin{equs}\label{eq:ELGedgedef}
a_e=|\s|+|\mathfrak{e}(e)|_\s-|\mathfrak{l}|_\s,
\end{equs}
\item for every edge $e\in\dsE^{\uparrow}(L_T)$, $a_e=|\s|+\kappa$, $|\mathfrak{e}(e)|_\s=|\mathfrak{l}(e)|_\s=0$ and $|\mathfrak{l}(e_-)|_\s=-|s|/2-\kappa=\alpha$, with $\kappa > 0,$
\item for every node $v\in N_T$, one has: 
\begin{equs}\label{eq:ELGleafedge}(a_{(v_0,v)}, r_{(v_0,v)}, v_{(v_0,v)})=(-|\mathfrak{n}(v)|_\s, 0, v_0).
\end{equs}
\end{itemize}
\item For every edge of $\bar{T}$, one has: $r_e=\lceil|T_e|_\s\rceil\vee 0$ where $T_e$ is the tree above edge $e$. In set-theoretic notation we may write $T_e=(\ds{V}_e,\mathds{E}_0(\ds{V}_e))$, where $\ds{V}_e=\{v\in\ds{V}\setminus\{v_0\}:e_+\wedge v = e_+\}.$ 
\end{itemize}
\end{definition}
\begin{definition}[Homogeneity]
To each type $\ell\in\mathfrak{l}$ we associate a homogeneity $|\ell|_\s\in\mathbb{R}$. The homogeneity of a labelled tree $T^\mathfrak{n}_\mathfrak{e}$ then is given by:
$$|{T}^\mathfrak{n}_\mathfrak{e}|_\s=\hspace{-2mm}\sum_{u\in L_T \sqcup E_T}\hspace{-2mm}|\mathfrak{\ell}(u)|_\s+\sum_{x\in\mathring{N}_T}|\mathfrak{e}(x)|_\s-\sum_{e\in E_T}|\mathfrak{e}(e)|_\s$$
For an elementary labelled graph, $G$, the homogeneity is defined as:
\begin{equs}\label{eq:defdegELG}
|G|_\s=\left(|\dsV|-2\right)|\s|-\sum_{e\in\dsE}a_e
\end{equs}
\end{definition}

The following proposition is an easy consequence of the definitions:

\begin{proposition}
Let $G=(\dsV,\dsE)$ be an elementary labelled graph and ${T}^\mathfrak{n}_\mathfrak{e}$ the labelled graph associated to it. Then $|{T}^\mathfrak{n}_\mathfrak{e}|_\s=|G|_\s$
\end{proposition}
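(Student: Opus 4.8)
The plan is to prove the identity $|{T}^\mathfrak{e}_\mathfrak{n}|_\s = |G|_\s$ by a direct bookkeeping argument, matching the two sides term by term using the defining relations of an elementary labelled graph given in Definition~\ref{def:ELG}. First I would unpack the left-hand side using the homogeneity formula for a labelled tree,
\begin{equs}
|{T}^\mathfrak{e}_\mathfrak{n}|_\s = \sum_{u\in L_T \sqcup E_T}|\mathfrak{\ell}(u)|_\s+\sum_{x\in\mathring{N}_T}|\mathfrak{e}(x)|_\s-\sum_{e\in E_T}|\mathfrak{e}(e)|_\s,
\end{equs}
and observe that, since we are dealing with trees coming from $\mfT_0$ generated by the rule set \eqref{rules_gKPZ} where the node decorations $\mathfrak{n}$ (equivalently the $\mathfrak{e}(x)$ polynomial labels on interior nodes) vanish for the trees in $\mc{T}_-$, the middle sum $\sum_{x\in\mathring{N}_T}|\mathfrak{e}(x)|_\s$ can be carried along symbolically and it will ultimately cancel against the polynomial contributions on the right-hand side. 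The remaining work is to express $\sum_{e\in E_T} a_e$ in terms of the intrinsic tree data.

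The key computation is to sum the relation $a_e=|\s|+|\mathfrak{e}(e)|_\s-|\mathfrak{l}(e)|_\s$ (for internal edges, i.e.\ those $e$ with $e_-\in N_T\setminus L_T$) together with $a_e=|\s|+\kappa$ for the leaf-edges $e$ with $e_-\in L_T$, and then to add the contributions $a_{(v_0,v)} = -|\mathfrak{e}(v)|_\s$ of the edges emanating from $v_0$, ranging over $v\in N_T$. I would organize this as: $\sum_{e\in\CE}a_e = \sum_{e\in E_T} a_e + \sum_{v\in N_T} a_{(v_0,v)}$. For the first piece, writing $E_T = E_T^{\mathrm{int}}\sqcup E_T^{\mathrm{leaf}}$, one gets $\sum_{e\in E_T^{\mathrm{int}}}(|\s| + |\mathfrak{e}(e)|_\s - |\mathfrak{l}(e)|_\s) + \sum_{e\in E_T^{\mathrm{leaf}}}(|\s|+\kappa)$; using $|E_T| = |N_T| - 1$ (tree identity, with $N_T$ counting all nodes including leaves) and the leaf relations $|\mathfrak{l}(e_-)|_\s = -|\s|/2-\kappa = \alpha$, $|\mathfrak{e}(e)|_\s = |\mathfrak{l}(e)|_\s = 0$ on leaf-edges, the $\kappa$'s from $E_T^{\mathrm{leaf}}$ get absorbed when I bring in the leaf homogeneities $\sum_{u\in L_T}|\mathfrak{l}(u)|_\s = |L_T|\cdot(-|\s|/2 - \kappa)$ appearing on the left. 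The $|\s|$ terms accumulate to $(|N_T| - 1)|\s|$, and since $|N_T| = |\mathring{N}_T| + |L_T|$ with $|\mathring{N}_T| = |\mathring{N}_G|$ and $|L_T| = |L_G|$, this is exactly $(|\mathring{N}_G| + |L_G| - 1)|\s|$. Combining with the leaf-homogeneity term $-\tfrac{|L_G|}{2}|\s| - \kappa|L_G|$ from the left-hand side and the $+\kappa$ per leaf-edge, the $\kappa$'s cancel and the $|\s|$ coefficient collapses to $(|\mathring{N}_G| + \tfrac{|L_G|}{2} - 1)|\s|$.

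Finally I would account for the $v_0$-edges: each contributes $a_{(v_0,v)} = -|\mathfrak{e}(v)|_\s$, so $\sum_{v\in N_T} a_{(v_0,v)} = -\sum_{v\in N_T}|\mathfrak{e}(v)|_\s$; note this sum is over all nodes including leaves, but on leaves $|\mathfrak{e}(\cdot)|_\s = 0$ by the third bullet, so effectively it is $-\sum_{x\in\mathring{N}_T}|\mathfrak{e}(x)|_\s$, which precisely cancels the middle term $+\sum_{x\in\mathring{N}_T}|\mathfrak{e}(x)|_\s$ in the left-hand formula. Likewise the $\sum_{e\in E_T^{\mathrm{int}}}|\mathfrak{e}(e)|_\s$ term from expanding $a_e$ cancels the $-\sum_{e\in E_T}|\mathfrak{e}(e)|_\s$ on the left (the leaf-edges contributing $0$), and the $-\sum_{e\in E_T^{\mathrm{int}}}|\mathfrak{l}(e)|_\s$ cancels the internal-edge part of $\sum_{u\in L_T\sqcup E_T}|\mathfrak{l}(u)|_\s$. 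Assembling: $|G|_\s = (|\mathring{N}_G| + \tfrac{|L_G|}{2} - 2)|\s| - \sum_{e\in\CE}a_e$ equals, after all cancellations, exactly $\sum_{u\in L_T\sqcup E_T}|\mathfrak{l}(u)|_\s + \sum_{x\in\mathring{N}_T}|\mathfrak{e}(x)|_\s - \sum_{e\in E_T}|\mathfrak{e}(e)|_\s = |{T}^\mathfrak{e}_\mathfrak{n}|_\s$; one must be careful that the $-2|\s|$ in the definition of $|G|_\s$ against the $-1|\s|$ obtained above is reconciled by the extra $|\s|$ appearing once more — this is the one spot to track precisely, coming from the fact that the root $v_\star$ is itself a leaf of the tree $T$ (it carries a generic noise-type leaf decoration), or alternatively from the normalization of $|\mathfrak{I}(\cdot)|_\s = |\cdot|_\s + 2 - |k|_\s$ which shifts homogeneities by $2 = |\s|$ at the root edge. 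The main obstacle, and the step I expect to require the most care, is exactly this matching of the additive constants ($-2$ versus $-1$ in units of $|\s|$) and making sure the treatment of the root vertex $v_\star$ and the distinguished $v_0$-edges is consistent between the two homogeneity conventions; the rest is a routine, if slightly tedious, term-by-term cancellation.
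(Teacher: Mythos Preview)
The paper does not actually prove this proposition; it is stated as ``an easy consequence of the definitions'' and left at that. Your direct bookkeeping approach is precisely what is intended, and the term-by-term cancellations you describe are all correct.

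The one place you explicitly hedge --- reconciling the $-2|\s|$ in $|G|_\s$ with the $-1|\s|$ you obtain from $|E_T|=|N_T|-1$ --- is not resolved by either of the explanations you offer. The vertex $v_\star$ is the \emph{root} of $\bar T$, not a leaf, and there is no hidden shift coming from the integration map. The missing $|\s|$ comes from the fact that $v_0$ itself is an interior node of $G$: one has $\mathring{N}_G = \mathring{N}_T \cup \{v_0\}$, hence $|\mathring{N}_G| = |\mathring{N}_T| + 1$. (You can sanity-check this on $G(\colb{\Xi})$: with $|\mathring{N}_G|=2$, $|L_G|=1$ and the single leaf-edge weight $|\s|+\kappa$, the formula gives $(2+\tfrac12-2)|\s|-(|\s|+\kappa)=-\tfrac{|\s|}{2}-\kappa=|\colb{\Xi}|_\s$, as it should.) Once you substitute $|\mathring{N}_G| = |\mathring{N}_T| + 1$, the coefficient of $|\s|$ becomes $|\mathring{N}_T|+\tfrac{|L_T|}{2}-1$, which after subtracting $\sum_{e\in E_T}a_e = (|\mathring{N}_T|+|L_T|-1)|\s| + \cdots$ collapses to $-\tfrac{|L_T|}{2}|\s|$, exactly matching the leaf contribution $|L_T|\cdot(-\tfrac{|\s|}{2}-\kappa)$ on the tree side together with the $|L_T|\kappa$ from the leaf-edge weights. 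The remaining cancellations (the $\mathfrak{e}$-, $\mathfrak{l}$-, and $\mathfrak{n}$-sums) go through exactly as you wrote them.
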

\begin{proof}
From the definition \eqref{eq:defdegELG} and \eqref{eq:ELGedgedef} and \eqref{eq:ELGleafedge}, one has:
\begin{equs}\label{eq:ELGproof1}
|G|_\s &= \left(|\dsV| - 2\right)|\s| - \sum_{e\in\dsE}a_e = \left(|\dsV| - 2\right)|\s| - \sum_{e\in\dsE(\hat T)}a_e -\hspace{-3mm}\sum_{e\in\dsE^{\uparrow}(\{v_0\})}\hspace{-2mm}a_e \\
&=\left(|\dsV| - 2\right)|\s| - \hspace{-8mm}\sum_{e\in\dsE(\hat T)\setminus\dsE^{\uparrow}(L_{\hat T})}\hspace{-8mm}\left(|\s|+|\mathfrak{e}(e)|_\s-|\mathfrak{l}|_\s\right) -\hspace{-3mm}\sum_{(v_0,v)\in\dsE}|\mathfrak{n}(v)|_\s - |L_{\hat T}|\kappa
\end{equs}
To conclude we notice that:
\begin{equs}\label{eq:ELGproof2}
(|\dsV|-2)|\s| - \hspace{-8mm}\sum_{u\in\dsE(\hat T)\setminus\dsE^{\uparrow}(L_T)}\hspace{-8mm}|\s|-|L_{\hat T}|\kappa &=(|\hat\dsV|-1)|\s|-\hspace{-8mm}\sum_{u\in\dsE(\hat T)\setminus\dsE^{\uparrow}(L_{\hat T})}\hspace{-8mm}|\s|-|L_{\hat T}|\kappa \\
&=|L_{\hat T}|\alpha = \sum_{v\in L_{\hat T}}|\mathfrak{l}(u)|_{\s},
\end{equs}
and that:
\begin{equs}\label{eq:ELGproof3}
\sum_{(v_0,v)\in\dsE}|\mathfrak{n}(v)|_\s = \sum_{v\in\mathring{N}_{\hat T}}|\mathfrak{n}(v)|_\s.
\end{equs}
It is now just a matter of putting \eqref{eq:ELGproof2} and \eqref{eq:ELGproof3} into \eqref{eq:ELGproof1}.
\end{proof}

Recall that the regularity structure in Section~\ref{sec:DisRS} was built up recursively from ${\color{blue} \Xi}$ and ${\color{blue} X}$. As we intend to codify $\hat\Pi_0^NT^\mfn_\mfe$ with elementary graphs, we built up our space of elementary labelled graph similarly from graphs that encode $\Pi^N_0\,{\color{blue}\Xi}$ and $\Pi^N_0\,{\color{blue}X}$:

\begin{definition}\label{def:elg1} For $\colb{\Xi},\,\colb{X}\in\mathcal{T}$, we define the elementary labelled graphs as:
$$\dsG({\color{blue}\Xi})=\begin{tikzpicture}[baseline=0cm]
\node at (0,1) [var] (a) {};
\node at (0,0) [dot] (b) {};
\node at (0,-1) [root] (c) {};
\draw[dashed] (b) to (a);
\draw[dotted, semithick] (c) to (b);
\end{tikzpicture}
\qquad\qquad \dsG({\color{blue}X})=\begin{tikzpicture}[baseline=0.5cm]
\node at (0,1.5) [dot] (a) {};
\node at (0,-0.5) [root] (b) {};
\draw[dotted, semithick] (b) to (a);
\draw[kepsus] (b) to[bend right = 60] node[labl,pos=0.45] {\tiny -1,0} (a);
\end{tikzpicture}$$
\end{definition}

With reference to the relation $\Pi_z^N\colb{\tau}\colb{\bar\tau}=\Pi_z^N\colb{\tau}\Pi_z^N\colb{\bar\tau}$, we define the product of elementary graphs in the following way:

\begin{definition}\label{def:elg2}
Given two Elementary Labelled Graphs $\dsG_1=\left(\dsE_1,\dsV_2\right)$ and $\dsG_2=\left(\dsE_2,\dsV_2\right)$ define a new Elementary Labelled Graph $\bar{\dsG}:=\dsG_1\otimes\dsG_2 = \left(\bar\dsE,\bar\dsV\right)$, by $\bar\dsV:=\dsV_1\cup\dsV_2$ and $\bar\dsE:=\dsE_1\cup\dsE_2$. With $v^{i}_0$ and $v^{i}_\star$ for $i\in\{1,2\}$ denoting the distinguished nodes coming from $\dsG_i$ and $v_0$, $v_\star$ the distinguished nodes from $\bar{\dsG}$, we identify $v_0 \sim v^{1}_0 \sim ~ v^{2}_0$ and $v_\star \sim v^1_\star \sim v^2_\star$.
\end{definition}
\begin{example}
Consider the product $\colb{\Xi X}$. Putting the previous two definitions together gives us: 
\beq
G(\colb{\Xi X})=\begin{tikzpicture}[baseline=0cm]
\node at (0,1) [var] (a) {};
\node at (0,0) [dot,label=200:$v_\star$] (b) {};
\node at (0,-1) [root] (c) {};
\draw[dashed] (a) to (b);
\draw[dotted, semithick] (c) to (b);
\draw[kepsus] (c) to[bend right = 60] node[labl,pos=0.45] {\tiny -1,0} (b);
\end{tikzpicture}
\eeq
\end{example}

Further with reference to \eqref{eq:I} we define the integration of graphs in the following manner:

\begin{definition}\label{def:elg3}
The integration of an elementary graph $\dsG=(\dsV,\dsE)$ is the elementary graph $\bar{\dsG}=\CI_n(G)$ given by: $(\ds{V}\cup\{v'_\star\},\ds{E}')$ where $\ds{E}'$ is defined by
\begin{itemize}
\item if $a_{e_\star}\neq 0$ then $\ds{E}'=\ds{E}\cup\{(v'_\star,v_0),(v'_\star,v_\star)\}$ with the label $(v'_\star,v_0)$ given by $(0,0)$.
\item else $\ds{E}'=(\ds{E}\setminus\{(v_0,v_\star)\})\cup\{(v_0,v'_\star),(v_\star,v'_\star)\}$.
where the edge label for $(v_0,v'_\star)$ and $(v_\star,v'_\star)$ are given by $(0,0)$ and $(|\s|-|\CI(\cdot)|_\s+|n|_\s,0\vee\lceil|G'|\rceil,v_0).$
\end{itemize}
\end{definition}
Consider the following examples of this construction:

\be
G\left(\colb{\CI(X\Xi)}\right)=\CI\left(G(\colb{X\Xi})\right)=\begin{tikzpicture}[baseline=0.5cm]
\node at (0,2) [var] (a) {};
\node at (0,1) [dot,label=200:$v_\star$] (b) {};
\node at (0,0) [dot,label=200:$v'_\star$] (c) {};
\node at (0,-1) [root] (d) {};
\draw[dashed] (a) to (b);
\draw[kepsus] (b) to node[labl,pos=0.45] {\tiny 1,1} (c);
\draw[kepsus] (d) to[out=50,in=310] node[labl,pos=0.45] {\tiny\textit -1,0} (b);
\draw[dotted, semithick] (d) to (c);
\end{tikzpicture}
\ee
\be
G\left(\colb{\CI(\Xi)\Xi}\right)=G\left(\colb{\CI(\Xi)}\right)\oast G\left(\colb{\Xi}\right)=\begin{tikzpicture}[baseline=0.5cm]
\node at (1,2) [var] (a) {};
\node at (1,1) [var] (e) {};
\node at (0,1) [dot,label=200:$v_\star$] (b) {};
\node at (0,0) [dot,label=200:$v'_\star$] (c) {};
\node at (0,-1) [root] (d) {};
\draw[dashed] (a) to (b);
\draw[kepsus] (b) to node[labl,pos=0.45] {\tiny 1,1} (c);
\draw[dashed] (e) to (c);
\draw[dotted, semithick] (d) to (c);
\end{tikzpicture}
\ee

We differentiate between the "integration" of the graph and the "integration" of the abstract symbol by colouring the latter in blue. 


\begin{definition}[Subtree]
Given a elementary labelled graph $\dsG = (\dsV,\dsE)$, a subtree $\bar T = (\bar\dsV,\bar\dsE)$ is defined via the inclusions $\bar\dsV\subset\dsV\text{ and }\bar\dsE\subset\dsE$ and one defines the homogeneity of $\bar T$ as:
\beq\label{def:homsub}|\bar{T}|_\s=\left(|\bar\dsV| - \mathbbm{1}_{\{v_\star\in\bar\dsV\}}\right)|\s|-\sum_{e\in\dsE_0(\bar\dsV)}a_e - \sum_{e\in\dsE^\uparrow(\bar\dsV)}a_e
\eeq
\end{definition}
As a sanity check, one may chose $\bar{\dsV} = \dsV$, and notice that in this case, the second sum above would be over an empty index and what remains is just $|G|_{\s}$ \eqref{eq:defdegELG}.

Recursively one could generate any number of graphs, but in our analysis of gKPZ, we will only be concerned with those that are constructed with the specific set of rules given in $ \eqref{rules_gKPZ} $.
So for example, the rules do not allow for any powers of $\colb{\Xi}$.  We will denote by $\CG_{\mathcal{R}}$ the set of elementary graphs built from $\CR $ and $G(\colb\Xi)$ where all the symbols, products and integrations are replaced by the counterparts we have given in Definitions \ref{def:elg1}, \ref{def:elg2}, \ref{def:elg3}.
This recursive construction matches the recursive construction of the model $ \Pi^N $ as we have a one to one correspondence between a subclass of elementary graphs and the iterated integrals generated by $ \Pi^N \btau $. 

$\CR$ is a locally subcritical set of rules, which we recall means that for any subtree $T$ of any elementary labelled graph $\dsG\in \CG_{\CR}$ such that it is not $\colb{\Xi}$, we always have that $|T|_\s > |\colb{\Xi}|_\s = \alpha$.

\subsection{Bounds on Elementary Labelled Graphs}

To use the bounds of the last section on these elementary graphs, we will want to use Theorem \ref{th:HQbnd}. To this end we will suggest bounds on Elementary Labelled Graphs, that will give us the bounds \eqref{eq:ass2e1} and \eqref{eq:ass2e2} and hence satisfy the hypothesis of the theorem in question.

\begin{assumption}\label{ass3} \begin{itemize}
\item[1.] For every subset $\bar{\ds{V}}\subset\ds{V}_0$ one has:
\beq\label{elgass1}
\sum_{e\in\mathds{E}_0(\bar{\ds{V}})} a_e \le \left(|\bar{\ds{V}}|-\frac{1}{2}\right)|\s|+\kappa.
\eeq
\item[2.] For every subset $\bar{\ds{V}}\subset\ds{V}$ one has for $|\bar{\ds{V}}|\ge 3$:
\beq\label{elgass2}
\begin{split}
\sum_{e\in\mathds{E}_0(\bar{\ds{V}})} a_e + \sum_{e\in\mathds{E}^\uparrow(\bar{\ds{V}})}\mathbbm{1}_{\{v_e\in\bar{\ds{V}}\wedge r_e>0\}}(a_e&+r_e-1)-\sum_{e\in\mathds{E}^\downarrow(\bar{\ds{V}})}\mathbbm{1}_{\{v\in\bar{\ds{V}}\}}r_e \\
&<\left(|\bar{\ds{V}}|-1\right)|\s|,
\end{split}
\eeq
where we treat the vertex $0$ as an inner node.
\item[3.] For every non-empty subset $\bar{\ds{V}}\subsetneq \ds{V}_0$, one has:
\beq\begin{split}\label{elgass3}
&\sum_{e\in\mathds{E}_0(\bar{\ds{V}})}a_e + \sum_{e\in\mathds{E}^\downarrow(\bar{\ds{V})}}\bigl(\mathbbm{1}_{\{v_e\in\bar{\ds{V}}\vee r_e=0\}}(a_e+r_e-1)-(r_e-1)\bigr) \\ &+\sum_{e\in\mathds{E}^\uparrow(\bar{\ds{V}})}\bigl((a_e+r_e)-\mathbbm{1}_{\{v_e\in\bar{\ds{V}}\}}r_e\bigr)+\mathbbm{1}_{\{v_\star\in\bar{\ds{V}}\}}|G|_\s
>\biggl(|\bar{\ds{V}}|-\mathbbm{1}_{\{v_\star\in\bar{\ds{V}}\}}\biggr)|\s|.\end{split}
\eeq

We note that if $\bar{\ds{V}}=\ds{V}_0$, we obtain equality in the previous bound which is the definition of $|G|_\s$.
\end{itemize}
\end{assumption}

The following result tells us that \eqref{elgass3} is always satisfied for the graphs generated from $\mathcal{G}_{\mathcal{R}}$.

\begin{proposition}\label{prop:assump4-3}
Let $\dsG=(\dsV,\dsE)\in\CG_{\CR}$ and $\bar{{T}}=(\bar{\dsV},\bar{\dsE})$ a sub-tree of $\dsG$ such that $\bar{\dsV}\subsetneq\dsV_0$ then $\bar{\dsV}$ satisfies the assumption \eqref{elgass3}.
\end{proposition}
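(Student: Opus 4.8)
The plan is to prove the statement by induction on the recursive construction of elementary labelled graphs in $\CG_{\CR}$, mirroring the inductive build-up of the model $\Pi^N$ via Definitions~\ref{def:elg1}, \ref{def:elg2} and \ref{def:elg3}. Since $\eqref{elgass3}$ is a statement quantified over all proper subtrees $\bar T$ of $\dsG$ with $\bar\dsV \subsetneq \dsV_0$, and since in an elementary graph the ``tree part'' $\bar T = (\ds V \setminus \{v_0\}, \ds E \setminus \ds E^\uparrow(\{v_0\}))$ is genuinely a rooted tree with root $v_\star$ (plus the distinguished edges emanating from $v_0$), a proper subtree $\bar T$ that does not contain $v_\star$ is itself the tree part of an elementary labelled subgraph sitting over some edge $e$ of the ambient tree. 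The strategy is: first reduce the inequality $\eqref{elgass3}$ for $\bar T$ to a statement about $|T_e|_\s$ for the relevant edge $e$, using the defining relations $a_e = |\s| + |\mfe(e)|_\s - |\mfl|_\s$ and $r_e = \lceil |T_e|_\s \rceil \vee 0$ from Definition~\ref{def:ELG}; then invoke local subcriticality of $\CR$ — recorded just before this subsection — which guarantees $|T|_\s > \alpha = |\colb{\Xi}|_\s$ for every subtree $T \neq \colb{\Xi}$.

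The key steps, in order. \textbf{Step 1.} Set up the induction on the size (number of internal nodes plus leaves) of $\dsG$. The base cases are $G(\colb{\Xi})$ and $G(\colb{X})$ from Definition~\ref{def:elg1}: here $\dsV_0$ has a single non-distinguished vertex (or none), so there is no proper subtree $\bar T$ with $\bar\dsV \subsetneq \dsV_0$ containing a meaningful configuration, and $\eqref{elgass3}$ holds vacuously or by direct inspection of the labels $(-1,0)$ and $(0,0)$. \textbf{Step 2.} For the product $\bar\dsG = \dsG_1 \otimes \dsG_2$ (Definition~\ref{def:elg2}): since the two graphs are glued only along $v_0$ and $v_\star$, any subtree $\bar T$ with $\bar\dsV \subsetneq \dsV_0$ and $v_\star \notin \bar\dsV$ is entirely contained in the tree part of $\dsG_1$ or of $\dsG_2$ (the vertex sets $\dsV_1 \setminus \{v_0, v_\star\}$ and $\dsV_2 \setminus \{v_0, v_\star\}$ are disjoint, and a subtree is connected), so $\eqref{elgass3}$ follows directly from the inductive hypothesis applied to whichever factor contains $\bar T$ — note the edges from $v_0$ to the nodes of $\bar T$ are inherited unchanged, so every term in $\eqref{elgass3}$ is unchanged. \textbf{Step 3.} For the integration $\bar\dsG = \CI_n(\dsG)$ (Definition~\ref{def:elg3}): the new graph adds one vertex $v'_\star$, which becomes the new root $v_\star$ of the ambient tree, and the old root becomes an internal node carrying a new edge of label $(|\s| - |\CI(\cdot)|_\s + |n|_\s, \, 0 \vee \lceil |G'| \rceil, v_0)$. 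A proper subtree $\bar T$ with $\bar\dsV \subsetneq \dsV_0$ either (a) does not contain the old root $v_\star^{\text{old}}$, in which case $\bar T$ is a subtree of $\dsG$ not involving its root and $\eqref{elgass3}$ is the inductive hypothesis; or (b) contains $v_\star^{\text{old}}$ but not the new $v'_\star$, in which case $\bar T$ is precisely the tree part of $\dsG$ and the inequality $\eqref{elgass3}$ becomes — after moving all terms — exactly the assertion $|\dsG|_\s > \alpha$, which is local subcriticality. The arithmetic here amounts to checking that the left-hand side of $\eqref{elgass3}$ with $\bar\dsV = \dsV_0^{\text{old}}$, $v_\star \in \bar\dsV$, collapses to $|G^{\text{old}}|_\s$ plus the contribution of the newly created edge, and comparing against $(|\dsV_0^{\text{old}}| )|\s|$; this is where the Proposition following Definition~\ref{def:ELG} (which says $|T^\mfn_\mfe|_\s = |G|_\s$) and the definition of $r_e = \lceil|T_e|_\s\rceil \vee 0$ are used to rewrite $r_e$-dependent terms in terms of homogeneities.

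I expect the main obstacle to be \textbf{Step 3, case (b)}: carefully unwinding the telescoping of the various sums $\sum_{\ds E_0}$, $\sum_{\ds E^\uparrow}$, $\sum_{\ds E^\downarrow}$ in $\eqref{elgass3}$ when $v_\star \in \bar\dsV$, and verifying that the indicator $\mathbbm{1}_{\{v_e \in \bar\dsV \vee r_e = 0\}}$ and the $-(r_e-1)$ correction conspire so that the downward edges from $v_0$ (which carry $r_e = 0$ by Definition~\ref{def:ELG}, last bullet) contribute exactly $a_e$ and the upward tree edges contribute their $a_e + r_e$ minus the recentring term, leaving precisely the expression defining $|G|_\s$. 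One must also track the new integration edge's $r$-label $0 \vee \lceil |G'| \rceil$ and confirm it does not spoil the strict inequality — here strictness comes from the $\kappa > 0$ slack built into $\alpha = -|\s|/2 - \kappa$ and propagated through local subcriticality. The product and ``no proper subtree'' cases are routine once the bookkeeping convention (that $v_0$ is treated as an internal node, as the assumption explicitly notes) is fixed.
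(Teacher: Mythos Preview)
Your inductive approach differs from the paper's and, as written, has real gaps in the case analysis.

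\textbf{The paper's argument.} The paper does not induct on the construction of $\dsG$. For the given subtree $\bar T$ with root $\rho$, it considers the full subtree $T_\rho$ of $\dsG$ sitting above the unique edge entering $\rho$ from the direction of $v_\star$ (or the whole tree if $\rho=v_\star$). The key identity is the additive splitting
\[
|T_\rho|_\s \;=\; |\bar T|_\s \;+\; \sum_{e\in\dsE^\downarrow(\bar\dsV)}|T_e|_\s,
\]
which, combined with \eqref{def:homsub}, rewrites $|T_\rho|_\s$ in terms of exactly the edge sums occurring in \eqref{elgass3}. Two bounds are then fed in: (i) $\sum_{e\in\dsE^\uparrow(\bar\dsV)} r_e > |T_\rho|_\s$ when $\rho\neq v_\star$ (respectively $|T_\rho|_\s=|G|_\s$ when $\rho=v_\star$), coming from $r_e=\lceil|T_e|_\s\rceil\vee0$; and (ii) for each $e\in\dsE^\downarrow(\bar\dsV)$, either $|T_e|_\s>r_e-1$ (if $r_e>0$) or $|T_e|_\s>-|\s|/2-\kappa>-a_e$ (if $r_e=0$, by local subcriticality). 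Substituting gives \eqref{elgass3} in one stroke, with both cases $v_\star\in\bar\dsV$ and $v_\star\notin\bar\dsV$ handled uniformly.

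\textbf{Gaps in your induction.} In Step~2 you explicitly restrict to $v_\star\notin\bar\dsV$ so that $\bar T$ sits in a single factor; but the proposition allows $v_\star\in\bar\dsV$, and then a connected subtree containing the shared root $v_\star$ can straddle both $\dsG_1$ and $\dsG_2$. You give no argument for that case, and simply summing the two factor inequalities does not obviously reproduce \eqref{elgass3} for the union (the $|G|_\s$ term and the $\dsE^\uparrow$ contributions do not split additively). In Step~3(b) you assert that any subtree containing $v_\star^{\text{old}}$ but not $v'_\star$ must be the entire old tree; this is false, since $\bar T$ can be any subtree rooted at $v_\star^{\text{old}}$, for instance a single branch of a product sitting at the old root. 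Covering that case would force you to compare the old inequality (which carries the term $|\dsG^{\text{old}}|_\s$) with the new one (which instead carries the $(a_e+r_e)$ contribution of the freshly created integration edge), and you have not supplied that bookkeeping. These omitted cases are precisely where the interaction between $r_e$, $|T_e|_\s$, and $|G|_\s$ is nontrivial; the paper's direct decomposition handles them without any case distinction on the build-up of $\dsG$.
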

\begin{proof}
Let $\rho$ be the root of $\bar T$ and recall that $T_\rho$ denotes the tree above $\rho$. If $\rho\neq v_\star$, we replace $T_\rho$ by $T_u$ where $(u,\rho)\in\dsE$. To each edge $e=(e_-,e_+)\in\dsE^{\downarrow}(\bar{\dsV})$, we can associate a tree $T_e=T_{e_-}$, and notice that we have the following equalities:
$$T_\rho=\bar{T}\cup\hspace{-3mm}\bigcup_{v\in\dsE^{\downarrow}(\bar{\dsV})}T_e,\qquad |T_\rho|_\s=|\bar{T}|_\s + \hspace{-3mm}\sum_{e\in\dsE^{\downarrow}(\bar \dsV)}\hspace{-2mm}|T_e|_\s$$
Then in view of \eqref{def:homsub} and the second equality above we get:
\beq\label{eq:T_rho}|T_\rho|_\s=\left(|\bar{\dsV}|-\mathbbm{1}_{\{v_\star\in\bar{\dsV}\}}\right)|\s|-\hspace{-3mm}\sum_{e\in\dsE_0(\bar{\dsV})}\hspace{-2mm}a_e-\hspace{-3mm}\sum_{e\in\dsE^{\uparrow}(\bar{\dsV})}a_e + \sum_{e\in\dsE^{\downarrow}(\bar{\dsV})}|T_e|_\s\eeq
Assume first that $\rho\neq v_\star$ and recall that by definition one has $\sum_{e\in\dsE^{\uparrow}(\bar{\dsV})}r_e > |T_\rho|_\s$. This implies that:
\begin{equs}
\sum_{e\in\dsE^{\uparrow}(\bar{\dsV})}r_e >\left(|\bar{\dsV}|-\mathbbm{1}_{\{v_\star\in\bar{\dsV}\}}\right)|\s|-\hspace{-3mm}\sum_{e\in\dsE_0(\bar{\dsV})}\hspace{-2mm}a_e-\hspace{-3mm}\sum_{e\in\dsE^{\uparrow}(\bar{\dsV})}a_e - \sum_{e\in\dsE^{\downarrow}(\bar{\dsV})}|T_e|_\s \\
\Rightarrow \sum_{e\in\dsE_0(\bar{\dsV})}\hspace{-2mm}a_e +\hspace{-3mm}\sum_{e\in\dsE^{\uparrow}(\bar{\dsV})}a_e + r_e > \left(|\bar{\dsV}|-\mathbbm{1}_{\{v_\star\in\bar{\dsV}\}}\right)|\s| + \sum_{e\in\dsE^{\downarrow}(\bar{\dsV})}|T_e|_\s 
\end{equs}
To bound $|T_e|_\s$ recall that by subcriticality of $G$, one has at least that $|T_e|_\s >-\frac{\s}{2}-\kappa>-a_e$. If we know that $r_e > 0$, we can improve the bound to $|T_e|_\s > r_e - 1$. Putting this bound back in gives us:
\begin{equs}
\sum_{e\in\dsE_0(\bar{\dsV})}a_e + \hspace{-3mm}\sum_{e\in\dsE^{\uparrow}(\bar{\dsV})}(a_e+r_e) + \hspace{-3mm}\sum_{e\in\dsE^{\downarrow}(\bar{\dsV})}&\left(\mathbbm{1}_{\{r_e=0\}}(a_e+r_e-1)-(r_e-1)\right) \\ &\hphantom{a}\hspace{10mm} > \left(|\bar{\dsV}|-\mathbbm{1}_{\{v_\star\in\bar{\dsV}\}}\right)|\s|
\end{equs}
Assume now that $|T_\rho|_\s=|G|_\s$. With the bound on $|T_e|_\s$ the same, one gets the following:
\begin{equs}
\sum_{e\in\dsE_0(\bar{\dsV})}a_e + \hspace{-3mm}\sum_{e\in\dsE^{\uparrow}(\bar{\dsV})}a_e + \hspace{-3mm}\sum_{e\in\dsE^{\downarrow}(\bar{\dsV})}&\left(\mathbbm{1}_{\{r_e=0\}}(a_e+r_e-1)-(r_e-1)\right) \\ &\hphantom{a}\hspace{5mm}+|G|_\s > \left(|\bar{\dsV}|-\mathbbm{1}_{\{v_\star\in\bar{\dsV}\}}\right)|\s|
\end{equs}
Finally, we can consolidate the two previous bounds into the required one:
\begin{equation*}\begin{split}
\sum_{e\in\dsE_0(\bar{\dsV})}a_e + \hspace{-3mm}\sum_{e\in\dsE^{\uparrow}(\bar{\dsV})}(a_e+r_e-&\mathbbm{1}_{\{v_e\in\bar\dsV\}}r_e) + \hspace{-3mm}\sum_{e\in\dsE^{\downarrow}(\bar{\dsV})}\left(\mathbbm{1}_{\{r_e=0\}}(a_e+r_e-1)-(r_e-1)\right) \\ &+\mathbbm{1}_{\{v_\star\in\bar{\dsV}\}}|G|_\s > \left(|\bar{\dsV}|-\mathbbm{1}_{\{v_\star\in\bar{\dsV}\}}\right)|\s|,
\end{split}
\end{equation*}
where the indicator $\mathbbm{1}_{\{v_e\in\bar\dsV\}}r_e$ comes from the fact that when $\rho_\star = v_\star$, one trivially has $v_e\in\bar\dsV$.
\end{proof}

Let $G=(\dsV,\dsE)$ and $\bar{\dsV}\subsetneq\dsV_0$. We consider $\bar{G}=(\bar{\dsV},\bar{\dsE})$ where $\bar{\dsE}=\dsE_0(\bar{\dsV})$. Then the graph $\bar{G}$ admits the following decomposition: $\bar{G}=\sqcup_{j\in K} \bar{G}_j$ where $G_j=(\dsV_j,\dsE_j)$, with $\dsE_j\coloneqq\dsE_0(\dsV_j)$, are disjoint subtrees of $\bar G$ and $|K|\geq 1$.
\begin{example}
    In graph $G$ given below, we colour in red $\bar{G}$ and then exhibit the decomposition $\bar{G}_1\sqcup \bar{G}_2$ in colours green and yellow.
    \begin{equs}
        \begin{tikzpicture}[baseline=0cm,scale=0.8]
\node at (0,2) {$G$ with {\color{red}$\bar G$}};
\node at (0,-2) [root, label=below:$0$] (a) {};
\node at (0,-1) [dot,color=red,label=left:$v_\star$] (b) {};
\node at (-1,0) [dot,label=left:$v$] (c) {};
\node at (1,0) [dot,color=red,label=right:$v_1$] (d) {};
\node at (-2,1) [dot,color=red,label=left:$x_1$] (e) {};
\node at (-1.5,1.5) [var] (i) {};
\node at (-0.5,0.5) [var] (f) {};
\node at (0.5,0.5) [var] (g) {};
\node at (2,1) [dot,label=right:$v_2$] (h) {};
\node at (1.5,1.5) [var] (j) {};
\draw[testfcn] (a) to (b);
\draw[kepsus] (c) to (b);
\draw[kepsus,color=red] (d) to (b);
\draw[kepsus] (e) to (c);
\draw[dashed] (f) to (c);
\draw[dashed] (e) to (i);
\draw[kepsus] (h) to (d);
\draw[dashed] (g) to (d);
\draw[dashed] (j) to (h);
\end{tikzpicture}\longrightarrow \begin{tikzpicture}[baseline=0cm,scale=0.8]
\node at (0,2) {{\color{green} $\bar{G}_1$} and {\color{yellow} $\bar{G}_1$}};
\node at (0,-0.5) [dot,color=yellow,label=left:$v_\star$] (b) {};
\node at (1,0.5) [dot,color=yellow,label=right:$v_1$] (d) {};
\node at (-2,0) [dot,color=green,label=left:$x_1$] (e) {};
\node at (0.5,1) [var] (g) {};
\node at (-1.5,0.5) [var] (i) {};
\draw[kepsus,color=yellow] (d) to (b);
\draw[dashed,color=green] (e) to (i);
\draw[dashed] (g) to (d);
\end{tikzpicture}
    \end{equs}
    
\end{example}

Using this characterisation we are able to prove:
\begin{proposition}
Let $G=(\dsV,\dsE)\in\CG_{\CR}$ and let $\bar{\dsV}\subset\dsV\backslash \dsV_\star$. Then $\bar{\dsV}$ satisfies the assumption \eqref{elgass3}.
\end{proposition}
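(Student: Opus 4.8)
The plan is to reduce the general subset $\bar{\dsV}\subset\dsV\setminus\dsV_\star$ to the case of a single subtree, which was essentially handled by the previous proposition, and then to combine the contributions additively. Concretely, write $\bar G=(\bar{\dsV},\dsE_0(\bar{\dsV}))=\bigsqcup_{j\in K}T_j$ with $T_j=(\dsV_j,\dsE_j)$ the connected components (disjoint subtrees of $G$), as indicated in the paragraph preceding the statement. Since $\bar{\dsV}\subset\dsV_0$ does not contain $v_\star$, none of the $T_j$ contains $v_\star$ either, so the indicator $\mathbbm{1}_{\{v_\star\in\bar{\dsV}\}}$ and all the $\mathbbm{1}_{\{v_\star\in\dsV_j\}}$ vanish; thus the target inequality \eqref{elgass3} simplifies to
\begin{equation*}
\sum_{e\in\mathds{E}_0(\bar{\dsV})}a_e + \sum_{e\in\mathds{E}^\downarrow(\bar{\dsV})}\bigl(\mathbbm{1}_{\{v_e\in\bar{\dsV}\vee r_e=0\}}(a_e+r_e-1)-(r_e-1)\bigr)+\sum_{e\in\mathds{E}^\uparrow(\bar{\dsV})}\bigl((a_e+r_e)-\mathbbm{1}_{\{v_e\in\bar{\dsV}\}}r_e\bigr)>|\bar{\dsV}||\s|.
\end{equation*}

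Next I would decompose each edge-set $\mathds{E}^\uparrow(\bar{\dsV})$, $\mathds{E}^\downarrow(\bar{\dsV})$ and $\mathds{E}_0(\bar{\dsV})$ over the components: $\mathds{E}_0(\bar{\dsV})=\bigsqcup_j\dsE_j$, while an edge in $\mathds{E}^\uparrow(\bar{\dsV})$ or $\mathds{E}^\downarrow(\bar{\dsV})$ touches exactly one of the $\dsV_j$ (being a single vertex-set it cannot straddle two components, as the $T_j$ are the connected components of the induced subgraph). For each $j$, apply the previous proposition to the subtree $T_j$ with vertex set $\dsV_j\subsetneq\dsV_0$: this gives
\begin{equation*}
\sum_{e\in\dsE_j}a_e+\sum_{e\in\mathds{E}^\downarrow(\dsV_j)}\bigl(\mathbbm{1}_{\{r_e=0\}}(a_e+r_e-1)-(r_e-1)\bigr)+\sum_{e\in\mathds{E}^\uparrow(\dsV_j)}(a_e+r_e)>|\dsV_j||\s|,
\end{equation*}
using $|\dsV_j|\,|\s|$ on the right since $v_\star\notin\dsV_j$. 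Summing over $j\in K$ and using $|\bar{\dsV}|=\sum_j|\dsV_j|$ and the edge decompositions above, the left side telescopes into exactly the left side of the simplified \eqref{elgass3}, except that the indicators $\mathbbm{1}_{\{v_e\in\bar{\dsV}\vee r_e=0\}}$ and $\mathbbm{1}_{\{v_e\in\bar{\dsV}\}}$ must be reconciled with the per-component indicators $\mathbbm{1}_{\{r_e=0\}}$ and the absence of a $v_e$-correction. This is where the only real work lies: I must check monotonicity of each summand in these indicators so that replacing the per-component version by the global $\bar{\dsV}$-version does not decrease the left-hand side. For a downward edge, $\mathbbm{1}_{\{v_e\in\bar{\dsV}\vee r_e=0\}}(a_e+r_e-1)$ dominates $\mathbbm{1}_{\{r_e=0\}}(a_e+r_e-1)$ provided $a_e+r_e-1\ge 0$, which holds because $a_e+r_e-1\ge a_e>0$ for the kernel edges occurring here (or, for the $r_e=0$ edges, both indicators agree); for an upward edge, subtracting $\mathbbm{1}_{\{v_e\in\bar{\dsV}\}}r_e$ only removes a nonnegative quantity relative to the per-component bound, so one must instead argue that when $v_e\in\bar{\dsV}$ the vertex $v_e$ lies in some component $T_{j'}$ and the corresponding $+r_e$ that would otherwise appear is cancelled — i.e. the bookkeeping of $v_e$ across components is exactly consistent with the definition of the boundary sets.

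The main obstacle, then, is purely combinatorial: verifying that when one sums the single-tree inequalities the indicator terms line up correctly with the global definitions in \eqref{elgass3}, paying attention to edges $e\in\mathds{E}^\uparrow(\bar{\dsV})\cup\mathds{E}^\downarrow(\bar{\dsV})$ whose tracking vertex $v_e$ may or may not lie in $\bar{\dsV}$, and to the fact that $\mathds{E}^\uparrow$ and $\mathds{E}^\downarrow$ are defined relative to the whole $\bar{\dsV}$ rather than to individual components. I would handle this by a short case analysis on the location of $e_-$, $e_+$, $v_e$ relative to the components $\dsV_j$, using in each case the sign facts $a_e>0$, $r_e\ge 0$, and $a_e+r_e-1\ge 0$ together with local subcriticality ($|T_e|_\s>-|\s|/2-\kappa>-a_e$ for the subtrees above $r_e=0$ edges), exactly as in the proof of the preceding proposition. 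Once the indicator reconciliation is done, summing the strict inequalities over the finite set $K$ gives the strict inequality \eqref{elgass3} for $\bar{\dsV}$, completing the proof.
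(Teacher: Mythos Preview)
Your approach is the same as the paper's: decompose $\bar{\dsV}$ into the connected components $\dsV_j$, apply the previous proposition to each subtree $T_j$, and sum. The edge-set decompositions you describe are correct (no edge of $\dsE_0(\bar{\dsV})$ can run between different components, so $\mathds{E}^\uparrow(\bar{\dsV})=\bigsqcup_j\mathds{E}^\uparrow(\dsV_j)$ and likewise for $\mathds{E}^\downarrow$ and $\mathds{E}_0$), and summing the strict per-component inequalities immediately gives the result.

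The only remark is that your ``indicator reconciliation'' step is unnecessary, and your handling of the upward-edge case is not quite right. For graphs in $\CG_{\CR}$, every edge is built via Definitions~\ref{def:elg1}--\ref{def:elg3}, and in each of these the label has $v_e=v_0$. Since $\bar{\dsV}\subset\dsV\setminus\dsV_\star$ and $v_0\in\dsV_\star$, one has $v_e\notin\bar{\dsV}$ for every edge, so $\mathbbm{1}_{\{v_e\in\bar{\dsV}\}}=0$ and $\mathbbm{1}_{\{v_e\in\bar{\dsV}\vee r_e=0\}}=\mathbbm{1}_{\{r_e=0\}}$. Thus \eqref{elgass3} for $\bar{\dsV}$ already coincides \emph{exactly} with the sum of the per-component bounds from the previous proposition, and no monotonicity or cross-component bookkeeping is needed. (Your proposed cancellation for upward edges --- finding a ``corresponding $+r_e$'' in another component --- does not actually exist: if $e\in\mathds{E}^\uparrow(\bar{\dsV})$ and $v_e\in\dsV_{j'}$, the edge $e$ contributes nothing to the $T_{j'}$-inequality since neither endpoint lies in $\dsV_{j'}$. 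Fortunately this case simply does not occur here.)
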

\begin{proof}
We decompose $\bar{\dsV}=\bigsqcup_{j\in K}\dsV_j$ where $\dsV_j$ are disjoint sets and $T_j = (\dsV_j,\dsE_0(V_j))$ is a subtree of $G$. Then we apply the previous proposition on each of $\dsV_j$ and by summing the bounds, we obtain the required result.
\end{proof}
\begin{proposition}\label{prop:ass4-2}
Let $G=(\dsV,\dsE)$ a labelled graph and $\bar{\dsV}\subset\dsV$ such that $v_0\in\bar{\dsV}$ and such that $\tilde{\dsV}=\dsV\backslash\bar{\dsV}$ satisfies \eqref{elgass3} then $\bar{\dsV}$ satisfies the assumption \eqref{elgass2}.
\end{proposition}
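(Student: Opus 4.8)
This is a complementation argument. Because $v_0\in\bar{\dsV}$, the complement $\tilde{\dsV}=\dsV\setminus\bar{\dsV}$ lies inside $\dsV_0$, so by hypothesis it satisfies \eqref{elgass3}; the idea is to subtract a rearrangement of that inequality from the defining equality of $|G|_\s$ — namely the instance $\bar{\dsV}=\dsV_0$ of \eqref{elgass3}, in which $v_\star\in\dsV_0$ and the inequality becomes the equality ``$[\text{edge terms at }\dsV_0]+|G|_\s=(|\dsV_0|-1)|\s|$'' — and then to read off \eqref{elgass2} for $\bar{\dsV}$. If $\tilde{\dsV}=\emptyset$ then $\bar{\dsV}=\dsV$ and the claim follows at once from that same equality, so assume $\tilde{\dsV}\neq\emptyset$.

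The first step is to simplify \eqref{elgass2}. By the construction of the elementary graphs in $\CG_{\CR}$ (Definitions~\ref{def:elg1}--\ref{def:elg3}) every edge $e$ with $r_e>0$ is recentred at $v_e=v_0$, and every $v_0$-edge has $v_e=v_0$ and $r_e=0$; since $v_0\in\bar{\dsV}$, all the indicators $\mathbbm{1}_{\{v_e\in\bar{\dsV}\}}$ occurring in \eqref{elgass2} equal $1$, while edges with $r_e=0$ contribute nothing to the terms carrying an $r_e$-weight. Thus \eqref{elgass2} for $\bar{\dsV}$ is equivalent to
\begin{equs}
\sum_{e\in\dsE_0(\bar{\dsV})}a_e\;+\!\!\sum_{\substack{e\in\dsE^\uparrow(\bar{\dsV})\\ r_e>0}}\!\!\bigl(a_e+r_e-1\bigr)\;-\!\!\sum_{\substack{e\in\dsE^\downarrow(\bar{\dsV})\\ r_e>0}}\!\!r_e\;<\;\bigl(|\bar{\dsV}|-1\bigr)|\s|.
\end{equs}

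Next I would do the edge bookkeeping relative to the partition $\dsV=\bar{\dsV}\sqcup\tilde{\dsV}$. Each edge is internal to $\bar{\dsV}$, internal to $\tilde{\dsV}$, or crossing, and every crossing edge lies in exactly one of $\dsE^\uparrow(\bar{\dsV})\cap\dsE^\downarrow(\tilde{\dsV})$ or $\dsE^\downarrow(\bar{\dsV})\cap\dsE^\uparrow(\tilde{\dsV})$; since $v_0$ only emits edges, every edge meeting $v_0$ is either internal to $\bar{\dsV}$ or crossing with source $v_0$, and all such edges have $r_e=0$. Expanding $\dsE_0(\dsV_0),\dsE^\uparrow(\dsV_0),\dsE^\downarrow(\dsV_0)$ in the $\bar{\dsV}=\dsV_0$ instance of \eqref{elgass3} through this partition, and matching each edge's weight there against its weight in the display above and in the left-hand side of \eqref{elgass3} at $\tilde{\dsV}$, one expresses the left-hand side of the display as $(|\dsV_0|-1)|\s|-|G|_\s$ minus the left-hand side of \eqref{elgass3} at $\tilde{\dsV}$, up to a correction $\varrho$ supported on the crossing edges and the distinguished edge (plus, when $v_\star\in\tilde{\dsV}$, a $|G|_\s$-term). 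Inserting the hypothesis (strict inequality \eqref{elgass3} at $\tilde{\dsV}$, right-hand side $(|\tilde{\dsV}|-\mathbbm{1}_{\{v_\star\in\tilde{\dsV}\}})|\s|$) and using $|\bar{\dsV}|=|\dsV_0|-|\tilde{\dsV}|+1$, the desired bound $<(|\bar{\dsV}|-1)|\s|$ follows provided $\varrho$ together with the residual $|G|_\s$-terms is non-positive; this is where local subcriticality of $G$ enters: each tree $T_e$ sitting above a crossing edge $e$ is not $\colb{\Xi}$, so $|T_e|_\s>\alpha$, and one has $|T_e|_\s>r_e-1$ when $|T_e|_\s>0$ and $|T_e|_\s>-a_e$ otherwise; moreover $|G|_\s<0$ is used in the case $v_\star\in\bar{\dsV}$.

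I expect the main obstacle to be the sign-tracking in the correction $\varrho$: a crossing edge pointing from $\tilde{\dsV}$ into $\bar{\dsV}$ is a downward edge for $\bar{\dsV}$ contributing $-r_e$ to the display but an upward edge for $\tilde{\dsV}$ contributing $(a_e+r_e)-\mathbbm{1}_{\{v_e\in\tilde{\dsV}\}}r_e$ to \eqref{elgass3}, whereas a crossing edge pointing from $\bar{\dsV}$ into $\tilde{\dsV}$ is upward for $\bar{\dsV}$ contributing $(a_e+r_e-1)$ or $0$ and downward for $\tilde{\dsV}$ contributing $\mathbbm{1}_{\{v_e\in\tilde{\dsV}\vee r_e=0\}}(a_e+r_e-1)-(r_e-1)$; one must verify that in every configuration the mismatch left over after the subtraction is $\le 0$, which is exactly what local subcriticality of the hanging trees $T_e$ delivers. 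It is natural to organise the verification by the two cases $v_\star\in\bar{\dsV}$ and $v_\star\in\tilde{\dsV}$, the latter being where the extra $|G|_\s$-term appears and where the available slack is tightest.
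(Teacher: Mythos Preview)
Your complementation idea is right and is exactly the paper's strategy, but you are overcomplicating the execution. The paper argues by contradiction: assume \eqref{elgass2} fails at $\bar{\dsV}$ and \emph{add} this reversed inequality to \eqref{elgass3} at $\tilde{\dsV}$. The point you are missing is that the crossing-edge contributions pair up \emph{exactly}: for $e\in\dsE^\uparrow(\bar{\dsV})=\dsE^\downarrow(\tilde{\dsV})$ the two pieces $\mathbbm{1}_{\{v_e\in\bar{\dsV}\wedge r_e>0\}}(a_e+r_e-1)$ and $\mathbbm{1}_{\{v_e\in\tilde{\dsV}\vee r_e=0\}}(a_e+r_e-1)-(r_e-1)$ sum to $a_e$ in every configuration of $(v_e,r_e)$, and likewise for $e\in\dsE^\downarrow(\bar{\dsV})=\dsE^\uparrow(\tilde{\dsV})$ the pieces $-\mathbbm{1}_{\{v_e\in\bar{\dsV}\}}r_e$ and $(a_e+r_e)-\mathbbm{1}_{\{v_e\in\tilde{\dsV}\}}r_e$ sum to $a_e$. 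Hence your correction $\varrho$ is identically zero; there is nothing to bound via subcriticality of the hanging trees $T_e$, and the inequalities $|T_e|_\s>r_e-1$, $|T_e|_\s>-a_e$ you import from another proposition are irrelevant here.

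After this cancellation the summed inequality reads $\sum_{e\in\dsE}a_e+\mathbbm{1}_{\{v_\star\in\tilde{\dsV}\}}|G|_\s>(|\dsV|-1-\mathbbm{1}_{\{v_\star\in\tilde{\dsV}\}})|\s|$, and this is ruled out by the single global bound $|G|_\s\ge -\tfrac{|\s|}{2}-\kappa$ (equivalently $\sum_e a_e\le(|\dsV|-\tfrac32)|\s|+\kappa$); note that you do \emph{not} need $|G|_\s<0$. Two further points: your preliminary simplification ``all $v_e=v_0$'' uses the structure of $\CG_{\CR}$, which is not part of the stated hypothesis (the proposition is for a general labelled graph), and in any case it is unnecessary since the cancellation above holds regardless of where $v_e$ lives.
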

\begin{proof}
We suppose that $\bar{\dsV}$ does not satisfy \eqref{elgass2} which yields:
\begin{equs}\label{eq:ELGproof4}\sum_{e\in\dsE_0(\bar{V})}a_e + \hspace{-3mm}\sum_{e\in\dsE^{\uparrow}(\bar{\dsV})}&\mathbbm{1}_{\{v_e\in\bar{\dsV}\wedge r_e > 0\}}(a_e + r_e - 1) \\ 
- &\sum_{e\in\dsE^{\downarrow}(\bar{\dsV})}\mathbbm{1}_{\{v_e\in\bar{\dsV}\}}r_e\ge\left(|\bar{\dsV}|-1\right)|\s|\end{equs}

On the other hand, $\tilde{\dsV}=\dsV\backslash\bar{\dsV}$ satisfies \eqref{elgass3}:
\begin{equs}\label{eq:ELGproof5}
\sum_{e\in\dsE_0(\tilde{V})}a_e &+ \hspace{-3mm}\sum_{e\in\dsE^{\uparrow}(\tilde{\dsV})}\left(a_e + r_e - \mathbbm{1}_{\{v_e\in\tilde{\dsV}\}}r_e\right) \\
&+\hspace{-3mm}\sum_{e\in\dsE^{\downarrow}(\tilde{\dsV})}(\mathbbm{1}_{\{v_e\in\tilde{\dsV}\vee r_e=0\}}(a_e+r_e-1)-(r_e-1)) \\
&+ \mathbbm{1}_{\{v_\star\in\tilde{\dsV}\}}|G|_\s\ge\left(|\tilde{\dsV}|-\mathbbm{1}_{\{v_\star\in\tilde\dsV\}}\right)|\s|
\end{equs}
Notice that the definitions of $\bar{\dsV}$ and $\tilde{\dsV}$, and in particular $\tilde{\dsV}^{\textnormal{c}}=\bar\dsV$, imply that $\dsE_0(\dsV) = \dsE_0(\bar\dsV)\sqcup\dsE_0(\tilde{\dsV})$, $\dsE^{\uparrow}(\bar{\dsV})=\dsE^{\downarrow}(\tilde{\dsV})$, $\dsE^{\downarrow}(\bar{\dsV})=\dsE^{\uparrow}(\tilde{\dsV})$, and:
\begin{equs}\mathbbm{1}_{\{v_e\in\bar{\dsV}\}} + \mathbbm{1}_{\{v_e\in\tilde{\dsV}\}} = 1,\qquad \mathbbm{1}_{\{v_e\in\tilde{\dsV}\vee r_e=0\}}+\mathbbm{1}_{\{v_e\in\bar{\dsV}\wedge r_e>0\}}=1,
\end{equs}
which means that adding \eqref{eq:ELGproof4} and \eqref{eq:ELGproof5}, gives:
\begin{equation}\label{eq:proof1}
 \sum_{e\in\dsE(\dsV)}a_e + \mathbbm{1}_{\{v_\star\in\tilde{\dsV}\}}|G|_\s > \left(|\dsV|-1-\mathbbm{1}_{\{v_\star\in\tilde{\dsV}\}}\right)|\s| 
 \end{equation}
\noindent The subcriticality of $G$ also gives us that:
 $$-\frac{|\s|}{2}-\kappa\le|G|_\s=\left(|\dsV|-2\right)|\s|-\sum_{e\in\dsE}a_e$$
which can be rearranged to give:
 $$\sum_{e\in\dsE}a_e\le\left(|\dsV|-\frac{3}{2}\right)|\s|+\kappa$$
 But this of course contradicts \eqref{eq:proof1} for all sufficiently small $\kappa > 0$. It follows then that $\bar{\dsV}$ satisfies the condition \eqref{elgass2}.
\end{proof}

We are now able to prove that all graphs in $\CG_{\CR}$ satisfy \eqref{elgass2}.

\begin{proposition}
Every $G=(\dsV,\dsE)\in\CG_{\CR}$ satisfies the condition \eqref{elgass2} for $\bar{\dsV}\subset\dsV$ and $v_0\in\bar{\dsV}$
\end{proposition}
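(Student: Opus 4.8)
The plan is to reduce the statement to the previous proposition by a complementation argument. Given $G=(\dsV,\dsE)\in\CG_{\CR}$ and $\bar{\dsV}\subset\dsV$ with $v_0\in\bar{\dsV}$, set $\tilde{\dsV}=\dsV\setminus\bar{\dsV}$. The first observation is that $\tilde{\dsV}\subset\dsV\setminus\dsV_\star$ unless $v_\star\notin\bar{\dsV}$, so we split into two cases according to whether $v_\star\in\bar{\dsV}$ or not. If $v_\star\notin\bar{\dsV}$ then $\tilde{\dsV}$ contains $v_\star$ but $\tilde{\dsV}\cap\dsV_\star\neq\dsV_\star$ (since $v_0\in\bar{\dsV}$); here one either applies a mild variant of the earlier analysis (treating $T_\rho$ when $\rho=v_\star$, where the homogeneity $|G|_\s$ term appears), or one simply observes that $\tilde\dsV$ still decomposes into disjoint subtrees of $G$ and invokes the second proposition above. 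If $v_\star\in\bar{\dsV}$ then $\tilde{\dsV}\subset\dsV\setminus\dsV_\star$ exactly, so the immediately preceding proposition applies verbatim: $\tilde{\dsV}$ satisfies \eqref{elgass3}.

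Once we know $\tilde{\dsV}$ satisfies \eqref{elgass3}, the third proposition above (``Let $G=(\dsV,\dsE)$ a labelled graph and $\bar{\dsV}\subset\dsV$ such that $v_0\in\bar{\dsV}$ and such that $\tilde{\dsV}=\dsV\setminus\bar{\dsV}$ satisfies \eqref{elgass3} then $\bar{\dsV}$ satisfies \eqref{elgass2}'') is exactly tailored to conclude. So the core of the proof is a two-line citation: first establish the hypothesis on $\tilde{\dsV}$ using the complementation propositions, then feed it into the duality proposition. The bookkeeping that makes this work is the edge-orientation identity already used there, $\dsE^{\uparrow}(\bar{\dsV})=\dsE^{\downarrow}(\tilde{\dsV})$ and $\dsE^{\downarrow}(\bar{\dsV})=\dsE^{\uparrow}(\tilde{\dsV})$, together with $\dsE_0(\bar{\dsV})\sqcup\dsE_0(\tilde{\dsV})\sqcup\dsE^{\uparrow}(\bar{\dsV}) = \dsE$ when one accounts for all edges; summing the two inequalities and using the sharp bound $\sum_{e\in\dsE}a_e\le(|\dsV|-\tfrac32)|\s|+\kappa$ (which follows from local subcriticality applied to the full graph, i.e.\ $|G|_\s\ge-\tfrac{|\s|}2-\kappa$) produces the contradiction.

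The one genuinely delicate point is the boundary case $v_\star\in\bar{\dsV}$ but $v_\star$ is, say, the root of the subgraph spanned by $\bar{\dsV}$: then in passing to $\tilde{\dsV}$ one loses the $\mathbbm{1}_{\{v_\star\in\bar{\dsV}\}}|G|_\s$ term from the left side of \eqref{elgass3} but it reappears on the complement's side, and one must check the $\mathbbm{1}_{\{v_\star\in\cdot\}}$ indicators bookkeep correctly on both sides of the summed inequality so that the $|G|_\s$ contributions cancel and leave $\sum_{e\in\dsE}a_e$ plus at most one copy of $|G|_\s$. I would handle this by writing out \eqref{elgass2} for $\bar{\dsV}$ and \eqref{elgass3} for $\tilde{\dsV}$ side by side, adding them, cancelling the matching $\mathds{E}_0$, $\mathds{E}^\uparrow$, $\mathds{E}^\downarrow$ terms via the orientation identities, and reading off \eqref{eq:proof1}. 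The main obstacle is thus not conceptual but making sure the indicator functions $\mathbbm{1}_{\{v_e\in\bar{\dsV}\}}$ versus $\mathbbm{1}_{\{v_e\in\tilde{\dsV}\}}$ and the asymmetric treatment of $r_e=0$ edges are consistently matched when the two bounds are combined; everything else is a direct appeal to the three propositions proved just above.

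\begin{proof}
Fix $G = (\dsV,\dsE)\in\CG_{\CR}$ and $\bar{\dsV}\subset\dsV$ with $v_0\in\bar{\dsV}$, and set $\tilde{\dsV}=\dsV\setminus\bar{\dsV}$. Since $v_0\in\bar{\dsV}$ we have $v_0\notin\tilde{\dsV}$, so $\tilde{\dsV}\subset\dsV\setminus\{v_0\}$. If moreover $v_\star\in\bar{\dsV}$, then $\tilde{\dsV}\subset\dsV\setminus\dsV_\star$ and the preceding proposition gives that $\tilde{\dsV}$ satisfies \eqref{elgass3}; if instead $v_\star\notin\bar{\dsV}$, then $\tilde{\dsV}$ still decomposes as a disjoint union of subtrees of $G$ contained in $\dsV\setminus\{v_0\}$, and the same proposition applies to each subtree with the $\mathbbm{1}_{\{v_\star\in\cdot\}}|G|_\s$ term included, so $\tilde{\dsV}$ again satisfies \eqref{elgass3}. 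In either case the hypothesis of the proposition asserting that ``$v_0\in\bar{\dsV}$ and $\tilde{\dsV}$ satisfies \eqref{elgass3}'' implies ``$\bar{\dsV}$ satisfies \eqref{elgass2}'' is met. Applying that proposition directly yields that $\bar{\dsV}$ satisfies \eqref{elgass2}, as claimed.
\end{proof}
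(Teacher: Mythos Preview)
Your proof is correct and follows essentially the same approach as the paper: set $\tilde{\dsV}=\dsV\setminus\bar{\dsV}$, verify that $\tilde{\dsV}$ satisfies \eqref{elgass3}, and then invoke the duality proposition. The paper's own proof is a two-line version of yours (``One checks easily that the graph $G$ satisfies the condition \eqref{elgass3} on $\tilde{\dsV}$. By the previous proposition, $\bar{\dsV}$ satisfies \eqref{elgass2}''); you are simply more explicit about the case split on whether $v_\star\in\bar{\dsV}$, which the paper glosses over but which, as you note, is handled by the subtree proposition that already treats both $\rho=v_\star$ and $\rho\neq v_\star$.
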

\begin{proof}
Let $\tilde{\dsV}=\dsV\backslash\bar{\dsV}$. From Proposition~\ref{prop:assump4-3}, we know that \eqref{elgass3} is satisfied for $\tilde{\dsV}$, which in turn implies that $\bar{\dsV}$ satisfies \eqref{elgass2} due to  the Proposition~\ref{prop:ass4-2}.
\end{proof}

\begin{proposition}
Let $G=(\dsV,\dsE)\in\CG_{\CR}$ and $\bar{T}=(\bar{\dsV},\bar{\dsE})=\bigsqcup_{j\in K}T_j$ such that $\bar{\dsV}\subset\dsV_0$. Then
\begin{itemize}
\item If $|K|\ge 3$ or $|\bar{T}|_\s>0$ then the condition \eqref{elgass2} is satisfied.
\item If $|K|=2$ and there exists $j'\in K$ such that $|T_{j'}|_\s>-\frac{|s|}{2}-\kappa$ then the condition \eqref{elgass2} is satisfied.
\item If $|K|=1$ and $|\bar{T}|_\s < 0$ then the condition \eqref{elgass1} is satisfied but \eqref{elgass2} is not.
\end{itemize}
\end{proposition}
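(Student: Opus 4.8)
The plan is to reduce all three assertions to inequalities on the homogeneities $|T_j|_\s$ of the connected components $T_j=(\dsV_j,\dsE_0(\dsV_j))$, $j\in K$, of the induced sub-forest $\bar T$, exploiting that the subtree homogeneity of \eqref{def:homsub} is additive over components. First I would record the structural facts. Since $\bar T=(\bar\dsV,\dsE_0(\bar\dsV))$ is an \emph{induced} subgraph and $G$ is acyclic, no edge of $G$ joins two distinct $\dsV_j$; hence $\dsE_0(\bar\dsV)=\bigsqcup_{j\in K}\dsE_0(\dsV_j)$, $|\bar T|_\s=\sum_{j\in K}|T_j|_\s$, and each $T_j$ is a subtree of $G$ with $\dsV_j\subsetneq\dsV_0$, so that the preceding propositions apply componentwise. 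Next, every tree edge and every pole edge of an elementary graph carries recentring vertex $v_e=v_0$, and $v_0\notin\bar\dsV$ because $\bar\dsV\subset\dsV_0$; therefore every indicator $\mathbbm 1_{\{v_e\in\bar\dsV\}}$ appearing in \eqref{elgass1}--\eqref{elgass2} vanishes and these inequalities collapse respectively to $\sum_{e\in\dsE_0(\bar\dsV)}a_e\le(|\bar\dsV|-\tfrac12)|\s|+\kappa$ and $\sum_{e\in\dsE_0(\bar\dsV)}a_e<(|\bar\dsV|-1)|\s|$.

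The second step expresses the internal-edge sum through the component homogeneities. Applying \eqref{def:homsub} to each $T_j$ and isolating the internal edges yields $\sum_{e\in\dsE_0(\dsV_j)}a_e=|\dsV_j|\,|\s|-|T_j|_\s-B_j$, where $B_j\ge 0$ collects the labels of the edges of $G$ leaving $\dsV_j$ toward the leaves together with the negative-order edges entering $\dsV_j$; as in the proof of the preceding propositions, local subcriticality of $\CR$ and $r_e=\lceil|T_e|_\s\rceil\vee 0$ force $B_j\ge 0$, and a component that meets a child of $G$ outside $\dsV_j$ picks up from that $\colb{\mathcal I}$- or $\colb\Xi$-edge a contribution comparable to $|\s|$. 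Summing over $j$ and using $\sum_j|\dsV_j|=|\bar\dsV|$, \eqref{elgass2} becomes $\sum_{j\in K}|T_j|_\s+\sum_{j\in K}B_j>|\s|$, and \eqref{elgass1} becomes $\sum_{j\in K}|T_j|_\s+\sum_{j\in K}B_j\ge\tfrac12|\s|-\kappa$; the case $v_\star\in\bar\dsV$ is folded in through the identity $|G|_\s=(|\mathring N_G|+\tfrac{|L_G|}{2}-2)|\s|-\sum_e a_e$ exactly as in the earlier arguments.

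The third step is the trichotomy, which is where the accounting is most involved and which I expect to be the main obstacle. By local subcriticality each $|T_j|_\s\ge\alpha:=-\tfrac12|\s|-\kappa$, with equality exactly when $T_j$ is a single noise edge, so any negative homogeneity of $\bar T$ is concentrated in a few bushy components; combined with the boundary surplus $\sum_j B_j$, which grows linearly in $|K|$ because each non-trivial component sends out at least one $\colb{\mathcal I}$- or $\colb\Xi$-edge of $G$, one checks: (i) if $|K|\ge 3$, the surplus beats the total deficit $\sum_j(-|T_j|_\s)\le|K|(\tfrac12|\s|+\kappa)$ by more than $|\s|$, giving \eqref{elgass2}, and the same estimate works for any $|K|$ once $|\bar T|_\s>0$; (ii) if $|K|=2$ and some component is strictly supercritical, $|T_{j'}|_\s>\alpha$, the extra slack on that component (the achievable homogeneities being discrete and bounded below, a strict inequality buys a fixed $\delta>0$ for $\kappa$ small) together with the two-component surplus again closes \eqref{elgass2}; (iii) if $|K|=1$ and $|\bar T|_\s<0$, there is one boundary block and no cross-component slack, and one checks that $\sum_{e\in\dsE_0(\bar\dsV)}a_e$ then reaches or exceeds $(|\bar\dsV|-1)|\s|$, so \eqref{elgass2} genuinely fails, while the weaker \eqref{elgass1}, which tolerates an extra $\tfrac12|\s|+\kappa$, still holds by the same edge count used for the previous propositions together with $|\bar T|_\s>\alpha$. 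The delicate points throughout are correctly separating internal from boundary edges in \eqref{def:homsub}, confirming $B_j\ge 0$ via the dichotomy between $r_e=\lceil|T_e|_\s\rceil\vee 0$ on regularising edges and local subcriticality elsewhere, and pinning down the precise per-component slack that singles out case (iii).
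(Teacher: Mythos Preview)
Your first reduction is right and is exactly what the paper does implicitly: since every edge of an elementary graph has $v_e=v_0\notin\bar\dsV$, both \eqref{elgass1} and \eqref{elgass2} collapse to pure conditions on $\sum_{e\in\dsE_0(\bar\dsV)}a_e$. From that point on, however, your route via the boundary remainders $B_j$ is a genuine detour, and the crucial step---that $\sum_j B_j$ grows linearly in $|K|$ with a large enough slope---is not established and in fact does not follow from the ``one outgoing $\colb{\mathcal I}$-edge per component'' heuristic. An outgoing kernel edge contributes only $|\s|-2$ (or $|\s|-1$), while your reformulation $\sum_j|T_j|_\s+\sum_jB_j>|\s|$ together with the worst case $|T_j|_\s\ge -\tfrac{|\s|}{2}-\kappa$ would force $\sum_jB_j>(1+\tfrac{|K|}{2})|\s|+|K|\kappa$; for $|K|=3$ this asks for more than $\tfrac52|\s|$ of boundary surplus, which three edges of weight $|\s|-2$ cannot supply in low $|\s|$. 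The pole edges in $\dsE^\downarrow$, being negative, only make this worse, so your claim $B_j\ge0$ is also unsafe as stated.

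The paper avoids boundary terms entirely. It goes straight from subcriticality $|T_j|_\s\ge-\tfrac{|\s|}{2}-\kappa$ to a componentwise upper bound of the form
\[
\sum_{e\in\dsE_0(\dsV_j)}a_e\le\Bigl(|\dsV_j|-\tfrac12\Bigr)|\s|+\kappa,
\]
which is just \eqref{elgass1} for a single component. Summing over $j$ yields $\sum_{e\in\dsE_0(\bar\dsV)}a_e\le(|\bar\dsV|-\tfrac{|K|}{2})|\s|+|K|\kappa$, and then the trichotomy is immediate: for $|K|\ge3$ one has $\tfrac{|K|}{2}>1$ so \eqref{elgass2} holds; if $|\bar T|_\s=\sum_j|T_j|_\s>0$ or if $|K|=2$ with one component strictly above $-\tfrac{|\s|}{2}-\kappa$, the same summation with the sharper lower bound on $\sum_j|T_j|_\s$ closes \eqref{elgass2}; and for $|K|=1$ with $|\bar T|_\s<0$ the single-component bound is exactly \eqref{elgass1} while $|\bar T|_\s<0$ contradicts \eqref{elgass2}. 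The per-component slack of $\tfrac12|\s|$ comes from subcriticality alone, not from any accounting of boundary edges; this is the idea you are missing.
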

\begin{proof}
For every $j \in K$, we denote by $\dsV_j$, the nodes of the tree $T_j$.
From \eqref{def:homsub}, one has the bound:
\begin{equs}\label{eq:proof414-1}
    \left(|\dsV_j| - 1\right)|\s| -\hspace{-3mm}\sum_{e\in\dsE_0(\dsV_j)}\hspace{-2mm}a_e \ge |T_j|_\s
\end{equs}
If $|\bar{T}|_\s > 0$, then one necessarily has that $\sum_{j\in K}|T_j|_\s > 0$, which implies that:
\begin{equs}
    \sum_{e\in\dsE_0(\bar\dsV)}a_e = \sum_{j\in K}\sum_{e\in\dsE_0(\dsV_j)}a_e &\le \sum_{j\in K}\left(|\dsV_j| - 1\right)|\s|, \\
    &\le (|\dsV|-1)|\s|
\end{equs}
proving the result. If $|K|=2$ and there is a $T_j$ with strictly positive homogeneity, the result follows similarly. Consider finally the case $|K|\geq 3$, where subcriticality gives us that $|T_j|_\s\ge -\frac{|\s|}{2}-\kappa$ (we fail to have a strict inequality because not every $T_j$ is necessarily a proper subtree of $G$), which weakens the lower bound in \eqref{eq:proof414-1}, yielding:
 \begin{equation*}
  \sum_{e\in\dsE_0(\dsV_j)}a_e \le \left(|\dsV_{j}| - \frac{1}{2}\right)|\s| + \kappa
  \end{equation*} 
And again by summing over $j\in K$, we get
\begin{equs}
\sum_{e\in\dsE_0(\bar{\dsV})}a_e & \le \sum_{j \in K} \left(|\dsV_{j}| - \frac{1}{2} + \frac{\kappa}{|\s|}\right)|\s| \\
&=\left(|\bar{\dsV}|-\sum_{j\in K}\left(\frac{1}{2} - \frac{\kappa}{|\s|}\right)\right)
\end{equs}
If $|K| > 3$, $\sum_{j\in K}\left(\frac{1}{2}-\frac{\kappa}{|\s|}\right)>1$, which can be used to bound the expression.
For the last assertion when $|K|=1$ and $|\bar{T}|_\s < 0$, the fact that $|\bar{T}|_\s > -\frac{|\s|}{2} - \kappa$ proves the condition \eqref{elgass1} but $|\bar{T}|_\s < 0$ is in contradiction with \eqref{elgass2}.
\end{proof}
\begin{proposition}\label{prop12}
Let $G=(\dsV,\dsE)\in\CG_{\CR}$ and $\bar{T}=(\bar{\dsV},\bar{\dsE})$ a subtree of $G$. Suppose that there exists a $\mathscr{l}\in\bar{\dsV}_{\ell}$ such that $\mathscr{l}\notin\dsV_\ell$ then if $|\bar{\dsV}|>1$, the condition \eqref{eq:ass2e1} is satisfied. 
\end{proposition}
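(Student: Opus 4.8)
Proposition~\ref{prop12} asserts that if a subtree $\bar{T}=(\bar{\dsV},\bar{\dsE})$ of an elementary graph $G\in\CG_{\CR}$ contains a leaf $\ell$ whose own ``noise edge'' is \emph{not} captured inside $\bar{T}$ (i.e. $\ell\notin\dsV_\ell$, where $\dsV_\ell$ is the set of vertices sitting above the edge associated to $\ell$), then as long as $|\bar\dsV|>1$ the first assumption \eqref{eq:ass2e1} holds for $\bar\dsV$. The plan is as follows. First I would write out the left-hand side of \eqref{eq:ass2e1} for $\bar\dsV$ and use the structure of an elementary labelled graph from Definition~\ref{def:ELG}: the edges of $\bar{T}$ come in two kinds, the ``tree edges'' $e$ with $a_e = |\s| + |\mfe(e)|_\s - |\mfl|_\s$ and $r_e = \lceil|T_e|_\s\rceil\vee 0$, and the ``noise edges'' $(v_0,v)$ emanating from the distinguished vertex $v_0$ carrying label $(-|\mfl(v)|_\s,0,v_0)$. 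Since $v_0\notin\bar{\dsV}$ for a subtree, every noise edge meeting $\bar\dsV$ is a downward edge with $r_e = 0$, hence contributes only via $\mathds{E}_0$ or $\mathds{E}^\downarrow$; the $\mathbbm{1}_{\{v_e\in\bar\dsV\}}$ terms in the second and third sums of \eqref{eq:ass2e1} vanish on noise edges because $v_e = v_0\notin\bar\dsV$.

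Next I would relate the quantity to the homogeneity $|\bar T|_\s$ defined in \eqref{def:homsub}. The key arithmetic identity to establish is that
$$
\sum_{e\in\mathds{E}_0(\bar\dsV)}\hat a_e + \sum_{e\in\mathds{E}^\uparrow_+(\bar\dsV)}\mathbbm{1}_{\{v_e\in\bar\dsV\}}(\hat a_e+\hat r_e - 1) - \sum_{e\in\mathds{E}^\downarrow(\bar\dsV)}\mathbbm{1}_{\{v_e\in\bar\dsV\}}\hat r_e
= \bigl(|\bar\dsV|-1\bigr)|\s| - |\bar T|_\s + (\text{corrections}),
$$
where the corrections come from edges in $\mathds{E}^\uparrow(\bar\dsV)\setminus\mathds{E}^\uparrow_+(\bar\dsV)$ (i.e. $r_e=0$), from the $\mathbbm{1}_{\{a_e<0\}}$ factor in the downward sum of \eqref{def:homsub} versus the $r_e$-dependent factor here, and from the $\hat r_e - 1$ shift. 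So the inequality \eqref{eq:ass2e1}, namely that the left side is strictly less than $(|\bar\dsV|-1)|\s|$, is morally equivalent to $|\bar T|_\s > 0$ up to these correction terms. The role of the hypothesis $\ell\notin\dsV_\ell$ is precisely to produce a strict gain: the leaf $\ell$ sitting in $\bar T$ has homogeneity $|\mfl(\ell)|_\s = \alpha = -|\s|/2 - \kappa$, and normally in $G$ this negative weight is ``paid for'' by the noise edge above $\ell$; but if that noise edge is not in $\bar T$ (which is what $\ell\notin\dsV_\ell$ encodes — the tree above the relevant edge does not contain $\ell$ in the way that would bring the compensating structure inside), then the subtree $\bar T$ has an uncompensated leaf, which forces $|\bar T|_\s$ strictly positive, or more precisely makes the bound \eqref{eq:ass2e1} slack. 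I would make this precise by splitting $\bar\dsV$ into its inner nodes and its leaves and using local subcriticality (every proper subtree not equal to $\colb\Xi$ has $|T|_\s > \alpha$) on the relevant pieces.

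Concretely, I would decompose $\bar T$ along the edge structure as in the earlier propositions: write $\bar T$ with root $\rho$, let $T_\rho$ be the tree above $\rho$ in $G$, and use $|T_\rho|_\s = |\bar T|_\s + \sum_{e\in\mathds{E}^\downarrow(\bar\dsV)}|T_e|_\s$ together with the defining inequality $\sum_{e\in\mathds{E}^\uparrow(\bar\dsV)}r_e > |T_\rho|_\s$ (when $\rho\neq v_\star$) and, for each downward edge, either $|T_e|_\s > r_e - 1$ (if $|T_e|_\s>0$) or $|T_e|_\s > -|\s|/2-\kappa > -a_e$ (by subcriticality). Assembling these exactly as in the proof of the $\eqref{elgass3}$-propositions gives a bound on $\sum_{e\in\mathds{E}_0(\bar\dsV)}a_e$ and on the $r_e$-terms; the extra ingredient here, absent in those earlier arguments, is that the presence of the leaf $\ell$ with $\ell\notin\dsV_\ell$ means one of the $a_e$ entering $\mathds{E}_0(\bar\dsV)$ or $\mathds{E}^\downarrow(\bar\dsV)$ is the noise-edge weight $-|\mfl(\ell)|_\s = |\s|/2 + \kappa$ \emph{without} its companion, shifting the balance so that the inequality is strict in the direction of \eqref{eq:ass2e1} rather than \eqref{elgass1}. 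I expect the main obstacle to be bookkeeping: keeping the indicator functions $\mathbbm{1}_{\{v_e\in\bar\dsV\}}$, the distinction between $\mathds{E}^\uparrow_+$ and $\mathds{E}^\uparrow$, and the $\hat r_e-1$ versus $r_e$ shifts all consistent between the definitions \eqref{eq:ass2e1}, \eqref{def:homsub} and the homogeneity formula, and then verifying that the ``uncompensated leaf'' really does eat the potential equality case. Once the identity relating the left side of \eqref{eq:ass2e1} to $(|\bar\dsV|-1)|\s| - |\bar T|_\s$ plus controlled corrections is in hand, the strict inequality follows from $|\bar T|_\s \ge -|\s|/2 - \kappa$ (subcriticality) combined with the strict gain from $\ell$, for $\kappa$ small enough, exactly as in the preceding proof.
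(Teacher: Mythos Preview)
Your reading of the hypothesis is off, and this derails the argument. The symbol $\dsV_\ell$ here denotes the set of \emph{leaves of $G$} (the noise vertices), not ``the vertices above the edge associated to $\ell$''. So the assumption is that $\ell$ is a leaf of the subtree $\bar T$ which is \emph{not} a noise leaf in $G$ --- in other words, $\ell$ is an inner node of $G$ that happens to sit at the boundary of $\bar T$. Your paragraph treating $\ell$ as a genuine noise leaf with $|\mfl(\ell)|_\s = -|\s|/2-\kappa$, and arguing that its noise edge is ``missing'', goes in the wrong direction: an uncompensated noise would \emph{lower} $|\bar T|_\s$, not raise it.

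The paper's argument is far shorter than your bookkeeping plan and uses a trick you do not have. One builds an auxiliary tree $\tilde T$ by replacing the inner-node leaf $\ell$ by an actual noise leaf; subcriticality applied to $\tilde T$ gives $|\tilde T|_\s > -|\s|/2 - \kappa$, and the homogeneity difference between $\bar T$ and $\tilde T$ is exactly $|\s|/2$ (inner node versus leaf in the counting formula), so $|\bar T|_\s > -\kappa$. The crucial step you are missing is \emph{discreteness of the homogeneity set}: since the only tree with homogeneity in $(-\kappa,0]$ is the trivial one and $|\bar\dsV|>1$, one concludes $|\bar T|_\s > 0$ outright. Then the preceding proposition (the case $|\bar T|_\s>0$ there) gives \eqref{eq:ass2e1} immediately. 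Your final line ``$|\bar T|_\s \ge -|\s|/2-\kappa$ combined with the strict gain from $\ell$'' stops at $|\bar T|_\s > -\kappa$; without the discreteness step you cannot close the strict inequality, and all the edge-by-edge bookkeeping with $\mathbbm{1}_{\{v_e\in\bar\dsV\}}$ and $\hat r_e-1$ shifts is unnecessary once you have $|\bar T|_\s>0$.
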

\begin{proof}
We construct a new tree $\tilde T$, where we replace $\mathscr{l}$ by a leaf in $\bar{T}$, which counts for $-|\s|/2$. This means that $|\bar T|_\s = |\tilde{T}|_\s + |\s|/2 $ but coupled with subcriticality this means that $|\bar{T}|_\s>-\kappa$. For the set of homogeneities is a discrete set and the next homogeneity above $-\kappa$ is $0$, we must have that $|T| > 0$. We conclude that \eqref{eq:ass2e1} is satisfied.
\end{proof}
\section{Solving the Generalised KPZ Equation} \label{sec:gKPZ}

\subsection{Bounds on the Discrete Model}
To simplify the task of finding the bounds, we employ the graphical shorthand that is already well-established in literature. In particular we will use "\,\tikz \node [root] {};\,"  to denote a special node that represents the origin and "\,\tikz \node [var] {};\," for an instance of the noise. Furthermore "\,\tikz \node [dot] {};\," will denote dummy variables that are to be integrated out. Arrows of the form "\,\tikz \draw[kepsus] (0,0) to node[labl,pos=0.45] {\tiny $a_e,r_e,v_e$} (1.5,0);\," will denote Kernels with the label $(a_e,r_e,v_e)$ although when $v_e = 0$ in the interest of brevity, we will simply write $(a_e,r_e)$. We assume for our analysis that we are given the kernels $K^N(\cdot,\cdot)$ for which the Assumption~\ref{ass:Kernel} hold true. Given such kernels, we define the following functions that are kernels in the same sense:
\begin{equation}\label{eq:KerType}
\begin{split}
& K^N(t-s,y-x), \quad  K^N(t-s,y-x)-K^N(-s-x), \\ &  K^N(t-s,y-x)-K^N(-s-x)-\sum_{i=1}^dy_i\partial_i K^N(-s,-x), \quad \sum_{i=1}^d\partial_i K^N
\end{split}
\end{equation}
 These kernels come with the labels $(|\s|-2,0),\,(|\s|-2,1),\,(|\s|-2,2),\,((|\s|-2)+1,0)$. In \cite{MH21} the authors work in $d=3$ so we see kernels with weights $(3,0),\,(3,1),\,(3,2),\,(4,0)$. Furthermore, we use the arrow: `\,\tikz[baseline=-0.1cm] \draw[testfcn] (0,0) to (1,0);\,\!', to represent the test function $\varphi^\lambda$. Finally, a red polygon with $p$ dots inside will represent a joint cumulant $p$-th order of $\xi^N$. 
  example: $\begin{tikzpicture}
\node[] (e) at (0,-0.5) {};
\node[cumu4] (e-) at (e) {};
\node[dot] at (e.north east) {};
\node[dot] at (e.north west) {};
\node[dot] at (e.south east) {};
\node[dot] at (e.south west) {};
\end{tikzpicture}$; represents a $4$th order cumulant.

\begin{example}\label{ex:cum} Consider the following graph:
\beq\begin{tikzpicture}[scale=0.7,baseline=0.3cm]
\node at (0,-1)  [root] (root) {};
\node at (0,0) [dot,label=210:$z_1$] (b) {};
\node at (-1,1)  [dot,label=left:$z_2$] (left) {};
\node at (1,1)  [dot,label=right:$z_3$] (right) {};
\node[cumu2n] (a) at (0,2){};
\draw[cumu2] (a) ellipse (18pt and 9pt);
\draw[testfcn] (root) to (b);
\draw[kepsus] (right) to (b);
\draw[kepsus] (left) to (b);
\draw[kepsus] (a.west) node[dot] {} to (left);
\draw[kepsus] (a.east) node[dot] {} to (right); 
\end{tikzpicture}\eeq

The integration variables here comprise $z_1 = (t_1,x_1),\,z_2=(t_2,x_2),\,z_3=(t_3,x_3)$ and the integral it represents is:

$$\int \varphi_0^\lambda(z')\left[K(z_1 - z_2)K(z_1 - z_3)\right]\mathbb{E}_c\left[\xi^N_{t_2}(x_2),\xi^N_{t_3}(x_3)\right]dz$$
\end{example}

where by the integral we mean the semi-discrete integral we introduced in Section~\ref{sec:DisRS} over all the variables.
 
To see why cumulant appears in the integral above consider first: $\Pi^N_0\begin{tikzpicture}
\node at (-0.25,0.25) [bar] (a) {};
\node at (0.25,0.25) [bar] (b) {};
\draw[] (a) to (0,0);
\draw[] (0,0) to (b);
\end{tikzpicture}$. With reference to \eqref{eq:I} and \eqref{eq:product}, we expect to see the integral: $$\int \varphi_0^\lambda(z')\left[K(z_1 - z_2)K(z_1 - z_3)\right]\xi^N_t(x_1)\xi^N_t(x_2)\,dz$$
The problem with analysing the integral above is that the singularity of the noises means that the product of the noises is not defined. The classical approach in quantum physics (for white noise at least) is to use Wicks products to renormalise the product. One can refer to \cite{Wick50}, where C. G. Wick first introduced the Wick product, and then \cite{HI67} where the authors use a version of the Wick product in the Stochastic Analysis setup. In \cite{MY89} finally, the authors extended it to stochastic distributions.
We know then by Definition~\ref{def:wick} that: $\xi^N_t(x_1)\xi^N_t(x_2) =\;\; :\!\xi^N_t(x_1)\xi^N_t(x_2)\!: + \;\mathbb{E}_c\left[\xi^N_t(x_1),\xi^N_t(x_2)\right]$. Substituting this into the integral above we get the sum:
\beq\label{eq:examplesplit}\begin{split}&\int \varphi_0^\lambda(z')\left[K(z_1 - z_2)K(z_1 - z_3)\right]:\!\xi^N_{t_2}(x_2),\xi^N_{t_3}(x_3)\!:dz \\ &+ \int \varphi_0^\lambda(z')K(z_1 - z_{2})K(z_{1} - z_{3})\mathbb{E}_c\left[\xi^N_{t_2}(x_2),\xi^N_{t_3}(x_3)\right]dz\end{split}\eeq

The integrals in \eqref{eq:examplesplit} are then represented via the graph notation as follows:
\beq\begin{tikzpicture}[scale=0.7,baseline=0.3cm]
\node at (0,-1)  [root] (root) {};
\node at (0,0) [dot,label=210:$x$] (b) {};
\node at (-1,1)  [dot,label=left:$z$] (left) {};
\node at (1,1)  [dot,label=right:$y$] (right) {};
\node at (1,2) [var] (r) {};
\node at (-1,2) [var] (l) {};
\draw[testfcn] (root) to (b);
\draw[kepsus] (right) to (b);
\draw[kepsus] (left) to (b);
\draw[dashed] (left) to (l);
\draw[dashed] (right) to (r);
\end{tikzpicture} + \begin{tikzpicture}[scale=0.7,baseline=0.3cm]
\node at (0,-1)  [root] (root) {};
\node at (0,0) [dot,label=210:$x$] (b) {};
\node at (-1,1)  [dot,label=left:$z$] (left) {};
\node at (1,1)  [dot,label=right:$y$] (right) {};
\node[cumu2n] (a) at (0,2) {};
\draw[cumu2] (a) ellipse (18pt and 9pt);
\draw[testfcn] (root) to (b);
\draw[kepsus] (right) to (b);
\draw[kepsus] (left) to (b);
\draw[kepsus] (a.west) node[dot] {} to (left);
\draw[kepsus] (a.east) node[dot] {} to (right); 
\end{tikzpicture}\eeq

Looking at our definition of a model one can infer that the integrals we will be interested in, will always take the following form:

\beq\label{eq:genKer}
\int \varphi_0^\lambda(z)\prod_j\mc{K}_j^N(z^{\{j,2\}},\,z^{\{j,1\}}):\!\!\prod_k\xi^{N}_{t^{(k)}}(x^{(k)})\!\!:dz\eeq

where $\mc{K}^N_j$ stands for any the four possibilities in \eqref{eq:KerType}.

The quantities of interest then will be the $p$-th moment of \eqref{eq:genKer}:

$$\int \mathbb{E}\left[\prod_i^p\varphi_{0}^\lambda(z_i)\left[\prod_j\mc{K}_j^N(z_i^{\{j,2\}},\,z_i^{\{j,1\}})\right]:\!\!\prod_k\xi^{N}_{t_i^{(k)}}(x_i^{(k)})\!\!:\right]dz_i$$

We want to use the bounds on the cumulants to bound these moments. For this, we use Lemma~\ref{lem:cyc}, to transition to:
\beq\label{eq:cumgraph}
\sum_\pi\int\prod_{i=1}^p\varphi_0^\lambda(z_i)\left[\prod_{j}\mc{K}_j^N(z_i^{\{j,2\}},\,z_i^{\{j,1\}})\right]\prod_{B\in\pi}\mathbb{E}_c\left(\left[\xi^{N}_{t}(x):(t,x) \in B\right]\right)dz_i
\eeq

The graphical shorthands we have defined in the beginning of this section are used to codify \eqref{eq:cumgraph} (for some fixed $p$) are illustrated as graphs generated via the following algorithm:

\begin{itemize}
\item take $p$ copies of $\dsG$,
\item fix a partition $\pi$ of $\dsV_{\ell}^{\otimes p}$(set of all the leaves) such that each $B\in\pi$ contains at
least two elements that from different copies of $\dsG$,
\item for each $B\in\pi$ draw a red polygon with $|B|$ dots inside and connect these $|B|$ dots to the elements of $B$,
\item sum over all expression obtained in this way.
\end{itemize}

We illustrate this with an example:

\begin{example} Consider for example the term $\colb{\Xi\CI(\Xi)}$. In its treatment (the manipulation being the same as in \cite{HP15} but the contractions refer to cumulants arising as in Example~\ref{ex:cum}) one sees the following graphs:
\begin{equation}
\label{eq:xiixibasic}
\begin{aligned}
(\hat\Pi_0^{N}\colb{\Xi\CI(\Xi)})(\varphi_0^{\lambda})
&= \begin{tikzpicture}[scale=0.5,baseline=0.3cm]
\node at (-1,4) {\textcircled{1}};
\node at (-2,-1)  [root] (root) {};
\node at (-2,1)  [dot,label=left:$z^{(1)}$] (left) {};
\node at (-2,3)  [dot,label=left:$z^{(2)}$] (left1) {};
\node at (0,1) [var] (variable1) {};
\node at (0,3) [var] (variable2) {};

\draw[testfcn] (root) to  (left);

\draw[kepsus] (left1) to  node[labl, pos=0.45] {\tiny $3,1$} (left);
\draw[kepsus] (variable2) to (left1); 
\draw[kepsus] (variable1) to (left); 
\end{tikzpicture}\;
- \;
\begin{tikzpicture}[scale=0.5,baseline=0.3cm]
\node at (-2,4) {\textcircled{2}};
\node at (-2,-1)  [root] (root) {};
\node at (-2,1)  [dot,label=210:$z^{(1)}$] (left) {};
\node at (-2,3)  [dot,label=below:$z^{(2)}$] (top) {};
\node[cumu2n] (a) at (-4,2){};
\draw[cumu2] (a) ellipse (12pt and 24pt);

\draw[testfcn] (root) to  (left);

\draw[kepsus] (top) to [bend left=60] node[labl,pos=0.45] {\tiny $3,0$} (root);
\draw[kepsus] (a.south) node[dot] {} to (left);
\draw[kepsus] (a.north)node[dot] {} to (top); 
\end{tikzpicture}\;.
\end{aligned}
\end{equation}
\end{example}
 
 


 
Our aim is to be able to bound graphs such as these and for graphs like \textcircled{2} in \eqref{eq:xiixibasic} this is a straightforward matter. One notices that it is non-random and as such it is a straightforward application of Theorem~\ref{th:HQbnd}. As per Assumption~\ref{ass:cyc}, we can replace each cumulant kernel in \textcircled{2}, by a cycle of weight $\frac{3}{2}+\kappa$ and then check Assumption~\ref{ass:grph}. So for example one may check that for the subtree $\{z^{(2)},\,z^{(1)}\}$, condition \eqref{eq:ass2e1} reads: $|\s| - 2 < (2 - 1)|\s| = |\s|$ which is of course true for all $\s$. Similarly for $\{z^{(2)},\,z^{(1)},\,0\}$ one check that it becomes: $|\s|- 2 + |\s| - 2  = 2(|\s| - 2) < (3 - 1)|\s| = 2|\s|$, which again holds true always. Condition \eqref{eq:ass2e2} is checked similarly. From thence it is only a matter of invoking Theorem~\ref{th:HQbnd}.

The arguments needed for \textcircled{1} in \eqref{eq:xiixibasic} become more complicated because now we have a random object instead of a deterministic object. Now one applies the general program we expounded on before. The family of quantities (indexed by $p$) we are looking to bound is given by:
\beq\label{eq:moment}\int\mathbb{E}\left[\prod_{i=1}^p\varphi_0^\lambda(z_i)[K^N(z^{(1)}_i-z^{(2)}_i)-K^{N}(-z^{(2)})]\xi^{N}_{t^{(1)}}(x^{(1)})\xi^{N}_{t^{(2)}}(x^{(2)})\right]\,dz_i\eeq

To be able to use Assumption~\ref{ass:cyc} we need to transition from expectations to cumulants in \eqref{eq:moment}. Via Lemma~\ref{lem:cyc}, one gets:
\beq\label{eq:cumulant}\sum_{\pi}\int\prod_{i=1}^p\varphi_0^\lambda(z_i)[K^N(z_i^{(1)}-z_i^{(2)})-K^N(-z_i^{(2)})\prod_{B\in\pi}\mathbb{E}_c(\left\{\xi_t^N(x):(t,x)\in B\right\})\,dz\eeq

with $\pi$ running over all partitions in $\mathcal{P}_{1,2}(\{1,2\}\times\{1,2,\cdots,p\})$. Let us now illustrate the algorithm for constructing illustrations of \eqref{eq:cumulant}. Let us fix $p=2$, which means we begin by taking two copies of \textcircled{1} in \eqref{eq:xiixibasic}. With the four possible leaves, the only partition that makes sense is the one where we pair off one leaf from either copy with the other. This results in:

\beq\label{eq:xiixibasic2}\begin{tikzpicture}[scale=0.5,baseline=0.3cm]
\node at (0,-1)  [root] (root) {};
\node at (-2,1)  [dot] (left) {};
\node at (-2,3)  [dot] (left1) {};
\node at (2,1)   [dot] (right) {};
\node at (2,3)   [dot] (right1) {}; 
\node[cumu2n] (a) at (0,3){};
\draw[cumu2] (a) ellipse (24pt and 12pt);
\node[cumu2n] (b) at (0,1){};
\draw[cumu2] (b) ellipse (24pt and 12pt);

\draw[testfcn] (left) to  (root);
\draw[testfcn] (right) to (root);

\draw[kepsus] (left1) to node[labl,pos=0.45] {\tiny 3, 0} (left);
\draw[kepsus] (right1) to node[labl,pos=0.45] {\tiny 3, 0} (right);
\draw[kepsus] (a.west) node[dot] {} to (left1);
\draw[kepsus] (a.east) node[dot] {} to (right1);
\draw[kepsus] (b.west) node[dot] {} to (left);
\draw[kepsus] (b.east) node[dot] {} to (right);
\end{tikzpicture}\;\eeq

The next step would be to bound each of these diagrams. It was achieved via \cite[Proposition 4.7]{MH21} in the original paper through a general criterion that when met checks automatically Assumption~\ref{ass:grph} for all the graphs generated for the $p$ moments. In the section that follows we present our version of that proposition.

\subsection{General Criterion}

Fix a graph $\ds{G}$, like the first one in \eqref{eq:xiixibasic}. As in \cite{MH21} we would like to leverage the fact that the "cumulant" kernels appearing in the cumulant terms are replaced with cycles with some prescribed weight. To formulate the conditions we will use ${\dsV}_\xi$ which we define as the subset of ${\dsV}$ comprising interior nodes such that they are attached to an instance of the noise.

\begin{proposition}\label{prop:pcondition}
Starting with a graph $\dsG=(\dsV,\dsE)$ for which we have  replaced each cumulant term by a simple weighted cycle with weight $3/2+\eta$, if the following bounds are met for every subset $\bar\dsV\subseteq\dsV_0$:
 \beq\label{eq:cond1a} \sum_{e\in\dsE_0(\bar\dsV)} a_e + \frac{3}{2}|\bar\dsV\cap\dsV_\xi| \le \left(|\bar\dsV| - \frac{1}{4}\right)|\s|+\kappa\eeq
\beq\label{eq:cond1b} \sum_{e\in\dsE_0(\bar\dsV)} a_e \le \left(|\bar\dsV| - \frac{1}{2}\right)|\s|+\kappa\eeq

then all the $p$-th moments of $\dsG$ satisfy \eqref{elgass1}.

Further if for every subset $\bar\dsV\subseteq\dsV$, such that $|\bar\dsV|\ge 3$, one has the following bounds:
 \beq\label{eq:cond2a}\begin{split}\sum_{e\in\dsE_0(\bar\dsV)} &a_e + \hspace{-2mm}\sum_{e\in\dsE^\uparrow(\bar\dsV)}\mathbbm{1}_{\{v_e\in\bar\dsV\vee r_e > 0\}}(a_e + r_e - 1) \\ &- \sum_{e\in\dsE^\downarrow(\bar\dsV)}\mathbbm{1}_{\{v_e\in\bar\dsV\}}r_e + \frac{3}{2}|\bar\dsV\cap\dsV_\xi| < \left(|\bar\dsV|-\frac{1}{2}\right)|\s|\end{split}\eeq
\beq\label{eq:cond2b}\begin{split}\sum_{e\in\dsE_0(\bar\dsV)} a_e + \sum_{e\in\dsE^\uparrow(\bar\dsV)}&\mathbbm{1}_{\{v_e\in\bar\dsV\vee r_e > 0\}}(a_e + r_e - 1) \\ &- \sum_{e\in\dsE^\downarrow(\bar\dsV)}\mathbbm{1}_{\{v_e\in\bar\dsV\}}r_e < \left(|\bar\dsV|-1\right)|\s|\end{split}\eeq
then the $p$-th moments of the graph satisfy \eqref{elgass2}.

Finally, if for every non-empty subset $\bar\dsV\subset\dsV_0$, one has:
\beq\begin{split}\label{eq:cond3}&\sum_{e\in\dsE_0(\bar\dsV)}a_e + \sum_{e\in\dsE^\downarrow(\bar\dsV)}\left(\mathbbm{1}_{\{v_\star\in\bar\dsV\vee r_e > 0\}}(a_e + r_e - 1) - (r_e - 1)\right)
\\
&+\sum_{e\in\dsE^\uparrow(\bar\dsV)}\left((a_e+r_e) - \mathbbm{1}_{v_e\in\bar\dsV}r_e\right)
+ \mathbbm{1}_{\{v_\star\in\bar\dsV\}}|\dsG|_\s
\\
&+\frac{3}{2}|\bar\dsV\cap\dsV_\xi|> \left(|\bar\dsV| - \mathbbm{1}_{\{v_\star\in\bar\dsV\}}\right)|\s|,
\end{split}\eeq
then the $p$-th moment of the graph satisfies \eqref{elgass3}.
\end{proposition}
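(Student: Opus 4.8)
The plan is to fix a moment order $p\ge 2$ and one of the graphs $\bar{\ds{G}}=(\bar{\ds{V}},\bar{\ds{E}})$ produced by the algorithm before the proposition from $p$ copies of $\ds{G}$ and an admissible partition $\pi$, and to verify the three inequalities \eqref{elgass1}--\eqref{elgass3} for $\bar{\ds{G}}$ directly; once these hold, the propositions of Section~\ref{sec:elblgraph} and the implications recorded there give Assumption~\ref{ass:grph}, hence Theorem~\ref{th:HQbnd}, for $\bar{\ds{G}}$. The only structure I would use is that $\bar{\ds{E}}$ is the disjoint union of $p$ isomorphic copies $\ds{E}^{(1)},\dots,\ds{E}^{(p)}$ of the kernel edges of $\ds{G}$, glued at the common origin $0$ and to the $p$ copy-roots by the distinguished $(0,0,0)$-edges, together with, for each block $B\in\pi$, the \emph{cumulant cycle} through the noise leaves of $B$ supplied by Assumption~\ref{ass:cyc}, every edge of which carries label $(\tfrac32+\eta,0,\cdot)$, and every such cycle meeting at least two distinct copies.

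First I would decompose: for a subset $\bar{\ds{W}}$ occurring in one of \eqref{elgass1}--\eqref{elgass3}, write $\bar{\ds{W}}=\bigsqcup_{i=1}^p\ds{W}_i$ with $\ds{W}_i$ the trace of $\bar{\ds{W}}$ on the $i$-th copy (the shared origin, when present, being assigned to a single copy), and set $I=\{i:\ds{W}_i\neq\emptyset\}$, so $|I|\ge1$. Because every kernel edge belongs to a unique copy, the kernel part of each of the three sums splits as $\sum_{i\in I}(\cdots)$, the $i$-th summand being exactly the corresponding sum evaluated on $\ds{W}_i\subseteq\ds{V}(\ds{G})$. Next I would account for the cumulant cycles. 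Each cumulant edge has $a_e=\tfrac32+\eta$ and $r_e=0$, so for the upper-bound conditions \eqref{elgass1} and \eqref{elgass2} such an edge contributes $a_e$ when both endpoints lie in $\bar{\ds{W}}$ and nothing otherwise; for a fixed block $B$ the cycle through $B$ has at most $|B\cap\bar{\ds{W}}|$ chords inside $\bar{\ds{W}}$ (deleting any omitted vertex turns the cycle into a path), whence $\sum_{e\ \mathrm{cumulant},\,e\in\ds{E}_0(\bar{\ds{W}})}a_e\le(\tfrac32+\eta)\,\ds{V}_\xi\text{-count}:=(\tfrac32+\eta)\,|\bar{\ds{W}}\cap\ds{V}_\xi|=(\tfrac32+\eta)\sum_{i\in I}|\ds{W}_i\cap\ds{V}_\xi|$. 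For the lower-bound condition \eqref{elgass3} I would instead use that a cumulant edge incident to $\bar{\ds{W}}$ through $\ds{E}_0$, $\ds{E}^\uparrow$ or $\ds{E}^\downarrow$ contributes exactly $a_e$ to the left-hand side (since $r_e=0$), and that the number of such edges is at least $2|\bar{\ds{W}}\cap\ds{V}_\xi|-|\bar{\ds{W}}\cap\ds{V}_\xi|=|\bar{\ds{W}}\cap\ds{V}_\xi|$ by the same chord count, so their total contribution is $\ge(\tfrac32+\eta)\sum_{i\in I}|\ds{W}_i\cap\ds{V}_\xi|$.

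Then I would combine. If $|I|\ge2$, apply the ``$(a)$'' hypotheses --- \eqref{eq:cond1a}, \eqref{eq:cond2a}, respectively \eqref{eq:cond3} --- to each $\ds{W}_i$, $i\in I$, and sum: the cumulant contribution from the previous step is exactly matched by the $\tfrac32|\ds{W}_i\cap\ds{V}_\xi|$ terms, and on the right-hand side $\sum_{i\in I}\big(|\ds{W}_i|-\tfrac14\big)|\s|=\big(|\bar{\ds{W}}|-\tfrac{|I|}{4}\big)|\s|\le\big(|\bar{\ds{W}}|-\tfrac12\big)|\s|$ because $|I|\ge2$, which is the slack that recovers the threshold of \eqref{elgass1}; analogously $\sum_{i\in I}(|\ds{W}_i|-1)|\s|\le(|\bar{\ds{W}}|-1)|\s|$ for \eqref{elgass2}, and for \eqref{elgass3} the inequalities point in the right direction once the $\mathbbm{1}_{\{v_\star\in\cdot\}}$ indicators and the identity defining $|\ds{G}|_\s$ are collected. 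If instead $|I|=1$, then $\bar{\ds{W}}$ lies in a single copy and no cumulant edge has both endpoints in $\bar{\ds{W}}$ (and for a lower bound the outgoing cumulant edges only help), so one invokes the plain ``$(b)$'' hypotheses \eqref{eq:cond1b}, \eqref{eq:cond2b} (and again \eqref{eq:cond3}, which needs no ``$(b)$'' variant); the size constraint $|\bar{\ds{W}}|\ge3$ in \eqref{elgass2} transfers to $\ds{W}_{i_0}$, so \eqref{eq:cond2b} applies. The discrepancy between $\tfrac32+\eta$ and $\tfrac32$, together with the at most $p\kappa$ accumulated on the right-hand sides, is absorbed into a single $\kappa'$, which is legitimate since for each fixed $p$ one only needs $\kappa$ below a fixed positive constant.

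I expect the main obstacle to be exactly this bookkeeping in the second and third steps: showing that the weight actually carried by the cumulant cycles in $\bar{\ds{G}}$ is never worse than the ``virtual'' weight $\tfrac32|\cdot\cap\ds{V}_\xi|$ pre-installed in the hypotheses (which is what forces the chord-counting estimate for cycles and the sharp separation of the one-copy case from the genuinely multi-copy case, where the slack $\big(\tfrac{|I|}{4}-\tfrac12\big)|\s|\ge0$ is what saves the exponent), and preserving every strict inequality through the summations. A secondary but real difficulty is checking that the shared origin $0$ and the $p$ copy-roots are consistently treated as the distinguished vertices $\ds{V}_\star$ of $\bar{\ds{G}}$ in the sense of \eqref{eq:ass2e1}--\eqref{eq:ass2e2}, i.e.\ that assigning $0$ to one copy and splitting the associated origin-edges does not upset the counts; this parallels the corresponding verification in \cite[Proposition~4.7]{MH21}, whose argument I would follow.
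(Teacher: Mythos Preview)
Your approach is essentially the paper's: decompose the test set $\bar{\dsW}$ copy by copy, control the cumulant edges through the crude count ``at most one cycle edge per noise vertex in $\bar{\dsW}$'', apply the $(a)$ hypotheses to each non-empty piece, and sum, using that at least two pieces are present to recover the constant $\tfrac12$ (resp.\ $1$) on the right-hand side. Your distinction between $|I|\ge 2$ and $|I|=1$, and your explicit chord-counting estimate for the cumulant cycles, are in fact cleaner than the paper's write-up, which simply invokes $p\ge2$ without separating out how many copies $\bar{\dsW}$ actually meets.

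There is one genuine gap in your $|I|=1$ case. You assert that if $\bar{\dsW}$ lies in a single copy then \emph{no} cumulant edge has both endpoints in $\bar{\dsW}$. This is false: the admissible partitions $\pi\in\CP_M$ only require each block to touch at least two \emph{different} copies, so a block $B$ may still contain two (or more) noise vertices from the same copy, and for some cyclic ordering $\sigma$ in Assumption~\ref{ass:cyc} those two vertices are adjacent, producing a cumulant edge entirely inside $\bar{\dsW}$. The easy fix is to sharpen your chord count: since $|I|=1$ forces every block $B$ touching $\bar{\dsW}$ to have at least one vertex outside $\bar{\dsW}$, the cycle on $B$ has at most $|B\cap\bar{\dsW}|-1$ edges inside, so the total cumulant weight in $\dsE_0(\bar{\dsW})$ is $\le(\tfrac32+\eta)\big(|\bar{\dsW}\cap\dsV_\xi|-n_B\big)$ with $n_B\ge1$ the number of blocks meeting $\bar{\dsW}$. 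Applying \eqref{eq:cond1a} then leaves a deficit of at most $\tfrac{|\s|}{4}-\tfrac32$, which is nonpositive in the regime the paper works in; alternatively one falls back on \eqref{eq:cond1b} after absorbing the surviving cumulant weight. The paper's own proof is silent on this point, so your argument is no worse than what is written there.
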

\begin{proof}
In the case that $p=1$, \eqref{eq:cond1a} directly gives us \eqref{elgass1}, so we may assume $p\ge 2$. We need to argue that for any subset $\bar{\dsV}\subset\dsV_0$, which is the union of all the nodes in the $p$ copies, satisfies \eqref{elgass1}. To this end we introduce the notation $\bar{\dsV}_{j}$ for the nodes in $\bar{\dsV}$ that come from the $j$-th copy; the decomposition $\bar{\dsV}=\cup_{j=1}^p\bar{\dsV}_j$ is what we have in mind.

Now as in \cite{MH21}, we proceed by decomposing the sum $\sum_{e\in{\dsE}_0({\bar{\dsV}})}\hat{a}_e$. The idea is that the nodes in $\dsV_j$ that also lie in $\dsV_\xi$, will lead to a cumulant cycle after contraction which under our strategy can be seen as an edge with weight $3/2+\eta$. This means that it will add a weight of $\tfrac{3}{2}|\dsV_j\cap\dsV_\xi|$ to the existing sum. Notice that here we have ignored the $\eta > 0$ and will do so in the proof of the other inequalities too. This is justified by noticing that apart from \eqref{elgass1}, the inequalities we are interested in are strict, and in the case of \eqref{elgass1} we have an arbitrarily small constant, $\kappa$, on the other side which can be adjusted in conjunction with $\eta$. The nodes which are not attached to leaves, are unchanged and hence we are able to make the following calculation:
\be\begin{split}\sum_{e\in{\dsE}_0(\bar{\dsV})}\hat{a}_e
&\le\sum_{\substack{j=1 \\ \bar{\dsV}\!_j\cap\dsV_\xi\neq\emptyset}}^p\hspace{-3mm}\left(\sum_{e\in{\dsE}_0(\dsV_j)}a_e+\frac{3}{2}|\dsV_{j}\cap\dsV_\xi|\right)+\sum_{\substack{j=1 \\ \bar{\dsV}\!_j\cap\dsV_\xi=\emptyset}}^p\hspace{-3mm}\sum_{e\in{\dsE}_0(\dsV_j)}a_e \\
&<|\s|\hspace{-2mm}\sum_{\substack{j=1 \\ \bar{\dsV}\!_j\cap\dsV_\xi\neq\emptyset}}^p\hspace{-3mm}\left(|\bar{\ds{V}}|-\frac{1}{4}\right)+\kappa + |\s|\hspace{-2mm}\sum_{\substack{j=1 \\ \bar{\dsV}\!_j\cap\dsV_\xi=\emptyset}}^p\hspace{-3mm}\left(|\bar{\ds{V}}|-\frac{1}{4}\right)+\kappa \\
&< |\s|\left(|\bar{\ds{V}}|-\frac{1}{2}\right)+\kappa'
\end{split}
\ee
where the last inequality follows from recalling $p\ge 2$ and setting $\kappa' := 2\kappa$, proving \eqref{elgass1} for $p$-th moments.

Fix a graph $\ds{G}$, and assume we have a $\bar{\dsV}$ such that it has cardinality at least $3$ and it satisfies the conditions in \eqref{eq:cond2a} and \eqref{eq:cond2b}. We now argue for the second inequality. The case for $p=1$ is fulfilled directly by \eqref{eq:cond2b}, so we may assume $p\ge 2$. Consider the decomposition $\bar{\dsV}\setminus\{0\}:=\cup_{j=1}^p\bar{\dsV}_j$ where $\bar{\dsV}_j$ denote vertices belonging to the $j$-th copy in $\ds{G}$. The argument for \eqref{elgass2} proceeds in the same manner as in \eqref{elgass3}, except we bound the summands with qualified by $\dsV_j\cap\dsV_\xi\neq\emptyset$, by $\left(|\dsV_j\cup\{0\}|-1\right)|\s|$.

Finally for the last part, fix some $\bar{\dsV}\subsetneq\dsV_0$ and notice that for each $v\in\dsV_{\xi}$, there are two cumulant kernels emerging from it, and that $v$ can belong to at most one edge $e$ with $e\in\dsE^{\downarrow}(\bar\dsV)$. This means that when we decompose $\bar{\dsV}=\cup_{j=1}^p\bar{\dsV}_j$, we can bound the required quantity from below by using \eqref{eq:cond3}:
\be\begin{split}
&\sum_{e\in\mathds{E}_0(\bar{\ds{V}})}a_e + \sum_{e\in\mathds{E}^\downarrow(\bar{\ds{V})}}\bigl(\mathbbm{1}_{\{v_e\in\bar{\ds{V}}\vee r_e=0\}}(a_e+r_e-1)-(r_e-1)\bigr) \\ &+\sum_{e\in\mathds{E}^\uparrow(\bar{\ds{V}})}\bigl((a_e+r_e)-\mathbbm{1}_{\{v_e\in\bar{\ds{V}}\}}r_e\bigr) + \mathbbm{1}_{\{v_\star \in \bar\dsV\}}|\dsG|_\s \\
&\ge\sum_{j=1}^p\biggl(\sum_{e\in\mathds{E}_0(\bar{\ds{V}}_j)}a_e + \sum_{e\in\mathds{E}^\downarrow(\bar{\ds{V}}_j)}\bigl(\mathbbm{1}_{\{v_e\in\bar{\ds{V}}_j\vee r_e=0\}}(a_e+r_e-1)-(r_e-1)\bigr)  \\
&+\sum_{e\in\mathds{E}^\uparrow(\bar{\ds{V}})}\bigl((a_e+r_e)-\mathbbm{1}_{\{v_e\in\bar{\ds{V}}\}}r_e\bigr)+ \frac{3}{2}|\bar{\dsV}_j\cap\dsV_\xi|+\mathbbm{1}_{\{v_\star \in \bar\dsV\}}|\dsG|_\s\biggr) \\
&>\left(|\bar{\ds{V}}| - \mathbbm{1}_{v_\star\in\bar\dsV}\right)|\s|
\end{split}
\ee
\end{proof}

\subsection{Renormalisation Procedure}\label{sec:RenormProc}

In this section, we will develop our renormalisation procedure that is general enough to deal with the gKPZ. Let $\ds{G}=(\dsV,\dsE)$ be the kind of diagrammatic representation we have seen already. Assume it has some ``negative subtree'' $\bar{T}=(\bar{\dsV},\bar{\dsE})$ that requires renormalisation in the sense that it violates \eqref{eq:cond2a} or \eqref{eq:cond2b} - \eqref{eq:cond3} is always satisfied while \eqref{eq:cond1a} and \eqref{eq:cond1b} are mild enough that we do not expect them to be contravened. We will effect this renormalisation by changing the label of some $\gamma\in\mathds{E}^{\downarrow}(\bar\dsV)$ by substituting $v_\gamma= v_0$ with a node of $\bar{T}$ such that the new Taylor expansion point has a renormalisation effect on $T$.

Let $\gamma=(v_1,v_2)\in\mathds{E}^{\downarrow}(\bar\dsV)$ and $v\in\bar T$ such that there exists a $v'$ such that $(v,v')\in\mathds{E}^{\uparrow}(\bar\dsV)$. Diagrammatically we mean:$\begin{tikzpicture}[baseline=-1mm]
\node at (-2,0) [dot, label=above:$v$] (a) {}; 
\node at (0,0) [dot, label=above:$v_2$] (b) {};
\node at (2,0) [dot, label=above:$v_1$] (c) {};
\draw[semithick, - >] (c) to node[labl,pos=0.45] {\tiny $a_\gamma,r_\gamma,v_\gamma$} (b);
\draw[snake=zigzag, segment amplitude=0.5pt,segment length = 1mm, line after snake = 1mm, - >] (b) to (a);
\end{tikzpicture}$ where the symbol $\begin{tikzpicture}[baseline=-1mm]
\node at (0,0) (a) {}; 
\node at (1,0) (b) {};
\draw[snake=zigzag, segment amplitude=0.5pt,segment length = 1mm, line after snake = 1mm, - >] (b) to (a);
\end{tikzpicture}$ means that there exists a path between $v_2$ and $v$. In most practical examples it will be an edge. The label of $\gamma$ is replaced by $(a_\gamma, r_\gamma', v)$. This transformation and the choice of the level $r_\gamma'$ will depend on the subtree $\bar T$. For instance, we take $r_\gamma'=\max(\lceil-|\bar{T}|_\s\rceil,r_\gamma)$. Starting with $v_\gamma=v_0$, we want to rewrite the Taylor expansion in the point $v$. We proceed as follows:
\begin{align*}\hat{K}_\gamma(&x_{v_2}-x_{v_1})=K_\gamma(x_{v_2}-x_{v_1})-\hspace{-2mm}\sum_{|j|_\s<r_e}\frac{(x_{v_2})^j}{j!}K_\gamma^{(j)}(-x_{v_1})\\
&=K_\gamma(x_{v_2}-x_{v_1})-\hspace{-2mm}\sum_{|j|_\s<r_e'}\frac{(x_{v_2}-x_v)^j}{j!}K_\gamma^{(j)}(x_v-x_{v_1})\\
&+\sum_{|j|_\s<r_e'}\frac{(x_{v_2}-x_v)^j}{j!}K_\gamma^{(j)}(x_v-x_{v_1})-\hspace{-4mm}\sum_{|j+k|_\s<r_e}\frac{(x_{v_2}-x_v)^j(x_v)^k}{j!k!}K_\gamma^{(j+k)}(-x_{v_1})\\
&=K_\gamma(x_{v_2}-x_{v_1})-\hspace{-2mm}\sum_{|j|_\s<r_e'}\frac{(x_{v_2}-x_v)^j}{j!}K_\gamma^{(j)}(x_v-x_{v_1})\\
&+\sum_{|j|_\s<r_e'}\frac{(x_{v_2}-x_v)^j}{j!}\Bigl(K_\gamma^{(j)}(x_v-x_{v_1})-\hspace{-4mm}\sum_{|k|_\s<r_e-|j|_\s}\frac{(x_v)^k}{k!}K_\gamma^{(j+k)}(-x_{v_1})\Bigr)
\end{align*} 
Graphically, given a graph $\ds{G}$, we are effecting the following decomposition:
\beq\label{eqdecomp}
\begin{tikzpicture}[baseline=0.5mm]
\node at (0,0) [dot, label=right:$v$] (a) {}; 
\node at (0,1) [dot, label=right:$v_2$] (b) {};
\node at (0,2) [dot, label=right:$v_1$] (c) {};
\draw[semithick, - >] (c) to node[labl,pos=0.45] {\tiny $\gamma$} (b);
\draw[snake=zigzag, segment amplitude=0.5pt,segment length = 1mm, line after snake = 1mm, - >] (b) to (a);
\end{tikzpicture} = \begin{tikzpicture}[baseline=0.5mm]
\node at (0,0) [dot, label=right:$v$] (a) {}; 
\node at (0,1) [dot, label=right:$v_2$] (b) {};
\node at (0,2) [dot, label=right:$v_1$] (c) {};
\draw[semithick, - >] (c) to node[labl,pos=0.45] {\tiny $\gamma_e$} (b);
\draw[snake=zigzag, segment amplitude=0.5pt,segment length = 1mm, line after snake = 1mm, - >] (b) to (a);
\end{tikzpicture}+\sum_{|j|_\s<r_e'}\begin{tikzpicture}[baseline=0.1cm]
\node at (-1,1) [dot, label=left:$v_1$] (a) {};
\node at (1,1) [dot, label=right:$v_2$] (b) {};
\node at (0,0) [dot, label=right:$v$] (c) {};
\draw[semithick, - >] (a) to node[labl,pos=0.45] {\tiny $\gamma_j$} (c);
\draw[snake=zigzag, segment amplitude=0.5pt,segment length = 1mm, line after snake = 1mm, - >] (b) to (c);
\draw[semithick, - >] (c) to[bend left=60] node[labl,pos=0.45] {\tiny $e_j$} (b);
\end{tikzpicture}
\eeq

where $\gamma$, $\gamma_e$, $\gamma_j$, and $e_j$ stand for the labels $(a_\gamma,r_\gamma,v_0)$, $(a_\gamma,r'_\gamma,v)$, $(a_\gamma + |j|_\s, \max(r_\gamma-|j|_\s,0),v_0)$, $(-|j|_\s,0,v_0)$.

One is able to prove that in the case of the \eqref{eq:gKPZ}, this renormalisation procedure is relatively mild, in that we do not expect it to create new divergences. We prove the following results in this direction:

\begin{proposition}\label{prop:invcond3}
If either of the conditions - \eqref{eq:ass2e2}, \eqref{eq:cond3} - is satisfied in \eqref{eqdecomp} for the terms with the labels $\gamma$ and 
$\gamma_e$, then that condition is also satisfied on the other terms on $\bar{\dsV}\subset\dsV$ such that $\bar{\dsV}\cap\{v,v_1,v_2\}\neq\{v\}$.
\end{proposition}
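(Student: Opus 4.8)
I plan to read \eqref{eqdecomp} as a one-edge replacement: the edge $e$ joining $v_1,v_2$ with label $\gamma=(a_e,r_e,v_0)$ is replaced either by the same edge carrying the label $\gamma_e=(a_e,r'_e,v)$, or by the \emph{other terms} — for each $j$ with $|j|_\s<r'_e$, an edge joining $v_1,v$ with label $\gamma_j=(a_e+|j|_\s,\max(r_e-|j|_\s,0),v_0)$ together with an edge joining $v,v_2$ with label $e_j=(-|j|_\s,0,v_0)$; in practice the latter is merged with the path already present between $v$ and $v_2$, which plays no role below. Write $\dsG_\gamma,\dsG_{\gamma_e},\dsG_j$ for the three graphs, and let $F(\cdot,\bar\dsV)$ denote the left-hand side minus the right-hand side of whichever of \eqref{eq:ass2e2} or \eqref{eq:cond3} is under consideration. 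Two facts drive everything. First, the move conserves the total order ($a_{\gamma_j}+a_{e_j}=a_e$) and creates no new vertex, since $v$ already lies in $\bar T\subset\dsG$; hence $\sum_e a_e$, $\dsV_\xi$, $|\dsV|$, $v_0$ and $v_\star$ — and therefore the global terms $\mathbbm{1}_{\{v_\star\in\bar\dsV\}}|\dsG|_\s$ and $\tfrac32|\bar\dsV\cap\dsV_\xi|$ occurring in \eqref{eq:cond3} — are identical for $\dsG_\gamma,\dsG_{\gamma_e},\dsG_j$. Second, the left-hand sides of \eqref{eq:ass2e2} and \eqref{eq:cond3} are sums of a contribution attached to each edge, a function only of its label and of how it meets $\bar\dsV$ (interior, leaving, entering, disjoint), plus those invariant terms. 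Since the three graphs agree away from $e$ resp.\ from $\{\gamma_j,e_j\}$, the claim reduces to showing that for every $\bar\dsV$ with $\bar\dsV\cap\{v_1,v_2,v\}\neq\{v\}$ the combined contribution of $\gamma_j$ and $e_j$ in $\dsG_j$ dominates the contribution of $e$ in $\dsG_\gamma$ or in $\dsG_{\gamma_e}$ on the \emph{same} $\bar\dsV$; with the first fact this yields $F(\dsG_j,\bar\dsV)\ge F(\dsG_{\mathrm{ref}},\bar\dsV)$, which is positive (resp.\ nonnegative) by the hypothesis on $\dsG_\gamma$ and $\dsG_{\gamma_e}$.

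The core is then a finite check over the seven admissible patterns of $S:=\bar\dsV\cap\{v_1,v_2,v\}$ (all subsets of $\{v_1,v_2,v\}$ except $\{v\}$). For each I would tabulate the per-edge contributions — for \eqref{eq:ass2e2}: an interior edge gives $a_e$; a leaving edge gives $a_e+r_e$, or $a_e$ if $v_e\in\bar\dsV$; an entering edge gives $a_e$ if $r_e=0$ or $v_e\in\bar\dsV$, and the penalty $1-r_e$ otherwise — and read off the reference graph that makes the comparison work: in most patterns it is $\dsG_\gamma$, the inequality being either an equality or closed via $\max(r_e-|j|_\s,0)\ge r_e-|j|_\s$ or via $1-r_e\le 0\le a_e$; in the patterns where $e$ meets $\bar\dsV$ as an entering or leaving edge whose "shape" $\gamma_j,e_j$ fail to reproduce, one compares instead with $\dsG_{\gamma_e}$, and the inequality is closed precisely because $|j|_\s\le r'_e-1$, i.e.\ $-|j|_\s\ge 1-r'_e$ — so the very regular edge $e_j$ out-weighs the enlarged Taylor penalty carried by $e$ in $\dsG_{\gamma_e}$ — together with the fact that moving the Taylor point of $e$ onto $v$ restores its contribution to $a_e$ once $v\in\bar\dsV$. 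Condition \eqref{eq:cond3} carries slightly different entering-edge weights (the indicator $\mathbbm{1}_{\{v_\star\in\bar\dsV\vee r_e>0\}}$ in place of $\mathbbm{1}_{\{v_e\in\bar\dsV\vee r_e=0\}}$), so its check must be run in parallel; the structure is the same but the accounting around $\dsE^\downarrow(\bar\dsV)$ needs separate care.

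The main obstacle I anticipate is exactly this pattern-by-pattern accounting: pinning down the correct reference graph in each of the seven cases, verifying the resulting elementary inequality for \eqref{eq:ass2e2} and for \eqref{eq:cond3} simultaneously, and keeping the orientation conventions for $e,\gamma_j,e_j$ consistent with the definitions of $\hat K_e$ and of $\dsE^\uparrow(\bar\dsV),\dsE^\downarrow(\bar\dsV)$. Before starting I would also confirm that $S=\{v\}$ really is the only bad pattern: there $\gamma_j$ appears as an entering edge at $v$ with no analogue in $\dsG_\gamma$, so $F$ can fall by up to $r_e-1$, which is precisely the potential new divergence handled by the companion statement through the choice $r'_e=\max(\lceil-|\bar T|_\s\rceil,r_e)$ — the reason \eqref{eqdecomp} is set up with $|j|_\s<r'_e$.
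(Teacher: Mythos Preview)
Your proposal is correct and follows essentially the same approach as the paper: both reduce the claim to a finite case analysis over the possible intersections $\bar\dsV\cap\{v,v_1,v_2\}$, comparing the combined contribution of $\gamma_j$ and $e_j$ against the minimum of the contributions of $\gamma$ and $\gamma_e$. The paper compresses this into a single $7\times4$ table and the one-line observation that $\gamma_j+e_j\ge\min(\gamma,\gamma_e)$ except when $\bar\dsV_j=\{v\}$, which is exactly the excluded case.
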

\begin{proof}

Given that the conditions are satisfied on terms with $\gamma$ and $\gamma_e$, it suffices to prove that the contribution of $\gamma_j$ and $e_j$ to the left hand side of \eqref{eq:ass2e2} and \eqref{eq:cond3} is greater than the minimum of the contributions of $\gamma$ and $\gamma_e$. To this end we fix for $\bar{\dsV}\subset\dsV$ the set $\tilde{\ds{V}}=\bar{\dsV}\cap\{v,v_1,v_2\}$, and consider the the contribution of each of the four edges for every possible $\tilde{\dsV}$. So for example, if one considers $\tilde\dsV = \{v\}$ and the edge $\gamma_j$, then one calculates in the following way. For $\gamma_j$ carries the label $(a_\gamma+|j|_\s,\max(r_\gamma - |j|_\s,0),v_0)$, and we have that $v_0\not\in\{v\}$ one has to differentiate between the cases $r_\gamma - |j|_\s = 0$ and $r_\gamma - |j|_\s > 0$. In the former case, it gives us the contribution $a_\gamma + |j|_s$ and in the later yields a contribution of $-(r_e+|j|_s - 1)$. In the following table we collate all of these minor computations:
\\
\vspace{3mm}
\begin{center}
\begin{tabular}{|c|c|c|c|c|}
\hline 
\vphantom{\huge{A}} $\tilde{\dsV}$ & $\gamma_j$ & $e_j$ & $\gamma_e$ & $\gamma$ \\
\hline
$\substack{{\color{white} 0} \\ {\color{white} 0} \\ \{v\} \\ {\color{white} 0} \\ {\color{white} 0}}$ & $\substack{- \mathbbm{1}_{\{r_\gamma-|j|_\s>0\}}(r_\gamma-|j|_\s-1) \\ + \mathbbm{1}_{\{r_\gamma-|j|_\s= 0\}}(a_\gamma+|j|_\s)}$ & $-|j|_\s $ & $ 0 $ & $ 0 $\\
\hline
$\{v_1\} $ & $ a_\gamma+|j|_\s+\text{max}(r_\gamma-|j|_\s,0) $ & $ 0 $ & $ a_\gamma+r'_\gamma $ & $ a_\gamma+r_\gamma $\\
\hline
$\{v_2\} $ & $ 0 $ & $ -|j|_\s $ & $ -(r'_\gamma-1) $ & $ -(r_\gamma-1) $\\
\hline
$\{v,v_1\}$ & $ a_\gamma+|j|_\s $ & $ -|j|_\s $ & $ a_\gamma $ & $ a_\gamma + r_\gamma $ \\
\hline
$\substack{{\color{white} 0} \\ {\color{white} 0} \\ \{v,v_2\} \\ {\color{white} 0} \\ {\color{white} 0}}$ & $\substack{- \mathbbm{1}_{\{r_\gamma-|j|_\s>0\}}(r_\gamma-|j|_\s-1) \\ + \mathbbm{1}_{\{r_\gamma-|j|_\s=0\}}(a_\gamma+|j|_\s)} $ & $ -|j|_\s $ & $ -(r'_\gamma-1) $ & $ -(r_\gamma-1) $\\
\hline
$\{v_1,v_2\} $ & $ a_\gamma+|j|_\s+\text{max}(r_\gamma-|j|_\s,0) $ & $ -|j|_\s $ & $ a_\gamma $ & $ a_\gamma $\\
\hline
$\{v,v_1,v_2\}$ & $ a_\gamma+|j|_\s $ & $ -|j|_\s $ & $ a_\gamma$ & $ a_\gamma $\\
\hline
\end{tabular}
\end{center}
\vspace{3mm}
Hence the sum of the contributions of $\gamma_j$ and $e_j$ is indeed greater than the minimum between $\gamma_e$ and $\gamma$ except for $\tilde\dsV=\{v\}$, when $r_e-|j|_\s>0$. But this implies that $-(r_e - 1) \leq 0$, which in turn implies the required result.
\end{proof}

\begin{proposition}\label{prop:1}
If the condition \eqref{elgass2} is satisfied in \eqref{eqdecomp} for the terms with the labels $\gamma$ and $\gamma_e$ on some subset $\bar\dsV$, then this condition is satisfied for the other terms on the same subsets.
\end{proposition}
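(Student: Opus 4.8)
The plan is to mirror the structure of the proof of Proposition~\ref{prop:invcond3}, reducing the verification to a finite case analysis governed by the position of the three distinguished vertices $v$, $v_1$, $v_2$ relative to the tested subset. Concretely, I would fix an arbitrary $\bar{\dsV}\subset\dsV$ with $|\bar{\dsV}|\ge 3$ (for smaller $\bar{\dsV}$ the bound \eqref{elgass2} is vacuous) and write the left-hand side of \eqref{elgass2}, for any one of the four graphs occurring in \eqref{eqdecomp}, as $(\text{global})+(\text{local})$, where the local part is the contribution of the edge(s) that were modified and the global part, coming from all the unchanged edges, is common to the four graphs. Since \eqref{eqdecomp} leaves the vertex set intact, the right-hand side $(|\bar{\dsV}|-1)|\s|$ is also the same for all four graphs. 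Hence it suffices to show that for every $\bar{\dsV}$ the local contribution of the pair $(\gamma_j,e_j)$ is bounded above by the larger of the local contributions of $\gamma$ and of $\gamma_e$; together with the hypothesis that \eqref{elgass2} holds strictly for the $\gamma$- and $\gamma_e$-labelled graphs, this yields \eqref{elgass2} for every telescopic term, strictness included.

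The heart of the argument is then the explicit table, analogous to the one in the proof of Proposition~\ref{prop:invcond3}, of the local contribution to \eqref{elgass2} as a function of $\bar{\dsV}_j:=\bar{\dsV}\cap\{v,v_1,v_2\}$. For each of the seven nonempty subsets $\bar{\dsV}_j$ one reads off whether each edge among $\{(v_1,v_2)\}$ (for $\gamma$ and $\gamma_e$) or $\{(v_1,v),(v,v_2)\}$ (for $\gamma_j,e_j$) lies in $\mathds{E}_0(\bar{\dsV})$, $\mathds{E}^\uparrow(\bar{\dsV})$ or $\mathds{E}^\downarrow(\bar{\dsV})$ and inserts the corresponding summand of \eqref{elgass2}, using that the Taylor vertex is $v_0\notin\bar{\dsV}$ for $\gamma$, $\gamma_j$, $e_j$ while it is $v$ for $\gamma_e$. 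The comparison is in fact easier than in Proposition~\ref{prop:invcond3} because \eqref{elgass2} carries no $|G|_\s$ term ($v_\star\notin\{v,v_1,v_2\}$) and its $\mathds{E}^\downarrow$-summand lacks the $-(r_e-1)$ correction of \eqref{eq:cond3}; in particular the case $\bar{\dsV}_j=\{v\}$, which had to be excluded in Proposition~\ref{prop:invcond3}, here contributes zero to all four graphs and is harmless. The only two cases that really use the hypotheses are $\bar{\dsV}_j=\{v,v_1\}$ and $\bar{\dsV}_j=\{v,v_1,v_2\}$, where the $\gamma_e$-term produces a summand $a_e+r'_e-1$ (since $v\in\bar{\dsV}$ and $r'_e>0$) while $(\gamma_j,e_j)$ produces $a_e+|j|_\s$; the bound then follows from $0\le|j|_\s\le r'_e-1$, which holds because the telescopic sum in \eqref{eqdecomp} runs over $|j|_\s<r'_e$ and $r'_e=\max(\lceil-|\bar{T}|_\s\rceil,r_e)$ is a positive integer, $\bar{T}$ being a negative subtree. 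Notably, the signs of $a_e$ and $r_e$ play no role, so the same argument covers the edges emanating from $v_0$.

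I expect the only real friction to be bookkeeping rather than a genuine obstacle: one must be careful that the edge $e_j$, of singularity order $-|j|_\s\le 0$ and level $0$, is either treated as a genuinely new edge with its own (nonpositive) contribution or, if it is concatenated with an edge already present on the path joining $v$ to $v_2$, that the resulting change to the global part only lowers the left-hand side of \eqref{elgass2} and so cannot destroy the inequality. Once the splitting into local and global parts is justified in this way, the proof closes by invoking the seven-row table described above.
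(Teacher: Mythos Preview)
Your plan—freeze $\bar\dsV$, split the left-hand side of \eqref{elgass2} into a common ``global'' part and a ``local'' part carried by the modified edges, then bound the local part for $(\gamma_j,e_j)$ by the maximum of the local parts for $\gamma$ and $\gamma_e$—is exactly the paper's argument in the case $v_0\notin\bar\dsV$. The table you outline and the key inequality $|j|_\s\le r'_e-1$ on the row $\bar\dsV_j=\{v,v_1\}$ are the substance of that case.

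The gap is the tacit assumption $v_0\notin\bar\dsV$, which you insert when evaluating the indicators $\mathbbm{1}_{\{v_e\in\bar\dsV\}}$. The paper does \emph{not} run the table in that case; it splits off $v_0\in\hat\dsV$ and argues by duality: Proposition~\ref{prop:invcond3} propagates \eqref{elgass3} to the complement $\dsV\setminus\hat\dsV$, and the earlier proposition (``\eqref{elgass3} on $\dsV\setminus\hat\dsV$ with $v_0\in\hat\dsV$ implies \eqref{elgass2} on $\hat\dsV$'') then yields \eqref{elgass2} for the $(\gamma_j,e_j)$ graph. Your purely local comparison cannot replace this step in general: take $v_0\in\bar\dsV$ with $\bar\dsV\cap\{v,v_1,v_2\}=\{v_1,v_2\}$. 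Then the edge carrying $\gamma$ (and $\gamma_e$) is in $\mathds{E}_0(\bar\dsV)$ and contributes $a_e$, while $\gamma_j$, now in $\mathds{E}^\uparrow(\bar\dsV)$ with Taylor vertex $v_0\in\bar\dsV$, contributes $a_e+r_e-1$ whenever $|j|_\s<r_e$; the majorization therefore fails once $r_e\ge2$. (In the gKPZ application $r_e\le1$ and your table would in fact suffice, but the proposition is stated abstractly.)

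A small slip in the bookkeeping: for $\bar\dsV_j=\{v,v_1,v_2\}$ all three edges are internal, so $\gamma_e$ contributes $a_e$ (not $a_e+r'_e-1$) and $(\gamma_j,e_j)$ contributes $(a_e+|j|_\s)+(-|j|_\s)=a_e$; this row is trivial, and only $\{v,v_1\}$ genuinely uses $|j|_\s<r'_e$. Incidentally, the paper's printed table records entries consistent with the opposite edge orientation, which is why its numbers look different from yours; the inequality it extracts is the same.
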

\begin{proof}
Let $\hat\dsV\subseteq \bar\dsV$, if $v_0 \in \hat\dsV$ then from the previous proposition the conditions \eqref{elgass3} are satisfied on $\bar\dsV\backslash \hat\dsV$, which due to Proposition~\ref{prop:ass4-2} gives us the required conclusion. It remains to consider the case $v_0\notin \hat\dsV$. As in the previous proposition, we calculate the contribution of the new edges $e_j$ and $\gamma_j$ to \eqref{elgass2} for all possible $\tilde\dsV=\hat\dsV\cap\{v,v_1,v_2\}$:
$$
\begin{array}{|c|c|c|c|c|}
\hline 
\tilde\dsV& \gamma_j & e_j & \gamma_e & \gamma \\
\hline
\{v\} & 0 & 0 & 0 & 0 \\
\hline
\{v_1\} & 0 & 0 & 0 & 0 \\
\hline
\{v_2\} & 0 & 0 & 0 & 0 \\
\hline
\{v,v_1\} & a_\gamma+|j|_\s & 0 & a_\gamma + r'_\gamma & a_\gamma \\
\hline
\{v,v_2\}& 0 & -|j|_\s & -r'_\gamma & 0 \\
\hline
\{v_1,v_2\} & 0 & 0 & a_\gamma & a_\gamma \\
\hline
\{v,v_1,v_2\}& a_\gamma+|j|_\s & -|j|_\s & a_\gamma & a_\gamma \\
\hline
\end{array}
$$
We see that the sum of the contributions from the edges in the terms in $\gamma_j$ and $e_j$ is smaller than the maximum between $\gamma_e$ and $\gamma$.
\end{proof}

After some number of contractions, one expects to see a graph with no instances of noises on it. If this graph still violates \eqref{eq:cond2a} or \eqref{eq:cond2b}, we would like to use it to motivate our definition of the renormalisation constant for our model. One has to be careful however that there are no Taylor expansions on these Kernels. Should this happen we remedy the situation by removing the Taylor expansions, by which we mean a transformation of the form $(a_e,r_e,v_0)\rightarrow (a_e,0)$. Let $e=(v_1,v_2)$ and edge in $\bar T$ with a label $(a_e,r_e,v_0)$ and $r_e>0$. We can perform the following decomposition to effect the previously stated transformation:
\begin{equation}\label{eq:tayexp}\begin{tikzpicture}[baseline=0cm,scale=0.8]
\node at (0,-1) [dot,label=right:$v_1$] (a) {};
\node at (0,1) [dot,label=right:$v_2$] (b) {};
\draw[kepsus] (b) to (a);
\end{tikzpicture}
= \begin{tikzpicture}[baseline=0cm,scale=0.8]
\node at (0,-1) [dot,label=right:$v_1$] (a) {};
\node at (0,1) [dot,label=right:$v_2$] (b) {};
\draw[kepsus] (b) to node[labl,pos=0.45] {\tiny $a_e,0$} (a);
\end{tikzpicture} - \sum_{|k|_\s < r_e}\begin{tikzpicture}[baseline=0cm,scale=0.8]
\node at (0,0) [dot,label=left:$v_1$] (a) {};
\node at (1,-1) [root,label=below:$0$] (b) {};
\node at (1,1) [dot, label=above:$v_2$] (c) {};
\draw[kepsus] (a) to node[labl,pos=0.45] {\tiny $-|j|_\s,0$} (b);
\draw[kepsus] (c) to node[labl,pos=0.45] {\tiny $a_e + |j|_\s$} (b);
\end{tikzpicture}
\end{equation}

For the generalised KPZ equation it is an easy power counting problem to prove that:

\begin{proposition}
The previous terms depending on $k$ satisfy the conditions \eqref{elgass1} and \eqref{elgass2} on $\dsV(\bar{T})$
\end{proposition}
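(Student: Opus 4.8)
The plan is to show that the $k$-dependent graph, call it $\dsG_k$, obtained from $\bar T$ by deleting the edge $e=(v_1,v_2)$ and inserting the two edges joining $v_1$ and $v_2$ to the root $0$ — one with label $(-|k|_\s,0,v_0)$ and the other with label $(a_e+|k|_\s,0,v_0)$ — satisfies \eqref{elgass1} and \eqref{elgass2} on every subset of $\dsV(\bar T)$ (recall that \eqref{elgass3} is automatic, as established above). Three structural facts will drive the argument: the two new labels add up to $a_e$, so the total $a$-weight of the graph is unchanged; both new edges carry $r_e=0$, hence never enter the $\mathds{E}_+^\uparrow$ or $\mathds{E}_+^\downarrow$ sums; and both new edges are incident to the root $0$, which is not an element of $\dsV_0$.

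First I would dispose of \eqref{elgass1}. For any $\bar\dsV\subset\dsV_0$ the root $0$ is absent from $\bar\dsV$, so neither new edge lies in $\mathds{E}_0(\bar\dsV)$; consequently $\mathds{E}_0^{\dsG_k}(\bar\dsV)=\mathds{E}_0^{\bar T}(\bar\dsV)\setminus\{e\}$, and since $e$ carries $a_e\ge 0$ (it is a rescaled heat or derivative kernel with $a_e\in\{|\s|-2,|\s|-1\}$) we get $\sum_{e'\in\mathds{E}_0^{\dsG_k}(\bar\dsV)}a_{e'}\le\sum_{e'\in\mathds{E}_0^{\bar T}(\bar\dsV)}a_{e'}$. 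As $\bar T$ is a subtree of a graph in $\CG_{\CR}$ with $|\bar T|_\s<0$, local subcriticality forces it to obey \eqref{elgass1} — this is exactly the last case of the proposition on the decomposition $\bar T=\bigsqcup_{j\in K}T_j$ with $|K|=1$ — and the bound transfers verbatim to $\dsG_k$.

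The substance is in \eqref{elgass2}, which I would settle by a case analysis on whether a given $\bar\dsV$ contains the root $0$, organised in the table form used in Propositions~\ref{prop:invcond3} and \ref{prop:1}. If $0\notin\bar\dsV$ then, because $v_e=v_0\notin\bar\dsV$, the edge $e$ contributed to the left-hand side of \eqref{elgass2} for $\bar T$ only through $\mathds{E}_0(\bar\dsV)$, while the new edges contribute nothing; hence the left-hand side for $\dsG_k$ equals that for $\bar T$ minus $a_e\mathbbm{1}_{\{v_1,v_2\in\bar\dsV\}}$. For a proper subset this is $<(|\bar\dsV|-1)|\s|$ because the only subdivergence of $\bar T$ is the maximal one — we work throughout with non-nested, non-overlapping subdivergences — and for $\bar\dsV=\dsV_0(\bar T)$ one combines \eqref{elgass1} for $\bar T$ with the gain $-a_e$ and the discreteness of the set of homogeneities to close the strict inequality for $\kappa$ small. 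If $0\in\bar\dsV$, the two new edges can now be internal to $\bar\dsV$; here I would pair their contributions against that of $e$ in $\bar T$, using that the monomial edge has nonpositive weight $-|k|_\s$, that neither new edge carries an $\mathds{E}_+$ penalty, and the constraint $|k|_\s<r_e$, to check row by row — for the seven possibilities of $\bar\dsV\cap\{0,v_1,v_2\}$, exactly as in the tables above — that the left-hand side stays below $(|\bar\dsV|-1)|\s|$.

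The step I expect to be the main obstacle is precisely this last case, the subsets containing the root: once $v_e=v_0$ sits inside $\bar\dsV$, every edge of $\bar T$ with $r_{e'}>0$ switches on an $\mathds{E}_+$ contribution that was dormant when $0$ was excluded, so the bookkeeping becomes genuinely dimension-sensitive and has to lean on the fact that the trees produced by $\CR$ for \eqref{eq:gKPZ} are small (only a handful of edges carry $r_{e'}>0$), together with the monomial-degree cut $|k|_\s<r_e$ and local subcriticality. Everything else is routine once the three structural facts above are in place.
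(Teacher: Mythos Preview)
The paper does not supply a proof of this proposition; it is stated as a gKPZ-specific fact and left unargued. So there is nothing to compare line by line; I assess your plan on its own and relate it to the paper's later treatment of the concrete instance~\eqref{eq:decomp3}.

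Your argument for \eqref{elgass1} is correct and complete: since $0\notin\dsV_0$, neither new edge can sit in $\mathds{E}_0(\bar\dsV)$, and dropping $e$ with $a_e=|\s|-2>0$ only lowers the sum; subcriticality supplies the bound on $\bar T$ that you then transfer.

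For \eqref{elgass2} you are making the problem harder than stated. The phrase ``on $\dsV(\bar T)$'' restricts the check to subsets of the original subtree's vertex set, and $0\notin\dsV(\bar T)$; your ``main obstacle'' case $0\in\bar\dsV$ therefore does not arise in this proposition at all. Once that case is discarded, your mechanism is the right one: for proper subsets \eqref{elgass2} already held for $\bar T$ (no nested subdivergence) and dropping $e$ only helps, while for the full $\dsV(\bar T)$ the loss of $a_e$ on the left closes the gap between \eqref{elgass1} and \eqref{elgass2}. That last step needs $a_e>|\s|/2+\kappa$, which with $a_e=|\s|-2$ is the concrete gKPZ check $|\s|>4+2\kappa$; this is exactly the regime the examples in Section~\ref{sec:exmclc} live in, so the closure goes through.

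For contrast, when the paper does prove the analogous claim for the specific decomposition~\eqref{eq:decomp3}, it argues differently: if $0\notin V$ it splits $G_k$ into two KPZ trees by duplicating the cumulant node and invokes \eqref{elgass1} on each piece to get \eqref{elgass2}; if $0\in V$ it uses the duality between \eqref{elgass2} on $V$ and \eqref{elgass3} on the complement. That route leans on the concrete triangle shape of~\eqref{eq:decomp3}, whereas your direct edge-bookkeeping is closer in spirit to the table-based verifications of Propositions~\ref{prop:invcond3}--\ref{prop:1} and transfers more readily to other shapes of $\bar T$.
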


In the following list we compile the sort of negative subtrees we expect to see in our analysis:

\begin{equation}\label{eqneg}
\begin{tikzpicture}[baseline=0cm,scale=0.8]
\node at (0,0) [dot,label=left:$v$] (a) {};
\node at (0,1) [dot,label=left:$v_1$] (b) {};
\node[cumu2n] (f) at (1,0.5) {};
\draw[cumu2] (f) ellipse (6pt and 10pt);
\node at (0,2) [dot,label=left:$v_2$] (d) {};
\draw[kepsus] (b) to node[labl,pos=0.45] {\tiny $\gamma_1$} (a);
\draw[kepsus] (d) to node[labl,pos=0.45] {\tiny $\gamma$} (b);
\draw[] (f.north) node[dot1] {} to (b);
\draw[] (f.south) node[dot1] {} to (a);
\end{tikzpicture}
\begin{tikzpicture}[baseline=0cm,scale=0.8]
\node at (0,0) [dot,label=left:$v$] (a) {};
\node[cumu2n] (f) at (1,0.5){};
\draw[cumu2] (f) ellipse (6pt and 10pt);
\node at (0,1) [dot,label=left:$v_1$] (c) {};
\node at (1,2) [dot,label=right:$v_3$] (d) {};
\node at (-1,2) [dot,label=left:$v_2$] (e) {};
\draw[kepsus] (c) to node[labl,pos=0.45] {\tiny $\gamma_1$} (a);
\draw[] (f.north) node[dot1] {} to (c);
\draw[] (f.south) node[dot1] {} to (a);
\draw[kepsus] (e) to node[labl,pos=0.45] {\tiny $\gamma$} (c);
\draw[kepsus] (d) to node[labl,pos=0.45] {\tiny $\gamma'$} (c);
\node at (3.5,2) [dot,label=right:$v_3$] (u) {};
\node at (7.5,2) [dot,label=left:$v_4$] (v) {};
\node at (5.5,0) [dot,label=left:$v$] (g) {};
\node at (4.5,1) [dot,label=left:$v_1$] (h) {};
\node at (6.5,1) [dot,label=right:$v_2$] (i) {};
\node[cumu2n] (j) at (5.5,2) {};
\draw[cumu2] (j) ellipse (10pt and 6pt);
\draw[] (j.west) node[dot1] {} to (h);
\draw[] (j.east) node[dot1] {} to (i);
\draw[kepsus] (v) to node[labl,pos=0.45] {\tiny $\gamma_1$} (i);
\draw[kepsus] (u) to node[labl,pos=0.45] {\tiny $\gamma_2$} (h);
\draw[kepsus] (h) to node[labl,pos=0.45] {\tiny $\gamma_1$} (g);
\draw[kepsus] (i) to node[labl,pos=0.45] {\tiny $\gamma_2$} (g);
\node[] (k) at (-1,-1.5) {};
\node[cumu3,rotate=270] (k-) at (-1.05,-1.5) {};
\node at (-3,-1) [dot,label=above:$v_3$] (l) {};
\node at (-2,-2) [dot,label=left:$v_2$] (m) {};
\node at (-1,-3) [dot,label=below:$v$] (n) {};
\node at (0,-2) [dot,label=right:$v_1$] (o) {};
\node at (1,-1) [dot,label=above:$v_4$] (w) {};
\draw[] (k.north west) node[dot1] {} to (l);
\draw[] (k.south west) node[dot1] {} to (m);
\draw[] (k.east) node[dot1] {} to (o); 
\draw[kepsus] (l) to node[labl,pos=0.45] {\tiny $\gamma_3$} (m);
\draw[kepsus] (m) to node[labl,pos=0.45] {\tiny $\gamma_2$} (n);
\draw[kepsus] (o) to node[labl,pos=0.45] {\tiny $\gamma_1$} (n);
\draw[kepsus] (w) to node[labl,pos=0.45] {\tiny $\gamma_1$} (o);
\node at (2.5,-1) [dot,label=left:$v_5$] (p) {};
\node at (3.5,-2) [dot,label=left:$v_3$] (q) {};
\node at (4.5,-1) [dot,label=right:$v_4$] (r) {};
\node at (4.5,-3) [dot,label=below:$v_1$] (s) {};
\node at (5.5,-2) [dot,label=right:$v_2$] (t) {};
\node at (5.5,-4) [dot,label=right:$v$] (x) {};
\node at (6.5,-3) [dot,label=right:$v_6$] (y) {};
\draw[kepsus] (p) to node[labl,pos=0.45] {\tiny $\gamma_1$} (q);
\draw[kepsus] (r) to node[labl,pos=0.45] {\tiny $\gamma_2$} (q);
\draw[kepsus] (q) to node[labl,pos=0.45] {\tiny $\gamma_3$} (s);
\draw[kepsus] (t) to node[labl,pos=0.45] {\tiny $\gamma_3$} (s);
\draw[kepsus] (y) to node[labl,pos=0.45] {\tiny $\gamma_1$} (x);
\draw[kepsus] (s) to node[labl,pos=0.45] {\tiny $\gamma_5$} (x);
\node at (3.5,-0.5) (u) {};
\node[cumu3] (u-) at (3.5,-0.55) {};
\draw[] (u.south west) node[dot1] {} to (p);
\draw[] (u.north) node[dot1] {} to (r);
\draw[] (u.south east) node[dot1] {} to (t); 
\end{tikzpicture}
\end{equation}

The first two subdivergences tend to be similar in that they can be treated in a general fashion which we encapsulate in the propositions that follow. For the other divergences, we will present some ad-hoc methods in Section~\ref{sec:exmclc}.

\begin{proposition}
Let $\ds{G}$ be one of the first two graphs in \eqref{eqneg}. If $\ds{G}$ satisfies the condition \eqref{eq:ass2e2} then the new graph $\ds{G}_*$ obtained from the transformation of the label of $\gamma=(v_2,v_1)$ to $(a_\gamma,r'_\gamma,v)$ satisfies the same condition.
\end{proposition}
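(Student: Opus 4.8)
The strategy is a direct analogue of the argument used in Proposition~\ref{prop:invcond3} and Proposition~\ref{prop:1}: the transformation of the label of $e = (v_2, v_1)$ from $(a_e, r_e, v_0)$ to $(a_e, r_e', v)$ via the decomposition \eqref{eqdecomp} produces three new types of terms (with labels $\gamma_e$, $\gamma_j$, $e_j$), and I want to show that whenever the original graph $\ds{G}$ (label $\gamma$) satisfies \eqref{eq:ass2e2}, then each of these pieces does too. First I would fix a non-empty subset $\bar{\dsV} \subset \hat{\dsV}\setminus\dsV_\star$ and record, exactly as in the table of Proposition~\ref{prop:invcond3}, the contribution that each of $e$, $\gamma_e$, $\gamma_j$, $e_j$ makes to the left-hand side of \eqref{eq:ass2e2}, distinguishing the seven cases according to which of $v, v_1, v_2$ belong to $\bar{\dsV}$. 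The key inequality to extract from that table is that for each case the combined contribution of $\gamma_j$ and $e_j$ is bounded below by the minimum of the contributions of $\gamma$ and $\gamma_e$ — this is precisely what lets one transfer a strict inequality from the $\gamma$/$\gamma_e$ terms to the split terms.

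**Main steps.** (1) Write out the decomposition \eqref{eqdecomp} specialised to each of the first two graphs in \eqref{eqneg}; note that here $v_e = v_0$ initially and $r_e' = \max(\lceil -|\bar{T}|_\s\rceil, r_e)$, so the new edge $(v, v_1)$ carries label $\gamma_j = (a_e + |j|_\s, \max(r_e - |j|_\s, 0), v_0)$ and the new edge $(v, v_2)$ (the one created by $\mcI_{e_j}$) carries label $e_j = (-|j|_\s, 0, v_0)$. (2) For the $\gamma_j$-term: the only effect of replacing $e$ by $e$ with shifted order is that the singularity order goes from $a_e$ to $a_e + |j|_\s$ while the Taylor order is reduced by $|j|_\s$; since $|j|_\s < r_e' \le r_e + \lceil -|\bar{T}|_\s\rceil$ and $|\bar{T}|_\s < 0$ for a negative subtree, the total degree bookkeeping shows $a_e + |j|_\s + (\text{Taylor contribution})$ stays below the threshold — here one uses that $\bar{T}$ being a negative subtree of a locally subcritical graph means $|\bar T|_\s > -\frac{|\s|}{2} - \kappa$, so $r_e'$ is bounded and the shift is controlled. (3) For the $e_j$-term: this is a kernel of order $-|j|_\s < 0$ attached to $v_0$, which only *helps* \eqref{eq:ass2e2} (a negative $a_e$ contributes negatively to the left-hand side when $e \in \mathds{E}^\downarrow$ with $r_e = 0$), so it cannot create a divergence. (4) Assemble: for the subset $\bar{\dsV}$ containing neither $v$ nor $v_2$, nothing changes and the bound is inherited verbatim; for subsets touching $\{v, v_1, v_2\}$ non-trivially, compare the table entry-by-entry against the $\gamma$ and $\gamma_e$ columns, both of which are assumed to satisfy \eqref{eq:ass2e2}, and conclude via the min-domination observation.

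**Main obstacle.** The delicate point — and the one I expect to require the most care — is the case $\bar{\dsV} = \{v\}$ (or more generally $\bar{\dsV}\cap\{v,v_1,v_2\} = \{v\}$), which in Proposition~\ref{prop:invcond3} was exactly where the min-domination failed and had to be patched by the ad hoc remark that $-(r_e - 1)\le 0$. For the two graphs in \eqref{eqneg} I need to verify that this patch still goes through: concretely, that when $r_e - |j|_\s > 0$ the term $-(r_e - |j|_\s - 1)$ is harmless on the singleton $\{v\}$, using that a singleton subset with $|\bar{\dsV}| = 1$ makes the right-hand side of \eqref{eq:ass2e2} equal to $|\s|$, which is large enough to absorb it. A secondary subtlety is ensuring the choice $r_e' = \max(\lceil -|\bar{T}|_\s\rceil, r_e)$ is consistent across \emph{all} admissible $\bar{\dsV}$ simultaneously (not just the one triggering the renormalisation); since $\bar{T}$ is fixed once we decide which subtree to renormalise, and since enlarging the Taylor order only improves \eqref{eq:ass2e2} on supersets while \eqref{eq:cond3}/\eqref{eq:ass2e2} for the complementary region was already handled by Propositions~\ref{prop:invcond3} and \ref{prop:1}, this is a matter of bookkeeping rather than a genuine difficulty. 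I would close by remarking that the argument is uniform in $p$ because the cumulant cycles introduced in Proposition~\ref{prop:pcondition} are untouched by the relabelling of $e$.
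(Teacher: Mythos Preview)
You have misidentified what $\ds{G}_*$ is and hence what must be shown. The graph $\ds{G}_*$ is \emph{only} the first term on the right-hand side of \eqref{eqdecomp}, namely $\ds{G}$ with the single label change $\gamma=(a_e,r_e,v_0)\mapsto\gamma_e=(a_e,r'_e,v)$ on the edge $e=(v_2,v_1)$. The proposition asks you to prove that this relabelling alone preserves \eqref{eq:ass2e2}. Your plan instead aims at the $\gamma_j,e_j$ pieces and invokes the ``min-domination'' table; but that table already \emph{assumes} that both $\gamma$ and $\gamma_e$ satisfy the condition --- it is precisely the content of Proposition~\ref{prop:invcond3}. Using it here is circular: you would be assuming the conclusion for $\gamma_e$ in order to handle side terms that are not even part of the statement.

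The paper's argument is of a different nature. It fixes $\bar{\dsV}$ and compares, case by case according to whether $e$ lies in $\dsE_0(\bar{\dsV})$, $\dsE^\downarrow(\bar{\dsV})$, or $\dsE^\uparrow(\bar{\dsV})$ and whether $v\in\bar{\dsV}$, the contribution of $e$ under $\gamma$ versus under $\gamma_e$ to the left-hand side of \eqref{eq:ass2e2}. Two cases are genuinely delicate and force the restriction to the first two graphs of \eqref{eqneg}: (i) $e\in\dsE^\downarrow(\bar{\dsV})$ with $v\notin\bar{\dsV}$, where the contribution moves from $-(r_e-1)$ to $-(r'_e-1)$ and may worsen since $r'_e\ge r_e$; this is rescued by the concrete structure of $\bar T$ (in particular $r'_e-r_e\le 1$ and an explicit lower bound $a_{\gamma_1}+a_{\gamma_2}-3+\cdots\ge -|\bar T|_\s>r'_e-1$); (ii) $e\in\dsE^\uparrow(\bar{\dsV})$ with $v\in\bar{\dsV}$, where one loses a factor $r_e$ and must use that $r_{\gamma_3}=0$ and not both $r_{\gamma_1},r_{\gamma_2}$ are positive, together with the explicit estimate $a_{\gamma_3}+a_{\gamma_1}+r_{\gamma_1}-1\le 2|\s|$. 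None of these graph-specific inputs appear in your sketch, and without them the bad cases cannot be closed.
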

\begin{proof}
We need only check that in changing to $v_\gamma$ from $v_0$ in $G$ the contribution of the edge $e$ is preserved. Let $\bar{\dsV}\subset\dsV$; we consider each case in turn:
\begin{itemize}
\item $\gamma\in\mathds{E}_0(\bar{\dsV})$ then we still have the same contribution of $a_\gamma$.
\item $\gamma\in\mathds{E}^{\downarrow}(\bar{\dsV})$, we must have $v_1\in \bar\dsV$. There are a few possibilities:
\begin{itemize}
\item If $v\in\bar{\dsV}$ then the new contribution is $a_\gamma$ which is the same when $r_\gamma = 0$ and which is better than $-(r_\gamma-1)$ when $r_\gamma>0$.
\item If $v\notin\bar{\dsV}$, then for the first graph the only possibility is $\dsV = \{v_1\}$. In this case if $r'_\gamma=0$, the contribution is the same, so the only relevant thing case to be considered is when $\bar{T}$ is a negative subtree. One is able to compute:
\begin{equs}
a_{\gamma_1} - |\s| = -|\bar T|_\s > (r'_e - 1),
\end{equs}
which means that \eqref{elgass3} is satisfied. For the second tree, if $v_3\not\in\bar\dsV$, the contribution $-(r_{\gamma^{'}}-1)$ is equal to zero. Otherwise, we may use the previous bound for $\{v_1\}$ and the fact that $r_{\gamma_1}\ge r_{\gamma^{'}}$ to conclude for $\{v_3\}$.

\end{itemize}
\item $\gamma\in\mathds{E}^{\uparrow}(\bar\dsV)$, we have $v_2\in \bar\dsV$.
\begin{itemize}
\item If $v\notin \bar\dsV$, the contribution is improved, that is $a_\gamma+r'_\gamma\geq a_\gamma+r_\gamma$.
\item Otherwise if $v\in \bar\dsV$, we lose a factor $r_\gamma$ in the contribution. Then for $r_{\gamma_1}>0$, one see that:
\begin{equs}
a_{\gamma_1} + \frac{3}{2} + r_{\gamma'} - 1 \leq 2|\s|,
\end{equs}
and a similar bound holds for $r_{\gamma_2}$. If we include $v_1$ to $\bar\dsV$, the bound will become sharper, but the argument to check \eqref{elgass3} is similar.
\end{itemize}
\end{itemize}
\end{proof}

\begin{proposition}
Let $\ds{G}$ be one of the two first graphs in \eqref{eqneg}. If $\ds{G}$ satisfies the conditions \eqref{eq:ass2e1} for some subsets $\bar{\dsV}\neq\{v,v_1\}$ then the new graph obtained from the transformation of the label of $\gamma=(v_2,v_1)$ to $(a_\gamma,r'_\gamma,v)$, satisfies this condition on the same subset and on $\{v,v_1\}$.
\end{proposition}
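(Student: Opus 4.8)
The plan is to proceed exactly as in the proof of the preceding proposition, that is by checking that the transformation $v_e = v_0 \to v_e = v$ on the edge $e=(v_1,v_2)$ preserves (or improves) the contribution of $e$ to the left-hand side of \eqref{eq:ass2e1} for every subset $\bar{\dsV}\subset\dsV$, and then dealing separately with the one exceptional subset $\{v,v_1\}$ which the hypothesis has excluded. First I would recall that \eqref{eq:ass2e1} only involves $v_e$ through the indicator factors $\mathbbm{1}_{\{v_e\in\bar{\dsV}\}}$ attached to the $\mathds{E}^{\uparrow}_+$ and $\mathds{E}^{\downarrow}$ sums, so the effect of the transformation is local: it can only change the contribution of the single edge $e$, and only in the cases $e\in\mathds{E}^{\uparrow}_+(\bar{\dsV})$ or $e\in\mathds{E}^{\downarrow}(\bar{\dsV})$. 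One then tabulates, just as before, the contribution of $e$ with label $(a_e,r_e,v_0)$ versus label $(a_e,r_e',v)$, for each possible value of $\bar{\dsV}\cap\{v,v_1,v_2\}$.

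In the case $e\in\mathds{E}_0(\bar{\dsV})$ nothing changes since $v_e$ plays no role; this is immediate. In the case $e\in\mathds{E}^{\downarrow}(\bar{\dsV})$ (so $v_2\in\bar{\dsV}$), the old contribution is $-\mathbbm{1}_{\{v_0\in\bar{\dsV}\}}\hat r_e$ and the new one is $-\mathbbm{1}_{\{v\in\bar{\dsV}\}}\hat r_e'$; if $v\notin\bar{\dsV}$ the new contribution vanishes and the inequality can only become easier, while if $v\in\bar{\dsV}$ one has $\bar{\dsV}\supseteq\{v,v_2\}$ and one uses the fact (verified for the negative subtrees appearing in \eqref{eqneg}) that $r_e' \le r_e + 1$ together with the homogeneity bounds on $\bar T$ to absorb the extra term, exactly in the style of the calculation $a_{\gamma_1}+a_{\gamma_2}-3+\mathbbm{1}_{v_\star\in\bar\dsV}(a_{\gamma_2}-3)\ge -|\bar T|_\s > r_e'-1$ from the previous proof. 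The case $e\in\mathds{E}^{\uparrow}_+(\bar{\dsV})$ (so $v_1\in\bar{\dsV}$) is handled symmetrically: if $v\notin\bar{\dsV}$ the term $\mathbbm{1}_{\{v_e\in\bar{\dsV}\}}(\hat a_e+\hat r_e-1)$ was already absent and the new $r_e'$ does not resurrect it; if $v\in\bar{\dsV}$, then $\bar{\dsV}\supseteq\{v,v_1\}$ and if moreover this forces $\bar{\dsV}\neq\{v,v_1\}$ to contain a third vertex one is back in the regime covered by the general argument.

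The point of the hypothesis $\bar{\dsV}\neq\{v,v_1\}$ is precisely that the two-vertex subset $\{v,v_1\}$ is the only one on which the naive term-by-term comparison can fail, because there $e\in\mathds{E}^{\uparrow}_+(\{v,v_1\})$ with $v_e=v\in\{v,v_1\}$ picks up the full weight $\hat a_e+\hat r_e'-1$ with $r_e'$ possibly larger than $r_e$. For this subset I would argue directly rather than by comparison: since $|\{v,v_1\}|=2$, condition \eqref{eq:ass2e1} reads $\hat a_e + \mathbbm{1}_{\{v=v_e\}}(\hat a_e+\hat r_e'-1) < |\s|$ — but here $\mathds{E}_0(\{v,v_1\})$ is empty (the edge between $v$ and $v_1$, when it exists, is the wiggly path, not a kernel edge of the form we track), so in fact one must examine which edges actually lie in $\mathds{E}_0(\{v,v_1\})$ for each of the two graphs in \eqref{eqneg}; this is a finite check. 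Using $r_e' = \max(\lceil -|\bar T|_\s\rceil, r_e)$ and the local subcriticality of the graph (so that $|\bar T|_\s > -|\s|/2 - \kappa$, hence $r_e' \le 1$ in the relevant examples), together with $a_e = |\s|-2$ or $|\s|-1$, one verifies the strict inequality by hand.

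The main obstacle I expect is exactly this last point: the exceptional subset $\{v,v_1\}$ is where all the care goes, and one genuinely has to use the specific structure of the two negative subtrees in \eqref{eqneg} (the values of $\gamma_1,\gamma,\gamma'$, the placement of the cumulant cycle, and the bound $r_e'-r_e\le 1$) rather than a soft argument. Everything else is a routine re-run of the previous proposition's table with the roles of the $\mathds{E}^{\uparrow}$ and $\mathds{E}^{\downarrow}$ contributions swapped to match \eqref{eq:ass2e1} instead of \eqref{eq:ass2e2}, and I would present it in the same compact tabular form, flagging only the $\{v,v_1\}$ row as requiring the explicit homogeneity estimate.
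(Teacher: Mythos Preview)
You have the orientation of the edge $e$ backwards, and this reverses the entire mechanism on the key subset $\{v,v_1\}$. The statement's $e=(v_1,v_2)$ is a typo; as in the preceding proposition and in the diagrams of \eqref{eqneg}, the edge is $e=(v_2,v_1)$, i.e.\ $e_-=v_2$, $e_+=v_1$. With the correct orientation one has $e\in\mathds{E}^{\downarrow}(\{v,v_1\})$, and the effect of replacing $v_e=v_0$ by $v_e=v\in\{v,v_1\}$ is to introduce the term $-r_e'$ in the left-hand side of \eqref{eq:ass2e1}. That negative contribution is precisely the improvement that makes the condition hold on $\{v,v_1\}$; this is the whole point of the renormalisation procedure. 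You instead place $e$ in $\mathds{E}^{\uparrow}_+(\{v,v_1\})$ and pick up $+(a_e+r_e'-1)$, then propose a direct verification. That verification would fail: for the first graph in \eqref{eqneg} the set $\mathds{E}_0(\{v,v_1\})$ is not empty (it contains the $\gamma_1$-edge and the cumulant cycle, contributing $a_{\gamma_1}+3=|\s|+1$), so the left-hand side already exceeds $|\s|$ before you add anything positive.

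Because of the swap, you also misidentify which case is delicate. In the paper's proof the non-trivial case is $e\in\mathds{E}^{\uparrow}(\bar\dsV)$ with $v_2\in\bar\dsV$ and $v\in\bar\dsV$: there the new contribution $a_e+r_e'-1$ can exceed the old one, and the fix is to adjoin $v_1$ to $\bar\dsV$ and invoke the inequality $\sum_i a_{\gamma_i}-(|\bar\dsV|-1)|\s|+r_e'-1>0$, which encodes that $r_e'$ was chosen to match the degree of divergence of the subtree. Your treatment of the analogous case (``use $r_e'\le r_e+1$ and homogeneity bounds, exactly in the style of the previous proof'') transplants a computation designed for \eqref{eq:ass2e2} and does not supply this step.
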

\begin{proof}
Let $\bar{\dsV} \subset G$, such that the condition \eqref{eq:ass2e1} is satisfied or such that $\bar\dsV = \{v,v_1\}$. We have to check that by changing $v_\gamma$ in $G$ the contribution of the edge $\gamma$ is preserved and indeed has been improved in the case of $\bar\dsV=\ds{G}'$. As before we proceed case by case:
\begin{itemize}
\item $\gamma\in\mathds{E}_0(\bar\dsV)$ then we still have the same contribution with $a_\gamma$
\item $\gamma\in\mathds{E}^{\uparrow}(\bar\dsV)$, we have $v_2\in \bar\dsV$.
\begin{itemize}
\item If $v\notin \bar\dsV$, the contribution does not change (from $0$).
\item If $v\in \bar\dsV$ (and $r_\gamma>0$ because otherwise the contribution is zero anyway) then the contribution improves to $a_\gamma + r'_\gamma - 1$ which is greater than $a_\gamma + r_\gamma - 1$ in the cases we are concerned with. By adding $v_1$ and using the fact that:
$$a_{\gamma_1} + 2\left(\frac{3}{2}\right) - (|\bar\dsV| - 1)|\s|+r'_\gamma-1>0$$
we obtain the desired bound.
\end{itemize}
\item $e\in\mathds{E}^{\downarrow}(\bar\dsV)$, we have $v_1\in V$
\begin{itemize}
\item If $v\in \bar\dsV$ then the new contribution is $-r'_\gamma$ which in generally is less than or equal to $-r_\gamma$. 
\item If $v\not\in \bar\dsV$, then $\bar\dsV$ is not equal to $\ds{G}'$ and the condition \eqref{eq:ass2e1} is satisfied.
\end{itemize}
\end{itemize}
\end{proof}

\begin{proposition}
For the graphs given in \eqref{eqneg}, if the condition \eqref{eq:ass2e2} and \eqref{eq:cond3} is satisfied on some subset $\bar\dsV$ in \eqref{eqdecomp} for the terms with the labels $\gamma$ and $\gamma_e$, then this condition is satisfied on the other terms for the same subsets. 
\end{proposition}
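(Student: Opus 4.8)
The plan is to read this off from Proposition~\ref{prop:invcond3} together with one structural observation special to the diagrams in \eqref{eqneg}. Recall that Proposition~\ref{prop:invcond3} already establishes, for both \eqref{eq:ass2e2} and \eqref{eq:cond3}, that whenever the condition holds on $\bar{\dsV}$ for the $\gamma$-term and the $\gamma_e$-term of \eqref{eqdecomp} it is inherited by every $(\gamma_j,e_j)$-term on any $\bar{\dsV}$ with $\bar{\dsV}\cap\{v,v_1,v_2\}\neq\{v\}$, and that on the remaining subsets the only configuration it leaves open is $\bar{\dsV}\cap\{v,v_1,v_2\}=\{v\}$ \emph{together with} $r_e-|j|_\s>0$. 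So the first step is just to record that in each graph of \eqref{eqneg} the renormalised edge $e$ enters the negative subtree $\bar T$ precisely at its root, i.e.\ $e_+$ is the root of $\bar T$ and hence $T_e=\bar T$; since $\bar T$ is a negative subtree, $r_e=\lceil|\bar T|_\s\rceil\vee 0=0$, and therefore $r_{\gamma_j}=\max(r_e-|j|_\s,0)=0$ for every $j$ occurring in the sum in \eqref{eqdecomp}. In particular the case $r_e-|j|_\s>0$ never arises, so the gap left by Proposition~\ref{prop:invcond3} is empty.

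Second, I would check directly the subsets $\bar{\dsV}$ with $v\in\bar{\dsV}$ and $v_1,v_2\notin\bar{\dsV}$. For these, the edges $\gamma$ and $\gamma_e$, whose endpoints are $v_1$ and $v_2$, do not lie in $\mathds{E}_0(\bar{\dsV})\cup\mathds{E}^{\uparrow}(\bar{\dsV})\cup\mathds{E}^{\downarrow}(\bar{\dsV})$, so they contribute nothing to the left-hand side of either condition, and the assumed strict inequalities for the $\gamma$- and $\gamma_e$-terms are just the inequalities for the rest of the graph. In the $(\gamma_j,e_j)$-term the edge $e$ is replaced by $\gamma_j\in\mathds{E}^{\downarrow}(\bar{\dsV})$ and $e_j\in\mathds{E}^{\uparrow}(\bar{\dsV})$, both of level zero; inserting $r_{\gamma_j}=0$ into the $\{v\}$-row of the table in the proof of Proposition~\ref{prop:invcond3} gives that their joint contribution to the left-hand side of \eqref{eq:ass2e2} is $a_e+|j|_\s-|j|_\s=a_e\ge 0$, and their joint contribution to the left-hand side of \eqref{eq:cond3} is $a_e$ if $v_\star\in\bar{\dsV}$ and $1-|j|_\s$ otherwise. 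By local subcriticality of $\CG_{\CR}$ one has $|\bar T|_\s>\alpha=|\bXi|_\s$, so $r_e'=\max(\lceil-|\bar T|_\s\rceil,r_e)=1$ for the trees in \eqref{eqneg} (as already noted above, $r_e'-r_e\le 1$) and only $j=0$ occurs in the sum; hence $1-|j|_\s=1\ge 0$ as well. In every case the decomposition only raises the left-hand side, so the strict inequalities pass to the $(\gamma_j,e_j)$-terms.

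The argument is therefore short, and the only point that needs care is the structural one in the first step: one must be sure that, for each of the finitely many diagrams in \eqref{eqneg}, the renormalised edge genuinely enters $\bar T$ at its root (so that $r_e=0$, equivalently $r_{\gamma_j}=0$) — this is precisely the no-nesting, no-overlap feature of the subdivergences occurring in \eqref{eq:gKPZ} — together with the small bookkeeping of the $v_\star\in\bar{\dsV}$ versus $v_\star\notin\bar{\dsV}$ split in the $\{v\}$-row for condition \eqref{eq:cond3}. I expect the write-up to parallel the proofs of the two preceding propositions almost verbatim, with the $\{v\}$-row of the table reinstated and seen to be trivially nonnegative.
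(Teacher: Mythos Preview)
Your first step contains a genuine error: the claim that $r_e=0$ for the renormalised edge is false. You argue that ``$e_+$ is the root of $\bar T$ and hence $T_e=\bar T$; since $\bar T$ is negative, $r_e=\lceil|\bar T|_\s\rceil\vee 0=0$.'' But $T_e$ in Definition~\ref{def:ELG} is the subtree of the \emph{elementary labelled graph} sitting above the edge $e$ (on the far side of $e$ from the root $v_\star$), whereas $\bar T$ is the negative subtree \emph{below} $e$ that arises after the cumulant contraction. These are on opposite sides of $e$, so $T_e\neq\bar T$. Concretely, in the example treated at \eqref{eq94} the edge being moved carries the label $\gamma_1=(|\s|-2,1,v_0)$, hence $r_e=1$; and indeed $\gamma_\star$ there has $r'_e=2$, so the sum in \eqref{eqdecomp} ranges over $|j|_\s\in\{0,1\}$, not only $j=0$. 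Your downstream computations (``$r_{\gamma_j}=0$'', ``only $j=0$ occurs'') inherit this mistake.

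The paper's argument does not need $r_e=0$. It observes simply that, for the gKPZ labels, every edge has $r_e\le 1$ (the only labels are $\gamma_1,\gamma_2,\gamma_3$ with $r\in\{0,1\}$). Revisiting the exceptional $\{v\}$-row of the table in Proposition~\ref{prop:invcond3}, the joint $(\gamma_j,e_j)$-contribution there is $-(r_e-|j|_\s-1)-|j|_\s=-(r_e-1)$ when $r_e-|j|_\s>0$, and this is $\ge 0$ precisely because $r_e\le 1$. That one-line observation closes the gap left by Proposition~\ref{prop:invcond3}. Your structural ``no-nesting'' intuition is on the right track, but the quantity it controls is $r_e\le 1$, not $r_e=0$.
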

\begin{proof}
The result comes essentially from the proposition \eqref{prop:invcond3} except when we consider a subset $V$ such that $V\cap\{v,v_1,v_2\}=\{v\}$ and $j < r_\gamma$. In that case, the node $v$ has the contribution $-(r_\gamma-1)$, which due to the fact that the examples only have $r_\gamma \le 1$, gives the result.
\end{proof}

We iterate the renormalisation procedure on each edge in $\mathds{E}^{\downarrow}(\bar{\dsV})$ to obtain a graph with no leaves which can be divergent. In the generalised KPZ terms, the diverging graph has the following form:

\beq\label{eq:decomp3}
\begin{tikzpicture}[baseline=0cm,scale=0.8]
\node at (0,-1) [dot,label=left:$v$] (a) {};
\node at (0,1) [dot,label=left:$v_1$] (b) {};
\node[cumu2n] (z) at (1,0) {};
\draw[cumu2] (z) ellipse (6pt and 10pt);
\draw[] (z.north) node[dot1] {} to (b);
\draw[] (z.south) node[dot1] {} to (a); 
\draw[kepsus] (b) to node[labl,pos=0.45] {\tiny $\gamma$} (a);
\end{tikzpicture} = 
\begin{tikzpicture}[baseline=0cm,scale=0.8]
\node at (0,-1) [dot,label=left:$v$] (a) {};
\node at (0,1) [dot,label=left:$v_1$] (b) {};
\draw[kepsus] (b) to node[labl,pos=0.45] {\tiny $\gamma'$} (a);
\node[cumu2n] (z) at (1,0) {};
\draw[cumu2] (z) ellipse (6pt and 10pt);
\draw[] (z.north) node[dot1] {} to (b);
\draw[] (z.south) node[dot1] {} to (a);
\end{tikzpicture} + \sum_{k < r_e} \begin{tikzpicture}[baseline=0cm,scale=0.8]
\node at (1.5,-1) [root,label=below:$0$] (a) {};
\node at (1.5,1) [dot,label=right:$v_1$] (b) {};
\node at (0,0.5) [dot,label=left:$v$] (c) {};
\draw[kepsus] (b) to node[labl,pos=0.45] {\tiny $a_\gamma + k,0$} (a);
\draw[kepsus] (c) to node[labl,pos=0.45] {\tiny $-k,0$} (a);
\node[cumu2n] (z) at (0.5,1.5) {};
\draw[cumu2] (z) ellipse (10pt and 6pt);
\draw[] (z.east) node[dot1] {} to (b);
\draw[] (z.west) node[dot1] {} to (c);
\end{tikzpicture},
\eeq

where we have the following labels for $\gamma,\,\gamma'$: $(a_\gamma,r_\gamma,0),\,(a_\gamma,0,0,)$.
\begin{proposition}
The condition \eqref{elgass3} is satisfied for all the terms of the previous decomposition.
\end{proposition}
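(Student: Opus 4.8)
The plan is to reduce \eqref{elgass3} to a finite case check and to compare its left-hand side, edge by edge, against that of the diverging triangle on the left of \eqref{eq:decomp3}, which we may assume satisfies \eqref{elgass3} by the propositions of Section~\ref{sec:elblgraph} together with the stability statements already proved in this subsection (recall that \eqref{elgass3}, like \eqref{eq:cond3}, is the mild condition that is never contravened). Since \eqref{elgass3} ranges only over the non-empty proper subsets $\bar{\dsV}\subsetneq\dsV_0$, and since each summand on the right of \eqref{eq:decomp3} differs from the reference graph only in the label of the single edge $e=(v_1,v)$ --- in the $k$-family this edge is moreover cut into the two edges $(v_1,0)$ and $(v,0)$ and the origin $0$ is promoted to the root --- it suffices to track the contribution of these edges and to run the case analysis on $\bar{\dsV}\cap\{v,v_1,v_c\}$, the condition being literally unchanged from the reference for all subsets not incident to $e$ (with the obvious modification of the computation when $v_\star\in\bar{\dsV}$).

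First I would treat the de-Taylored term, where $e$ now carries the label $(a_e,0)$. If $e\in\mathds{E}_0(\bar{\dsV})$ its contribution $a_e$ is unchanged. If $e\in\mathds{E}^{\downarrow}(\bar{\dsV})$, i.e. $v\in\bar{\dsV}\not\ni v_1$, the contribution becomes $a_e$ in place of $1-r_e$; since the kernels of \eqref{eq:KerType} have $a_e\ge|\s|-2\ge 1$ and $r_e\ge 0$, the change $a_e+r_e-1$ is non-negative and \eqref{elgass3} only improves. The remaining case, $e\in\mathds{E}^{\uparrow}(\bar{\dsV})$ with $v_1\in\bar{\dsV}\not\ni v$ --- which concerns only $\bar{\dsV}=\{v_1\}$ and $\bar{\dsV}=\{v_1,v_c\}$ --- is the one where the contribution genuinely drops, from $a_e+r_e$ to $a_e$, and has to be checked directly: here the other incident edges are $\gamma_3$ (with $r_{\gamma_3}=0$) and, for the two-element set, $\gamma_2$, whose homogeneities are again among those of \eqref{eq:KerType}, and one uses that $r_e\le 1$ for every negative subtree relevant to gKPZ (exactly as invoked in the preceding propositions) to recover the required strict inequality.

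The $k$-family is handled along the same lines: the edge $e$ is replaced by $(v,0)$ with label $(-k,0)$ and $(v_1,0)$ with label $(a_e+k,0)$, while $v_c$ keeps $\gamma_2,\gamma_3$ and $0$ becomes the root, so these terms are, near $0$, a scalar renormalisation constant times a remainder carried by $v_c$. For each $\bar{\dsV}$ one compares the joint contribution of the two new edges with that of $e$ in the reference graph exactly as in the proof of Proposition~\ref{prop:invcond3}; as $a_e+k+(-k)=a_e$, the only delicate situation, precisely as there, is $\bar{\dsV}\cap\{v,v_1,v_c\}=\{v\}$ with $k<r_e$, where $v$ acquires the negative contribution $-k$. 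I expect this to be the main obstacle, but it is resolved as before: for all the gKPZ subtrees $r_e\le 1$, which forces $k=0$ so that nothing is subtracted and the contribution is preserved, while for the remaining subsets the argument of Proposition~\ref{prop:invcond3} applies verbatim. Assembling the three cases yields \eqref{elgass3} for every term of \eqref{eq:decomp3}.
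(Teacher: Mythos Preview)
Your overall strategy --- tabulate the contribution of the modified edges as a function of $V'=\bar{\dsV}\cap\{v,v_1\}$ and compare against the reference graph --- is exactly the paper's. The execution, however, has two real problems.

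First, for the $k$-family you have mislocated the bad case. With the conventions of \eqref{eq:decomp3} one has $e=(v_1,v)$ with $e_-=v_1$, $e_+=v$, and the two new edges are $(v,0)$ and $(v_1,0)$, both in $\dsE^{\uparrow}(\bar{\dsV})$ whenever their tail lies in $\bar{\dsV}$. For $V'=\{v\}$ the new contribution is $-k$ while the original is $-(r_e-1)$; since $k<r_e$ gives $k\le r_e-1$, one has $-k\ge 1-r_e$ and the inequality \emph{improves}, so this case needs no argument at all. The genuinely problematic case is $V'=\{v_1\}$: there the original edge contributes $a_e+r_e$ while the $k$-term gives only $a_e+k<a_e+r_e$. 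Your invocation of $r_e\le1$ to force $k=0$ does not help here, because even $k=0$ leaves a deficit of $r_e$. This is precisely the row of the paper's table singled out as ``the main difference''.

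Second, for the de-Taylored term $e'$ you correctly spot that the drop occurs when $v_1\in\bar{\dsV}\not\ni v$, but you then restrict to $\bar{\dsV}\in\{\{v_1\},\{v_1,v_c\}\}$. This is not legitimate: the triangle sits inside the full elementary graph $G$, and \eqref{elgass3} must hold for \emph{every} $\bar{\dsV}\subsetneq\dsV_0$ with $v_1\in\bar{\dsV}$, $v\notin\bar{\dsV}$; such $\bar{\dsV}$ can contain arbitrary other vertices of $G$. Merely citing $r_e\le1$ and the list \eqref{eq:KerType} does not close this.

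The paper's remedy for both versions of $V'=\{v_1\}$ is the same and is what your argument is missing: pass from $V$ to $V\cup\{v\}$. On that larger set $e$ (or $e'$, or the pair of $k$-edges) lies in $\dsE_0$ and contributes exactly $a_e$ in every graph, so \eqref{elgass3} holds there by hypothesis. Removing $v$ decreases the right-hand side by $|\s|$, while on the left the only loss beyond $e$ comes from the outgoing edge $\gamma\in\dsE^{\uparrow}$ below $v$, whose contribution is $a_\gamma+r_\gamma\le|\s|$. Hence the inequality persists with the reduced contribution $a_e$, which suffices for $e'$ and a fortiori for $a_e+k\ge a_e$.
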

\begin{proof}
Let $\bar\dsV$ be a subset of $G$. We have for $\tilde\dsV=\bar\dsV\cap\{v,v_1\}$:
$$\begin{array}{|c|c|c|c|c|}
\hline
\tilde\dsV & \gamma & \gamma' & (v,0) & (v_1,0) \\
 \hline
 \{v,v_1\} & a_\gamma & a_\gamma & -k & a_\gamma + k \\
 \hline
 \{v\} & -(r_\gamma - 1) & a_\gamma & -k & 0 \\
 \hline
 \{v_1\} & a_\gamma + r_\gamma & a_\gamma & 0 & a_\gamma + k \\
 \hline
\end{array}$$
For the first two rows of the previous table, the contribution of $\gamma'$  and the sum of $(0,v)$ and $(0,v_1)$ are greater than that of $\gamma$. It is for $\{v_1\}$ and $r_\gamma > 0$, that we need to be more careful. In the case $v\in \bar\dsV$, the contribution of the edge from the cumulant is bounded by $|\s|$, which means that \eqref{elgass3} holds true. By removing this node and keeping only the contribution other than the half-edge from the cumulant, we notice that $a_\gamma$ is sufficient for the required bound instead of $a_\gamma + r_\gamma$.
\end{proof}

\begin{proposition}
The condition \eqref{elgass2} is satisfied on each term $G_k$ depending on $k < r_\gamma$ of the previous decomposition \eqref{eq:decomp3} and it is also satisfied for the first term $G$ on subset $V\neq\{v,v_1\}$
\end{proposition}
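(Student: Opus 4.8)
The plan is to argue exactly along the lines of the three propositions that immediately precede, treating the modified edges one at a time and comparing their contribution to the left‑hand side of \eqref{elgass2} with that of the edge $e$ in the original reduced diverging graph $G$. Since $\gamma_2$ and $\gamma_3$ are untouched in every term of \eqref{eq:decomp3}, it suffices to fix a subset $\bar\dsV\subset\dsV$ with $|\bar\dsV|\ge 3$, set $\bar\dsV'=\bar\dsV\cap\{v,v_1,v_c\}$, record whether $0\in\bar\dsV$, and check in each of the resulting cases that the modified edges contribute no more to \eqref{elgass2} than $e$ contributes in $G$; then the bound \eqref{elgass2} for $G$, which is valid away from the genuinely divergent subset, passes to the term under consideration. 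The only case in which this comparison must fail is $\bar\dsV=\{v,v_1,v_c\}$ for the first term, and that is precisely the surviving divergence which the model's renormalisation constant is meant to absorb, so the statement is exactly what one should expect.

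First I would dispose of the first term of \eqref{eq:decomp3}, the one carrying the edge $e'$ with label $(a_e,0)$. Since $r_{e'}=0$, this edge contributes $a_e$ to \eqref{elgass2} when both its endpoints lie in $\bar\dsV$ and $0$ otherwise. Comparing with $e$ (label $(a_e,r_e,v_0)$): if both $v,v_1\in\bar\dsV$ both edges give $a_e$; if $v_1\in\bar\dsV$ but $v\notin\bar\dsV$ the contribution of $e$ is $\mathbbm{1}_{\{0\in\bar\dsV,\,r_e>0\}}(a_e+r_e-1)\ge 0$, hence no smaller; and if $v\in\bar\dsV$ but $v_1\notin\bar\dsV$ the contribution of $e$ is $-\mathbbm{1}_{\{0\in\bar\dsV\}}r_e\le 0$. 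Thus the left‑hand side of \eqref{elgass2} can only increase in the last subcase with $0\in\bar\dsV$, which forces $v\in\bar\dsV$, $v_1\notin\bar\dsV$ and, since $|\bar\dsV|\ge 3$, the presence of $v_c$ in $\bar\dsV$; there I would invoke, as in the exceptional‑case argument of the preceding proof, that in all the gKPZ subtrees $r_e\le 1$, so the loss is at most $|\s|$, and that this is compensated by $\gamma_2$, which now falls into $\mathds{E}_0(\bar\dsV)$.

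For a term $G_k$ the edge $e$ is replaced by the two edges $(v_1,0)$ with label $(a_e+k,0)$ and $(v,0)$ with label $(-k,0)$, both carrying $r=0$, so neither produces an $r_e$‑penalty in \eqref{elgass2}. When $\{v,v_1\}\subseteq\bar\dsV$ the pair lies in $\mathds{E}^{\uparrow}(\bar\dsV)$ if $0\notin\bar\dsV$ (total contribution $0$) and in $\mathds{E}_0(\bar\dsV)$ if $0\in\bar\dsV$ (total contribution $(a_e+k)+(-k)=a_e$), in either case bounded by the contribution $a_e$ of $e$ in $G$; when $\bar\dsV$ contains exactly one of $v,v_1$ the relevant single edge contributes $a_e+k\le a_e+r_e-1$ or $-k\ge-(r_e-1)$, which are precisely the bounds that were available in the earlier propositions. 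The main obstacle I anticipate is the bookkeeping around the extra vertex $0$: when $0\in\bar\dsV$ one gains an extra $|\s|$ of slack on the right‑hand side of \eqref{elgass2}, and one must ensure that it is not spent twice when $\bar\dsV$ also meets the ambient graph through $v_c$; I would handle this by reducing to $\bar\dsV\cap\{v,v_1,v_c,0\}$ and observing that every edge of $G_k$ other than the two new ones already occurs in $G$, so the slack estimate for $G$ is inherited. This gives \eqref{elgass2} for every $G_k$, and for the first term on all subsets $\bar\dsV\neq\{v,v_1,v_c\}$, as claimed.
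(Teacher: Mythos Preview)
Your approach is quite different from the paper's, and it has a real gap. The paper argues in two strokes: if $0\notin V$, split the cumulant vertex $v_c$ into two leaves so that the subgraph decomposes into two gKPZ subtrees, each of which satisfies \eqref{elgass1}, and summing these gives \eqref{elgass2} on $V$; if $0\in V$, invoke the immediately preceding proposition, which established \eqref{elgass3} on the complement $\dsV\setminus V$ for \emph{every} term of \eqref{eq:decomp3}, and then use the earlier duality (the proposition that \eqref{elgass3} on $\dsV\setminus V$ implies \eqref{elgass2} on $V$). Neither branch refers back to the undecomposed graph on the left of \eqref{eq:decomp3}.

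Your argument instead compares each term edge--by--edge with the left-hand graph $G$ carrying $e$ and tries to inherit \eqref{elgass2} from $G$. Two problems arise. First, you silently assume \eqref{elgass2} holds for $G$ on every subset except $\{v,v_1,v_c\}$; nothing in the paper establishes this for the reduced ``diverging graph'' of \eqref{eq:decomp3}, so your baseline is unjustified. Second, and more concretely, your treatment of $G_k$ on the critical subset $\{v,v_1,v_c\}$ does not close: you correctly observe that the two new edges to $0$ contribute $0$ there, but the only conclusion your comparison yields is that the left-hand side of \eqref{elgass2} for $G_k$ is at most that for $G$---and $G$ is divergent on precisely this subset, so the bound is vacuous. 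Your own sentence ``the only case in which this comparison must fail is $\bar\dsV=\{v,v_1,v_c\}$ for the first term'' is wrong: it fails for $G_k$ there too, and that is exactly the case where a structural argument (the paper's tree splitting, or a direct appeal to \eqref{elgass1}) is needed rather than a comparison. Without this, \eqref{elgass2} for $G_k$ on all subsets---the main content of the proposition---is not proved.
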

\begin{proof}
Let $\bar\dsV\subset G$. If $0\notin\bar\dsV$ then we can split the graph $G_k$ into two KPZ trees by splitting the cumulant into two leaves and the condition \eqref{elgass1} is satisfied on each tree which gives \eqref{elgass2} on $V$. If $0\in \dsV$ the previous proposition gives the condition \eqref{elgass3} on $G_k\setminus \bar\dsV$ which proves \eqref{elgass2} on $\bar\dsV$.
\end{proof}

If one compares the general renormalisation map in Section~\ref{sec:RnmMdl} (Definition~\ref{def:Deltam})  with the renormalisation procedure we have suggested, one finds that they are equivalent for these sorts of divergences. In fact, in our renormalisation procedure, we are moving errant edges to the root and excising the troublesome subtree through translation invariance. The extraction-contraction procedure of the renormalisation procedure amounts to the same. Indeed, for gKPZ, there are no nested neither overlapping divergences. One has to face only one subdivergence which is treated by the procedure described above and corresponds to the application of $ R_g $ to a node in the tree. We apply the renormalisation procedure to a graph that comes from some  $ \Pi^N \btau $ and it produces in the end a graph coming from $ \Pi^{N,\tilde{g}} \btau $  with $ \tilde{g} $ equal to $ g $ except on the empty tree, where it will be equal to zero. This forces the extraction of one subtree.
In the end, we get:
\begin{equs}
\Pi^{N,g} \btau = \Pi^{N} \btau + \Pi^{N,\tilde{g}} \btau
\end{equs}
This suggests that we are computing the correct counter-terms with our renormalisation procedure.

\begin{remark} For several disjoint subdivergences, the same procedure can be applied. Nested subdivergences could be handled in a recursive way by starting the procedure on the inner subdivergence and then proceeding to the outers. Overlapping divergences will require much more work and use similar techniques as in \cite{CH16} concerning safe/unsafe forests.
\end{remark}

\subsection{Example Computations}\label{sec:exmclc}

In this section, we would like to work out some calculations to show how the divergences will look like in practice, and how they are then resolved - in particular we will show how the subdivergences that were listed in the previous section are dealt with. Not to clutter notation we fix the following labels upfront $\gamma_1 = (|\s|-2,1,v_0)$, $\gamma_2=(|\s|-1,0,v_0)$, and $\gamma_3 = (|\s|-2,0,v_0)$. Consider then the tree coming from $\tau=\colb{\CI'(\CI(\Xi)\Xi)^2}$.

$$\begin{tikzpicture}[baseline=0cm,scale=0.8]
\node at (0,2) {\textcircled{1}};
\node at (0,-2) [root, label=below:$0$] (a) {};
\node at (0,-1) [dot,label=left:$v_\star$] (b) {};
\node at (-1,0) [dot,label=left:$v$] (c) {};
\node at (1,0) [dot,label=right:$v_1$] (d) {};
\node at (-2,1) [dot,label=left:$x_1$] (e) {};
\node at (-1.5,1.5) [var] (i) {};
\node at (-0.5,0.5) [var] (f) {};
\node at (0.5,0.5) [var] (g) {};
\node at (2,1) [dot,label=right:$v_2$] (h) {};
\node at (1.5,1.5) [var] (j) {};
\draw[testfcn] (a) to (b);
\draw[kepsus] (c) to node[labl,pos=0.45] {\tiny $\gamma_2$} (b);
\draw[kepsus] (d) to node[labl,pos=0.45] {\tiny $\gamma_2$} (b);
\draw[kepsus] (e) to node[labl,pos=0.45] {\tiny $\gamma_1$} (c);
\draw[dashed] (f) to (c);
\draw[dashed] (e) to (i);
\draw[kepsus] (h) to node[labl,pos=0.45] {\tiny $\gamma_1$} (d);
\draw[dashed] (g) to (d);
\draw[dashed] (j) to (h);
\end{tikzpicture}\quad\begin{tikzpicture}[baseline=0cm,scale=0.8]
\node at (0,2) {\textcircled{2}};
\node at (0,-2) [root,label=below:$0$] (a) {};
\node at (0,-1) [dot,label=left:$v_\star$] (b) {};
\node at (-1,0) [dot,label=left:$v$] (c) {};
\node at (1,0) [dot,label=right:$v_1$] (d) {};
\node at (-2,1) [dot,label=left:$x_1$] (e) {};
\node at (2,1) [dot,label=right:$v_2$] (h) {};
\node at (-1.5,1.5) [var] (i) {};
\node at (1.5,1.5) [var] (j) {};
\node[cumu2n] (f) at (0,1){};
\draw[cumu2] (f) ellipse (14pt and 10pt);
\draw[testfcn] (a) to (b);
\draw[kepsus] (e) to node[labl,pos=0.45] {\tiny $\gamma_1$} (c);
\draw[kepsus] (h) to node[labl,pos=0.45] {\tiny $\gamma_1$} (d);
\draw[] (f.west) node[dot1] {} to (c);
\draw[] (f.east) node[dot1] {} to (d); 
\draw[kepsus] (c) to node[labl,pos=0.45] {\tiny $\gamma_2$} (b);
\draw[kepsus] (d) to node[labl,pos=0.45] {\tiny $\gamma_2$} (b);
\draw[dashed] (e) to (i);
\draw[dashed] (h) to (j);
\end{tikzpicture}$$
$$\begin{tikzpicture}[baseline=0cm,scale=0.8]
\node at (0,2.5) {\textcircled{3}};
\node at (0,-2) [root, label=below:$0$] (a) {};
\node at (0,-1) [dot,label=left:$v_\star$] (b) {};
\node at (-1,0) [dot,label=left:$v$] (c) {};
\node at (1,0) [dot,label=right:$v_1$] (d) {};
\node at (-2,1) [dot,label=left:$x_1$] (e) {};
\node at (-0.5,0.5) [var] (y) {};
\node at (0.5,0.5) [var] (g) {};
\node at (2,1) [dot,label=right:$v_2$] (h) {};
\node[cumu2n] (z) at (0,1.5){};
\draw[cumu2] (z) ellipse (12pt and 8pt);
\draw[testfcn] (a) to (b);
\draw[kepsus] (c) to node[labl,pos=0.45] {\tiny $\gamma_2$} (b);
\draw[kepsus] (d) to node[labl,pos=0.45] {\tiny $\gamma_2$} (b);
\draw[kepsus] (e) to node[labl,pos=0.45] {\tiny $\gamma_1$} (c);
\draw[kepsus] (h) to node[labl,pos=0.45] {\tiny $\gamma_1$} (d);
\draw[dashed] (c) to (y);
\draw[dashed] (d) to (g);
\draw[] (z.west) node[dot1] {} to (e);
\draw[] (z.east) node[dot1] {} to (h); 
\end{tikzpicture}$$
$$\begin{tikzpicture}[baseline=0cm,scale=0.8]
\node at (0,2) {\textcircled{4}};
\node at (0,-2) [root, label=below:$0$] (a) {};
\node at (0,-1) [dot,label=left:$v_\star$] (b) {};
\node at (-1,0) [dot,label=left:$v$] (c) {};
\node at (1,0) [dot,label=right:$v_1$] (d) {};
\node at (-2,1) [dot,label=left:$x_1$] (e) {};
\node at (2,1) [dot,label=right:$v_2$] (f) {};
\node at (0.5,0.5) [var] (g) {};
\node at (1.5,1.5) [var] (k) {};
\node[cumu2n] (z) at (-1,1){};
\draw[cumu2] (z) ellipse (12pt and 8pt);
\draw[] (z.west) node[dot1] {} to (e);
\draw[] (z.east) node[dot1] {} to (c);
\draw[->] (e) to node[labl,pos=0.45] {\tiny $\gamma_1$} (c);
\draw[->] (c) to node[labl,pos=0.45] {\tiny $\gamma_2$} (b);
\draw[->] (d) to node[labl,pos=0.45] {\tiny $\gamma_2$} (b);
\draw[->] (f) to node[labl,pos=0.45] {\tiny $\gamma_2$} (d);
\draw[testfcn] (a) to (b);
\draw[dashed] (g) to (d);
\draw[dashed] (k) to (f);
\end{tikzpicture}\quad\begin{tikzpicture}[baseline=0cm,scale=0.8]
\node at (0,2) {\textcircled{5}};
\node at (0,-2) [root, label=below:$0$] (a) {};
\node at (0,-1) [dot,label=left:$v_\star$] (b) {};
\node at (-1,0) [dot,label=left:$v$] (c) {};
\node at (1,0) [dot,label=right:$v_1$] (d) {};
\node at (-2,1) [dot,label=left:$x_1$] (e) {};
\node at (2,1) [dot,label=right:$v_2$] (f) {};
\node[cumu2n] (z) at (-1,1){};
\draw[cumu2] (z) ellipse (12pt and 8pt);
\node[cumu2n] (y) at (1,1){};
\draw[cumu2] (y) ellipse (12pt and 8 pt);
\draw[] (z.west) node[dot1] {} to (e);
\draw[] (z.east) node[dot1] {} to (c);
\draw[] (y.west) node[dot1] {} to (d);
\draw[] (y.east) node[dot1] {} to (f);
\draw[->] (e) to node[labl,pos=0.45] {\tiny $\gamma_1$} (c);
\draw[->] (c) to node[labl,pos=0.45] {\tiny $\gamma_2$} (b);
\draw[->] (d) to node[labl,pos=0.45] {\tiny $\gamma_2$} (b);
\draw[->] (f) to node[labl,pos=0.45] {\tiny $\gamma_1$} (d);
\draw[testfcn] (a) to (b);
\end{tikzpicture}$$

In our case however we will also see higher order contractions. Some possible examples are:

$$
\begin{tikzpicture}[baseline=0cm,scale=0.8]
\node at (0,2) {\textcircled{6}};
\node at (0,-2) [root, label=below:$0$] (a) {};
\node at (0,-1) [dot,label=left:$v_\star$] (b) {};
\node at (-1,0) [dot,label=left:$v$] (c) {};
\node at (-2,1) [dot,label=left:$x_1$] (y) {};
\node at (1,0) [dot,label=right:$v_1$] (d) {};
\node at (2,1) [dot,label=right:$v_2$] (h) {};
\node at (1.5,1.5) [var] (k) {};
\node[] (e) at (0,0.5) {};
\node[cumu3,rotate=270] (e-) at (-0.05,0.5) {};
\draw[testfcn] (a) to (b);
\draw[kepsus] (c) to node[labl,pos=0.45] {\tiny $\gamma_2$} (b);
\draw[kepsus] (d) to node[labl,pos=0.45] {\tiny $\gamma_2$} (b);
\draw[kepsus] (h) to node[labl,pos=0.45] {\tiny $\gamma_1$} (d);
\draw[kepsus] (y) to node[labl,pos=0.45] {\tiny $\gamma_1$} (c);
\draw[] (e.north west) node[dot1] {} to (y);
\draw[] (e.south west) node[dot1] {} to (c);
\draw[] (e.east) node[dot1] {} to (d);
\draw[dashed] (k) to (h);
\end{tikzpicture}\quad\begin{tikzpicture}[baseline=0cm,scale=0.8]
\node at (0,-2) [root, label=below:$0$] (a) {};
\node at (0,2) {\textcircled{7}};
\node at (0,-1) [dot,label=left:$v_\star$] (b) {};
\node at (-1,0) [dot,label=left:$v$] (c) {};
\node at (-2,1) [dot,label=left:$x_1$] (y) {};
\node at (1,0) [dot,label=right:$v_1$] (d) {};
\node at (2,1) [dot,label=right:$v_2$] (h) {};
\node[] (e) at (0,0.5) {};
\node[cumu4] (e-) at (e) {};
\draw[testfcn] (a) to (b);
\draw[kepsus] (c) to node[labl,pos=0.45] {\tiny $\gamma_2$} (b);
\draw[kepsus] (d) to node[labl,pos=0.45] {\tiny $\gamma_2$} (b);
\draw[kepsus] (h) to node[labl,pos=0.45] {\tiny $\gamma_1$} (d);
\draw[kepsus] (y) to node[labl,pos=0.45] {\tiny $\gamma_1$} (c);
\draw[] (e.north west) node[dot1] {} to (y);
\draw[] (e.south west) node[dot1] {} to (c);
\draw[] (e.south east) node[dot1] {} to (d);
\draw[] (e.north east) node[dot1] {} to (h);
\end{tikzpicture}$$

The quantities $(|\dsV|-1)|\s|$ and $(|\dsV|-\frac{1}{2})|\s|$ will show up a lot in our analysis, so we define $A(n) \overset{\text{def}}{=}\left(n - \frac{1}{2}\right)|\s|,\,\,B(n)\overset{\text{def}}{=}(n-1)|\s|$.

For \textcircled{1}, we notice that each edge carries at most the weight $|\s|-1$, while on the other side we have the recursive identities: $A(n + 1)=A(n) + |\s|$, and $B(n+1) = B(n) + |\s|$. It is easy to see then, via an inductive argument for example, that \eqref{elgass1} and \eqref{elgass2} are satisfied for \textcircled{1}.

For \textcircled{2} in the list above, consider the cumulant term in the subtree $\{v,v_1\}$. As per our assumption, it will carry the weight of $3$ (the $\eta$ term is not really important here). Putting it into \eqref{elgass1}, one gets $2 - \kappa \le |\s|$. This gives us a first lower bound on $|\s|$ for the theory to work. Consider now the subtree $\{v,v_\star,v_1\}$, for which \eqref{elgass2} gives us $3 + |\s| - 1 + |\s| - 1 = 2|\s| + 1 \not < 2|\s|$. To remedy this we apply our renormalisation procedure:

\begin{equation}\label{eq94}\begin{tikzpicture}[baseline=0cm,scale=0.8]
\node at (0,-2) [root, label=below:$0$] (a) {};
\node at (0,-1) [dot,label=240:$v_\star$] (b) {};
\node at (-1,0) [dot,label=left:$v$] (c) {};
\node at (1,0) [dot,label=right:$v_1$] (d) {};
\node[cumu2n] (z) at (0,1){};
\draw[cumu2] (z) ellipse (14pt and 10pt);
\node at (-2,1) [dot,label=left:$x_1$] (e) {};
\node at (2,1) [dot,label=right:$v_2$] (h) {};
\node at (-1.5,1.5) [var] (i) {};
\node at (1.5,1.5) [var] (j) {};
\draw[dashed] (i) to (e);
\draw[dashed] (j) to (h);
\draw[testfcn] (a) to (b);
\draw[] (z.west) node[dot1] {} to (c);
\draw[] (z.east) node[dot1] {} to (d); 
\draw[kepsus] (c) to node[labl,pos=0.45] {\tiny $\gamma_2$} (b);
\draw[kepsus] (d) to node[labl,pos=0.45] {\tiny $\gamma_2$} (b);
\draw[kepsus] (e) to node[labl,pos=0.45] {\tiny $\gamma_1$} (c);
\draw[kepsus] (h) to node[labl,pos=0.45] {\tiny $\gamma_1$} (d);
\end{tikzpicture} =
\begin{tikzpicture}[baseline=0cm,scale=0.8]
\node at (0,-2) [root, label=below:$0$] (a) {};
\node at (0,-1) [dot,label=240:$v_\star$] (b) {};
\node at (-1,0) [dot,label=left:$v$] (c) {};
\node at (1,0) [dot,label=right:$v_1$] (d) {};
\node at (-2,1) [dot,label=left:$x_1$] (e) {};
\node at (-1.5,1.5) [var] (k) {};
\node at (1.5,1.5) [var] (l) {};
\node[cumu2n] (f) at (0,1){};
\draw[cumu2] (f) ellipse (14pt and 10pt);
\node at (2,1) [dot,label=right:$v_2$] (h) {};
\draw[testfcn] (a) to (b);
\draw[dashed] (e) to (k);
\draw[dashed] (h) to (l);
\draw[kepsus] (c) to node[labl,pos=0.45] {\tiny $\gamma_2$} (b);
\draw[kepsus] (d) to node[labl,pos=0.45] {\tiny $\gamma_2$} (b);
\draw[kepsus] (e) to node[labl,pos=0.45] {\tiny $\gamma_\star$} (c);
\draw[] (f.west) node[dot1] {} to (c);
\draw[kepsus] (h) to node[labl,pos=0.45] {\tiny $\gamma_1$} (d);
\draw[] (f.east) node[dot1] {} to (d);
\end{tikzpicture} + \sum_{j<2}\begin{tikzpicture}[baseline=0cm,scale=0.8]
\node at (0,-2) [root,label=below:$0$] (a) {};
\node at (0,-1) [dot,label=240:$v_\star$] (b) {};
\node[cumu2n] (f) at (0,0.5) {};
\draw[cumu2] (f) ellipse (14pt and 10pt);
\node at (-1,0) [dot,label=above:$v$] (c) {};
\node at (1,0) [dot,label=right:$v_1$] (d) {};
\node at (-1.8,-0.4) [var,label=$x_1$] (e) {};
\node at (1,1) [var,label=$v_2$] (h) {};
\draw[testfcn] (a) to (b);
\draw[kepsus] (e) to node[labl,pos=0.45] {\tiny $\gamma_j$} (b);
\draw[kepsus] (c) to[out=10,in=90] node[labl,pos=0.45] {\tiny $-j$,0} (b);
\draw[kepsus] (c) to node[labl,pos=0.45] {\tiny $\gamma_2$} (b);
\draw[] (f.west) node[dot1] {} to (c);
\draw[] (f.east) node[dot1] {} to (d);
\draw[kepsus] (d) to node[labl,pos=0.45] {\tiny $\gamma_2$} (b);
\draw[kepsus] (h) to node[labl,pos=0.45] {\tiny $\gamma_1$} (d);
\end{tikzpicture}
\end{equation}

Here $\gamma_\star=(|\s|-2,2,v_\star)$ and $\gamma_j=(|\s|-2+j,1-j,0)$. Our assumption on the cumulants gives us that $\begin{tikzpicture}[baseline=0cm,scale=0.8]
\node at (-1,0) (a) {};
\node at (1,0) (b) {};
\node[cumu2n] (z) at (0,0.5){};
\draw[cumu2] (z) ellipse (14pt and 10pt);
\draw[] (z.west) node[dot1] {} to (a);
\draw[] (z.east) node[dot1] {} to (b);
\end{tikzpicture}$ can be replaced by a kernel of weight $3$, and then it can be seen that all the above graphs satisfy \eqref{elgass1} (where we have assumed the lower bound $2 - \kappa \le |\s|$) and \eqref{elgass3}. 
The benefit of the decomposition can be seen now in the fact that the tree with $\gamma_\star$ satisfies \eqref{elgass2}\, - $3 + |\s| - 1 + |\s| - 1 + 0 - 2 = 2|\s| -  1 < 2|\s|$. For the terms with $\gamma_j$ the problem persists because notice that $3 + (|\s| - 1) + (|\s| - 1) - j = 2|\s| + 1 - j \nless 2|\s|$ for $j\in\{0,1\}$. Fortunately a couple of clever decompositions can fix this as well. For the case $j=1$, we may do:

\begin{equation*}
\begin{tikzpicture}[baseline=0cm,scale=0.8]
\node at (0,-2) [root,label=below:$0$] (a) {};
\node at (0,-1) [dot,label=240:$v_\star$] (b) {};
\node at (-1,0) [dot,label=above:$v$] (c) {};
\node at (1,0) [dot,label=right:$v_1$] (d) {};
\node at (-1.8,-0.4) [var,label=$x_1$] (e) {};
\node at (1,1) [var,label=$v_2$] (h) {};
\node[cumu2n] (z) at (0,0.5){};
\draw[cumu2] (z) ellipse (14pt and 10pt);
\draw[testfcn] (a) to (b);
\draw[kepsus] (e) to node[labl,pos=0.45] {\tiny $\gamma_2$} (b);
\draw[kepsus] (c) to[out=10,in=90] node[labl,pos=0.45] {\tiny -1,0} (b);
\draw[kepsus] (c) to node[labl,pos=0.45] {\tiny $\gamma_2$} (b);
\draw[kepsus] (d) to (b);
\draw[] (z.west) node[dot1] {} to (c);
\draw[] (z.east) node[dot1] {} to (d);
\draw[kepsus] (h) to node[labl,pos=0.45] {\tiny $\gamma_1$} (d);
\end{tikzpicture} = \begin{tikzpicture}[baseline=0cm,scale=0.8]
\node at (0,-2) [root,label=below:$0$] (a) {};
\node at (0,-1) [dot,label=240:$v_\star$] (b) {};
\node at (-1,0) [dot,label=above:$v$] (c) {};
\node at (1,0) [dot,label=right:$v_1$] (d) {};
\node at (-1.8,-0.4) [var,label=$x_1$] (e) {};
\node at (1,1) [var,label=$v_2$] (h) {};
\node[cumu2n] (z) at (0,0.5){};
\draw[cumu2] (z) ellipse (14pt and 10pt);
\draw[testfcn] (a) to (b);
\draw[kepsus] (e) to node[labl,pos=0.45] {\tiny $\gamma_2$} (b);
\draw[kepsus] (c) to[out=10,in=90] node[labl,pos=0.45] {\tiny -1,0} (b);
\draw[kepsus] (c) to node[labl,pos=0.45] {\tiny $\gamma_2$} (b);
\draw[kepsus] (d) to (b);
\draw[kepsus] (h) to node[labl,pos=0.45] {\tiny $\gamma_\star$} (d);
\draw[] (z.west) node[dot1] {} to (c);
\draw[] (z.east) node[dot1] {} to (d);
\end{tikzpicture} + C^1_0 \begin{tikzpicture}[baseline=0cm,scale=0.8]
\node at (0,-2) [root,label=below:$0$] (a) {};
\node at (0,-1) [dot,label=240:$v_\star$] (b) {};
\node at (-1,0.5) [var,label=$x_1$] (e) {};
\node at (1,0.5) [var,label=$v_2$] (h) {};
\draw[testfcn] (a) to (b);
\draw[kepsus] (e) to node[labl,pos=0.45] {\tiny $\gamma_2$} (b);
\draw[kepsus] (h) to node[labl,pos=0.45] {\tiny $\gamma_1$} (b);
\end{tikzpicture}
\end{equation*}

where $\gamma_\star = (1,1,v_\star)$ and $C^1_0=
\begin{tikzpicture}[baseline=0cm]
\node at (0,0) [dot,label=240:$0$] (b) {};
\node at (-1,1) [dot,label=$v$] (e) {};
\node at (1,1) [dot,label=$v_1$] (h) {};
\node[cumu2n] (z) at (0,1.5){};
\draw[cumu2] (z) ellipse (12pt and 8pt);
\draw[kepsus] (h) to node[labl, pos=0.45] {\tiny $\gamma_2$} (b);
\draw[kepsus] (e) to node[labl, pos=0.45] {\tiny $\gamma_2$} (b);
\draw[kepsus] (e) to[out=350,in=90] node[labl, pos=0.45] {\tiny -1,0} (b);
\draw[] (z.west) node[dot1] {} to (e);
\draw[] (z.east) node[dot1] {} to (h);
\end{tikzpicture}$. 

One can now check that for the first graph on the right hand side of the above equation one has:  $3 + (|\s| - 1) + (|\s| - 1) + 0 - 1 - 1 = 2|\s| - 1 < 2|\s|$.

When $j=0$, we may consider:
\begin{equation*}
\begin{tikzpicture}[baseline=-1cm,scale=0.7]
\node at (0,-2) [root,label=below:$0$] (a) {};
\node at (0,-1) [dot,label=240:$v_\star$] (b) {};
\node at (-1,0) [dot,label=above:$v$] (c) {};
\node at (1,0) [dot,label=right:$v_1$] (d) {};
\node at (-1.8,-0.4) [var,label=$x_1$] (e) {};
\node at (1,1.2) [var,label=$v_2$] (h) {};
\node[cumu2n] (z) at (0,0.5){};
\draw[cumu2] (z) ellipse (14pt and 10pt);
\draw[testfcn] (a) to (b);
\draw[kepsus] (e) to node[labl,pos=0.45] {\tiny $\gamma_1$} (b);
\draw[kepsus] (c) to node[labl,pos=0.45] {\tiny $\gamma_2$} (b);
\draw[] (z.west) node[dot1] {} to (c);
\draw[] (z.east) node[dot1] {} to (d);
\draw[kepsus] (d) to node[labl,pos=0.45] {\tiny $\gamma_2$} (b);
\draw[kepsus] (h) to node[labl,pos=0.45] {\tiny $\gamma_1$} (d);
\end{tikzpicture} = \begin{tikzpicture}[baseline=-1cm,scale=0.7]
\node at (0,-2) [root,label=below:$0$] (a) {};
\node at (0,-1) [dot,label=240:$v_\star$] (b) {};
\node at (-1,0) [dot,label=above:$v$] (c) {};
\node at (1,0) [dot,label=right:$v_1$] (d) {};
\node at (-1.8,-0.4) [var,label=left:$x_1$] (e) {};
\node at (1,1.2) [var,label=$v_2$] (h) {};
\node[cumu2n] (z) at (0,0.5){};
\draw[cumu2] (z) ellipse (14pt and 10pt);
\draw[testfcn] (a) to (b);
\draw[kepsus] (e) to node[labl,pos=0.45] {\tiny $\gamma_1$} (b);
\draw[kepsus] (c) to node[labl,pos=0.45] {\tiny $\gamma_2$} (b);
\draw[] (z.west) node[dot1] {} to (c);
\draw[] (z.east) node[dot1] {} to (d);
\draw[kepsus] (d) to node[labl,pos=0.45] {\tiny $\gamma_2$} (b);
\draw[kepsus] (h) to node[labl,pos=0.45] {\tiny $\gamma_\star$} (d);
\end{tikzpicture} +\hspace{2mm}C^2_0\hspace{-5mm}
\begin{tikzpicture}[baseline=-1cm,scale=0.7]
\node at (0,-2) [root,label=below:$0$] (a) {};
\node at (0,-1) [dot,label=240:$v_\star$] (b) {};
\node at (-1,0) [var,label=left:$x_1$] (c) {};
\node at (1,0) [var,label=right:$v_2$] (d) {};
\draw[testfcn] (a) to (b);
\draw[kepsus] (c) to node[labl,pos=0.45] {\tiny $\gamma_2$} (b);
\draw[kepsus] (d) to node[labl,pos=0.45] {\tiny $\gamma_1$} (b);
\end{tikzpicture} + \hspace{2mm}C_1\hspace{-5mm}\begin{tikzpicture}[baseline=-1cm,scale=0.7]
\node at (0,-2) [root,label=below:$0$] (a) {};
\node at (0,-1) [dot,label=240:$v_\star$] (b) {};
\node at (-1,0) [var,label=left:$x_1$] (c) {};
\node at (1,0) [var,label=right:$v_2$] (d) {};
\draw[testfcn] (a) to (b);
\draw[kepsus] (c) to node[labl,pos=0.45] {\tiny $\gamma_1$} (b);
\draw[kepsus] (d) to node[labl,pos=0.45] {\tiny $\gamma_1$} (b);
\end{tikzpicture}
\end{equation*}
where $\gamma_\star=(1,2,v_\star)$, and $C^2_0=\begin{tikzpicture}[baseline=0cm]
\node at (0,0) [dot,label=240:$0$] (b) {};
\node at (-1,1) [dot,label=$v$] (e) {};
\node at (1,1) [dot,label=$v_1$] (h) {};
\node[cumu2n] (z) at (0,1.5){};
\draw[cumu2] (z) ellipse (14pt and 10pt);
\draw[kepsus] (h) to node[labl, pos=0.45] {\tiny $\gamma_2$} (b);
\draw[kepsus] (e) to node[labl, pos=0.45] {\tiny $\gamma_2$} (b);
\draw[kepsus] (h) to[out=190,in=90] node[labl, pos=0.45] {\tiny -1,0} (b);
\draw[] (z.west) node[dot1] {} to (e);
\draw[] (z.east) node[dot1] {} to (h);
\end{tikzpicture}$ and $C_1=\begin{tikzpicture}[baseline=0cm]
\node at (0,0) [dot,label=240:$0$] (b) {};
\node at (-1,1) [dot,label=$v$] (e) {};
\node at (1,1) [dot,label=$v_1$] (h) {};
\node[cumu2n] (z) at (0,1.5){};
\draw[cumu2] (z) ellipse (14pt and 10pt);
\draw[kepsus] (h) to node[labl, pos=0.45] {\tiny $\gamma_2$} (b);
\draw[kepsus] (e) to node[labl, pos=0.45] {\tiny $\gamma_2$} (b);
\draw[] (z.west) node[dot1] {} to (e);
\draw[] (z.east) node[dot1] {} to (h);
\end{tikzpicture}$
Hence the renormalised term is given by:
\begin{equation*}
\begin{tikzpicture}[baseline=-1cm,scale=0.7]
\node at (0,-2) [root, label=below:$0$] (a) {};
\node at (0,-1) [dot,label=240:$v_\star$] (b) {};
\node at (-1,0) [dot,label=left:$v$] (c) {};
\node at (1,0) [dot,label=right:$v_1$] (d) {};
\node at (-1.8,1) [var,label=$x_1$] (e) {};
\node[cumu2n] (z) at (0,1){};
\draw[cumu2] (z) ellipse (14pt and 10pt);
\node at (1.8,1) [var,label=$v_2$] (h) {};
\draw[testfcn] (a) to (b);
\draw[kepsus] (c) to node[labl,pos=0.45] {\tiny $\gamma_2$} (b);
\draw[kepsus] (d) to node[labl,pos=0.45] {\tiny $\gamma_2$} (b);
\draw[kepsus] (e) to node[labl,pos=0.45] {\tiny $\gamma_1$} (c);
\draw[kepsus] (h) to node[labl,pos=0.45] {\tiny $\gamma_1$} (d);
\draw[] (z.west) node[dot1] {} to (c);
\draw[] (z.east) node[dot1] {} to (d);
\end{tikzpicture} - \hspace{2mm}C_1\hspace{-5mm}\begin{tikzpicture}[baseline=-1cm,scale=0.7]
\node at (0,-2) [root,label=below:$0$] (a) {};
\node at (0,-1) [dot,label=240:$v_\star$] (b) {};
\node at (-1,0) [var,label=left:$x_1$] (c) {};
\node at (1,0) [var,label=right:$v_2$] (d) {};
\draw[testfcn] (a) to (b);
\draw[kepsus] (c) to node[labl,pos=0.45] {\tiny $\gamma_1$} (b);
\draw[kepsus] (d) to node[labl,pos=0.45] {\tiny $\gamma_1$} (b);
\end{tikzpicture} -\hspace{2mm}C^2_0\hspace{-5mm}
\begin{tikzpicture}[baseline=-1cm,scale=0.7]
\node at (0,-2) [root,label=below:$0$] (a) {};
\node at (0,-1) [dot,label=240:$v_\star$] (b) {};
\node at (-1,0) [var,label=left:$x_1$] (c) {};
\node at (1,0) [var,label=right:$v_2$] (d) {};
\draw[testfcn] (a) to (b);
\draw[kepsus] (c) to node[labl,pos=0.45] {\tiny $\gamma_2$} (b);
\draw[kepsus] (d) to node[labl,pos=0.45] {\tiny $\gamma_1$} (b);
\end{tikzpicture}
\end{equation*}

For \textcircled{3} \eqref{elgass1} and \eqref{elgass3} are satisfied for the same reason as before and additionally, we notice that the subdivergence caused by \begin{tikzpicture}[scale=0.6]
\node at (-0.5,0) [dot] (a) {};
\node at (0.5,0) [dot] (b) {};
\node[cumu2n] (c) at (0,0.5){};
\draw[cumu2] (c) ellipse (12pt and 8pt);
\draw[] (c.west) node[dot1] {} to (a);
\draw[] (c.east) node[dot1] {} to (b);
\end{tikzpicture} in the second tree, does not cause a problem here.

Consider now \textcircled{4}. The subtree $\{x_1,v\}$ gives us a new construction to work with. As before the cumulant carries the weight $3$. Adding this into the label of the existing edge gives us a new edge with weight $\hat{a}_e = (|\s| - 2) + 3 = |\s| + 1$. One can check that \eqref{elgass1} is satisfied if again we have $2 - \kappa \le |\s|$. The checks for \eqref{elgass2} are similar as before except for $\{x_1,v,v_\star\}$ where we would have $3 + |\s| - 2 + |\s| - 1 = 2|\s| \not < 2|\s|$. When one moves on to $\{x_1, v, v_\star,v_1\}$, the condition is no longer violated because the addition of the interior node adds a weight of $|\s|$ to the RHS of the inequality, while the edge only adds a $|\s| - 1$ to the LHS. Our respite is in the fact that the problematic set is not connected directly to the noises and as such one is able to "excise" it. Consider first the following decomposition, which is due to \eqref{eq:tayexp}:

$$
\begin{tikzpicture}[baseline=0cm,scale=0.8]
\node at (0,-2) [root, label=below:$0$] (a) {};
\node at (0,-1) [dot,label=left:$v_\star$] (b) {};
\node at (-1,0) [dot,label=left:$v$] (c) {};
\node at (1,0) [dot,label=right:$v_1$] (d) {};
\node at (-2,1) [dot,label=left:$x_1$] (e) {};
\node at (2,1) [dot,label=right:$v_2$] (f) {};
\node at (0.5,0.5) [var] (g) {};
\node at (1.5,1.5) [var] (k) {};
\node[cumu2n] (z) at (-1,1){};
\draw[cumu2] (z) ellipse (12pt and 8pt);
\draw[] (z.west) node[dot1] {} to (e);
\draw[] (z.east) node[dot1] {} to (c);
\draw[->] (e) to node[labl,pos=0.45] {\tiny $\gamma_1$} (c);
\draw[->] (c) to node[labl,pos=0.45] {\tiny $\gamma_2$} (b);
\draw[->] (d) to node[labl,pos=0.45] {\tiny $\gamma_2$} (b);
\draw[->] (f) to node[labl,pos=0.45] {\tiny $\gamma_1$} (d);
\draw[testfcn] (a) to (b);
\draw[dashed] (g) to (d);
\draw[dashed] (k) to (f);
\end{tikzpicture}=\begin{tikzpicture}[baseline=0cm,scale=0.8]
\node at (0,-2) [root, label=below:$0$] (a) {};
\node at (0,-1) [dot,label=left:$v_\star$] (b) {};
\node at (-1,0) [dot,label=left:$v$] (c) {};
\node at (1,0) [dot,label=right:$v_1$] (d) {};
\node at (-2,1) [dot,label=left:$x_1$] (e) {};
\node at (2,1) [dot,label=right:$v_2$] (f) {};
\node at (0.5,0.5) [var] (g) {};
\node at (1.5,1.5) [var] (k) {};
\node[cumu2n] (z) at (-1,1){};
\draw[cumu2] (z) ellipse (12pt and 8pt);
\draw[] (z.west) node[dot1] {} to (e);
\draw[] (z.east) node[dot1] {} to (c);
\draw[->] (e) to node[labl,pos=0.45] {\tiny $\gamma_3$} (c);
\draw[->] (c) to node[labl,pos=0.45] {\tiny $\gamma_2$} (b);
\draw[->] (d) to node[labl,pos=0.45] {\tiny $\gamma_2$} (b);
\draw[->] (f) to node[labl,pos=0.45] {\tiny $\gamma_1$} (d);
\draw[testfcn] (a) to (b);
\draw[dashed] (g) to (d);
\draw[dashed] (k) to (f);
\end{tikzpicture}$$
$$ \hspace{4.5cm}+\begin{tikzpicture}[baseline=0cm,scale=0.8]
\node at (0,-2) [root, label=below:$0$] (a) {};
\node at (0,-1) [dot,label=left:$v_\star$] (b) {};
\node at (-1,0) [dot,label=left:$v$] (c) {};
\node at (1,0) [dot,label=right:$v_1$] (d) {};
\node at (-2,1) [dot,label=left:$x_1$] (e) {};
\node at (2,1) [dot,label=right:$v_2$] (f) {};
\node at (0.5,0.5) [var] (g) {};
\node at (1.5,1.5) [var] (k) {};
\node[cumu2n] (z) at (-1,1){};
\draw[cumu2] (z) ellipse (12pt and 8pt);
\draw[] (z.west) node[dot1] {} to (e);
\draw[] (z.east) node[dot1] {} to (c);
\draw[->] (e) to[bend right=30] node[labl,pos=0.45] {\tiny $\gamma_3$} (a);
\draw[->] (c) to node[labl,pos=0.45] {\tiny $\gamma_2$} (b);
\draw[->] (d) to node[labl,pos=0.45] {\tiny $\gamma_2$} (b);
\draw[->] (f) to node[labl,pos=0.45] {\tiny $\gamma_1$} (d);
\draw[testfcn] (a) to (b);
\draw[dashed] (g) to (d);
\draw[dashed] (k) to (f);
\end{tikzpicture}
$$

Notice that the disappearance of the edge $\{x_1,\,v\}$ from the second graph cures the divergence. The first graph on the left is rewritten via translation invariance.

$$\begin{tikzpicture}[baseline=0cm,scale=0.8]
\node at (0,-2) [root, label=below:$0$] (a) {};
\node at (0,-1) [dot,label=left:$v_\star$] (b) {};
\node at (-1,0) [dot,label=left:$v$] (c) {};
\node at (1,0) [dot,label=right:$v_1$] (d) {};
\node at (-2,1) [dot,label=left:$x_1$] (e) {};
\node at (2,1) [dot,label=right:$v_2$] (f) {};
\node at (0.5,0.5) [var] (g) {};
\node at (1.5,1.5) [var] (k) {};
\node[cumu2n] (z) at (-1,1){};
\draw[cumu2] (z) ellipse (12pt and 8pt);
\draw[] (z.west) node[dot1] {} to (e);
\draw[] (z.east) node[dot1] {} to (c);
\draw[->] (e) to node[labl,pos=0.45] {\tiny $\gamma_3$} (c);
\draw[->] (c) to node[labl,pos=0.45] {\tiny $\gamma_2$} (b);
\draw[->] (d) to node[labl,pos=0.45] {\tiny $\gamma_2$} (b);
\draw[->] (f) to node[labl,pos=0.45] {\tiny $\gamma_1$} (d);
\draw[testfcn] (a) to (b);
\draw[dashed] (g) to (d);
\draw[dashed] (k) to (f);
\end{tikzpicture} = \begin{tikzpicture}[baseline=0.5cm,scale=0.8]
\node at (-1,0) [dot,label=below:$0$] (a) {};
\node at (-2,1) [dot] (b) {};
\node[cumu2n] (z) at (-1,1){};
\draw[cumu2] (z) ellipse (12pt and 8pt);
\draw[] (z.west) node[dot1] {} to (b);
\draw[] (z.east) node[dot1] {} to (a);
\draw[kepsus] (b) to node[labl,pos=0.45] {\tiny $\gamma_3$} (a);
\end{tikzpicture}\begin{tikzpicture}[baseline=0cm,scale=0.8]
\node at (0,-2) [root,label=left:$0$] (a) {};
\node at (0,-1) [dot,label=left:$v_\star$] (b) {};
\node at (0,0) [dot,label=left:$v_1$] (c) {};
\node at (0,1) [dot,label=left:$v_2$] (d) {};
\node at (1,0) [var] (e) {};
\node at (1,1) [var] (f) {};
\node at (1,-1) [label=right:$\colb{\mathbbm{1}}$] {};
\draw[] (1,-1) to (b);
\draw[testfcn] (a) to (b);
\draw[kepsus] (c) to node[labl,pos=0.45] {\tiny $\gamma_2$} (b);
\draw[kepsus] (d) to node[labl,pos=0.45] {\tiny $\gamma_1$} (c);
\draw[dashed] (e) to (c);
\draw[dashed] (f) to (d);
\end{tikzpicture}
$$

As before, notice that due to the fact that $K^N$ kills constants the second tree above on the right side is indeed zero, and so is the one on the left.

In \textcircled{6} one can check that the subset $\{x_1,\,v,\,v_\star,\,v_1\}$ fails $\eqref{eq:ass2e1}$. Indeed one can calculate that $|\s| - 2 + |\s| - 1 + |\s| - 1 + 3/2(3) + 0 - 0 = 3|\s| + 1/2 \nless 3|\s|$. To remedy this, consider first the decomposition:

$$\begin{tikzpicture}[baseline=0cm,scale=0.8]
\node at (0,-2) [root, label=below:$0$] (a) {};
\node at (0,-1) [dot,label=left:$v_\star$] (b) {};
\node at (-1,0) [dot,label=left:$v$] (c) {};
\node at (-2,1) [dot,label=left:$x_1$] (y) {};
\node at (1,0) [dot,label=right:$v_1$] (d) {};
\node at (2,1) [dot,label=right:$v_2$] (h) {};
\node at (1.5,1.5) [var] (k) {};
\node[] (e) at (0,0.5) {};
\node[cumu3,rotate=270] (e-) at (-0.05,0.5) {};
\draw[testfcn] (a) to (b);
\draw[kepsus] (c) to node[labl,pos=0.45] {\tiny $\gamma_2$} (b);
\draw[kepsus] (d) to node[labl,pos=0.45] {\tiny $\gamma_2$} (b);
\draw[kepsus] (h) to node[labl,pos=0.45] {\tiny $\gamma_1$} (d);
\draw[kepsus] (y) to node[labl,pos=0.45] {\tiny $\gamma_3$} (c);
\draw[] (e.north west) node[dot1] {} to (y);
\draw[] (e.south west) node[dot1] {} to (c);
\draw[] (e.east) node[dot1] {} to (d);
\draw[dashed] (k) to (h);
\end{tikzpicture}+\begin{tikzpicture}[baseline=0cm,scale=0.8]
\node at (0,-2) [root, label=below:$0$] (a) {};
\node at (0,-1) [dot,label=left:$v_\star$] (b) {};
\node at (-1,0) [dot,label=left:$v$] (c) {};
\node at (-2,1) [dot,label=left:$x_1$] (y) {};
\node at (1,0) [dot,label=right:$v_1$] (d) {};
\node at (2,1) [dot,label=right:$v_2$] (h) {};
\node at (1.5,1.5) [var] (k) {};
\node[] (e) at (0,0.5) {};
\node[cumu3,rotate=270] (e-) at (-0.05,0.5) {};
\draw[testfcn] (a) to (b);
\draw[kepsus] (c) to node[labl,pos=0.45] {\tiny $\gamma_2$} (b);
\draw[kepsus] (d) to node[labl,pos=0.45] {\tiny $\gamma_2$} (b);
\draw[kepsus] (h) to node[labl,pos=0.45] {\tiny $\gamma_1$} (d);
\draw[kepsus] (y) to[bend right=30] node[labl,pos=0.45] {\tiny $\gamma_3$} (a);
\draw[] (e.north west) node[dot1] {} to (y);
\draw[] (e.south west) node[dot1] {} to (c);
\draw[] (e.east) node[dot1] {} to (d);
\draw[dashed] (k) to (h);
\end{tikzpicture}$$

Notice that for the graph on the right the divergence has been cured because the edge $(x_1,\,v)$ (and the weight that it carries) is removed from the calculation. To deal with the left tree we begin by redirecting the edge $(v_2,\,v_1)$ to $v_\star$, which by our renormalisation procedure gives rise to the graphs:
$$\begin{tikzpicture}[baseline=0cm,scale=0.8]
\node at (0,-2) [root, label=below:$0$] (a) {};
\node at (0,-1) [dot,label=left:$v_\star$] (b) {};
\node at (-1,0) [dot,label=left:$v$] (c) {};
\node at (-2,1) [dot,label=left:$x_1$] (y) {};
\node at (1,0) [dot,label=right:$v_1$] (d) {};
\node at (2,1) [dot,label=right:$v_2$] (h) {};
\node at (1.5,1.5) [var] (k) {};
\node[] (e) at (0,0.5) {};
\node[cumu3,rotate=270] (e-) at (-0.05,0.5) {};
\draw[testfcn] (a) to (b);
\draw[kepsus] (c) to node[labl,pos=0.45] {\tiny $\gamma_2$} (b);
\draw[kepsus] (d) to node[labl,pos=0.45] {\tiny $\gamma_2$} (b);
\draw[kepsus] (h) to node[labl,pos=0.45] {\tiny $\gamma$} (d);
\draw[kepsus] (y) to node[labl,pos=0.45] {\tiny $\gamma_3$} (c);
\draw[] (e.north west) node[dot1] {} to (y);
\draw[] (e.south west) node[dot1] {} to (c);
\draw[] (e.east) node[dot1] {} to (d);
\draw[dashed] (k) to (h);
\end{tikzpicture}+\begin{tikzpicture}[baseline=0cm,scale=0.8]
\node at (0,-2) [root, label=below:$0$] (a) {};
\node at (0,-1) [dot,label=left:$v_\star$] (b) {};
\node at (-1,0) [dot,label=left:$v$] (c) {};
\node at (-2,1) [dot,label=left:$x_1$] (y) {};
\node at (1,0) [dot,label=right:$v_1$] (d) {};
\node at (1.5,-0.75) [dot,label=below:$v_2$] (h) {};
\node at (2.5,0) [var] (k) {};
\node[] (e) at (0,0.5) {};
\node[cumu3,rotate=270] (e-) at (-0.05,0.5) {};
\draw[testfcn] (a) to (b);
\draw[kepsus] (c) to node[labl,pos=0.45] {\tiny $\gamma_2$} (b);
\draw[kepsus] (d) to node[labl,pos=0.45] {\tiny $\gamma_2$} (b);
\draw[kepsus] (h) to node[labl,pos=0.45] {\tiny $\gamma_1$} (b);
\draw[kepsus] (y) to node[labl,pos=0.45] {\tiny $\gamma_3$} (c);
\draw[] (e.north west) node[dot1] {} to (y);
\draw[] (e.south west) node[dot1] {} to (c);
\draw[] (e.east) node[dot1] {} to (d);
\draw[dashed] (k) to (h);
\end{tikzpicture}$$

where $\gamma = (|\s| - 2,\,1,\,v_\star)$. This means that the condition now becomes $|\s| - 2 + |\s| - 1 + |\s| - 1 + 3/2(3) - 1 = 3|\s| - 1/2 < 3|\s|$.
One defines $C_1\coloneqq\begin{tikzpicture}[baseline=0cm,scale=0.5]
\node at (0,-1) [dot] (b) {};
\node at (-1,0) [dot] (c) {};
\node at (-2,1) [dot] (y) {};
\node at (1,0) [dot] (d) {};
\node[] (e) at (0,0.5) {};
\node[cumu3,rotate=270] (e-) at (-0.05,0.5) {};
\draw[kepsus] (c) to node[labl,pos=0.45] {\tiny $\gamma_2$} (b);
\draw[kepsus] (d) to node[labl,pos=0.45] {\tiny $\gamma_2$} (b);
\draw[kepsus] (y) to node[labl,pos=0.45] {\tiny $\gamma_3$} (c);
\draw[] (e.north west) node[dot1] {} to (y);
\draw[] (e.south west) node[dot1] {} to (c);
\draw[] (e.east) node[dot1] {} to (d);
\end{tikzpicture}$ and then renormalisation is a matter of removing a factor of
$C_1\begin{tikzpicture}[baseline=0cm,scale=0.6]
\node at (0,-1) [root,label=left:$0$] (a) {};
\node at (0,0) [dot,label=left:$v_\star$] (b) {};
\node at (0,1.5) [var] (c) {};
\draw[testfcn] (a) to (b);
\draw[] (c) to node[labl,pos=0.45] {\tiny $\gamma_1$} (b);
\end{tikzpicture}$.

Another example is furnished by the tree 
$\begin{tikzpicture}[baseline=0cm,scale=0.2]
\node at (0,0) [bar] (a) {};
\node at (1,1) [bar] (b) {};
\node at (-2,2) [bar] (d) {};
\node at (0,2) [bar] (e) {};
\draw (a) -- (b);
\draw (a) -- (-1,1);
\draw[thick] (-1,1) to (d);
\draw[thick] (-1,1) to (e); 
\end{tikzpicture}$. The renormalisation map for the homogeneous Gaussian case gives us the following:  

\begin{equation*}
\begin{split}
\hat{\Pi}_{0}^{\varepsilon}\begin{tikzpicture}[baseline=0cm,scale=0.2]
\node at (0,0) [bar] (a) {};
\node at (1,1) [bar] (b) {};
\node at (-2,2) [bar] (d) {};
\node at (0,2) [bar] (e) {};
\draw (a) -- (b);
\draw (a) -- (-1,1);
\draw[thick] (-1,1) -- (d);
\draw[thick] (-1,1) -- (e); 
\end{tikzpicture}&=
\begin{tikzpicture}[baseline=0cm,scale=0.8]
\node at (0,-1) [root] (a) {}; 
\node at (0,0) [dot] (b) {};
\node at (0,1) [var] (c) {};
\node at (1,1) [dot] (d) {};
\node at (1,2) [var] (e) {};
\node at (-1,1) [dot] (f) {};
\node at (0,2) [dot] (g) {};
\node at (-2,2) [dot] (h) {};
\node at (-2,3) [var] (i) {};
\node at (0,3) [var] (j) {};
\draw[testfcn] (a) to (b);
\draw[dashed] (b) -- (c);
\draw[->] (d) to node[labl,pos=0.45] {\tiny $\gamma_1$} (b);
\draw[dashed] (e) to (d);
\draw[kepsus] (g) to node[labl,pos=0.45] {\tiny $\gamma_2$} (f);
\draw[kepsus] (h) to node[labl,pos=0.45] {\tiny $\gamma_2$} (f);
\draw[dashed] (i) to (h);
\draw[->] (f) to node[labl,pos=0.45] {\tiny $\gamma_1$} (b);
\draw[dashed] (j) to (g);
\end{tikzpicture} - \begin{tikzpicture}[baseline=0cm,scale=0.8]
\node at (0,-1) [root] (a) {}; 
\node at (0,0) [dot] (b) {};
\node at (1,1) [dot] (d) {};
\node at (-1,1) [dot] (f) {};
\node at (0,2) [dot] (g) {};
\node at (-2,2) [dot] (h) {};
\node at (-2,3) [var] (i) {};
\node at (0,3) [var] (j) {};
\node[cumu2n] (z) at (0,1){};
\draw[cumu2] (z) ellipse (14pt and 10pt);
\draw[testfcn] (a) to (b);
\draw[->] (d) to node[labl,pos=0.45] {\tiny $\gamma_3$} (a);
\draw[kepsus] (g) to node[labl,pos=0.45] {\tiny $\gamma_2$} (f);
\draw[kepsus] (h) to node[labl,pos=0.45] {\tiny $\gamma_2$} (f);
\draw[dashed] (i) to (h);
\draw[->] (f) to node[labl,pos=0.45] {\tiny $\gamma_1$} (b);
\draw[dashed] (j) to (g);
\draw[] (z.west) node[dot1] {} to (b);
\draw[] (z.east) node[dot1] {} to (d);
\end{tikzpicture} + 2\begin{tikzpicture}[baseline=0cm,scale=0.8]
\node at (0,-1) [root] (a) {}; 
\node at (0,0) [dot] (b) {};
\node at (0,1) [var] (c) {};
\node at (1,1) [dot] (d) {};
\node at (-1,1) [dot] (f) {};
\node at (0,2) [dot] (g) {};
\node at (-2,2) [dot] (h) {};
\node at (-2,3) [var] (i) {};
\node[cumu2n] (z) at (0.5,1.5){};
\draw[cumu2] (z) ellipse (14pt and 10pt);
\draw[testfcn] (a) to (b);
\draw[dashed] (b) -- (c);
\draw[->] (d) to node[labl,pos=0.45] {\tiny $\gamma_1$} (b);
\draw[kepsus] (g) to node[labl,pos=0.45] {\tiny $\gamma_2$} (f);
\draw[kepsus] (h) to node[labl,pos=0.45] {\tiny $\gamma_2$} (f);
\draw[dashed] (i) to (h);
\draw[->] (f) to node[labl,pos=0.45] {\tiny $\gamma_1$} (b);
\draw[] (z.west) node[dot1] {} to (g);
\draw[] (z.east) node[dot1] {} to (d);
\end{tikzpicture}
\\
&+2\begin{tikzpicture}[baseline=0cm,scale=0.8]
\node at (0,-1) [root] (a) {}; 
\node at (0,0) [dot] (b) {};
\node at (1,1) [dot] (d) {};
\node at (1,2) [var] (e) {};
\node at (-1,1) [dot] (f) {};
\node at (0,2) [dot] (g) {};
\node at (-2,2) [dot] (h) {};
\node at (-2,3) [var] (i) {};
\node[cumu2n] (z) at (0,1){};
\draw[cumu2] (z) ellipse (14pt and 10pt);
\draw[testfcn] (a) to (b);
\draw[->] (d) to node[labl,pos=0.45] {\tiny $\gamma_1$} (b);
\draw[dashed] (e) to (d);
\draw[kepsus] (g) to node[labl,pos=0.45] {\tiny $\gamma_2$} (f);
\draw[kepsus] (h) to node[labl,pos=0.45] {\tiny $\gamma_2$} (f);
\draw[dashed] (i) to (h);
\draw[->] (f) to node[labl,pos=0.45] {\tiny $\gamma_2$} (b);
\draw[] (z.west) node[dot1] {} to (b);
\draw[] (z.east) node[dot1] {} to (g);
\end{tikzpicture} - 2\begin{tikzpicture}[baseline=0cm,scale=0.8]
\node at (0,-1) [root] (a) {}; 
\node at (0,0) [dot] (b) {};
\node at (1,1) [dot] (d) {};
\node at (1,2) [var] (e) {};
\node at (-1,1) [dot] (f) {};
\node at (0,2) [dot] (g) {};
\node at (-2,2) [dot] (h) {};
\node at (-2,3) [var] (i) {};
\node[cumu2n] (z) at (0,1){};
\draw[cumu2] (z) ellipse (14pt and 10pt);
\draw[testfcn] (a) to (b);
\draw[->] (d) to node[labl,pos=0.45] {\tiny $\gamma_1$} (b);
\draw[dashed] (e) to (d);
\draw[kepsus] (g) to node[labl,pos=0.45] {\tiny $\gamma_2$} (f);
\draw[kepsus] (h) to node[labl,pos=0.45] {\tiny $\gamma_2$} (f);
\draw[dashed] (i) to (h);
\draw[->] (f) to node[labl,pos=0.45] {\tiny $\gamma_3$} (a);
\draw[] (z.west) node[dot1] {} to (b);
\draw[] (z.east) node[dot1] {} to (g);
\end{tikzpicture} - 2\begin{tikzpicture}[baseline=0cm,scale=0.8]
\node at (0,-1) [root] (a) {}; 
\node at (0,0) [dot] (b) {};
\node at (1,1) [dot] (d) {};
\node at (-1,1) [dot] (f) {};
\node at (0,2) [dot] (g) {};
\node[cumu2n] (z) at (0.5,1.5){};
\draw[cumu2] (z) ellipse (12pt and 8pt);
\node[cumu2n] (y) at (0,1){};
\draw[cumu2] (y) ellipse (12pt and 8pt);
\draw[testfcn] (a) to (b);
\draw[->] (d) to node[labl,pos=0.45] {\tiny $\gamma_1$} (b);
\draw[->] (g) to node[labl,pos=0.45] {\tiny $\gamma_2$} (f);
\draw[->] (f) to node[labl,pos=0.45] {\tiny $\gamma_3$} (a);
\draw[] (z.west) node[dot1] {} to (g);
\draw[] (z.east) node[dot1] {} to (d);
\draw[] (y.east) node[dot1] {} to (b);
\draw[] (y.west) node[dot1] {} to (f);
\end{tikzpicture} - 2\begin{tikzpicture}[baseline=0cm,scale=0.8]
\node at (0,-1) [root] (a) {}; 
\node at (0,0) [dot] (b) {};
\node at (1,1) [dot] (d) {};
\node at (-1,1) [dot] (f) {};
\node at (0,2) [dot] (g) {};
\node[cumu2n] (z) at (0.5,1.5){};
\draw[cumu2] (z) ellipse (12pt and 8pt);
\node[cumu2n] (y) at (0,1){};
\draw[cumu2] (y) ellipse (12pt and 8pt);
\draw[testfcn] (a) to (b);
\draw[->] (d) to node[labl,pos=0.45] {\tiny $\gamma_3$} (a);
\draw[->] (g) to node[labl,pos=0.45] {\tiny $\gamma_2$} (f);
\draw[->] (f) to node[labl,pos=0.45] {\tiny $\gamma_1$} (b);
\draw[] (z.west) node[dot1] {} to (g);
\draw[] (z.east) node[dot1] {} to (d);
\draw[] (y.east) node[dot1] {} to (b);
\draw[] (y.west) node[dot1] {} to (f);
\end{tikzpicture}
\end{split}
\end{equation*}

Then, our usual assumption on the second cumulant, allows us to check Proposition~\ref{prop:pcondition}. In our case however we have non-vanishing higher order cumulants, so we will see the graphs like:

\be
\begin{tikzpicture}[baseline=0cm,scale=0.8]
\node at (0,4) {\textcircled{1}};
\node at (0,-1) [root] (a) {}; 
\node at (0,0) [dot,label=left:$v_\star$] (b) {};
\node at (1,1) [dot,label=right:$v_1$] (d) {};
\node at (-1,1) [dot,label=left:$v_2$] (f) {};
\node at (1,2) [var] (i) {};
\node at (0,2) [dot,label=left:$v_3$] (g) {};
\node at (-2,2) [dot,label=left:$v_4$] (h) {};
\node[] (e) at (0.15,3) {};
\node[cumu3,rotate=270] (e-) at (0.1,3) {};
\draw[testfcn] (a) to (b);
\draw[->] (d) to node[labl,pos=0.45] {\tiny $\gamma_1$} (b);
\draw[->] (g) to node[labl,pos=0.45] {\tiny $\gamma_2$} (f);
\draw[->] (h) to node[labl,pos=0.45] {\tiny $\gamma_2$} (f);
\draw[] (e.north west) node[dot1] {} to (h);
\draw[] (e.south west) node[dot1] {} to (g);
\draw (e.east) node[dot1] {} .. controls (0.8,1) and (-0.5,2) .. (b);
\draw[->] (f) to node[labl,pos=0.45] {\tiny $\gamma_1$} (b);
\draw[dashed] (i) to (d);
\end{tikzpicture},\quad
\begin{tikzpicture}[baseline=0cm,scale=0.8]
\node at (0,4) {\textcircled{2}};
\node at (0,-1) [root] (a) {}; 
\node at (0,0) [dot,label=left:$v_\star$] (b) {};
\node at (1,1) [dot,label=left:$v_1$] (d) {};
\node at (-1,1) [dot,label=left:$v_2$] (f) {};
\node at (0,1) [var] (i) {};
\node at (0,2) [dot,label=left:$v_3$] (g) {};
\node at (-2,2) [dot,label=left:$v_4$] (h) {};
\node[] (e) at (0.15,3) {};
\node[cumu3,rotate=270] (e-) at (0.1,3) {};
\draw[testfcn] (a) to (b);
\draw[->] (d) to node[labl,pos=0.45] {\tiny $\gamma_1$} (b);
\draw[->] (g) to node[labl,pos=0.45] {\tiny $\gamma_2$} (f);
\draw[->] (h) to node[labl,pos=0.45] {\tiny $\gamma_2$} (f);
\draw[] (e.north west) node[dot1] {} to (h);
\draw[] (e.south west) node[dot1] {} to (g);
\draw[] (e.east) node[dot1] {} to[bend left=30] (d);
\draw[->] (f) to node[labl,pos=0.45] {\tiny $\gamma_1$} (b);
\draw[dashed] (i) to (b);
\end{tikzpicture},\quad
\begin{tikzpicture}[baseline=0cm,scale=0.8]
\node at (0,4) {\textcircled{3}};
\node at (0,-1) [root] (a) {}; 
\node at (0,0) [dot,label=left:$v_\star$] (b) {};
\node at (1,1) [dot,label=right:$v_1$] (d) {};
\node at (-1,1) [dot,label=left:$v_2$] (f) {};
\node at (0,2) [dot,label=left:$v_3$] (g) {};
\node at (-2,2) [dot,label=left:$v_4$] (h) {};
\node at (-2,3) [var] (i) {};
\node[] (e) at (0.15,1) {};
\node[cumu3,rotate=270] (e-) at (0.1,1) {};
\draw[testfcn] (a) to (b);
\draw[->] (d) to node[labl,pos=0.45] {\tiny $\gamma_1$} (b);
\draw[->] (g) to node[labl,pos=0.45] {\tiny $\gamma_2$} (f);
\draw[->] (h) to node[labl,pos=0.45] {\tiny $\gamma_2$} (f);
\draw[] (e.north west) node[dot1] {} to (g);
\draw[] (e.south west) node[dot1] {} to (b);
\draw[] (e.east) node[dot1] {} to (d);
\draw[dashed] (i) to (h);
\draw[->] (f) to node[labl,pos=0.45] {\tiny $\gamma_1$} (b);
\end{tikzpicture}
\ee
\be
\begin{tikzpicture}[baseline=0cm,scale=0.8]
\node at (0,-1) [root] (a) {};
\node at (0,3) {\textcircled{4}};
\node at (0,0) [dot,label=left:$v_\star$] (b) {};
\node at (1,1) [dot,label=right:$v_1$] (d) {};
\node at (-1,1) [dot,label=left:$v_2$] (f) {};
\node at (0,2) [dot,label=left:$v_3$] (g) {};
\node at (-2,2) [dot,label=left:$v_4$] (h) {};
\node[] (e) at (0,1) {};
\node[cumu4] (e-) at (e) {};
\draw[testfcn] (a) to (b);
\draw[->] (d) to node[labl,pos=0.45] {\tiny $\gamma_1$} (b);
\draw[->] (g) to node[labl,pos=0.45] {\tiny $\gamma_2$} (f);
\draw[->] (h) to node[labl,pos=0.65] {\tiny $\gamma_2$} (f);
\draw[] (e.north west) node[dot1] {} to (g);
\draw[] (e.south west) node[dot1] {} to (h);
\draw[] (e.south east) node[dot1] {} to (b);
\draw[] (e.north east) node[dot1] {} to (d);
\draw[->] (f) to node[labl,pos=0.45] {\tiny $\gamma_1$} (b);
\end{tikzpicture}
\ee

In the above list, the tree \textcircled{3} can be seen to be unproblematic. In \textcircled{1} and \textcircled{2}, we see the subset $\bar{\dsV}=\{v_4,\,v_2,\,v_3\}$ we have needed to treat before but here that cumulant is of order 3, and so the extra weight of $3/2+$ associated to the cumulant terms attached to $v_4\text{ and }v_3$ will not be included in $\dsE^{\downarrow}\left(\bar{\dsV}\right)$. For this reason these trees do not cause any problems.

The next tree we can look at comes from \begin{tikzpicture}[baseline=0.25cm,scale=0.4]
\node at (0.5,0.5) [bar] (b) {};
\node at (-0.5,0.5) [bar] (c) {};
\node at (0,1) [bar] (d) {};
\node at (-0.5,1.5) [bar] (e) {};
\draw[thick] (0,0) to (b);
\draw[thick] (0,0) to (c);
\draw[] (c) to (d);
\draw[] (d) to (e);
\end{tikzpicture}. The tree in consideration is then:

$$
\begin{tikzpicture}[baseline=0cm,scale=0.8]
\node at (0,-2) [root,label=below:$0$] (a) {};
\node at (0,-1) [dot,label=left:$v_\star$] (b) {};
\node at (1,0) [dot,label=right:$v$] (c) {};
\node at (-1,0) [dot,label=left:$v_1$] (d) {};
\node at (0,1) [dot,label=right:$v_2$] (e) {};
\node at (-1,2) [dot,label=left:$v_3$] (f) {};
\node at (1,0.75) [var] (g) {};
\node at (-1,0.75) [var] (h) {};
\node at (0,1.75) [var] (i) {};
\node at (-1,2.75) [var] (j) {};
\draw[kepsus] (c) to node[labl,pos=0.45] {\tiny $\gamma_2$} (b);
\draw[kepsus] (d) to node[labl,pos=0.45] {\tiny $\gamma_2$} (b);
\draw[kepsus] (e) to node[labl,pos=0.45] {\tiny $\gamma_1$} (d);
\draw[kepsus] (f) to node[labl,pos=0.45] {\tiny $\gamma_1$} (e);
\draw[dashed] (c) to (g);
\draw[dashed] (d) to (h);
\draw[dashed] (e) to (i);
\draw[dashed] (f) to (j);
\draw[testfcn] (a) to (b);
\end{tikzpicture}
$$

It is an easy exercise in computations to check that this graph satisfies all the required condition.

Some of the second order contractions we expect to see are:

$$
\begin{tikzpicture}[baseline=0cm,scale=0.8]
\node at (0,3) {\textcircled{1}};
\node at (0,-2) [root,label=below:$0$] (a) {};
\node at (0,-1) [dot,label=left:$v_\star$] (b) {};
\node at (1,0) [dot,label=right:$v$] (c) {};
\node at (-1,0) [dot,label=left:$v_1$] (d) {};
\node at (0,1) [dot,label=right:$v_2$] (e) {};
\node at (-1,2) [dot,label=left:$v_3$] (f) {};
\node at (1,0.75) [var] (g) {};
\node at (-1,0.75) [var] (h) {};
\draw[kepsus] (c) to node[labl,pos=0.45] {\tiny $\gamma_2$} (b);
\draw[kepsus] (d) to node[labl,pos=0.45] {\tiny $\gamma_2$} (b);
\draw[kepsus] (e) to node[labl,pos=0.45] {\tiny $\gamma_1$} (d);
\draw[kepsus] (f) to node[labl,pos=0.45] {\tiny $\gamma_1$} (e);
\draw[dashed] (c) to (g);
\draw[dashed] (d) to (h);
\draw[testfcn] (a) to (b);
\node[cumu2n] (z) at (-0.25,2){};
\draw[cumu2] (z) ellipse (10pt and 6pt);
\draw[] (z.west) node[dot1] {} to (f);
\draw[] (z.east) node[dot1] {} to (e);
\end{tikzpicture}\quad\begin{tikzpicture}[baseline=0cm,scale=0.8]
\node at (0,3) {\textcircled{2}};
\node at (0,-2) [root,label=below:$0$] (a) {};
\node at (0,-1) [dot,label=left:$v_\star$] (b) {};
\node at (1,0) [dot,label=right:$v$] (c) {};
\node at (-1,0) [dot,label=left:$v_1$] (d) {};
\node at (0,1) [dot,label=right:$v_2$] (e) {};
\node at (-1,2) [dot,label=left:$v_3$] (f) {};
\node at (1,0.75) [var] (g) {};
\node at (0,1.75) [var] (i) {};
\node[cumu2n] (z) at (-1,1){};
\draw[cumu2] (z) ellipse (10pt and 6pt);
\draw[kepsus] (c) to node[labl,pos=0.45] {\tiny $\gamma_2$} (b);
\draw[kepsus] (d) to node[labl,pos=0.45] {\tiny $\gamma_2$} (b);
\draw[kepsus] (e) to node[labl,pos=0.45] {\tiny $\gamma_1$} (d);
\draw[kepsus] (f) to node[labl,pos=0.45] {\tiny $\gamma_1$} (e);
\draw[dashed] (c) to (g);
\draw[dashed] (e) to (i);
\draw[] (z.west) node[dot1] {} to (f);
\draw[] (z.east) node[dot1] {} to (d);
\draw[testfcn] (a) to (b);
\end{tikzpicture}\quad\begin{tikzpicture}[baseline=0cm,scale=0.8]
\node at (0,3) {\textcircled{3}};
\node at (0,-2) [root,label=below:$0$] (a) {};
\node at (0,-1) [dot,label=left:$v_\star$] (b) {};
\node at (1,0) [dot,label=right:$v$] (c) {};
\node at (-1,0) [dot,label=left:$v_1$] (d) {};
\node at (0,1) [dot,label=right:$v_2$] (e) {};
\node at (-1,2) [dot,label=left:$v_3$] (f) {};
\node at (0,1.75) [var] (i) {};
\node at (-1,2.75) [var] (j) {};
\node[cumu2n] (z) at (0,0){};
\draw[cumu2] (z) ellipse (10pt and 6pt);
\draw[kepsus] (c) to node[labl,pos=0.45] {\tiny $\gamma_2$} (b);
\draw[kepsus] (d) to node[labl,pos=0.45] {\tiny $\gamma_2$} (b);
\draw[kepsus] (e) to node[labl,pos=0.45] {\tiny $\gamma_1$} (d);
\draw[kepsus] (f) to node[labl,pos=0.45] {\tiny $\gamma_1$} (e);
\draw[dashed] (e) to (i);
\draw[dashed] (f) to (j);
\draw[testfcn] (a) to (b);
\draw[] (z.west) node[dot1] {} to (d);
\draw[] (z.east) node[dot1] {} to (c);
\end{tikzpicture}
$$
$$
\begin{tikzpicture}[baseline=0cm,scale=0.8]
\node at (0,3) {\textcircled{4}};
\node at (0,-2) [root,label=below:$0$] (a) {};
\node at (0,-1) [dot,label=left:$v_\star$] (b) {};
\node at (1,0) [dot,label=right:$v$] (c) {};
\node at (-1,0) [dot,label=left:$v_1$] (d) {};
\node at (0,1) [dot,label=above:$v_2$] (e) {};
\node at (-1,2) [dot,label=left:$v_3$] (f) {};
\node at (-1,0.75) [var] (h) {};
\node at (-1,2.75) [var] (j) {};
\draw[kepsus] (c) to node[labl,pos=0.45] {\tiny $\gamma_2$} (b);
\draw[kepsus] (d) to node[labl,pos=0.45] {\tiny $\gamma_2$} (b);
\draw[kepsus] (e) to node[labl,pos=0.45] {\tiny $\gamma_1$} (d);
\draw[kepsus] (f) to node[labl,pos=0.45] {\tiny $\gamma_1$} (e);
\draw[dashed] (d) to (h);
\draw[dashed] (f) to (j);
\draw[testfcn] (a) to (b);
\node[cumu2n] (z) at (0.5,0.5){};
\draw[cumu2] (z) ellipse (10pt and 6pt);
\draw[] (z.west) node[dot1] {} to (e);
\draw[] (z.east) node[dot1] {} to (c);
\end{tikzpicture}\quad\begin{tikzpicture}[baseline=0cm,scale=0.8]
\node at (0,3) {\textcircled{5}};
\node at (0,-2) [root,label=below:$0$] (a) {};
\node at (0,-1) [dot,label=left:$v_\star$] (b) {};
\node at (1,0) [dot,label=right:$v$] (c) {};
\node at (-1,0) [dot,label=left:$v_1$] (d) {};
\node at (0,1) [dot,label=right:$v_2$] (e) {};
\node at (-1,2) [dot,label=left:$v_3$] (f) {};
\draw[kepsus] (c) to node[labl,pos=0.45] {\tiny $\gamma_2$} (b);
\draw[kepsus] (d) to node[labl,pos=0.45] {\tiny $\gamma_2$} (b);
\draw[kepsus] (e) to node[labl,pos=0.45] {\tiny $\gamma_1$} (d);
\draw[kepsus] (f) to node[labl,pos=0.45] {\tiny $\gamma_1$} (e);
\draw[testfcn] (a) to (b);
\node[cumu2n] (y) at (0,0){};
\draw[cumu2] (y) ellipse (10pt and 6pt);
\node[cumu2n] (z) at (-0.25,2){};
\draw[cumu2] (z) ellipse (10pt and 6pt);
\draw[] (z.west) node[dot1] {} to (f);
\draw[] (z.east) node[dot1] {} to (e);
\draw[] (y.west) node[dot1] {} to (d);
\draw[] (y.east) node[dot1] {} to (c);
\end{tikzpicture}
$$

Checking the conditions on \textcircled{1} and \textcircled{2} will follow a similar line of reasoning as the un-contracted tree. The only difference stems from the second order contractions, which occurs in the subsets $\{v_3,\,v_1\}$ and $\{v_3,\,v_2\}$. Consider first the effect of $(v_3,\,v_2)$ in \textcircled{1}. As before we expect this to add a weight of $3$ to the edge $(v_3,\,v_2)$ which is to say that $\hat{a}_{(v_3,v_2)}=|\s| - 2 + 3 = |\s| + 1$, but it is straightforward to see that conditions are satisfied for the same reasons as they were for the original tree. For \textcircled{2}, $\{v_3,\,v_2,\,v_1\}$ is a new structure, but one checks that $|\s| - 2 + |\s| - 2 + 3 = 2|\s| - 1 < A(3) = 2|\s|$. The rest of the tree shares arguments with the original tree.

In \textcircled{3}, we again see the sort of sub-divergence that we have had to contend with before in $\{v_1,\,v,\,v_\star\}$. To deal with it we will, much like before, move the edge $(v_2,\,v_1)$ to $v_\star$ instead. This decomposition will take the following form:

$$
\begin{tikzpicture}[baseline=0cm,scale=0.8]
\node at (0,3) {\textcircled{3}};
\node at (0,-2) [root,label=below:$0$] (a) {};
\node at (0,-1) [dot,label=left:$v_\star$] (b) {};
\node at (1,0) [dot,label=right:$v$] (c) {};
\node at (-1,0) [dot,label=left:$v_1$] (d) {};
\node at (0,1) [dot,label=right:$v_2$] (e) {};
\node at (-1,2) [dot,label=left:$v_3$] (f) {};
\node at (0,1.75) [var] (i) {};
\node at (-1,2.75) [var] (j) {};
\node[cumu2n] (z) at (0,0){};
\draw[cumu2] (z) ellipse (10pt and 6pt);
\draw[kepsus] (c) to node[labl,pos=0.45] {\tiny $\gamma_2$} (b);
\draw[kepsus] (d) to node[labl,pos=0.45] {\tiny $\gamma_2$} (b);
\draw[kepsus] (e) to node[labl,pos=0.45] {\tiny $\gamma_1$} (d);
\draw[kepsus] (f) to node[labl,pos=0.45] {\tiny $\gamma_1$} (e);
\draw[dashed] (e) to (i);
\draw[dashed] (f) to (j);
\draw[testfcn] (a) to (b);
\draw[] (z.west) node[dot1] {} to (d);
\draw[] (z.east) node[dot1] {} to (c);
\end{tikzpicture}=\begin{tikzpicture}[baseline=0cm,scale=0.8]
\node at (0,3) {\textcircled{3}};
\node at (0,-2) [root,label=below:$0$] (a) {};
\node at (0,-1) [dot,label=left:$v_\star$] (b) {};
\node at (1,0) [dot,label=right:$v$] (c) {};
\node at (-1,0) [dot,label=left:$v_1$] (d) {};
\node at (0,1) [dot,label=right:$v_2$] (e) {};
\node at (-1,2) [dot,label=left:$v_3$] (f) {};
\node at (0,1.75) [var] (i) {};
\node at (-1,2.75) [var] (j) {};
\node[cumu2n] (z) at (0,0){};
\draw[cumu2] (z) ellipse (10pt and 6pt);
\draw[kepsus] (c) to node[labl,pos=0.45] {\tiny $\gamma_2$} (b);
\draw[kepsus] (d) to node[labl,pos=0.45] {\tiny $\gamma_2$} (b);
\draw[kepsus] (e) to node[labl,pos=0.45] {\tiny $\gamma$} (d);
\draw[kepsus] (f) to node[labl,pos=0.45] {\tiny $\gamma_1$} (e);
\draw[dashed] (e) to (i);
\draw[dashed] (f) to (j);
\draw[testfcn] (a) to (b);
\draw[] (z.west) node[dot1] {} to (d);
\draw[] (z.east) node[dot1] {} to (c);
\end{tikzpicture} + \sum_{j < 2} \begin{tikzpicture}[baseline=0cm,scale=0.8]
\node at (0,3) {\textcircled{3}};
\node at (0,-2) [root,label=below:$0$] (a) {};
\node at (0,-1) [dot,label=right:$v_\star$] (b) {};
\node at (1.5,1) [dot,label=right:$v$] (c) {};
\node at (-1.5,1) [dot,label=above:$v_1$] (d) {};
\node at (-2,0) [dot,label=left:$v_3$] (f) {};
\node at (-1.5,-1) [dot,label=below:$v_2$] (e) {};
\node at (-2,1) [var] (g) {};
\node at (-2.5,-1) [var] (i) {};
\node[cumu2n] (z) at (0,1){};
\draw[cumu2] (z) ellipse (10pt and 6pt);
\draw[kepsus] (c) to node[labl,pos=0.45] {\tiny $\gamma_2$} (b);
\draw[kepsus] (d) to node[labl,pos=0.45] {\tiny $\gamma_2$} (b);
\draw[kepsus] (f) to node[labl,pos=0.45] {\tiny $\gamma_1$} (e);
\draw[kepsus] (e) to node[labl,pos=0.45] {\tiny $\gamma_j$} (b);
\draw[kepsus] (d) to[bend left=45] node[labl,pos=0.5] {\tiny $-j,0$} (b);
\draw[dashed] (g) to (f);
\draw[dashed] (i) to (e);
\draw[testfcn] (a) to (b);
\draw[] (z.west) node[dot1] {} to (d);
\draw[] (z.east) node[dot1] {} to (c);
\end{tikzpicture}
$$

where $\gamma = (1,\,2,\,v_\star)$ and as before this will cure the subdivergence for the first graph on the right hand side. The treatment for the other terms on the right hand side will be the same as before.

The higher order cumulants will see take the form:

$$
\begin{tikzpicture}[baseline=0cm,scale=0.8]
\node at (0,-2) [root,label=below:$0$] (a) {};
\node at (0,-1) [dot,label=left:$v_\star$] (b) {};
\node at (1,0) [dot,label=right:$v$] (c) {};
\node at (-1,0) [dot,label=left:$v_1$] (d) {};
\node at (0,1) [dot,label=right:$v_2$] (e) {};
\node at (-1,2) [dot,label=left:$v_3$] (f) {};
\node at (1,0.75) [var] (g) {};
\node[] (z) at (-0.75,1) {};
\node[cumu3,rotate=270] (z-) at (-0.8,1) {};
\draw[kepsus] (c) to node[labl,pos=0.45] {\tiny $\gamma_2$} (b);
\draw[kepsus] (d) to node[labl,pos=0.45] {\tiny $\gamma_2$} (b);
\draw[kepsus] (e) to node[labl,pos=0.45] {\tiny $\gamma_1$} (d);
\draw[kepsus] (f) to node[labl,pos=0.45] {\tiny $\gamma_1$} (e);
\draw[] (z.east) node[dot1] {} to (e);
\draw[] (z.north west) node[dot1] {} to (f);
\draw[] (z.south west) node[dot1] {} to (d);
\draw[dashed] (c) to (g);
\draw[testfcn] (a) to (b);
\end{tikzpicture}\quad\begin{tikzpicture}[baseline=0cm,scale=0.8]
\node at (0,-2) [root,label=below:$0$] (a) {};
\node at (0,-1) [dot,label=left:$v_\star$] (b) {};
\node at (1,0) [dot,label=right:$v$] (c) {};
\node at (-1,0) [dot,label=left:$v_1$] (d) {};
\node at (0,1) [dot,label=right:$v_2$] (e) {};
\node at (-1,2) [dot,label=left:$v_3$] (f) {};
\node at (-1,2.75) [var] (j) {};
\node[] (z) at (0,0.15) {};
\node[cumu3] (z-) at (0,0.1) {};
\draw[kepsus] (c) to node[labl,pos=0.45] {\tiny $\gamma_2$} (b);
\draw[kepsus] (d) to node[labl,pos=0.45] {\tiny $\gamma_2$} (b);
\draw[kepsus] (e) to node[labl,pos=0.45] {\tiny $\gamma_1$} (d);
\draw[kepsus] (f) to node[labl,pos=0.45] {\tiny $\gamma_1$} (e);
\draw[] (z.south west) node[dot1] {} to (d);
\draw[] (z.south east) node[dot1] {} to (c);
\draw[] (z.north) node[dot1] {} to (e);  
\draw[dashed] (f) to (j);
\draw[testfcn] (a) to (b);
\end{tikzpicture}\quad\begin{tikzpicture}[baseline=0cm,scale=0.8]
\node at (0,-2) [root,label=below:$0$] (a) {};
\node at (0,-1) [dot,label=left:$v_\star$] (b) {};
\node at (1,0) [dot,label=right:$v$] (c) {};
\node at (-1,0) [dot,label=left:$v_1$] (d) {};
\node at (0,1) [dot,label=right:$v_2$] (e) {};
\node at (-1.5,1.5) [dot,label=left:$v_3$] (f) {};
\node[] (z) at (0,0) {};
\node[cumu4] (e-) at (z) {};
\draw[kepsus] (c) to node[labl,pos=0.45] {\tiny $\gamma_2$} (b);
\draw[kepsus] (d) to node[labl,pos=0.45] {\tiny $\gamma_2$} (b);
\draw[kepsus] (e) to node[labl,pos=0.30] {\tiny $\gamma_1$} (d);
\draw[kepsus] (f) to node[labl,pos=0.45] {\tiny $\gamma_1$} (e);
\draw[testfcn] (a) to (b);
\draw[] (z.north west) node[dot1] {} to (f);
\draw[] (z.south west) node[dot1] {} to (d);
\draw[] (z.north east) node[dot1] {} to (e);
\draw[] (z.south east) node[dot1] {} to (c);
\end{tikzpicture}
$$

The subtree $\{v_1,\,v_2,\,v_3\}$ appears to be problematic $|\s| - 2 + |\s| - 2 + 3/2(3) = 2|\s| + 1/2 \nless 2|\s|$ but this is treated using the excision method we have already used. None of the other trees cause any trouble.

Next we look at the symbol: $\begin{tikzpicture}[baseline=-0.25cm,scale=0.4]
\node at (0,-1) [bar] (a) {};
\node at (-0.5,-0.5) [bar] (b) {};
\node at (-0.5,0.5) [bar] (c) {};
\node at (0.5,0.5) [bar] (d) {};
\node at (0,0) [bnode] (e) {};
\draw[] (a) to (b);
\draw[] (b) to (0,0);
\draw[thick] (0,0) to (c);
\draw[thick] (0,0) to (d);
\end{tikzpicture}$. The relevant tree is of the form:

$$\begin{tikzpicture}[baseline=0cm,scale=0.8]
\node at (0,-2) [root, label=below:$0$] (a) {};
\node at (0,-1) [dot,label=right:$v_\star$] (b) {};
\node at (-1,0) [dot,label=left:$v$] (c) {};
\node at (0,1) [dot,label=right:$v_1$] (d) {};
\node at (-1,2) [dot,label=left:$v_2$] (e) {};
\node at (1,2) [dot,label=right:$v_3$] (f) {};
\node at (-1,3) [var] (g) {};
\node at (1,3) [var] (h) {};
\node at (-1,1) [var] (i) {};
\node at (0,0) [var] (j) {};
\draw[testfcn] (a) to (b);
\draw[dashed] (g) to (e);
\draw[dashed] (h) to (f);
\draw[dashed] (i) to (c);
\draw[dashed] (b) to (j);
\draw[kepsus] (e) to node[labl,pos=0.45] {\tiny $\gamma_2$} (d);
\draw[kepsus] (f) to node[labl,pos=0.45] {\tiny $\gamma_2$} (d);
\draw[kepsus] (d) to node[labl,pos=0.45] {\tiny $\gamma_1$} (c);
\draw[kepsus] (c) to node[labl,pos=0.45] {\tiny $\gamma_1$} (b);
\end{tikzpicture}$$

It is an easy exercise to see that the above tree satisfies all the required conditions. The first order contractions one will see are as follows:

$$\begin{tikzpicture}[baseline=0cm,scale=0.8]
\node at (0,-2) [root, label=below:$0$] (a) {};
\node at (0,-1) [dot,label=right:$v_\star$] (b) {};
\node at (-1,0) [dot,label=left:$v$] (c) {};
\node at (0,1) [dot,label=right:$v_1$] (d) {};
\node at (-1,2) [dot,label=left:$v_2$] (e) {};
\node at (1,2) [dot,label=right:$v_3$] (f) {};
\node at (-1,1) [var] (i) {};
\node at (0,0) [var] (j) {};
\node[cumu2n] (z) at (0,2){};
\draw[cumu2] (z) ellipse (10pt and 6pt);
\draw[] (z.west) node[dot1] {} to (e);
\draw[] (z.east) node[dot1] {} to (f);
\draw[testfcn] (a) to (b);
\draw[dashed] (i) to (c);
\draw[dashed] (b) to (j);
\draw[kepsus] (e) to node[labl,pos=0.45] {\tiny $\gamma_2$} (d);
\draw[kepsus] (f) to node[labl,pos=0.45] {\tiny $\gamma_2$} (d);
\draw[kepsus] (d) to node[labl,pos=0.45] {\tiny $\gamma_1$} (c);
\draw[kepsus] (c) to node[labl,pos=0.45] {\tiny $\gamma_1$} (b);
\end{tikzpicture},\quad\begin{tikzpicture}[baseline=0cm,scale=0.8]
\node at (0,-2) [root, label=below:$0$] (a) {};
\node at (0,-1) [dot,label=right:$v_\star$] (b) {};
\node at (-1,0) [dot,label=left:$v$] (c) {};
\node at (0,1) [dot,label=right:$v_1$] (d) {};
\node at (-1,2) [dot,label=left:$v_2$] (e) {};
\node at (1,2) [dot,label=right:$v_3$] (f) {};
\node[cumu2n] (z) at (-0.85,-1){};
\draw[cumu2] (z) ellipse (10pt and 6 pt);
\node at (-1,3) [var] (g) {};
\node at (1,3) [var] (h) {};
\draw[] (z.west) node[dot1] {} to (c);
\draw[] (z.east) node[dot1] {} to (b);
\draw[testfcn] (a) to (b);
\draw[dashed] (g) to (e);
\draw[dashed] (h) to (f);
\draw[kepsus] (e) to node[labl,pos=0.45] {\tiny $\gamma_2$} (d);
\draw[kepsus] (f) to node[labl,pos=0.45] {\tiny $\gamma_2$} (d);
\draw[kepsus] (d) to node[labl,pos=0.45] {\tiny $\gamma_1$} (c);
\draw[kepsus] (c) to node[labl,pos=0.45] {\tiny $\gamma_1$} (b);
\end{tikzpicture},\quad\begin{tikzpicture}[baseline=0cm,scale=0.8]
\node at (0,-2) [root, label=below:$0$] (a) {};
\node at (0,-1) [dot,label=right:$v_\star$] (b) {};
\node at (-1,0) [dot,label=left:$v$] (c) {};
\node at (0,1) [dot,label=right:$v_1$] (d) {};
\node at (-1,2) [dot,label=left:$v_2$] (e) {};
\node at (1,2) [dot,label=right:$v_3$] (f) {};
\node[cumu2n] (z) at (-0.85,-1){};
\draw[cumu2] (z) ellipse (10pt and 6 pt);
\node[cumu2n] (y) at (0,2){};
\draw[cumu2] (y) ellipse (10pt and 6pt);
\draw[] (y.west) node[dot1] {} to (e);
\draw[] (y.east) node[dot1] {} to (f);
\draw[] (z.west) node[dot1] {} to (c);
\draw[] (z.east) node[dot1] {} to (b);
\draw[testfcn] (a) to (b);
\draw[kepsus] (e) to node[labl,pos=0.45] {\tiny $\gamma_2$} (d);
\draw[kepsus] (f) to node[labl,pos=0.45] {\tiny $\gamma_2$} (d);
\draw[kepsus] (d) to node[labl,pos=0.45] {\tiny $\gamma_1$} (c);
\draw[kepsus] (c) to node[labl,pos=0.45] {\tiny $\gamma_1$} (b);
\end{tikzpicture}$$

As before we expect the subset $\{v_1,\,v_2,\,v_3\}$ to cause us problems, but this may be treated in our usual excision method, while the subset $\{v,\,v_\star\}$ does not cause any trouble. The constant at the end we do not expect to be zero, and as such will be removed by the renormalisation.

In the higher order cumulants, the problematic one that we will find happens to be:

$$\begin{tikzpicture}[baseline=0cm,scale=0.8]
\node at (0,-2) [root, label=below:$0$] (a) {};
\node at (0,-1) [dot,label=right:$v_\star$] (b) {};
\node at (-1,0) [dot,label=left:$v$] (c) {};
\node at (0,1) [dot,label=right:$v_1$] (d) {};
\node at (-1,2) [dot,label=left:$v_2$] (e) {};
\node at (1,2) [dot,label=right:$v_3$] (f) {};
\node[] (z) at (0,2.2) {};
\node[cumu3] (z-) at (0,2.15) {};
\node at (0,0) [var] (j) {};
\draw[] (z.south west) node[dot1] {} to (c);
\draw[] (z.south east) node[dot1] {} to (f);
\draw[] (z.north) node[dot1] {} to (e);
\draw[testfcn] (a) to (b);
\draw[dashed] (b) to (j);
\draw[kepsus] (e) to node[labl,pos=0.45] {\tiny $\gamma_2$} (d);
\draw[kepsus] (f) to node[labl,pos=0.45] {\tiny $\gamma_2$} (d);
\draw[kepsus] (d) to node[labl,pos=0.45] {\tiny $\gamma_1$} (c);
\draw[kepsus] (c) to node[labl,pos=0.45] {\tiny $\gamma_1$} (b);
\end{tikzpicture}$$

One can calculate on $\{v_2,\,v_3,\,v_1,\,v\}$ leads to $|\s| - 1 + |\s| - 1 + |\s| - 2 + 3/2(3) = 3|\s| + 1/2 \not < (4 - 1)|\s| = 3|\s|$. This is dealt with the usual excision trick. No other combination (of three noises) causes any contravention of the rules. For example if either $v_2$ or $v_3$ is replaced in the previous set by $v_\star$, one cannot expect to see the same divergence for an edge of weight $|\s| - 1$ is replaced by an edge with weight $|\s| - 2$, while the RHS remains the same. If one wants to consider the effect of the cumulant term generated by the noises attached to $v_2,\,v_3\text{, and }v_\star$ we get, the set to be considered would be the entire tree except the origin and for this we will have $B(5) = 4|\s|$.

\subsection{Convergence of the model}\label{sec:convergence}

In this section, we want to talk about how the methods developed so far are to be put together to give the convergence result we are after. With reference to \cite{MH21}, this section is comparable to Section 5 in that article, wherein the authors show that $u^N$ which solves the rescaled, discretised, renormalised PAM converges in probability to the solution of the renormalised PAM driven by the Ornstein-Uhlenbeck process in the space $C^{\bar\eta,T}_N$ for $\bar \eta\in(0,\frac{1}{2}\wedge\eta)$. In particular, we want to say what needs to be changed for our result to work.

Take $\rho$ to be any smooth, compactly supported function defined on $\mb{R}^{d+1}$ that integrates to one, and define for $\delta\in(2^{-N},1]$ the rescaled version $\rho^\delta(t,x) \coloneqq \delta^{-5}\rho(\delta^{-2},\delta^{-1}x)$ and also $\rho^{\delta,N}\coloneqq 2^{-dN}\int\rho^{\delta}(t,y)\mathbbm{1}\{|y-x|\le 2^{-N-1}\}dy$. Then the semi-discrete convolution $\rho^{\delta,N}\star_N\xi^{N}$ defines a smooth discrete noise which we denote by $\xi^{\delta,N}$ and use to introduce the equation:
\begin{equs}\partial_tu^{\delta,N} & = (\Delta u^{\delta,N}) + g(u^{\delta,N})(\nabla u^{\delta,N})^2 \\ & + k(u^{\delta,N})(\nabla u^{\delta,N})+ \bar h(u^{\delta,N}) + \xi^{\delta,N}(u^{\delta,N})
\end{equs}
defined on $\mb{R}_{+}\times 2^{-N}\mb{S}$ where $\bar h$ is as it was before and the constants are the same but with $\xi^N$ replaced by $\xi^{\delta,N}$. The point the authors take in \cite{MH21} is that the setup above as it corresponds to the PAM is amenable to \cite[Theorem 16]{EH17} and hence one is able to argue that $\|u^{N};u^{\delta,N}\|_{C^{\eta,T}_{N}}$ converges to zero in probability provided one takes $N\rightarrow\infty$ before the mollification is removed. We are not able to directly argue like this for our equation because the named theorem pertains to a fixed point with non-linearities only in $u^N$. We however anticipate, that due to the general result in \cite{BCCH}, a similar conclusion for the \eqref{eq:gKPZ} should also hold. The next step in the argument is to look to the equation:
 \begin{equs}
\partial_t u^{\delta} & = (\Delta u^{\delta}) + g(u^{\delta})(\nabla u^{\delta})^2 \\ & + k(u^{\delta})(\nabla u^{\delta})+ h(u^{\delta}) + Y^{\delta}(u^{\delta})
\end{equs}
defined on $\mb{R}_+\times\mb{T}^{d}$, with $\bar h$ again as before, and $Y^{\delta} = \rho^{\delta}\star Y$ with $Y$ as in \eqref{eq:gKPZcon}. Then, with reference to the results in \cite{CH16} and \cite{BHZ}, one has that $u^{\delta}$ converges in probability in the space $\mc{C}^{\bar \eta,T}_N$ to a limit $u$ as $\delta\rightarrow 0$. At this point it only remains to prove the convergence for our result to hold:
\be
\lim_{\delta\rightarrow 0}\lim_{N\rightarrow\infty}\|u^{\delta,N};u^{\delta}\|_{\mc{C}^{\bar\eta,T}_N} = 0
\ee

Due to the smoothness of $u^{\delta}$, this follows as soon as we have:

$$\lim_{N\rightarrow\infty}\sup_{(t,x)\in[0,T]\times 2^{-N}\mb{S}}|\xi^{\delta,N}(t,x)-Y^\delta(t,x)|=0$$

which we have assumed to be true.

\begin{appendix}
\section{Joint Cumulants}
\label{sec:cum}
In this appendix, we recall the definition and basic properties of joint cumulants. Consider a collection of random variables $\{X_a\}_{a\in\CA}$ for some index set $\CA$. For the subsets $B\subseteq\CA$, we write $X_B=\{X_a : a\in B\}\text{ and }X^B=\prod_{a\in B}X_a$. One should be mindful of the fact that $X_B$ is a collection of random variables while $X^B$ is itself a random variable. Further, we write $\CP(B)$ for the set of all partitions of $B$.

\begin{definition}\label{def:cum}
Let $B\subseteq\CA$. The cumulant $\mbbE_c(X_B)$ is defined inductively over $|B|$ by $\mathbb{E}_c(X_B)=\mathbb{E}(X_a)$, if $B$ is the singleton containing $a$ and
\beq
\mbbE(X^B)=\sum_{\pi\in\CP(B)}\prod_{\bar{B}\in\pi}\mbbE_c(X_{\bar{B}}),\qquad\text{if }|B|\ge 2
\eeq
\end{definition}

The usefulness of the notation $X_B$ and $X^B$ is in the fact that cumulants are defined for collections (say vectors) of random variables, while the moments can be taken of at most a product of random variables. Due to this reliance on the moment in its definition the following properties are natural:

\begin{itemize}
\item $\mathbb{E}_c\left[h_1X_1,\cdots, h_k X_k\right] = \prod_{i=1}^k h_i \times\mathbb{E}_c\left[X_1,\cdots,X_k\right]$
\item $\mathbb{E}_c\left[X_B\right] = 0$, if one has $B = B' \cup B''$, $B'\cap B'' = \emptyset$,  and $X_{B'}$ and $X_{B''}$ are independent
\item For a jointly Gaussian collection of random variables $X_B$, $\mathbb{E}_c\left[X_B\right]=0$ if $|B| > 2$.
\end{itemize}

\begin{example}
Consider the case when $|B| = 2$:

$$\mathbb{E}_c\left[X_1,X_2\right] = \mathbb{E}\left[X_1X_2\right] - \mathbb{E}\left[X_1\right]\mathbb{E}\left[X_2\right]$$
that is to say, the cumulant of two random variables is its usual covariance. \\
When $|B|=3$, one is able to calculate:
\be
\begin{split}
\mb{E}_c[X_1,X_2,X_3] &= \mb{E}[X_1X_2X_3] - \mb{E}[X_1X_2]\mb{E}[X_3] - \mb{E}[X_1X_3]\mb[X_2] \\
&- \mb{E}[X_2X_3]\mb{E}[X_1] + 2\mb{E}[X_1]\mb{E}[X_2]\mb{E}[X_3]
\end{split}
\ee
\end{example}

The Wick products, that have a central role in the renormalisation of SPDEs can then be formulated in the sense of definition \ref{def:cum}:
\begin{definition}\label{def:wick}
For $A\subseteq \CA$, the Wick product $:\!X_A\!:$ is defined via $:\!X_{\emptyset}\!:=1$ and then recursively:
\beq
X^A=\sum_{B\subseteq A}:\!X_B\!:\sum_{\pi\in\CP(A\setminus B)}\prod_{\bar{B}\in\pi}\mbbE_c(X_{\bar{B}})
\eeq
\end{definition}

It is obvious that the above definition forces $\mathbb{E}\!:\!X_A\!: = 0$ whenever $A\neq\emptyset$, and moreover taking expectation on either side of the equality in Definition~\ref{def:wick} reduces to the equality in Definition~\ref{def:cum}.
\begin{example}
For a family of centred random variables $X_i$, one has $:\!X_i\!:=X_i$, $:\!X_1X_2\!: = X_1X_2 - \mathbb{E}(X_1X_2)$.
$$:\!X_1X_2X_3\!: = X_1X_2X_3 - \sum_{i\neq j \neq k}X_i\mathbb{E}(X_jX_k) - \mathbb{E}(X_1X_2X_3)$$
\end{example}

We also define the following class of partitions: Let $M$ and $P$ be two sets and fix a subset $D\subseteq M\times P$. Then $\CP_M(D)$ is the set of all partitions of $D$, such that for every $B\in \pi \in \CP_M(D)$, there must exist $(i,k),(i',k)\in B$ such that $k\neq k'$.

\begin{lemma}\label{lem:cyc}
For $m,p\in\mathbb{N}$. Set $M=\{i:1\le i\le m\}$ and $P=\{k:1\le k \le p\}$. Let $\{X_{(i,k)}\}_{i\in M,\,k\in P}$ be a collection of random variables with bounded moments of all orders. Then:
$$
\mbbE\left(\prod_{k=1}^p:\!\prod_{i=1}^m X_{(i,k)}\!:\right)=\sum_{\pi\in\CP_M(M\times P)}\prod_{B\in\pi}\mathbb{E}_c(X_B)
$$
\end{lemma}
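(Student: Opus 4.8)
The plan is to prove Lemma~\ref{lem:cyc} by combining the definition of the Wick product (Definition~\ref{def:wick}) with the moment-to-cumulant expansion (Definition~\ref{def:cum}), and then performing a combinatorial cancellation that eliminates precisely those partitions that fail the ``no block confined to a single column'' condition defining $\CP_M(M\times P)$. First I would expand each Wick-ordered factor $:\!\prod_{i=1}^m X_{(i,k)}\!:$ using Definition~\ref{def:wick} in its inverted form: writing $A_k = \{(i,k) : i \in M\}$, one has
\begin{equs}
\prod_{i=1}^m X_{(i,k)} = \sum_{B_k \subseteq A_k} \;:\!X_{B_k}\!:\; \sum_{\sigma \in \CP(A_k \setminus B_k)} \prod_{\bar B \in \sigma} \mbbE_c(X_{\bar B}),
\end{equs}
and invert this relation (Möbius inversion on the partition lattice, or simply read Definition~\ref{def:wick} as already defining $:\!X_{A_k}\!:$ recursively) to express $:\!X_{A_k}\!:$ as a signed sum over set partitions of $A_k$ of products of ordinary products and cumulants. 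Substituting into the left-hand side and using that $\mbbE$ is multilinear, we are reduced to computing $\mbbE$ of a product of ordinary products of the $X_{(i,k)}$ and then re-expanding that via Definition~\ref{def:cum}.

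Next I would organise the resulting sum. After expanding the Wick products and taking the expectation via Definition~\ref{def:cum}, every term is indexed by a pair: a partition $\sigma$ of $M\times P$ refining the column partition $\{A_1,\dots,A_p\}$ (coming from the Wick expansion, whose blocks lie within single columns and carry a sign), together with a partition $\pi$ of the ``leftover'' variables coming from the moment-to-cumulant step. Both $\sigma$-blocks and $\pi$-blocks contribute cumulant factors, so each global term is a signed product of joint cumulants indexed by a partition $\rho$ of $M\times P$ together with a choice of which blocks of $\rho$ came from $\sigma$ (the ``intra-column'' blocks). For a fixed partition $\rho$ of $M\times P$, I would sum over all admissible ways of designating a sub-collection of its blocks as Wick-expansion blocks; a block can be so designated only if it is contained in a single column. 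The key computation is then: for $\rho$ fixed, $\sum_{\text{admissible designations}} (\text{sign}) = \mathbbm{1}_{\{\rho \in \CP_M(M\times P)\}}$. The sign attached to a designation is $(-1)$ to the power (number of designated blocks $-$ number of columns they occupy, or similar), and one gets an alternating sum over subsets of the set of single-column blocks of $\rho$; this telescopes to $0$ unless $\rho$ has \emph{no} single-column block, i.e. unless $\rho \in \CP_M(M\times P)$, in which case the empty designation is the only term and it has coefficient $+1$. This is the mechanism by which the Wick ordering ``subtracts'' exactly the unwanted partitions.

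The main obstacle will be bookkeeping the signs and the interaction between the two layers of partitions carefully enough that the alternating-sum cancellation is transparent; concretely, I must check that when a block $\bar B$ of the eventual partition $\rho$ lies in a single column, flipping whether it is ``Wick-generated'' or ``moment-generated'' changes the sign but not the cumulant factor, so that those contributions pair up and cancel. A clean way to carry this out is to first prove the single-column auxiliary identity
\begin{equs}
\mbbE\Bigl(\,:\!\prod_{i=1}^m X_{(i,1)}\!:\,\Bigr) = \mathbbm{1}_{\{m=0\}},
\end{equs}
more precisely its multi-factor refinement, via induction on $\sum_k |A_k|$ using Definitions~\ref{def:cum} and~\ref{def:wick} directly, rather than through an explicit Möbius-function formula; the induction hypothesis supplies exactly the cancellation of single-column blocks at each stage. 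An alternative, perhaps cleaner, route is a generating-function / exponential-formula argument: encode $\mbbE(\exp(\sum t_{(i,k)} X_{(i,k)}))$ and its Wick-ordered analogue, use that the cumulant generating function linearises products, and extract the multilinear coefficient. I would present the inductive proof as the primary argument since it stays within the combinatorial framework already set up in the appendix, and remark that this is the standard ``diagram formula'' for products of Wick powers adapted to the bipartite index set $M \times P$.
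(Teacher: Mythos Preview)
The paper states Lemma~\ref{lem:cyc} in the appendix without proof, treating it as a standard ``diagram formula'' for products of Wick powers; there is no argument in the paper to compare against. Your strategy is the correct standard one. The inductive route you mention near the end is in fact the cleanest way to execute it: from Definition~\ref{def:wick} one has, for each column $A_k=\{(i,k):i\in M\}$,
\begin{equs}
\prod_k X^{A_k}=\sum_{(B_k)_k:\,B_k\subseteq A_k}\Bigl(\prod_k :\!X_{B_k}\!:\Bigr)\prod_k\sum_{\sigma_k\in\CP(A_k\setminus B_k)}\prod_{\bar B\in\sigma_k}\mbbE_c(X_{\bar B});
\end{equs}
taking expectations and expanding the left-hand side with Definition~\ref{def:cum} yields a sum over all partitions of $M\times P$. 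Isolating the term $B_k=A_k$ for all $k$ and applying the induction hypothesis to the remaining terms, one observes that any partition $\rho$ of $M\times P$ decomposes \emph{uniquely} into its single-column blocks (which are absorbed into the $\sigma_k$'s) and its multi-column blocks (which form an element of $\CP_M(\cup B_k)$); this bijection immediately gives the claimed identity. Your direct ``flip one single-column block between Wick-generated and moment-generated'' involution also works, but to make it rigorous you need the explicit signed inversion of Definition~\ref{def:wick}, and your parenthetical ``or similar'' for the sign indicates you have not pinned that down; the inductive argument sidesteps this entirely and is what I would recommend you write up.
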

\end{appendix}

\end{document}